\numberwithin{equation}{section}
\def\mb{\mathbb}
\def\mc{\mathcal}
\def\mr{\mathrm}
\def\wt{\widetilde}
\def\ra{\rightarrow}
\def\br{\mathbb R}
\def\bs{\boldsymbol}
\def\rro{\bs{\rho}}
\theoremstyle{plain}
\newtheorem{thm}{Theorem}[section]
\newtheorem{lemma}[thm]{Lemma}
\newtheorem{prop}[thm]{Proposition}
\newtheorem{cor}[thm]{Corollary}
\theoremstyle{definition}
\newtheorem{rem}[thm]{Remark}
\theoremstyle{definition}
\newtheorem{defn}[thm]{Definition}
\newcommand{\comment}[1]{}
\begin{document}
\begin{CJK}{UTF8}{gbsn}

\title{Topological components of surface group representations and  signature}
\author{InKang Kim}

\author{Xueyuan Wan}

\address{Inkang Kim: School of Mathematics, KIAS, Heogiro 85, Dongdaemun-gu Seoul, 02455, Republic of Korea}
\email{inkang@kias.re.kr}

\address{Xueyuan Wan: Mathematical Science Research Center, Chongqing University of Technology, Chongqing 400054, China}
\email{xwan@cqut.edu.cn}
\begin{abstract}
We study the topological components of the surface group representations into \(\mathrm{SL}(2,\mathbb{R}) \) and \(\mathrm{PSL}(2,\mathbb{R}) \). Utilizing the signature formula established in \cite{KPW}, we determine the number of connected components of the representation spaces with boundary elliptic, hyperbolic, and parabolic holonomies.
\end{abstract}

 \subjclass[2020]{57M50, 57JM07, 22E40}  
 \keywords{Signature,  surface group representations, connected components}
 
\maketitle
\tableofcontents

\section*{Introduction}

In the theory of surface group representations, understanding the topology of representation spaces is a fundamental topic which influenced many branches of mathematics including non-abelian Hodge theory, Higgs bundle theory and many others \cite{Hitchin1,Simpson1992,Corlette}. In particular, determining the number of connected components of these spaces is a basic yet crucial question \cite{Gold1,Hitchin1992}. Topological invariants serve as powerful tools in addressing such problems. Among these invariants, the Toledo invariant plays a central role in the representation theory of surface groups \cite{Toledo,BIW,BGG2003}.
 A foundational result in this direction was established by Domic and Toledo~\cite{DT}, who proved the celebrated Milnor--Wood inequality \cite{Milnor, Wood}. This landmark result was further generalized by Burger, Iozzi, and Wienhard~\cite{BIW}, who extended the definition of the Toledo invariant to surfaces with boundary for any Hermitian Lie group, utilizing the framework of bounded cohomology. They also showed that the Milnor--Wood inequality continues to hold in this broader setting.

In his seminal work~\cite{Gold1}, Goldman introduced the notion of the \emph{relative Euler class} for boundary non-elliptic representations, showing it takes integer values. In our previous work~\cite{KPW1}, we proved that the Toledo invariant coincides precisely with the relative Euler class (see Proposition~\ref{pre-prop1}). By exploiting both the continuity of the relative Euler class and its integrality in the case of closed surfaces, Goldman \cite[Theorem A]{Gold1} proved that the representation space $\mathrm{Hom}(\pi_1(\Sigma_{g,0}), \mathrm{SL}(2,\mathbb{R}))$ has exactly $2^{2g + 1} + 2g - 3$ connected components. Goldman further extended these ideas to surfaces with boundary, showing that the space of representations in $\mathrm{PSL}(2,\mathbb{R})$ with hyperbolic boundary holonomy decomposes into connected components indexed by the relative Euler class~\cite[Theorem D]{Gold1}. Recently, Ryu and Yang~\cite{RY} investigated the topology of the connected components of boundary parabolic representations into \(\mathrm{PSL}(2,\mathbb{R})\) and determined their number, see also~\cite{DT0, Kas, MPY, Yang} for related discussions. In contrast, much less is known about the topology of representation spaces with elliptic boundary holonomies. In particular, determining the number of connected components in this setting remains open.

In this paper, we address this problem by employing the signature invariant to systematically study the connected components of surface group representations into $\mathrm{SL}(2,\mathbb{R})$ and $\mathrm{PSL}(2,\mathbb{R})$ with fixed boundary holonomies of elliptic, hyperbolic, or parabolic type, and we aim to count the number of connected components of these representation spaces. A key difficulty arises from the fact that, for surfaces with boundary, the Toledo invariant is generally no longer an integer~\cite[Theorem 1(2)]{BIW}, making it insufficient to distinguish between connected components. Instead, we turn to the \emph{signature} invariant as a more refined topological invariant that remains integer-valued and is well-suited for our purpose.

In \cite{Hitchin1}, Hitchin studied representations of closed surface groups into \( G = \mathrm{SL}(2,\mathbb{R}) \), and established a real-analytic correspondence between irreducible representations \( \rho: \pi_1(\Sigma) \to \mathrm{SL}(2,\mathbb{C}) \) and stable Higgs bundles. Since then, the Higgs bundle approach has become a powerful and widely adopted method for studying moduli spaces of surface group representations (see, for example, \cite{Corlette, Donaldson, Simpson, Simpson1}). In particular, for surfaces with punctures, the topology of the moduli space of Higgs bundles has been explored in works such as \cite{Boden-Yokogawa, Mondello, Nasatyr-Steer}. However, these studies primarily focus on the case of nonzero relative Euler classes and do not compute the precise number of connected components (see e.g. \cite{Mondello}).

Building on the signature formula established in \cite{KPW}, we systematically and comprehensively investigate the topological structure of the connected components of boundary elliptic, hyperbolic, and parabolic representations. In particular, we provide a detailed counting of the number of these connected components.
To the best of our knowledge, the present work is the first systematic attempt to count the connected components of representation spaces with boundary elliptic holonomy using the signature invariant.
The signature invariant is another fundamental object in the study of surface group representations. Lusztig~\cite{Lus} and Meyer~\cite{Meyer, Meyer1} studied the signature of flat vector bundles over closed, even-dimensional manifolds, providing explicit integral formulas. For surfaces with boundary, Atiyah~\cite{Atiyah} provided a geometric interpretation of the signature as the sum of the integral of the first Chern class over the surface and the $\eta$-invariant contributions arising from the boundary. Unlike the Toledo invariant, the signature is always integer-valued, which makes it a more robust invariant for distinguishing connected components in moduli spaces of representations.

In~\cite{KPW}, we conducted a detailed study of the relationship between the Toledo invariant and the signature for representations $\phi \colon \pi_1(\Sigma) \to G$, where $G = \mr{U}(p,q)$ or $\mr{Sp}(2n,\mathbb{R})$. In particular, for $G = \mathrm{SL}(2,\mathbb{R})$, we established the following identity:
\begin{equation}\label{Sig-Tol}
\mathrm{sign}(\phi) = 2\, \mathrm{T}(\phi) + \rro(\phi),
\end{equation}
where $\rro(\phi)$ denotes the rho invariant (see Section~\ref{defn of rho}), and $\mathrm{T}(\phi)$ is the Toledo invariant~\cite[Theorem 4]{KPW}. 
If $\Sigma$ is the connected sum of two (connected) surfaces $\Sigma_i$ along a separating loop, then the signature satisfies
\[
\mathrm{sign}(\phi)=\mathrm{sign}(\phi|_{\pi_1(\Sigma_1)})+\mathrm{sign}(\phi|_{\pi_1(\Sigma_2)}).
\]
Moreover, reversing the orientation of the surface reverses the sign of the signature. These properties also hold for the Toledo invariant (cf. \cite{BIW}).

Let $\Sigma = \Sigma_{g,n}$ be a compact oriented surface of genus $g$ with $n$ boundary components $c_1, \dots, c_n$. Its fundamental group has the standard presentation:
\[
\pi_1(\Sigma) = \left\langle a_1, b_1, \dots, a_g, b_g, c_1, \dots, c_n \,\middle\vert\, \Pi_{i=1}^g [a_i, b_i] \Pi_{j=1}^n c_j = e \right\rangle.
\]
The conjugacy classes of $\mathrm{SL}(2,\mathbb{R}) \setminus \{\pm I\}$ naturally decompose into three types: elliptic, parabolic, and hyperbolic.

 Given a representation $\phi \in \mathrm{Hom}(\pi_1(\Sigma), \mathrm{SL}(2,\mathbb{R}))$, we say that it is a \emph{boundary elliptic} (respectively, \emph{parabolic} or \emph{hyperbolic}) representation if each $\phi(c_j)\in \mr{SL}(2,\mb{R})$ is elliptic (respectively, parabolic or hyperbolic).

\subsection{Boundary elliptic representations}  

An element \( A \in \mathrm{SL}(2,\mathbb{R}) \) is called \emph{elliptic} if \( |\mathrm{tr}(A)| < 2 \). Such an element is conjugate to a rotation matrix of the form
\[
R(\theta) = 
\begin{pmatrix}
  \cos\theta & -\sin\theta \\
  \sin\theta & \cos\theta
\end{pmatrix}
\in \mathrm{SO}(2) \setminus \{\pm I\}, \quad \theta \in (0,\pi) \cup (\pi,2\pi).
\]
We say an element \( A \) is \emph{elliptic-unipotent} if it is conjugate to \( R(\theta) \) for some \( \theta \in (0,2\pi) \). 

Let \( \mathrm{Ell}(\Sigma_{g,n}) \) (resp. \( \mathrm{E}(\Sigma_{g,n}) \)) denote the space of all boundary elliptic (resp. elliptic-unipotent) representations in \( \mathrm{Hom}(\pi_1(\Sigma_{g,n}), \mathrm{SL}(2,\mathbb{R})) \). 
Given \( \phi \in \mathrm{Ell}(\Sigma_{g,n}) \) or \( \mathrm{E}(\Sigma_{g,n}) \), we may assume that \( \phi(c_i) \sim R(\theta_i) \). We define the sigma map:
\[
\sigma\colon \mathrm{Ell}(\Sigma_{g,n}) \to \{-1,1\}^n, \quad 
\sigma(\phi) := \left( \mathrm{sgn}(\theta_1 - \pi), \dots, \mathrm{sgn}(\theta_n - \pi) \right).
\]

From the identity \eqref{Sig-Tol} and the values of the rho invariant in Table \ref{tab:rho-invariant}, we obtain:
\[
\mathrm{sign}(\phi) = \mathrm{T}(\phi) + 2\sum_{i=1}^n (1 - \tfrac{\theta_i}{\pi}).
\]
This expression shows that the Toledo invariant is generally not an integer, due to the second term on the right-hand side. However, for representations in \( \mathrm{Ell}(\Sigma_{g,n}) \) or \( \mathrm{E}(\Sigma_{g,n}) \), the signature takes values in the set
\[
\mathrm{sign}(\mathrm{Ell}(\Sigma)) = \mathrm{sign}(\mathrm{E}(\Sigma)) 
= \left[2\chi(\Sigma), 2|\chi(\Sigma)|\right] \cap \left\{2|\chi(\Sigma)| - 4\mathbb{Z} \right\}.
\]

For any path \( \phi(t) \in \mathrm{Ell}(\Sigma_{g,n}) \) or \( \mathrm{E}(\Sigma_{g,n}) \), the signature remains constant, since \( \theta_i(t) \notin \{0, 2\pi\} \) along the path. Thus, the signature is locally constant on these spaces and can be used to count connected components. We denote by \( \#\{\mathrm{Ell}(\Sigma_{g,n})\} \) (resp. \( \#\{\mathrm{E}(\Sigma_{g,n})\} \)) the number of connected components.

We now state our first main result:

\begin{thm}\label{main thm1}
Let \( \Sigma_{g,n} \) be a  compact oriented surface of genus \( g \) with \( n \geq 1 \) boundary components, and assume \( \chi(\Sigma_{g,n})=2-2g-n \leq 0 \). Then:
\begin{itemize}
  \item The number of connected components of boundary elliptic-unipotent representations is
  \[
  \#\{\mathrm{E}(\Sigma_{g,n})\} = 2^{2g+1} + 2g + n - 3.
  \]
  \item The number of connected components of boundary elliptic representations is
  \[
  \#\{\mathrm{Ell}(\Sigma_{g,n})\} =
  \begin{cases}
    2^{n-1}(n - 1) & \text{if } g = 0, \\
    2^{2g + n} + 2^{n-1}(4g + n - 5) & \text{otherwise}.
  \end{cases}
  \]
\end{itemize}
\end{thm}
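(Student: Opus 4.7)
\emph{Strategy.} My plan is to use the signature as a locally constant integer-valued invariant on both $\mathrm{E}(\Sigma_{g,n})$ and $\mathrm{Ell}(\Sigma_{g,n})$, together with the sigma map on the latter (which is locally constant since $R(\pi)=-I$ is not elliptic, so $\theta_i$ cannot cross $\pi$ along a continuous family in $\mathrm{Ell}$). The count of connected components then reduces to (i) identifying the image of these invariants, and (ii) counting the connected components of each fiber. Heuristically, one expects each interior fiber to be connected, with additional components appearing at extremal signature values coming from the lifting ambiguity through the double cover $\mathrm{SL}(2,\mathbb{R})\to\mathrm{PSL}(2,\mathbb{R})$.

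\emph{Counting.} For $\mathrm{E}(\Sigma_{g,n})$, the signature formula $\mathrm{sign}(\phi)=T(\phi)+2\sum_i(1-\theta_i/\pi)$ combined with the Milnor--Wood bound $|T(\phi)|\le|\chi(\Sigma)|$ pins down the $2g+n-1$ achievable signature values listed in the excerpt. I would argue that at the two extremal values $\pm 2|\chi(\Sigma)|$, Milnor--Wood equality forces the representation to be Fuchsian after a suitable normalization of boundary angles, so the component count reduces to counting lifts of the unique Teichm\"uller-type $\mathrm{PSL}(2,\mathbb{R})$-component through the double cover $\mathrm{SL}(2,\mathbb{R})\to\mathrm{PSL}(2,\mathbb{R})$; this produces $2^{2g}$ components at each extreme. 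For each of the remaining $2g+n-3$ interior signature values, I would show the fiber is path connected, and summation yields $2\cdot 2^{2g}+(2g+n-3)=2^{2g+1}+2g+n-3$. For $\mathrm{Ell}(\Sigma_{g,n})$ the same scheme is run fiberwise over $\sigma$: for each $\epsilon\in\{-1,+1\}^n$ the restricted signature formula determines the achieved signatures, and the extremal/interior decomposition is again governed by Milnor--Wood. A careful summation over $\epsilon$, handling the genus-zero case separately (where the Fuchsian extremes become inaccessible when $g=0$), then yields $2^{n-1}(n-1)$ for $g=0$ and $2^{2g+n}+2^{n-1}(4g+n-5)$ for $g\ge 1$.

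\emph{Main obstacle.} The hardest step will be verifying path connectedness of each interior stratum; a rigorous argument requires more than the Milnor--Wood count. I expect to use a pants decomposition of $\Sigma_{g,n}$, reducing to the pair of pants $\Sigma_{0,3}$, where the representation variety with fixed $\sigma$-class admits an explicit parametrization from which connectedness of each signature fiber is directly visible, and then to induct on complexity by showing that two representations of $\Sigma_{g,n}$ with matching discrete invariants can be joined through deformations of the glueing data along each interior pants curve (which lives in a connected moduli space, an $S^1$-torsor when the interior curve is elliptic). The Fuchsian counts at the ends of the signature range reduce instead to a standard $H^1(\Sigma;\mathbb{Z}/2)$-calculation of lifts, and the bookkeeping for the elliptic case over the $2^n$ sigma classes is mostly combinatorial once the connectedness of interior fibers has been settled.
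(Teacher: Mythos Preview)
Your high-level strategy---using the signature together with the sigma map as locally constant invariants, identifying the image, and then arguing connectedness of each fiber---is exactly what the paper does, and your numerical bookkeeping (the $2g+n-1$ signature values for $\mathrm{E}$, with $2^{2g}$ components at each extreme and connected interior fibers) agrees with the paper's Lemma~\ref{lemma11} and Lemma~\ref{lemma23}. So the skeleton of the argument is right.

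The substantive divergence, and where your sketch has a genuine gap, is in the connectedness step. You propose a pants decomposition and an induction on complexity via gluing data, noting that the gluing parameter along an interior curve is ``an $S^1$-torsor when the interior curve is elliptic.'' But interior pants curves are \emph{not} elliptic in general---for instance, once the Toledo invariant is large, commutators $[A_i,B_i]$ are typically hyperbolic---and the gluing moduli argument you outline does not directly handle that. More seriously, to connect two representations with the same total signature you must be able to \emph{redistribute} signature between adjacent pieces, and this is where the real work lies. The paper does not proceed via a pants decomposition at all. Instead it first proves a key deformation lemma (Lemma~\ref{lemma6}): every boundary elliptic-unipotent representation can be deformed within $\mathrm{Ell}(\Sigma_{g,n})$ to one whose boundary holonomies lie in $\mathrm{SO}(2)\setminus\{\pm I\}$. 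This is delicate and occupies several pages, with separate arguments depending on whether certain partial products are elliptic or hyperbolic. Once the boundary is in $\mathrm{SO}(2)$, the paper proves a second technical lemma (Lemma~\ref{lemma12}) showing that for $\Sigma_{g,1}$ with non-extremal signature, the boundary angle can be pushed through $0$ or $2\pi$; this is what allows one to \emph{trivialize a handle} (Lemma~\ref{lemma7}) and thereby induct on genus rather than on pants pieces. The interior connectedness (Lemma~\ref{lemma23}) then reduces to the genus-zero case plus the $\Lambda_{g,n}$-action.

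Two smaller points. First, your ``Fuchsian'' description of the extremal components is not quite right: boundary elliptic representations with maximal signature are not Fuchsian (the boundary is elliptic), and the PSL component with maximal Toledo is not a single Teichm\"uller component in this setting. The correct mechanism for the $4^g$ count is that at maximal signature one has $\mathrm{tr}[A_i,B_i]<2$, forcing each $A_i,B_i$ to be hyperbolic, and the action $(A_i,B_i)\mapsto(\pm A_i,\pm B_i)$ then gives $4^g$ components (Lemma~\ref{lemma11}). Second, for $\mathrm{Ell}(\Sigma_{g,n})$ the fiberwise-over-$\sigma$ count is more intricate than your sketch suggests: the paper identifies, via equations \eqref{eqn19} and \eqref{eqn24}, specific $(\sigma,\mathrm{sign})$-pairs for which the product $C_1\cdots C_n$ \emph{cannot} be deformed to the identity within $\mathrm{Ell}$, and these contribute an extra $2^{2g+1}(2^{n-1}-1)$ components beyond the naive sum over $\sigma$-fibers. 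Your combinatorial summation would need to detect this obstruction.
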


To prove this result, we first show that any \( \phi \in \mathrm{Ell}(\Sigma_{g,n}) \) can be deformed to a boundary \( \mathrm{SO}(2) \setminus \{\pm I\} \)-valued representation via a path within \( \mathrm{Ell}(\Sigma_{g,n}) \); see Lemma~\ref{lemma6}. We then analyze the contributions from the signature and the sigma map to distinguish different connected components.

We also consider the representation space \( \mathrm{Hom}(\pi_1(\Sigma), \mathrm{PSL}(2,\mathbb{R})) \). Let
\[
\pi\colon \mathrm{SL}(2,\mathbb{R}) \to \mathrm{PSL}(2,\mathbb{R})
\]
be the natural projection. As shown in \cite[Lemma 2.2]{Gold1}, this induces a covering map onto its inverse image of each connected component:
\[
\pi_*\colon \mathrm{Hom}(\pi_1(\Sigma_{g,n}), \mathrm{SL}(2,\mathbb{R})) 
\to \mathrm{Hom}(\pi_1(\Sigma_{g,n}), \mathrm{PSL}(2,\mathbb{R})), 
\, \phi \mapsto [\phi] := \pi \circ \phi.
\]

Let \( \mathrm{Ell}_P(\Sigma_{g,n}) \) denote the space of boundary elliptic representations into \( \mathrm{PSL}(2,\mathbb{R}) \), and \( \#\{\mathrm{Ell}_P(\Sigma_{g,n})\} \) the number of connected components.
For any \( \phi \in \mathrm{Ell}(\Sigma_{g,n}) \), we define its signature in \( \mathrm{PSL}(2,\mathbb{R}) \) by
\[
\mathrm{sign}([\phi]) = 2\,\mathrm{T}(\phi) + 2(n - \sum_{i=1}^n \tfrac{[\theta_i]}{\pi} ),
\]
where \( [\theta_i] \in (0,\pi) \) is taken to be the unique representative of \( \theta_i \) mod \( \pi \) in that interval. This signature is well-defined and continuous on \( \mathrm{Ell}_P(\Sigma_{g,n}) \). In Lemma~\ref{lemma10}, we show that for any \( k \in \mathbb{Z} \), the set
\(
\mathrm{sign}^{-1}(k) \cap \mathrm{Ell}_P(\Sigma_{g,n})
\)
is either empty or a single connected component.
We obtain the following theorem:

\begin{thm}\label{main thm2}
The set of possible values of the signature for boundary elliptic representations into \( \mathrm{PSL}(2,\mathbb{R}) \) is given by
\[
\mathrm{sign}(\mathrm{Ell}_P(\Sigma_{g,n})) = 
\begin{cases}
  [2, 2(n - 1)] \cap 2\mathbb{Z} & \text{if } g = 0, \\
  [-4g + 4, 4g - 4 + 2n] \cap 2\mathbb{Z} & \text{otherwise}.
\end{cases}
\]
In particular, the number of connected components is
\[
\#\{\mathrm{Ell}_P(\Sigma_{0,n}) \}= n - 1, \,
\#\{\mathrm{Ell}_P(\Sigma_{g,n})\} = 4g + n - 3 \quad \text{for } g \geq 1.
\]
\end{thm}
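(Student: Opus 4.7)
The plan is to apply Lemma \ref{lemma10}, which asserts that each nonempty fiber of the signature map on $\mathrm{Ell}_P(\Sigma_{g,n})$ is a single connected component; counting components therefore reduces to identifying the image $\mathrm{sign}(\mathrm{Ell}_P(\Sigma_{g,n}))$. The argument splits into bounding this image and realizing every integer in the bounds.

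\textbf{Bounds.} Combining the defining formula
$$\mathrm{sign}([\phi]) = 2\,\mathrm{T}(\phi) + 2\Bigl(n - \textstyle\sum_{i=1}^n [\theta_i]/\pi\Bigr)$$
with the sharp Milnor--Wood inequality for boundary-elliptic $\mathrm{PSL}(2,\mathbb{R})$-representations (Burger--Iozzi--Wienhard),
$$-(2g-2) - \sum_{i=1}^n(1-[\theta_i]/\pi) \le \mathrm{T}(\phi) \le (2g-2) + \sum_{i=1}^n [\theta_i]/\pi,$$
directly yields $\mathrm{sign}([\phi]) \in [4-4g,\, 4g-4+2n]$ for $g \ge 1$. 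In the case $g=0$, the boundary relation $\prod[\phi(c_i)] = e$ combined with the Milnor--Wood inequality further restricts $\mathrm{sign}([\phi])$ to $[2, 2(n-1)]$. Evenness $\mathrm{sign}([\phi]) \in 2\mathbb{Z}$ is immediate since $\mathrm{sign}$ is the index of an integral Hermitian form.

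\textbf{Realization.} For each even integer $2k$ in the stated range I exhibit a representation attaining it. When $g=0$, or when $g \ge 1$ and $k \in \{1,\ldots,n-1\}$, take the abelian $\mathrm{SO}(2)$-valued representation $\phi(c_i) = R(m\pi/n)$ (and $\phi(a_j)=\phi(b_j)=I$) with $m = n-k \in \{1,\ldots,n-1\}$; then $\prod \phi(c_i) = (-I)^m \in \ker \pi$, so this is a valid representation in $\mathrm{Ell}_P(\Sigma_{g,n})$ with $\mathrm{T}(\phi)=0$ and $\mathrm{sign}([\phi]) = 2(n-m) = 2k$. For $g \ge 1$ and the remaining extreme values $k \in [2-2g,0] \cup [n,2g-2+n]$, I combine a nonabelian genus contribution carrying integer Toledo in $[-(2g-2), 2g-2]$ (obtained from a Fuchsian-type representation of the closed subsurface $\Sigma_g$) with an abelian boundary part, via the cut-and-paste decomposition $\Sigma_{g,n} = \Sigma_{g,1} \cup_{c_0} \Sigma_{0,n+1}$ along a hyperbolic simple closed curve $c_0$, using Novikov additivity of the signature to sum the two contributions.

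\textbf{Conclusion and main obstacle.} Applying Lemma \ref{lemma10} gives one component per realized signature value, namely $n-1$ values for $g=0$ and $((4g-4+2n)-(4-4g))/2 + 1 = 4g+n-3$ values for $g \ge 1$, which proves the theorem. The main obstacle is the realization step in the case $g \ge 1$: attaining the extreme signature values requires simultaneously tuning the Toledo contribution from the genus generators and the elliptic boundary angles, subject to the global boundary relation. The cut-and-paste argument together with Novikov additivity is the essential technical device, reducing the construction to a $\Sigma_{g,1}$-piece with hyperbolic boundary (carrying integer Toledo) and a $\Sigma_{0,n+1}$-piece handled by the abelian construction above.
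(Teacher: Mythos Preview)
Your overall strategy---invoke Lemma~\ref{lemma10} and then compute the image of the signature map---is exactly the paper's plan. However, the execution of both the bounds step and the realization step has genuine gaps.

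\textbf{The bounds argument is essentially circular.} The inequality you attribute to Burger--Iozzi--Wienhard,
\[
-(2g-2)-\sum_{i=1}^n\bigl(1-[\theta_i]/\pi\bigr)\ \le\ \mathrm{T}(\phi)\ \le\ (2g-2)+\sum_{i=1}^n[\theta_i]/\pi,
\]
is \emph{not} in BIW; their Theorem~1 gives only $|\mathrm{T}(\phi)|\le|\chi(\Sigma)|$ and a congruence for $\mathrm{T}$ modulo~$\mathbb{Z}$. In fact your inequality is equivalent (via the defining formula for $\mathrm{sign}([\phi])$) to the very bound $\mathrm{sign}([\phi])\in[4-4g,\,4g-4+2n]$ you are trying to prove. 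The paper does \emph{not} assume this; instead it works with lifts $\phi$ to $\mathrm{SL}(2,\mathbb{R})$, uses the relation $\mathrm{sign}([\phi])=\mathrm{sign}(\phi)+2(n-r_{\sigma(\phi)})$, applies the $\mu$--action to force $r_\sigma\in\{n-1,n\}$ (or $\{0,1\}$), and then invokes the signature Milnor--Wood inequality~\eqref{sig-MW}. This yields only $\mathrm{sign}([\phi])\in[-4g+2,\,4g-2+2n]$. Excluding the two extreme values requires the nontrivial fact, established earlier in the $\mathrm{SL}(2,\mathbb{R})$ analysis, that a representation with maximal signature $\mathrm{sign}(\phi)=2|\chi|$ forces $\sigma(\phi)=(-1,\dots,-1)$, hence $r_\sigma=n$. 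You have not supplied any substitute for this step. Also, your evenness justification is wrong: the $\mathrm{PSL}$--signature is \emph{defined} by a formula, not as the index of a Hermitian form.

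\textbf{The realization argument is too vague.} For the extreme values you propose gluing $\Sigma_{g,1}\cup_{c_0}\Sigma_{0,n+1}$ along a \emph{hyperbolic} curve and invoking Novikov additivity. But the $\mathrm{PSL}$--signature $\mathrm{sign}([\phi])$ is only defined for boundary-elliptic representations, so neither piece has a well-defined $\mathrm{PSL}$--signature to add, and Novikov additivity applies to the genuine index (the $\mathrm{SL}$--signature), not to the formula you need. The paper instead first settles the case $n=1$ directly (using \eqref{eqn14} and \eqref{eqn15}), and then glues $\Sigma_{g,1}\cup_c\Sigma_{0,n+1}$ along an \emph{elliptic} curve, for which there is an explicit additivity law $\mathrm{sign}([\phi])=\mathrm{sign}([\phi_1])+\mathrm{sign}([\phi_2])-2$. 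Your abelian $\mathrm{SO}(2)$ construction for the middle range is fine, but you still need to explain why \emph{every} $g=0$ representation obeys the bound $[2,2(n-1)]$, not just the abelian ones; the paper handles this by the deformation result Proposition~\ref{ell-prop2}.
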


\subsection{Boundary hyperbolic representations}  

An element \( A \in \mathrm{SL}(2,\mathbb{R}) \) is said to be \emph{hyperbolic} if \( |\mathrm{tr}(A)| > 2 \). The set of all hyperbolic elements is denoted by \( \mathrm{Hyp} \), and it decomposes into two connected components:
\[
\mathrm{Hyp}_0 := \left\{ A \in \mathrm{SL}(2,\mathbb{R}) : \mathrm{tr}(A) > 2 \right\}, \,
\mathrm{Hyp}_1 := -I \cdot \mathrm{Hyp}_0.
\]
Each component corresponds to a family of one-parameter subgroups in \( \mathrm{SL}(2,\mathbb{R}) \). We define the signature function \( \sigma \colon \mathrm{Hyp} \to \{0,1\} \) by
\[
\sigma(A) := 
\begin{cases}
1 & \text{if } \mathrm{tr}(A) > 2, \\
0 & \text{if } \mathrm{tr}(A) < -2.
\end{cases}
\]

Let \( \mathrm{Hyp}(\Sigma_{g,n}) \) (resp. \( \mathrm{Hyp}_P(\Sigma_{g,n}) \)) denote the space of all boundary hyperbolic representations in 
\(
\mathrm{Hom}(\pi_1(\Sigma_{g,n}), \mathrm{SL}(2,\mathbb{R})) 
\) (resp. $\mathrm{Hom}(\pi_1(\Sigma_{g,n}), \mathrm{PSL}(2,\mathbb{R}))$).
In his seminal work \cite{Gold1}, Goldman studied the topology of \( \mathrm{Hyp}_P(\Sigma_{g,n}) \) and proved that the relative Euler class map classifies its connected components:
\[
e\colon \mathrm{Hyp}_P(\Sigma_{g,n}) \to \mr{H}^2(\Sigma, \partial \Sigma; \mathbb{Z}) \cong \mathbb{Z},
\]
see \cite[Theorem D]{Gold1}. However, the number of connected components of the corresponding space in \( \mathrm{SL}(2,\mathbb{R}) \), i.e., \( \mathrm{Hyp}(\Sigma_{g,n}) \), was not computed in that work. For any representation \( \phi \in \mathrm{Hyp}(\Sigma_{g,n}) \), its rho invariant vanishes. Consequently, the signature of \( \phi \) satisfies
\[
\mathrm{sign}(\phi) = 2\, \mathrm{T}(\phi),
\]
 In particular, the values of the signature is the set
\[
\mathrm{sign}(\mathrm{Hyp}(\Sigma)) =[-2|\chi(\Sigma)|,\, 2|\chi(\Sigma)|] \cap 2\mathbb{Z}.
\]

For completeness—and also to prepare for the analysis of boundary parabolic representations—we determine the number of connected components of \( \mathrm{Hyp}(\Sigma_{g,n}) \) as follows:

\begin{thm}\label{main thm3}
Let \( \Sigma_{g,n} \) be a compact oriented surface of genus \( g \) with \( n \geq 1 \) boundary components, and $\chi(\Sigma_{g,n})\leq 0$. Then the number of connected components of the space of boundary hyperbolic representations is
\[
\#\left\{ \mathrm{Hyp}(\Sigma_{g,n}) \right\} 
= 2^{2g + n} + 2^{n-1}(4g + 2n - 5).
\]
\end{thm}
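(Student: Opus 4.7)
The strategy is to combine Goldman's classification of the PSL boundary hyperbolic variety with a careful analysis of the natural covering
\[
\pi_*\colon \mathrm{Hyp}(\Sigma_{g,n})\longrightarrow\mathrm{Hyp}_P(\Sigma_{g,n}).
\]
Because $\pi_1(\Sigma_{g,n})$ is free of rank $2g+n-1$ for $n\ge 1$, this map is a $2^{2g+n-1}$-sheeted cover, and every lift of a boundary hyperbolic PSL-representation is automatically again boundary hyperbolic. By Goldman's Theorem D (quoted above), $\mathrm{Hyp}_P(\Sigma_{g,n})$ has exactly $2|\chi|+1=4g+2n-3$ components $C_e$, indexed by the relative Euler class $e$ with $|e|\le|\chi|$. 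It therefore suffices, for each such $e$, to count the number $k_e$ of components of $\pi_*^{-1}(C_e)$ and to sum $\sum_e k_e$.

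The two locally constant invariants I would use on $\mathrm{Hyp}(\Sigma_{g,n})$ are the signature $\mathrm{sign}(\phi)=2\mathrm{T}(\phi)=2e([\phi])$, pulled back from the Euler class on $\mathrm{Hyp}_P$, and the sign map $\sigma\colon\mathrm{Hyp}(\Sigma_{g,n})\to\{0,1\}^n$ recording the signs of the boundary traces. Using $[-A_i,B_i]=[A_i,B_i]$ together with the relation $\prod_i[A_i,B_i]\prod_j C_j=I$ in $\mathrm{SL}(2,\mathbb{R})$, one checks that changing the lift by $A_i\mapsto-A_i$ or $B_i\mapsto-B_i$ leaves $\sigma$ unchanged, while any change of lifts of the $C_j$ compatible with the relation flips an even number of the $\sigma_j$. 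Consequently $\sum_j\sigma_j\bmod 2$ is an invariant of $C_e$, only $2^{n-1}$ of the $2^n$ a priori $\sigma$-values are realised over $C_e$, and each is realised by exactly $2^{2g}$ of the $2^{2g+n-1}$ lifts.

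The crux is to decide, for each $e$, whether these $2^{2g}$ lifts with common $\sigma$ lie in one or in several components of $\mathrm{Hyp}(\Sigma_{g,n})$; I expect the answer to split according as $e$ is \emph{non-extremal} ($|e|<|\chi|$) or \emph{extremal} ($|e|=|\chi|$). In the non-extremal case $C_e$ is not Fuchsian-rigid, and for each $i$ one can construct an explicit path in $\mathrm{Hyp}(\Sigma_{g,n})$ realising the sign flip $A_i\mapsto-A_i$ (and similarly for $B_i$) while the determined boundary element $C_n(t)=\big(\prod_k[A_k(t),B_k(t)]\prod_{j<n}C_j\big)^{-1}$ stays hyperbolic throughout, for instance by first deforming inside $C_e$ to a representation supported on a proper (e.g.\ triangular) subgroup where the flip is manifest, performing the flip there, and deforming back. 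This collapses the $2^{2g}$ lifts sharing a common $\sigma$-value into a single SL-component, giving $k_e=2^{n-1}$. In the extremal case $C_e$ is Teichm\"uller space, consisting of discrete faithful Fuchsian representations; any continuous deformation of such a representation remains Fuchsian near the starting point and cannot cross between different SL lifts (the ``spin components''), so all $2^{2g+n-1}$ lifts give distinct SL-components and $k_e=2^{2g+n-1}$.

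The main technical obstacle is precisely the non-extremal deformation: arranging the sign-flip path so that none of the $n$ boundary traces crosses $[-2,2]$. Granted that, summing over $e$ yields
\[
\#\{\mathrm{Hyp}(\Sigma_{g,n})\}=2\cdot 2^{2g+n-1}+(2|\chi|-1)\cdot 2^{n-1}=2^{2g+n}+2^{n-1}(4g+2n-5),
\]
which also handles the borderline case $|\chi|=0$ (i.e.\ $g=0,n=2$) correctly, since the formula there reduces to the single extremal contribution $2^{2g+n-1}=2$.
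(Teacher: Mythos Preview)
Your overall architecture---pull back Goldman's Euler-class decomposition of $\mathrm{Hyp}_P(\Sigma_{g,n})$ through the $2^{2g+n-1}$-fold cover $\pi_*$, stratify by the boundary sign vector $\sigma$, and treat the extremal and non-extremal $e$ separately---is precisely the paper's strategy, and your parity argument that only $2^{n-1}$ of the $\sigma$-values occur over each $C_e$ matches Lemma~\ref{lemma9} and the discussion following it.

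For the extremal case your argument via simple-connectedness of Teichm\"uller space (hence triviality of the cover over $C_{\pm|\chi|}$) is a clean alternative to the paper's, which instead observes that each $A_i,B_i$ is forced to be hyperbolic there (since $\mathrm{tr}[A_i,B_i]<2$), so the signs of $\mathrm{tr}\,A_i,\mathrm{tr}\,B_i$ supply $4^g$ locally constant invariants on top of the $2^{n-1}$ admissible $\sigma$-values.

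The real gap is in your non-extremal step. The triangular-subgroup idea cannot work: a representation into the Borel subgroup has Euler class $0$, so no such deformation exists on $C_e$ for $e\neq 0$; and even on $C_0$ the upper-triangular group in $\mathrm{SL}(2,\mathbb{R})$ has two components (by the sign of the diagonal), so the flip $A_i\mapsto -A_i$ cannot be performed inside it. The paper's mechanism (Lemmas~\ref{hyp-lemma12} and~\ref{par-lemma8}) is different: one deforms a boundary holonomy \emph{through} the elliptic region along one of the paths $\gamma_0,\gamma_1$ of Definition~\ref{defn1}, shifting Toledo invariant between subsurfaces until a chosen one-holed torus carries Toledo $0$; then one uses that $\mathrm{Hyp}(\Sigma_{1,1})\cap\mathrm{T}^{-1}(0)$ is connected because $A_i$ (or $B_i$) can be made elliptic there, at which point the $\pm$-flip is realised by a path of elliptic elements. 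This is the substantive technical content, and your sketch does not yet supply it.
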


\subsection{Boundary parabolic representations}

An element \( A \in \mathrm{SL}(2,\mathbb{R}) \setminus \{\pm I\} \) is called \emph{parabolic} if \( |\mathrm{tr}(A)| = 2 \). The set of parabolic elements is denoted by \( \mathrm{Par} \). Every parabolic element \( C \in \mathrm{SL}(2,\mathbb{R}) \) is conjugate to a matrix of the form
\[
\alpha^{\pm}(s) = \pm
\begin{pmatrix}
1 & s \\
0 & 1
\end{pmatrix},
\]
where \( s = \pm 1 \). If \( C \sim \alpha^\pm(s) \), we define the associated \( \sigma \)-invariant by
\begin{equation}
\sigma(C) :=
\begin{cases}
- \mathrm{sgn}(s) & \text{if } \mathrm{tr}(C) = 2, \\
\mathrm{sgn}(s) \sqrt{-1} & \text{if } \mathrm{tr}(C) = -2.
\end{cases}
\end{equation}
In particular, the rho invariant satisfies \( \rro(C) = \mathrm{Re}(\sigma(C)) \), and we have the identity \( \sigma(C^{-1}) = -\sigma(C) \).

Let \( \mathrm{Par}(\Sigma) \) denote the space of boundary parabolic representations of \( \pi_1(\Sigma) \) into \( \mathrm{SL}(2,\mathbb{R}) \), where \( \Sigma = \Sigma_{g,n} \). Define the sigma map
\[
\sigma \colon \mathrm{Par}(\Sigma) \to \{ \pm 1, \pm \sqrt{-1} \}^n, \quad
\phi \mapsto \left( \sigma(\phi(c_1)), \dots, \sigma(\phi(c_n)) \right).
\]

Suppose two representations \( \phi, \phi' \in \mathrm{Par}(\Sigma) \) are connected by a path \( \phi_t \) within \( \mathrm{Par}(\Sigma) \). Since the Toledo invariant is continuous and the rho invariant is constant on parabolic elements, it follows from the identity
\[
\mathrm{sign}(\phi_t) = 2\, \mathrm{T}(\phi_t) + \rro(\phi_t)
\]
that both \( \mathrm{sign}(\phi_t) \) and \( \rro(\phi_t) \) remain constant along the path. Furthermore, representations with distinct sigma values cannot be connected within \( \mathrm{Par}(\Sigma) \), as the rho invariant would otherwise jump. The possible values of the signature over \( \mathrm{Par}(\Sigma) \) lie in
\[
\mathrm{sign}(\mathrm{Par}(\Sigma)) = [-2|\chi(\Sigma)|,\, 2|\chi(\Sigma)|] \cap \mathbb{Z}.
\]

Now consider any \( \phi \in \mathrm{Par}(\Sigma_{g,n}) \) with \( n \) even. Via deformation, we can construct a boundary parabolic representation \( \phi' \in \mathrm{Hyp}(\Sigma_{n/2}) \). In particular, we obtain a natural map $\Phi$
from the set of connected components of $\mr{Par}(\Sigma_{g,n})$ to the set of connected components of  $\mr{Hyp}(\Sigma_{n/2})$, which is covering map with degree $8^{n/2}$ (see Corollary \ref{par-cor2}). A similar construction holds when \( n \) is odd. We thus obtain the following result:

\begin{thm}\label{main thm4}
Let \( \Sigma_{g,n} \) be a compact oriented surface of genus \( g \) with \( n \geq 1 \) boundary components, and $\chi(\Sigma_{g,n})\leq 0$. 
The number of connected components of the space of boundary parabolic representations into \( \mathrm{SL}(2,\mathbb{R}) \) is given by
\[
\#\{ \mr{Par}(\Sigma_{g,n}) \} =
\begin{cases}
16 & \text{for } g=n=1, \\
2^{2g+2n}+2^{2n-1} (4g+n-5) & \text{for } g\geq 2 \text{ is even}\\
2^{2g+2n-1}+2^{2n-1} (4g+n-4) & \text{for } g\geq 1 \text{ is odd}, g+n> 2\\
 2^{2n-1} \big(n - 3 \big) + 2^n (n + 1) &\text{for } g=0.
\end{cases}
\]
\end{thm}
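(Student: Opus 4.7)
The plan is to classify the connected components of $\mr{Par}(\Sigma_{g,n})$ by the pair of invariants $(\sigma,\mr{sign})$, and then reduce the counting to the boundary hyperbolic case addressed in Theorem \ref{main thm3} via the deformation construction underlying Corollary \ref{par-cor2}.

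First I would check that both $\sigma$ and $\mr{sign}$ are locally constant on $\mr{Par}(\Sigma_{g,n})$. For $\sigma$, a continuous path in $\mr{Par}(\Sigma_{g,n})$ keeps each $\phi_t(c_j)$ parabolic and bounded away from $\pm I$, so its sigma-class in $\{\pm 1,\pm\sqrt{-1}\}$ cannot jump; combined with the identity $\mr{sign}(\phi)=2\mr{T}(\phi)+\rro(\phi)$, the equality $\rro(\phi)=\sum_j\mr{Re}(\sigma(\phi(c_j)))$, and continuity of the Toledo invariant, this forces $\mr{sign}$ to be constant on each connected component. Thus every component carries a well-defined pair $(\sigma,\mr{sign})$, and distinct pairs cannot be connected.

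Next I would invoke the deformation map $\Phi$ from Corollary \ref{par-cor2} in the case $n$ even: pair up the $n$ parabolic boundary cycles, continuously deform each pair so that the associated product becomes hyperbolic, and thereby obtain a representation lying in $\mr{Hyp}$ on a surface with fewer boundary components. Using the fact that $\Phi$ is a covering of degree $8^{n/2}$ onto its image, the counting for $n$ even reduces to identifying which components of $\mr{Hyp}$ lie in the image of $\Phi$ and multiplying by $8^{n/2}$, with Theorem \ref{main thm3} supplying the hyperbolic count. For $n$ odd I would isolate one parabolic boundary and apply the even-$n$ construction to the remaining $n-1$ boundaries, carrying the extra sigma parameter $\sigma_n\in\{\pm 1,\pm\sqrt{-1}\}$ along as a free label. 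The exceptional case $g=n=1$, which falls outside the general pattern, would be treated by direct enumeration of admissible $(\sigma,\mr{sign})$ pairs, yielding $16$.

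Finally I would assemble the count as follows: for each of the $4^n$ sigma classes compute $\rro=\sum_j\mr{Re}(\sigma_j)$, determine the admissible signatures in $[-2|\chi|,2|\chi|]\cap\mb Z$ that are consistent with $2\mr{T}=\mr{sign}-\rro$ under the Milnor--Wood inequality $|\mr{T}|\leq|\chi|$, and sum over sigma classes. The distinction by parity of $g$, and the separate genus-zero case, both arise because the Milnor--Wood bound trims the naive product count $4^n\cdot(2|\chi|+1)$ for small $|\chi|$, and this trimming is asymmetric in $\rro$. The main obstacle, and the technical core of the argument, is verifying that each admissible $(\sigma,\mr{sign})$-level set is nonempty and path-connected --- equivalently, that the image of $\Phi$ is exactly the subset of $\pi_0(\mr{Hyp})$ dictated by the Milnor--Wood constraint --- so that the trimmed enumeration matches the four displayed formulas in the statement.
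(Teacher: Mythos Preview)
Your proposal mixes two strategies and, in the final paragraph, commits to the wrong one. The covering $\Phi$ of Corollary~\ref{par-cor2} together with Theorem~\ref{main thm3} is exactly the paper's route for even $n$ and $g\geq 1$: one shows $\Phi:\mathcal{P}_{g,n}\to\mathcal{H}_{g,n/2}$ is \emph{surjective} with every fiber of size $8^{n/2}$, so $\#\mathrm{Par}(\Sigma_{g,n})=8^{n/2}\cdot\#\mathrm{Hyp}(\Sigma_{g,n/2})$ directly. But you then revert to enumerating admissible $(\sigma,\mathrm{sign})$ pairs and assert that each such level set is path-connected. This is false, and the two statements you call ``equivalent'' are not. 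At extremal Toledo $\mathrm{T}(\phi)=\pm|\chi(\Sigma_{g,n})|$ with $g\geq 1$ there are $2^{2g+n-1}$ components (Corollary~\ref{cor1}) but only $2^{n-1}$ admissible $\sigma$-values (Proposition~\ref{prop1}); each $(\sigma,\mathrm{sign})$ level set there splits into $4^g$ pieces, arising from the action $(A_i,B_i)\mapsto(\pm A_i,\pm B_i)$ on the handle generators. A pure $(\sigma,\mathrm{sign})$ enumeration would thus miss exactly the $2^{2g+2n}$ and $2^{2g+2n-1}$ leading terms in the displayed formulas. The $\Phi$-covering approach sidesteps this because $\mathcal{H}_{g,n/2}$ already carries these $4^g$ extremal components.

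Two secondary gaps. For odd $n$, the leftover parabolic boundary is not a free $\{\pm 1,\pm\sqrt{-1}\}$ label: the paper passes through an intermediate space $\mathrm{HP}(\Sigma_{g,(n+1)/2})$ (one parabolic boundary, the rest hyperbolic) and shows (Lemma~\ref{lemma18}) that the map $\mathcal{HP}\to\mathcal{H}$ has degree $2$ over non-extremal Toledo but degree $1$ over extremal, so no uniform factor works. For $g=0$, Lemma~\ref{par-lemma2} (which underlies the construction of $\Phi$) requires $g\geq 1$; in genus zero there is a class $\mathcal{P}_{\mathrm{II}}$ of \emph{super-maximal} representations (Lemma~\ref{par-lemma16}) that cannot be deformed to interior hyperbolic at all. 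The term $2^n(n+1)$ in the $g=0$ formula counts $\mathcal{P}_{\mathrm{II}}$ separately, and the paper imports this count from~\cite{RY}.
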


\smallskip

We also consider the space of boundary parabolic representations into \( \mathrm{PSL}(2,\mathbb{R}) \), denoted by \( \mathrm{Par}_P(\Sigma) \).

\begin{thm}\label{main thm5}
The number of connected components of the space of boundary parabolic representations into \( \mathrm{PSL}(2,\mathbb{R}) \) is given by
\begin{equation*}
\#\{ \mr{Par}_P(\Sigma_{g,n}) \} =
\begin{cases}
2^{n} (4g+n-3) &  \text{for } g\geq 1 \\
2^{n} \big(n - 3 \big) + 2 (n + 1) &\text{for } g=0.
\end{cases}
\end{equation*}
\end{thm}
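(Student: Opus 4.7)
The plan is to follow the strategy of Theorem~\ref{main thm4}, working in $\mathrm{PSL}(2,\mathbb{R})$. Since $-I$ is trivial in PSL and $-\alpha^\pm(s) = \alpha^\mp(s)$, the four SL parabolic conjugacy classes indexed by $\{\pm 1, \pm\sqrt{-1}\}$ collapse to two PSL classes indexed by $\mathrm{sgn}(s) \in \{\pm 1\}$. I would accordingly introduce two locally constant invariants on $\mathrm{Par}_P(\Sigma_{g,n})$: a sigma invariant
\[
\sigma_P \colon \mathrm{Par}_P(\Sigma_{g,n}) \to \{\pm 1\}^n, \quad [\phi] \mapsto (\mathrm{sgn}(s_1), \dots, \mathrm{sgn}(s_n)),
\]
with $[\phi(c_i)] \sim \alpha(s_i)$, and an integer-valued PSL signature $\mathrm{sign}([\phi]) = 2\mathrm{T}([\phi]) + \rho_P([\phi])$, where $\rho_P$ is a canonical $\sigma_P$-determined integer contribution chosen so that $\mathrm{sign}([\phi])$ is independent of the SL lift and locally constant on $\mathrm{Par}_P(\Sigma_{g,n})$.

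I would then prove that the pair $(\sigma_P, \mathrm{sign})$ classifies the components. Local constancy gives one direction. For the converse, given two representations with equal invariants I lift them via $\pi_*$ to $\mathrm{Par}(\Sigma_{g,n})$ and apply the SL deformation arguments used in the proof of Theorem~\ref{main thm4}---boundary braiding, handle rearrangement, and the folding construction relating boundary parabolic representations on $\Sigma_{g,n}$ to boundary hyperbolic representations on a doubled surface---then project the SL path back to PSL. A careful bookkeeping of how the covering-group action of $H^1(\Sigma_{g,n}, \mathbb{Z}/2) \cong (\mathbb{Z}/2)^{2g+n-1}$ identifies SL components sharing the same $(\sigma_P, \mathrm{sign})$ values will put PSL components in bijection with the admissible pairs.

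Finally I would enumerate these admissible pairs. For $g \geq 1$, Milnor--Wood together with a sigma-dependent restriction on the attainable Toledo values (a parabolic analog of Goldman's analysis in \cite[Theorem D]{Gold1} for $\mathrm{Hyp}_P$) gives exactly $4g+n-3$ signatures per $\sigma_P$, totalling $2^n(4g+n-3)$. The $g=0$ case requires separate treatment: $\pi_1(\Sigma_{0,n})$ is free of rank $n-1$ with no handle generators, so many sigma patterns admit only reducible representations. A stratification of $\mathrm{Par}_P(\Sigma_{0,n})$ into irreducible and reducible loci, combined with a direct count of the latter, yields the correction $2(n+1)$ and the formula $2^n(n-3) + 2(n+1)$.

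The principal obstacle is the $g=0$ enumeration, and for general $g$ pinning down the precise restriction that reduces the per-sigma count from $2|\chi(\Sigma_{g,n})|+1$ (the boundary hyperbolic count) to $4g+n-3$, i.e.\ explaining that exactly $n$ Toledo values are lost per sigma pattern. The SL-to-PSL reduction via the covering-group action must also be tracked carefully, since two SL components related by a $\pm I$-twist may differ in both $\sigma$ and signature simultaneously while still projecting to a single PSL component.
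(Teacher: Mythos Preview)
Your high-level strategy for $g \geq 1$ --- pair the sigma invariant $\sigma_P$ with the Toledo invariant and count attainable Toledo values per sigma pattern --- is essentially what the paper ultimately establishes (the generalized Milnor--Wood inequality $-|\chi|+s_+ \leq \mathrm{T} \leq |\chi|-s_-$ together with connectedness of each $(\sigma_P,\mathrm{T})$-level set), but the paper arrives there by the opposite route. Rather than proving the $(\sigma_P,\mathrm{T})$ classification first and then counting, the paper runs the folding construction you mention as the \emph{primary} engine: the map $\Phi$ from $\mathrm{Par}_P(\Sigma_{g,n})$ to $\mathrm{Hyp}_P(\Sigma_{g,n/2})$ (even $n$) or to an analogous mixed space (odd $n$) is shown to be surjective with fibers of size $4^{n/2}$ in PSL (versus $8^{n/2}$ in SL, since PSL parabolics have only two conjugacy classes), and Goldman's Theorem~D gives $\#\mathrm{Hyp}_P(\Sigma_{g,n/2}) = 4g+n-3$ directly. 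The $4^g$ factor from the $\Lambda_{g,n}$-action disappears because this action is trivial in PSL. The sigma-dependent Toledo restriction then falls out as a corollary of the folding map, which resolves your own stated obstacle of ``pinning down the precise restriction that reduces the per-sigma count''.

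Your $g=0$ plan has a genuine gap. The obstruction for $\Sigma_{0,n}$ is not reducibility versus irreducibility: the exceptional class $\mathcal{P}_{\mathrm{II}}$ consists of representations (super-maximal in the sense of Deroin--Tholozan) that cannot be deformed so that any interior curve becomes hyperbolic, and these are generically irreducible. The paper's split is $\mathcal{P}_{\mathrm{I}}$ (interior-hyperbolizable, counted via the folding map) versus $\mathcal{P}_{\mathrm{II}}$ (counted using results of Deroin--Tholozan and Ryu--Yang). For the PSL count itself, however, the paper does something much simpler than either stratification: since $\pi_1(\Sigma_{0,n})$ is free on $n-1$ generators, the map $\pi_*\colon\mathrm{Par}(\Sigma_{0,n})\to\mathrm{Par}_P(\Sigma_{0,n})$ is a $2^{n-1}$-fold covering, so one simply divides the SL count from Theorem~\ref{main thm4} by $2^{n-1}$.
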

\begin{rem}
In \cite[Corollary 1.5]{RY}, Ryu and Yang proved Theorem~\ref{main thm5} by a completely different method.  
Their approach was to compute the number of connected components corresponding to each relative Euler class, and then sum these numbers to obtain the total.  

Our method instead proceeds by deforming to interior hyperbolic representations, thereby establishing a correspondence between boundary parabolic and boundary hyperbolic representations.  
Using the counting formula for boundary hyperbolic representations, we then deduce the number of connected components.  
\end{rem}

Here we briefly discuss some potential future research topics related to the study of topological components of surface group representations.

Let $\Sigma = \Sigma_{g,n}$ be a general surface with boundary. One may consider a surface group representation $\phi : \pi_1(\Sigma) \to G$, where $G$ is a $k$-fold covering $\pi:G\ra \mathrm{PSL}(2,\mathbb{R})$. A natural problem is to study the topological components of the spaces $\mathrm{Hom}(\pi_1(\Sigma), G)$ consisting of boundary elliptic, hyperbolic, and parabolic representations, and to compute the number of connected components. When $\Sigma$ is closed, i.e., $n = 0$, Goldman~\cite[Theorem A]{Gold1} has obtained the count of connected components in this case. For surfaces with boundary, this paper mainly considers the cases $k=1,2$. Clearly, when $g=0$, one can easily obtain a counting formula since $\pi_* : \mathrm{Hom}(\pi_1(\Sigma_{0,n}), G) \to \mathrm{Hom}(\pi_1(\Sigma_{0,n}), \mathrm{PSL}(2,\mathbb{R}))$ is a covering map of degree $k^{n-1}$. However, for general $g \geq 1$, the situation is more complicated compared with these two extreme cases, arising from the action $\mathrm{Hom}(\pi_1(\Sigma_{g,n}), \mathrm{Ker}(\pi))$.

In~\cite{KPW}, together with Pansu, we established a signature formula for surface group representations into $\mathrm{U}(p,q)$, and obtained a Milnor--Wood type inequality for the signature. Moreover, we introduced the definitions of boundary elliptic-unipotent, hyperbolic-unipotent, and unipotent (or parabolic) representations. Thus, a natural problem is to study the topological properties and to count the connected components of these subspaces. Note that $\mathrm{SL}(2,\mathbb{R})$ is isomorphic to $\mathrm{SU}(1,1)$, so from the results of this paper, the case $p=q=1$ is readily understood. However, for general $(p,q)$, this remains an open and interesting question. In addition, when $q=0$, i.e., in the $\mathrm{U}(p)$ case, the Toledo invariant is identically zero. In~\cite{KPW1}, we completely determined the possible values of signature, which makes it natural to investigate the topological components of boundary elliptic representations into $\mathrm{U}(p)$. Note that when $p=1$, $\mathrm{U}(1)$ is isomorphic to $\mathrm{SO}(2)$, which is studied systematically in Section~\ref{sec:SO(2)}, and Goldman~\cite[Theorem~A~(ii)]{Gold1} proved that for closed surfaces, the space of surface group representations into $\mathrm{SU}(2)$ is connected. 

\

This article is organized as follows. In Section~\ref{sec-Pre}, we recall the definition of the signature for surface group representations into \( \mathrm{SL}(2,\mathbb{R}) \), along with the Toledo invariant, relative Euler class, and the rho invariant. In Section~\ref{sec-elliptic}, we investigate the connected components of boundary elliptic representations and prove Theorems~\ref{main thm1} and~\ref{main thm2}. Section~\ref{sec-hyperbolic} is devoted to the study of boundary hyperbolic representations, where we establish Theorem~\ref{main thm3}. Finally, in Section~\ref{sec-parabolic}, we analyze boundary parabolic representations and prove Theorems~\ref{main thm4} and~\ref{main thm5}.

\

\text{{\bfseries{Acknowledgments.}}} 
Research by Inkang Kim is partially supported by Grant NRF-2019R1A2C1083865 and KIAS Individual Grant (MG031408). The first author is grateful for the support of IHES and MPI during his visits. Research by Xueyuan Wan is supported by Science and Technology Innovation Key R\&D Program of Chongqing and the National Key R\&D Program of China (Grant No. 2024YFA1013200) and the Natural Science Foundation of Chongqing, China (Grant No. CSTB2024NSCQ-LZX0040, CSTB2023NSCQ-LZX0042). The second author would like to thank Qiongling Li for bringing \cite{RY} to his attention.

\section{Preliminaries}\label{sec-Pre}

In this section, we recall the definition of the signature of surface group
representations into $\mathrm{SL}(2,\mb{R})$ and discuss its relation to the Toledo invariant.
For more comprehensive details, we refer the reader to \cite{Atiyah, BIW, KPW}.

\subsection{Definition of signature}

Let \( \Sigma \) be a connected, oriented surface with smooth boundary \( \partial\Sigma \), where each component of \( \partial\Sigma \) is homeomorphic to the circle \( S^1 \).

Consider the real vector space \( E = \mathbb{R}^2 \) endowed with the standard symplectic form \( \Omega \) of signature \( (1,1) \), defined by
\[
\Omega = dx \otimes dy - dy \otimes dx,
\]
whose corresponding matrix representation is
\[
J = \begin{pmatrix}
0 & 1 \\
-1 & 0 
\end{pmatrix}.
\]
By definition, the group \( \mathrm{SL}(2,\mathbb{R}) \) coincides with the symplectic group:
\[
\mathrm{SL}(2,\mathbb{R}) = \mr{Sp}(2,\mathbb{R}) := \left\{ M \in \mathrm{GL}(2,\mathbb{R}) \mid M^\top J M = J \right\},
\]
which consists of all linear transformations preserving the symplectic form \( \Omega \).

Let \( \phi: \pi_1(\Sigma) \to \mathrm{SL}(2,\mathbb{R}) \) be a representation of the fundamental group of \( \Sigma \) into \( \mathrm{SL}(2,\mathbb{R}) \), preserving the symplectic structure \( \Omega \). This induces a flat vector bundle \( \mathcal{E} = \widetilde{\Sigma} \times_\phi E \) over \( \Sigma \), where \( \widetilde{\Sigma} \) denotes the universal cover of \( \Sigma \).
Let \( \mathrm{H}^*(\Sigma, \mathcal{E}) \) and \( \mathrm{H}^*(\Sigma, \partial\Sigma, \mathcal{E}) \) denote the (relative) twisted singular cohomology groups with coefficients in \( \mathcal{E} \). Consider the image
\[
\widehat{\mathrm{H}}^1(\Sigma, \mathcal{E}) := \mathrm{Im}\left( \mathrm{H}^1(\Sigma, \partial\Sigma, \mathcal{E}) \to \mathrm{H}^1(\Sigma, \mathcal{E}) \right).
\]
On \( \widehat{\mathrm{H}}^1(\Sigma, \mathcal{E}) \), there exists a natural symmetric bilinear form
\[
Q_{\mathbb{R}}: \widehat{\mathrm{H}}^1(\Sigma, \mathcal{E}) \times \widehat{\mathrm{H}}^1(\Sigma, \mathcal{E}) \to \mathbb{R}
\]
defined by
\[
Q_{\mathbb{R}}([a],[b]) = \int_\Sigma \Omega([a] \cup [b]).
\]
By Poincar\'e duality (cf. \cite[Page 65]{APS}), the form \( Q_{\mathbb{R}} \) is non-degenerate. Moreover, since \( \Omega \) is skew-symmetric and the cup product is graded commutative, \( Q_{\mathbb{R}} \) is symmetric.

If we can decompose \( \widehat{\mathrm{H}}^1(\Sigma, \mathcal{E}) = \mathscr{H}^+ \oplus \mathscr{H}^- \) such that \( Q_{\mathbb{R}} \) is positive definite on \( \mathscr{H}^+ \) and negative definite on \( \mathscr{H}^- \), we define the {\it signature} of the symplectic bundle \( (\mathcal{E}, \Omega) \) as
\[
\mathrm{sign}(\mathcal{E}, \Omega) := \dim \mathscr{H}^+ - \dim \mathscr{H}^-.
\]

The signature of a representation is then defined as follows:

\begin{defn}
Let \( \phi: \pi_1(\Sigma) \to \mathrm{SL}(2,\mathbb{R}) \) be a representation. We define the \emph{signature} of \( \phi \) to be
\[
\mathrm{sign}(\phi) := \mathrm{sign}(\mathcal{E}, \Omega),
\]
where \( \mathcal{E} = \widetilde{\Sigma} \times_\phi \mathbb{R}^2 \) is the associated flat vector bundle equipped with the standard symplectic structure.
\end{defn}

\subsection{Toledo invariant and relative Euler class}

\subsubsection{Toledo invariant}
We now recall the definition of the Toledo invariant for surfaces with boundary, as introduced by Burger, Iozzi, and Wienhard in \cite[Section 1.1]{BIW}. The Toledo invariant can be expressed as the integral of the first Chern form with compact support over the surface (cf. \cite{KPW}).

A Lie group $G$ is said to be of {\it Hermitian type} if it is connected, semisimple with finite center, and has no compact factors, and if its associated symmetric space is Hermitian. Specifically, let $G$ be a group of Hermitian type so that the associated symmetric space $\mathscr{X}$ is Hermitian of noncompact type. Then, $\mathscr{X}$ carries a unique Hermitian (normalized) metric with minimal holomorphic sectional curvature equal to \(-1\). The corresponding K\"ahler form, denoted by \(\omega_{\mathscr{X}}\), belongs to \(\Omega^2(\mathscr{X})^G\), the space of \(G\)-invariant 2-forms on \(\mathscr{X}\). 

By the van Est isomorphism, we have
\[
\Omega^2(\mathscr{X})^G \cong \mathrm{H}^2_c(G, \mathbb{R}),
\]
where \(\mathrm{H}^{\bullet}_c(G, \mathbb{R})\) denotes the continuous cohomology of \(G\) with trivial \(\mathbb{R}\)-coefficients. There exists a unique class \(\kappa_G \in \mathrm{H}^2_c(G, \mathbb{R})\) corresponding to the K\"ahler form \(\omega_{\mathscr{X}}\), which induces a bounded K\"ahler class \(\kappa^b_G \in \mathrm{H}^2_{c,b}(G, \mathbb{R})\) via the isomorphism
\[
\mathrm{H}^2_c(G, \mathbb{R}) \cong \mathrm{H}^2_{c,b}(G, \mathbb{R}),
\]
where \(\mathrm{H}^{\bullet}_{c,b}(G, \mathbb{R})\) denotes the bounded continuous cohomology. In fact, \(\kappa^b_G \in \mathrm{H}^2_{c,b}(G, \mathbb{R})\) is defined by the following bounded cocycle:
\[
c(g_0, g_1, g_2) = \frac{1}{2\pi} \int_{\triangle(g_0x, g_1x, g_2x)} \omega_{\mathscr{X}},
\]
where \(\triangle(g_0x, g_1x, g_2x)\) is the geodesic triangle with ordered vertices \(g_0x, g_1x, g_2x\) for some base point \(x \in \mathscr{X}\).

By the Gromov isomorphism, we have 
\[
\phi_b^*(\kappa^b_G) \in \mathrm{H}^2_b(\pi_1(\Sigma), \mathbb{R}) \cong \mathrm{H}^2_b(\Sigma, \mathbb{R}),
\]
where \(\phi_b^*\) denotes the pullback of the bounded K\"ahler class. The canonical map 
\[
j_{\partial\Sigma} : \mathrm{H}^2_b(\Sigma, \partial\Sigma, \mathbb{R}) \to \mathrm{H}^2_b(\Sigma, \mathbb{R})
\]
from singular bounded cohomology relative to \(\partial\Sigma\) to singular bounded cohomology is an isomorphism. The Toledo invariant is then defined as
\[
\mr{T}(\phi)=\mathrm{T}(\Sigma, \phi) := \langle j^{-1}_{\partial\Sigma} \phi_b^*(\kappa^b_G), [\Sigma, \partial\Sigma] \rangle,
\]
where \(j^{-1}_{\partial\Sigma} \phi_b^*(\kappa^b_G)\) is viewed as an ordinary relative cohomology class, and \([\Sigma, \partial\Sigma] \in \mathrm{H}_2(\Sigma, \partial\Sigma, \mathbb{Z}) \cong \mathbb{Z}\) is the relative fundamental class.

According to \cite[Theorem 1]{BIW}, for a group \( G \) of Hermitian type and a representation \( \phi: \pi_1(\Sigma) \to G \), the Milnor-Wood inequality holds:
\begin{equation*}
|\mathrm{T}(\phi)| \leq \mathrm{rank}(\mathscr{X}) |\chi(\Sigma)|,
\end{equation*}
where \( \mathscr{X} \) is the symmetric space associated with \( G \), and \( \chi(\Sigma) \) denotes the Euler characteristic of the surface \( \Sigma \).
In the case where \( G = \mathrm{SL}(2,\mb{R}) \), the Milnor-Wood inequality is 
\begin{equation}\label{eqn25}
  |\mr{T}(\phi)|\leq |\chi(\Sigma)|.
\end{equation}

Since \( \mathrm{SL}(2,\mathbb{R}) \) and \( \mathrm{PSL}(2,\mathbb{R}) \) act identically on the Poincar\'e disk, for any representation \( \phi: \pi_1(\Sigma) \to \mathrm{SL}(2,\mathbb{R}) \), we have
\[
\mathrm{T}(\phi) = \mathrm{T}(\pi \circ \phi),
\]
where \( \pi: \mathrm{SL}(2,\mathbb{R}) \to \mathrm{PSL}(2,\mathbb{R}) \) denotes the double covering map.

\subsubsection{Relative Euler Class}

Let \( \widetilde{G} \) denote the universal covering group of \( \mathrm{SL}(2,\mathbb{R}) \), and let \( Z \) be its center, which is isomorphic to \( \mathbb{Z} \). Let \( \partial \) be the circle at infinity of the hyperbolic plane \( \mathbb{H}^2 \), and fix once and for all an orientation on \( \partial \). The group \( \widetilde{G} \) acts naturally on the universal cover \( \widetilde{\partial} \). Let \( z \in Z \) be a generator that acts by positively oriented translation on \( \widetilde{\partial} \).

By abuse of notation, we denote by \( \mathrm{Hyp}_0 \), \( \mathrm{Par}_0 \), \( \mathrm{Par}_0^+ \), and \( \mathrm{Par}_0^- \) the respective lifts to \( \widetilde{G} \) of the sets \( \mathrm{Hyp}_0 \), \( \mathrm{Par} \cap \{ \mathrm{tr} = 2 \} \), \( \mathrm{Par}^+ \), and \( \mathrm{Par}^- \subset \mathrm{SL}(2,\mathbb{R}) \), each of whose closures contain the identity element, where $\mr{Par}^{\pm}=\sigma^{-1}(\mp 1)\cap \mr{Par}$. We define the higher lifts by
\[
\mathrm{Hyp}_k := z^k \mathrm{Hyp}_0, \qquad \mathrm{Par}_k^\pm := z^k \mathrm{Par}_0^\pm.
\]
Note that when \( |k| \) is odd, the trace of any element \( A \in \mathrm{Hyp}_k \cup \mathrm{Par}_k \), defined via the projection to \( \mathrm{SL}(2,\mathbb{R}) \), is negative. See Figure~\ref{universal}.

\begin{figure}[ht]
\centering
\includegraphics[width=0.75\textwidth]{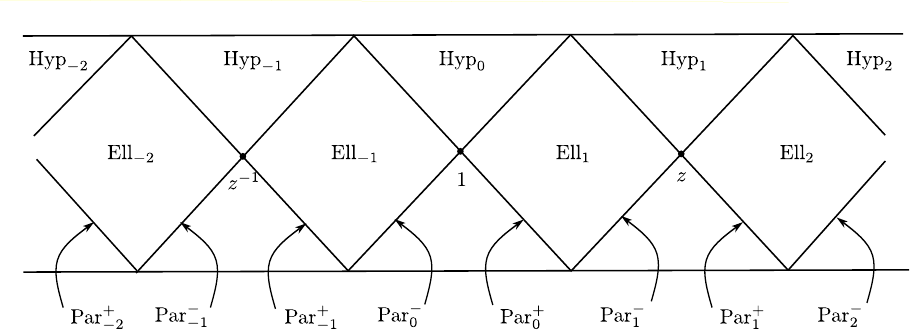}
\caption{Universal cover of \( \mathrm{SL}(2,\mathbb{R}) \)}
\label{universal}
\end{figure}

In \cite{Gold1}, W.~Goldman computed the relative Euler class of a representation \( \phi: \pi_1(\Sigma) \to G = \mathrm{PSL}(2,\mathbb{R}) \), assuming that \( \phi \) is non-elliptic along the boundary. This class lies in the cohomology group
\[
e(\phi)\in \mr{H}^2(\Sigma, \partial \Sigma; \mathbb{Z}) \cong \mathbb{Z}.
\]
Since each boundary holonomy is either hyperbolic or parabolic, the associated flat principal \( \mathrm{PSL}(2,\mathbb{R}) \)-bundle admits a canonical trivialization along each boundary component via one-parameter subgroups. Concretely, for boundary holonomy \( h \), the canonical section \( f:[0,1] \to G \) is given by \( f(t) = h_t^{-1} \), where \( \{h_t\} \) is a one-parameter subgroup with \( h_0 = \mathrm{id} \) and \( h_1 = h \). This section satisfies the equivariance condition $f(0) = \mathrm{id}$ and $f(1) = h^{-1}$.

The obstruction to extending this boundary trivialization to the interior defines a class in \( \mr{H}^2(\Sigma, \partial \Sigma; \mathbb{Z}) \). One computes this class explicitly using lifts to the universal cover \( \widetilde{G} \). Let \( \widetilde{\phi(C_j)} \) be the unique lift of \( \phi(C_j) \) lying in \( \{I\} \cup \mathrm{Hyp}_0 \cup \mathrm{Par}_0 \subset \widetilde{G} \). Then for arbitrary lifts \( \widetilde{\phi(A_i)}, \widetilde{\phi(B_i)} \in \widetilde{G} \), we have the relation:
\begin{equation*} 
\prod_{i=1}^g [\widetilde{\phi(A_i)}, \widetilde{\phi(B_i)}] \prod_{j=1}^n \widetilde{\phi(C_j)} = z^m,
\end{equation*}
where \( z \) is the fixed generator of the center of \( \widetilde{G} \) that lifts \( -I \in \mathrm{SL}(2,\mathbb{R}) \). The integer \( m \in \mathbb{Z} \) is the {\it relative Euler class} of the representation \( \phi \), denoted \( e(\phi) \).

As shown in \cite{KPW1}, the relative Euler class agrees with the Toledo invariant for such representations:
\begin{prop}[{\cite[Proposition 1.2]{KPW1}}]\label{pre-prop1}
The Toledo invariant and the relative Euler class of representations that are non-elliptic on the boundary coincide.
\end{prop}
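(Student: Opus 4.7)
The plan is to exhibit both invariants as evaluations of the same relative cohomology class on the fundamental class $[\Sigma,\partial\Sigma]$, with the boundary data dictated by the canonical one-parameter subgroup through each non-elliptic boundary element. I would rely on the well-known identification, in the closed case, of the normalized K\"ahler class $\kappa_G$ with the real Euler class of the central extension $\widetilde{G}\to\mathrm{PSL}(2,\mathbb{R})$; both classes span the one-dimensional space $\mathrm{H}^2_c(G,\mathbb{R})$ and the Milnor--Wood normalization~\eqref{eqn25} fixes the scale so that $\mathrm{T}=e$ on closed surfaces. The content of the proposition is then that the \emph{relative} versions also agree.

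First I would choose an equivariant developing map $f\colon\widetilde\Sigma\to\mathbb{H}^2$ adapted to the boundary in the following sense: on each lift of a boundary component $c_j$, the map $f$ parametrizes the canonical one-parameter orbit through $\phi(c_j)$ (the hyperbolic axis or a horocycle, respectively). The non-ellipticity assumption is exactly what makes this choice available, since it singles out a unique one-parameter subgroup $\{h^j_t\}$ with $h^j_1=\phi(c_j)$, so that the canonical boundary section $f_j(t)=(h^j_t)^{-1}$ used in Goldman's definition of the relative Euler class arises as the restriction of $f$ along $\partial\Sigma$. With this adapted $f$, the pullback $f^*\omega_{\mathbb{H}^2}$ is compactly supported in the interior after a standard bump-function modification, and represents $j^{-1}_{\partial\Sigma}\phi_b^*\kappa^b_G$.

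Next I would invoke the first-Chern-form realization of the Toledo invariant from~\cite{KPW} to write
\[
\mathrm{T}(\phi)=\frac{1}{2\pi}\int_\Sigma f^*\omega_{\mathbb{H}^2},
\]
and compare this with the relative Euler class. Writing the generators $a_i,b_i,c_j$ of $\pi_1(\Sigma)$ and lifting $\widetilde{\phi(C_j)}\in\{I\}\cup\mathrm{Hyp}_0\cup\mathrm{Par}_0\subset\widetilde{G}$ via the one-parameter orbit (so that the lifted section $t\mapsto\widetilde{h^j_t}$ in $\widetilde{G}$ ends at $\widetilde{\phi(C_j)}$), the product $\prod_i[\widetilde{\phi(A_i)},\widetilde{\phi(B_i)}]\prod_j\widetilde{\phi(C_j)}=z^m$ measures the holonomy of the flat $\widetilde{G}$-bundle around the boundary of a fundamental $(4g+n)$-gon. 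A Gauss--Bonnet-type computation—$\omega_{\mathbb{H}^2}$ being the curvature of the natural circle bundle $\mathrm{PSL}(2,\mathbb{R})\to\mathbb{H}^2$ whose Euler class generates $\mathrm{H}^2(G,\mathbb{Z})$—identifies $\tfrac{1}{2\pi}\int_\Sigma f^*\omega_{\mathbb{H}^2}$ with this integer $m$.

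The main obstacle is making the two boundary conventions match precisely, i.e.\ showing that the ``continuous'' canonical trivialization of $f$ (the one-parameter orbit on the axis or horocycle) is the same object, at the cocycle level, as the ``discrete'' canonical lift $\widetilde{\phi(C_j)}\in\{I\}\cup\mathrm{Hyp}_0\cup\mathrm{Par}_0$. This amounts to checking that the unique one-parameter subgroup through a non-elliptic element lifts to the unique path in $\widetilde{G}$ from $I$ to the prescribed canonical lift, which in turn is the content of the translation-number/rotation-number quasimorphism description of the bounded Euler class on $\widetilde{G}$. Once this compatibility is in place, both $\mathrm{T}(\phi)$ and $e(\phi)$ are computed by the same integral $\tfrac{1}{2\pi}\int_\Sigma f^*\omega_{\mathbb{H}^2}=m$, and the proposition follows.
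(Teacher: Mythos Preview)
The paper does not contain a proof of this proposition: it is stated as a citation of \cite[Proposition~1.2]{KPW1} and no argument is given here. There is therefore no ``paper's own proof'' to compare your proposal against.

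That said, your outline is broadly the standard strategy for this kind of statement and is consistent with how such results are typically proved (and presumably with the argument in \cite{KPW1}): realize the Toledo invariant as the integral of the pulled-back K\"ahler form via an equivariant map, trivialize along the boundary using the canonical one-parameter subgroup through each non-elliptic $\phi(c_j)$, and identify the resulting relative class with the obstruction integer $m$ defined by the canonical lifts $\widetilde{\phi(C_j)}\in\{I\}\cup\mathrm{Hyp}_0\cup\mathrm{Par}_0$. The one point that would need genuine care in a full write-up is your ``compactly supported after a bump-function modification'' step: for hyperbolic boundary holonomy the axis lies in $\mathbb{H}^2$ and the pullback of $\omega_{\mathbb{H}^2}$ along the boundary cylinder vanishes only after a careful primitive choice, while for parabolic holonomy the horocycle orbit limits to the ideal boundary and one must argue that the integral is still finite and equal to the relative pairing. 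These are exactly the analytic details handled in \cite{KPW} and \cite{KPW1}, so your sketch is on the right track but would not stand alone without importing those computations.
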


\subsection{Rho invariant}\label{defn of rho}

We now define the rho invariant associated to the group \( \mathrm{SL}(2,\mathbb{R}) \). The following definition is due to \cite[Definition 4.1]{KPW}.

\begin{defn}
For \( L \in \mathrm{SL}(2,\mathbb{R}) \), define a flat vector bundle \( \mathcal{E} \to S^1 \). Let \( \mathbf{J} \in \mathcal{J}(\mathcal{E}, \Omega) \) be a section of compatible complex structures. Choose a fixed point \( W \) of \( L \) in the closure \( \overline{\mathrm{D}} \) of the Poincar\'e disk \( \mathrm{D} \), and let \( \alpha_W \) be the associated \( L \)-invariant primitive. The \emph{rho invariant} is defined by:
\[
\rho(L, \mathbf{J}, W) = \iota(L, \mathbf{J}, W) + \eta(L, \mathbf{J}, W) := -\frac{1}{\pi} \int_{S^1} \widetilde{\mathbf{J}}^* \alpha_W + \eta(A_{\mathbf{J}}),
\]
where \( \eta(A_{\mathbf{J}}) \) is the eta invariant of a self-adjoint elliptic operator defined by \( \mathbf{J} \).
\end{defn}

By \cite[Lemma 4.2, Lemma 4.3, and Corollary 4.4]{KPW}, the quantity \( \rho(L, \mathbf{J}, W) \) does not depend on the choices of \( \mathbf{J} \) and \( W \). Hence, we simply write:
\[
\rho(L) := \rho(L, \mathbf{J}, W).
\]

The rho invariant for the boundary of a representation is defined as follows:

\begin{defn}
For a representation \( \phi: \pi_1(\Sigma) \to \mathrm{SL}(2,\mathbb{R}) \), the rho invariant of the boundary is defined as:
\[
\rro(\phi) := \rro_\phi(\partial \Sigma) = \sum_{\text{boundary component } c} \rho(\phi(c)).
\]
\end{defn}

Let \( L \in \mathrm{SL}(2,\mathbb{R}) \) be a matrix with eigenvalue \( \lambda \). If \( \lambda \in S^1 \setminus \{\pm 1\} \), then \( L \) is conjugate to a rotation matrix $R(\theta)$, where $\theta \in (0,\pi) \cup (\pi, 2\pi)$.
Otherwise, \( L \) is conjugate to
\(
\begin{pmatrix}
\lambda & \mu \\
0 & \lambda^{-1}
\end{pmatrix},
\)
for some \( \mu \in \mathbb{R} \). The values of the rho invariant for various types of matrices in \( \mathrm{SL}(2,\mathbb{R}) \) are summarized in Table \ref{tab:rho-invariant}.
\begin{table}[htbp]
\centering
\renewcommand{\arraystretch}{1.2}
\caption{The rho invariant for matrices in \( \mathrm{SL}(2,\mathbb{R}) \)}
\begin{tabular}{|c|c|}
\hline
Matrix Type & \( \rho(L) \) \\
\hline
\( \lambda \notin S^1 \) & $0$ \\
\hline
\( \lambda \in S^1 \setminus \{\pm 1\} \) & \( 2\left(1 - \frac{\theta}{\pi}\right) \) \\
\hline
\( \lambda = 1,\, \mu = 0 \) & $0$ \\
\hline
\( \lambda = 1,\, \mu > 0 \) & $-1$ \\
\hline
\( \lambda = 1,\, \mu < 0 \) & $1$ \\
\hline
\( \lambda = -1 \) & $0$ \\
\hline
\end{tabular}

\label{tab:rho-invariant}
\end{table}

By \cite[Theorem 1]{KPW}, for a compact oriented surface \( \Sigma \) with nonempty boundary and a representation \( \phi: \pi_1(\Sigma) \to \mathrm{SL}(2,\mathbb{R}) \), the signature, Toledo invariant, and rho invariant satisfy the relation:
\begin{equation} 
\mathrm{sign}(\phi) = 2\, \mathrm{T}(\phi) + \rro(\phi).
\end{equation}
Moreover, the signature satisfies the following Milnor–Wood type inequality:
\begin{equation} \label{sig-MW}
|\mathrm{sign}(\phi)| \leq 2 |\chi(\Sigma)|,
\end{equation}
see \cite[Theorem 4]{KPW}.

Note that the Toledo invariant \( \mathrm{T}(\phi) \) is continuous (see \cite[Theorem 1]{BIW}), and the rho invariant $\rro(\phi)$ is continuous away from elements with eigenvalue 1 (cf. \cite[Section 4]{KPW}). Consequently, the signature \( \mathrm{sign}(\phi) \) is locally constant and only jumps when the boundary holonomy acquires 1 as an eigenvalue.

\section{Boundary elliptic representations}\label{sec-elliptic}

In this section, we will study surface group representations
$\phi:\pi_1(\Sigma_{g,n}) \to \mathrm{SL}(2,\mathbb{R})$, where $\Sigma_{g,n}$ denotes a surface of genus $g$ with $n$ boundary components. A representation $\phi \in \mathrm{Hom}(\pi_1(\Sigma_{g,n}), \mathrm{SL}(2,\mathbb{R}))$ is called a {\it boundary elliptic} (resp. {\it boundary elliptic-unipotent}) representation if, for each boundary component $c_i \subset \partial \Sigma_{g,n} = \bigcup_{i=1}^n c_i$, the image $\phi(c_i)$ is conjugate to a rotation matrix
$$
R(\theta_i) = \begin{pmatrix}
  \cos\theta_i & -\sin\theta_i \\
  \sin\theta_i & \cos\theta_i
\end{pmatrix} \in \mathrm{SO}(2)\backslash\{\pm I\},
$$
for some angle $\theta_i \in (0,\pi) \cup (\pi,2\pi)$ (resp. $\theta_i \in (0,2\pi)$), see \cite[Definition 4.5]{KPW}.

We denote by $\mathrm{Ell}(\Sigma_{g,n})$ (resp. $\mathrm{E}(\Sigma_{g,n})$) the set of all boundary elliptic (resp. boundary elliptic-unipotent) representations. The goal of this section is to determine the number of connected components of $\mathrm{Ell}(\Sigma_{g,n})$ and $\mathrm{E}(\Sigma_{g,n})$.

\subsection{The case of $\mathrm{SO}(2)$}\label{sec:SO(2)}

In this subsection, we consider a simplified setting, namely, boundary elliptic (resp. boundary elliptic-unipotent) representations whose images lie in $\mathrm{SO}(2)$.

\subsubsection{Boundary elliptic-unipotent representations}

Let $\mathrm{E}(\Sigma)$ denote the space of all representations $\phi \in \mathrm{Hom}(\pi_1(\Sigma), \mathrm{SO}(2))$ such that the image of each boundary loop lies in $\mathrm{SO}(2) \setminus \{I\}$, where $\Sigma = \Sigma_{g,n}$ is a compact surface of genus $g$ with $n$ boundary components.

We define the following {\it signature map}:
$$
\mathrm{sign}: \mathrm{Hom}(\pi_1(\Sigma), \mathrm{SO}(2)) \to \mathbb{Z}.
$$
Since the rho invariant is continuous on $\mathrm{SO}(2) \setminus \{I\}$ (see \cite[Section 4]{KPW}), the boundary rho invariant $\boldsymbol{\rho}(\phi_t)$ varies continuously for any path $\{\phi_t\}_{t\in [0,1]} \subset \mathrm{E}(\Sigma)$. On the other hand, the Toledo invariant vanishes for every representation $\phi \in \mathrm{Hom}(\pi_1(\Sigma), \mathrm{SO}(2))$ (see \cite[Section 6.2]{KPW}). Therefore, for all such paths, we have
$$
\mathrm{sign}(\phi_t) =\boldsymbol{\rho}(\phi_t) \in \mathbb{Z},
$$
and it remains constant along the path. In particular, representations in $\mathrm{E}(\Sigma)$ with different signature values cannot lie in the same path-connected component.

Note that when $n = 1$, $\mathrm{E}(\Sigma) = \emptyset$. Hence, we assume $n \geq 2$ in the following. 
A simple calculation shows that
\begin{equation}\label{eqn1}
\mathrm{sign}(\mathrm{Hom}(\pi_1(\Sigma), \mathrm{SO}(2))) = [-2(n-2), 2(n-2)] \cap 2\mathbb{Z}.
\end{equation}

We now turn to boundary elliptic-unipotent representations and obtain the following result:

\begin{prop}\label{prop-sign1}
We have
$$
\mathrm{sign}(\mathrm{E}(\Sigma)) = [-2(n-2), 2(n-2)] \cap \{2n - 4\mathbb{Z}\}.
$$
\end{prop}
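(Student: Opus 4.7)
The plan is to exploit the fact that $\mathrm{SO}(2)$ is abelian, so every representation factors through the abelianization, which reduces the problem to an elementary counting of angle sums under a single linear constraint.

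First I would fix $\phi \in \mathrm{E}(\Sigma)$ and write $\phi(c_j) = R(\theta_j)$ with $\theta_j \in (0,2\pi)$. Since $\mathrm{SO}(2)$ is abelian, the relation $\prod_{i=1}^g[a_i,b_i]\prod_{j=1}^n c_j = e$ forces $\sum_{j=1}^n \theta_j \equiv 0 \pmod{2\pi}$. Because each $\theta_j$ lies strictly between $0$ and $2\pi$, the total sum lies in $(0, 2\pi n)$, so there exists a unique integer $k \in \{1,\dots,n-1\}$ with $\sum_{j=1}^n \theta_j = 2\pi k$.

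Next I would compute the signature. The Toledo invariant vanishes on $\mathrm{Hom}(\pi_1(\Sigma),\mathrm{SO}(2))$, so $\mathrm{sign}(\phi) = \boldsymbol{\rho}(\phi)$. Using the entry of Table~\ref{tab:rho-invariant} for $\lambda \in S^1\setminus\{\pm 1\}$ (and observing that the formula $\rho(R(\theta)) = 2(1-\theta/\pi)$ also gives the correct value $\rho = 0$ at $\theta = \pi$, since $R(\pi) = -I$), one gets
\[
\mathrm{sign}(\phi) = \sum_{j=1}^n 2\!\left(1-\tfrac{\theta_j}{\pi}\right) = 2n - \tfrac{2}{\pi}\sum_{j=1}^n \theta_j = 2n - 4k.
\]
This shows $\mathrm{sign}(\mathrm{E}(\Sigma)) \subseteq \{2n-4k : k = 1,\dots,n-1\} = [-2(n-2),2(n-2)] \cap \{2n-4\mathbb{Z}\}$.

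For surjectivity, given any $k \in \{1,\dots,n-1\}$, I would construct an explicit representation realizing signature $2n-4k$ by setting $\theta_j = 2\pi k/n \in (0,2\pi)$ for each $j$, sending every $a_i,b_i$ to the identity, and sending $c_j \mapsto R(2\pi k/n)$; the defining relation is automatically satisfied. The main (and essentially only) step is the angle-sum identification above — there is no real obstacle, as everything reduces to a one-variable integer count once the abelianization is used.
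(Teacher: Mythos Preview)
Your proof is correct and follows essentially the same approach as the paper: both compute $\mathrm{sign}(\phi)=2n-\tfrac{2}{\pi}\sum\theta_j$ and use the constraint $\sum\theta_j\in 2\pi\mathbb{Z}\cap(0,2\pi n)$. The only minor difference is that the paper invokes the previously established range \eqref{eqn1} from \cite{KPW1} for the interval bound, whereas you extract both the interval and the congruence directly from $k\in\{1,\dots,n-1\}$, making your argument slightly more self-contained.
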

\begin{proof}
Let $m \in [-(n-2), n-2] \cap \{n-2 \mathbb{Z}\}$, so we can write
$m = n - 2a,$
for some integer $a \in [1, n-1]$. Define a representation $\phi$ by
$
\phi(c_i) = R(\theta_i),  \theta_i \in (0, 2\pi),
$
such that
$
\sum_{i=1}^n \theta_i = 2a\pi.
$
 Then $\phi \in \mathrm{E}(\Sigma)$, and the signature is given by
$$
\mathrm{sign}(\phi) = \sum_{i=1}^n 2(1 - \tfrac{\theta_i}{\pi} ) = 2(n - 2a )= 2m.
$$

Conversely, for any $\phi \in \mathrm{E}(\Sigma) \subset \mathrm{Hom}(\pi_1(\Sigma), \mathrm{SO}(2))$, it follows from \eqref{eqn1} that
$$
\mathrm{sign}(\phi) \in [-2(n - 2), 2(n - 2)].
$$
Moreover, since each boundary image satisfies $\phi(c_i) = R(\theta_i)$ with $\theta_i \in (0, 2\pi)$, and the total angle satisfies
$
\sum_{i=1}^n \theta_i \in 2\pi \mathbb{Z}.
$
We compute
$$
\mathrm{sign}(\phi) = \sum_{i=1}^n 2(1 - \tfrac{\theta_i}{\pi}) = 2n - \tfrac{2}{\pi} \sum_{i=1}^n \theta_i \in \{2n - 4\mathbb{Z}\}.
$$
This completes the proof. 
\end{proof}

We next show that any two representations in $\mathrm{E}(\Sigma)$ with the same signature lie in the same connected component.
\begin{prop}\label{ell-prop1}
Let $m \in [-(n-2), n-2] \cap \{n-2 \mathbb{Z}\}$. Then the level set
$
\mathrm{sign}^{-1}(2m) \cap \mathrm{E}(\Sigma)
$
is a connected component of $\mathrm{E}(\Sigma)$.\end{prop}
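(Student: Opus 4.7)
The plan is to exploit the fact that $\mathrm{SO}(2)$ is abelian. Every $\phi \in \mathrm{Hom}(\pi_1(\Sigma),\mathrm{SO}(2))$ factors through the abelianization $H_1(\Sigma_{g,n};\mathbb{Z})$, which is generated by the classes of $a_j, b_j, c_i$ subject to the single relation $\sum_{i=1}^n [c_i] = 0$. Writing $\phi(a_j) = R(\alpha_j)$, $\phi(b_j) = R(\beta_j)$ with $\alpha_j, \beta_j \in \mathbb{R}/2\pi\mathbb{Z}$ and $\phi(c_i) = R(\theta_i)$ with $\theta_i \in (0, 2\pi)$ (the strict inequalities encoding $\phi(c_i) \neq I$), the abelianized relation forces $\sum_i \theta_i \in 2\pi\mathbb{Z}$. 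This identifies $\mathrm{E}(\Sigma)$ as a topological space with
\[
(\mathbb{R}/2\pi\mathbb{Z})^{2g} \times \Bigl\{(\theta_1, \ldots, \theta_n) \in (0, 2\pi)^n : \sum_{i=1}^n \theta_i \in 2\pi\mathbb{Z} \Bigr\}.
\]

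Next, I would stratify the right-hand factor by the integer invariant $a := \tfrac{1}{2\pi}\sum_i \theta_i$. Since each $\theta_i \in (0, 2\pi)$ strictly, the sum lies in the open interval $(0, 2\pi n)$, so $a$ ranges over $\{1, 2, \ldots, n-1\}$. Let $\mathrm{E}_a(\Sigma)$ denote the corresponding stratum. Each $\mathrm{E}_a(\Sigma)$ is homeomorphic to the product of the torus $(\mathbb{R}/2\pi\mathbb{Z})^{2g}$ with the open polytope
\[
\Delta_a^n := \Bigl\{(\theta_1, \ldots, \theta_n) \in (0, 2\pi)^n : \sum_i \theta_i = 2a\pi \Bigr\},
\]
which is nonempty (it contains the uniform point $(2a\pi/n, \ldots, 2a\pi/n)$) and convex, hence path-connected. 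Therefore each $\mathrm{E}_a(\Sigma)$ is connected, and the decomposition $\mathrm{E}(\Sigma) = \bigsqcup_{a=1}^{n-1} \mathrm{E}_a(\Sigma)$ is a partition into clopen subsets.

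Finally, by the signature computation already carried out in the proof of Proposition~\ref{prop-sign1}, one has $\mathrm{sign}(\phi) = 2n - \tfrac{2}{\pi}\sum_i \theta_i = 2(n - 2a)$ identically on $\mathrm{E}_a(\Sigma)$. For $m \in [-(n-2), n-2] \cap \{n - 2\mathbb{Z}\}$, the equation $2(n - 2a) = 2m$ has the unique solution $a = (n-m)/2 \in \{1, \ldots, n-1\}$, so
\[
\mathrm{sign}^{-1}(2m) \cap \mathrm{E}(\Sigma) = \mathrm{E}_{(n-m)/2}(\Sigma),
\]
which is a single connected component of $\mathrm{E}(\Sigma)$. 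I do not anticipate any real obstacle: once the abelian identification is in place, the argument is essentially tautological. The only point warranting a line of justification is the nonemptiness and convexity of $\Delta_a^n$, both of which are immediate from $0 < a < n$.
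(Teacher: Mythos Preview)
Your proposal is correct and follows essentially the same approach as the paper: parametrize by boundary angles $\theta_i\in(0,2\pi)$ subject to $\sum_i\theta_i\in 2\pi\mathbb{Z}$, and use linear interpolation (equivalently, convexity of the slice $\Delta_a^n$) to connect any two points with the same angle sum. Your write-up is in fact slightly more complete than the paper's, since you explicitly account for the $(\mathbb{R}/2\pi\mathbb{Z})^{2g}$ torus coming from the genus generators, whereas the paper's interpolation is written out only on the boundary angles.
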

\begin{proof}
From Proposition \ref{prop-sign1}, the level set $\mathrm{sign}^{-1}(2m) \cap \mathrm{E}(\Sigma)$ is nonempty.
Let $\phi_0, \phi_1 \in \mathrm{E}(\Sigma)$ be two representations with the same signature. We write
$$
\phi_0(c_i) = R(\theta_i), \quad \phi_1(c_i) = R(\tau_i), \quad \theta_i, \tau_i \in (0, 2\pi),
$$
such that
$
\sum_{i=1}^n \theta_i = \sum_{i=1}^n \tau_i \in 2\pi \mathbb{Z}.
$
Define a continuous path $\{\phi_t\}_{t\in [0,1]}$ by
$$
\phi_t(c_i) = R((1 - t)\theta_i + t\tau_i), \quad 1 \leq i \leq n.
$$
Since each interpolated angle lies in $(0, 2\pi)$ and their sum remains invariant, it follows that $\phi_t \in \mathrm{E}(\Sigma)$ for all $t \in [0,1]$, and $\phi_t$ provides a path connecting $\phi_0$ and $\phi_1$ in $\mathrm{E}(\Sigma)$.
\end{proof}
\begin{cor}
The set $\mathrm{E}(\Sigma)\subset \mathrm{Hom}(\pi_1(\Sigma),\mathrm{SO}(2))$ has exactly $n - 1$ connected components.
\end{cor}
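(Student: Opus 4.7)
The plan is to deduce the count directly from the two preceding propositions. By Proposition~\ref{prop-sign1} the signature map takes values in the finite set
\[
S := [-2(n-2),\, 2(n-2)] \cap \{2n - 4\mathbb{Z}\},
\]
and by Proposition~\ref{ell-prop1} every nonempty fiber $\mathrm{sign}^{-1}(2m) \cap \mathrm{E}(\Sigma)$ for $m \in [-(n-2), n-2] \cap \{n - 2\mathbb{Z}\}$ is a single connected component. Since the signature is a locally constant, integer-valued function on $\mathrm{E}(\Sigma)$, it gives a bijection between the set of connected components of $\mathrm{E}(\Sigma)$ and $S$. Hence the number of components equals $\#S$.

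To finish, I would simply enumerate $S$. Writing an element of $S$ in the form $2n - 4k$ with $k \in \mathbb{Z}$, the bound $|2n-4k| \leq 2(n-2)$ is equivalent to $1 \leq k \leq n-1$. This gives exactly $n-1$ admissible values of $k$, and therefore $\#S = n-1$. Combining with the bijection above yields the stated count.

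I do not expect any real obstacle: the content is entirely packaged in Propositions~\ref{prop-sign1} and~\ref{ell-prop1}, and the remaining step is a short arithmetic check. The only minor care is to verify that the parameter range $a \in [1, n-1]$ appearing in Proposition~\ref{prop-sign1} matches the range $k \in [1, n-1]$ obtained here, so that each allowed signature value is indeed realized (which it is, by the explicit construction in the proof of Proposition~\ref{prop-sign1}).
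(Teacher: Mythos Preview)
Your proposal is correct and follows exactly the intended approach: the corollary is an immediate consequence of Propositions~\ref{prop-sign1} and~\ref{ell-prop1}, and the paper gives no separate argument beyond stating it. Your explicit count of $\#S$ via $2n-4k$ with $1\le k\le n-1$ is the natural way to finish.
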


\subsubsection{Boundary elliptic representations}
Having considered boundary elliptic-unipotent representations, we now turn our attention to boundary elliptic representations.
 Let $\mathrm{Ell}(\Sigma)$ denote the space of all representations $\phi \in \mathrm{Hom}(\pi_1(\Sigma), \mathrm{SO}(2))$ such that for each boundary component $c_i \subset \partial\Sigma$, the image $\phi(c_i) = R(\theta_i)$, with $\theta_i \in (0,\pi) \cup (\pi, 2\pi)$. In this case, we again have
$$
\mathrm{sign}(\mr{Ell}(\Sigma)) =\mathrm{sign}(\mathrm{E}(\Sigma)) = [-2(n-2), 2(n-2)] \cap \{2n - 4\mathbb{Z}\}.
$$
Define a {\it sign vector map}
$$
\sigma: \mr{Ell}(\Sigma) \to \{-1, 1\}^n, \quad \sigma(\phi) := \left(\mathrm{sgn}(\theta_1 - \pi), \dots, {\mathrm{sgn}}(\theta_n - \pi) \right),
$$
where $\phi(c_i) = R(\theta_i)$. Clearly, if two representations $\phi, \psi \in \mr{Ell}(\Sigma)$ have different $\sigma$-values, then they lie in different connected components of $\mr{Ell}(\Sigma)$.
We now consider representations sharing the same sign vector.

\begin{thm}\label{thm4}
Let $a \in \{-1, 1\}^n$, and let $r_a := \#\{i : a_i = -1\}$. Then, for any
$$
m \in [-n + r_a + 1, \, r_a - 1] \cap \{n - 2\mathbb{Z}\},
$$
the subset $\sigma^{-1}(a) \cap \mathrm{sign}^{-1}(2m) \cap \mr{Ell}(\Sigma)$ is a connected component. For all other values of $m$, this intersection is empty.
\end{thm}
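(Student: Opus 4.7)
The plan is to refine the angle-coordinate analysis used in Proposition~\ref{ell-prop1} for $\mr{E}(\Sigma)$, now additionally tracking the sigma vector. Any $\phi\in\mr{Ell}(\Sigma)\subset\mathrm{Hom}(\pi_1(\Sigma),\mathrm{SO}(2))$ is specified by angles $\alpha_i,\beta_i\in\mb{R}/2\pi\mb{Z}$ on the handle generators, together with $\theta_j\in(0,\pi)\cup(\pi,2\pi)$ on the boundary loops, the only relation being $\sum_j\theta_j\in 2\pi\mb{Z}$ (the commutators vanish in the abelian target). The sigma condition $\sigma(\phi)=a$ requires $\theta_j\in(0,\pi)$ when $a_j=-1$ and $\theta_j\in(\pi,2\pi)$ when $a_j=1$, and the signature is $\mr{sign}(\phi)=2n-(2/\pi)\sum_j\theta_j$, by the values tabulated in the rho-invariant table.

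First I would pin down nonemptiness of the level set. With exactly $r_a$ of the $\theta_j$ in $(0,\pi)$ and the remaining $n-r_a$ in $(\pi,2\pi)$, the sum $\sum_j\theta_j$ ranges precisely over the open interval $((n-r_a)\pi,(2n-r_a)\pi)$. Imposing $\sum_j\theta_j=2\pi k\in 2\pi\mb{Z}$ and $\mr{sign}(\phi)=2m$ forces $m=n-2k$ with $k\in\mb{Z}$ satisfying $(n-r_a)/2<k<(2n-r_a)/2$; a short arithmetic check matches the admissible $m$ with the set $[-n+r_a+1,r_a-1]\cap\{n-2\mb{Z}\}$ appearing in the statement. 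For each such $m$, an explicit representation is obtained by distributing the required total angle $(n-m)\pi$ among the $\theta_j$ inside the intervals dictated by $a$, and choosing arbitrary $\alpha_i,\beta_i$.

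To prove connectedness I would use straight-line interpolation in angle coordinates. Given two elements in the intersection with data $(\alpha^0,\beta^0,\theta^0)$ and $(\alpha^1,\beta^1,\theta^1)$, interpolate each $\alpha_i$ and $\beta_i$ along any continuous path in the connected target $S^1$, and set $\theta_j(t)=(1-t)\theta_j^0+t\theta_j^1$. Convexity of the intervals $(0,\pi)$ and $(\pi,2\pi)$ keeps $\sigma$ constant along the path, and the boundary sum is the trivial affine combination of $(n-m)\pi$ with itself, hence constant and still in $2\pi\mb{Z}$, so the path lies in $\mr{Ell}(\Sigma)$ and the signature is preserved. Finally, since $\sigma$ is locally constant on $\mr{Ell}(\Sigma)$ (its jump locus is exactly the excluded angles $0,\pi,2\pi$) and $\mr{sign}$ is locally constant by continuity of $\mr{T}$ and of $\rro$ away from unipotents, the level set is clopen in $\mr{Ell}(\Sigma)$, and combining this with the path-connectedness just produced shows it is a single connected component. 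The main (and only mildly delicate) obstacle is the arithmetic reconciliation of the open-interval bounds on $\sum_j\theta_j$ with the parity constraint $m\equiv n\pmod 2$; the connectedness itself reduces to convex interpolation in an abelian group.
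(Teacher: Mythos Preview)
Your proposal is correct and follows essentially the same approach as the paper: both compute the admissible range of $\tfrac{1}{2}\mathrm{sign}(\phi)=n-\tfrac{1}{\pi}\sum_j\theta_j$ from the open-interval constraints on the $\theta_j$, and both prove connectedness by straight-line interpolation of the boundary angles inside their respective convex intervals $(0,\pi)$ or $(\pi,2\pi)$. Your version is marginally more complete in that you explicitly treat the handle angles $\alpha_i,\beta_i$ (which the paper leaves implicit) and spell out why the level set is clopen, but the argument is the same.
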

\begin{proof}
If $\phi \in \sigma^{-1}(a) \cap \mr{Ell}(\Sigma)$, then
	\begin{align*}
\tfrac{1}{2}  \mathrm{sign}(\phi)=n-\tfrac{1}{\pi}\sum_{i=1}^n\theta_i&\in \{n-((n-r_a),r_a+2(n-r_a))\}\cap \{n-2\mb{Z}\}\\
  &=(-n+r_a,r_a)\cap \{n-2\mb{Z}\}\\
  &=[-n+r_a+1,r_a-1]\cap \{n-2\mb{Z}\}.
\end{align*}
Therefore, if $m \not\in [-n + r_a + 1, \, r_a - 1] \cap \{n - 2\mathbb{Z}\}$, then
$$
\sigma^{-1}(a) \cap \mathrm{sign}^{-1}(2m) \cap \mr{Ell}(\Sigma) = \emptyset.
$$
Now fix $m \in [-n + r_a + 1, \, r_a - 1] \cap \{n - 2\mathbb{Z}\}$. It is straightforward to construct a representation $\phi \in \sigma^{-1}(a) \cap \mr{Ell}(\Sigma)$ such that $\mathrm{sign}(\phi) = 2m$, hence the intersection is nonempty.

To prove path-connectedness, let $\phi_0, \phi_1 \in \sigma^{-1}(a) \cap \mathrm{sign}^{-1}(2m) \cap \mr{Ell}(\Sigma)$. Without loss of generality, write:
$$
\phi_0(c_i) = R(\theta_i), \, \phi_1(c_i) = R(\tau_i), \, \text{where } \theta_i, \tau_i \in
\begin{cases}
(0, \pi) & \text{for } 1 \leq i \leq r_a, \\
(\pi, 2\pi) & \text{for } r_a + 1 \leq i \leq n,
\end{cases}
$$
with
$
\sum_{i=1}^n \theta_i = \sum_{i=1}^n \tau_i = (n - m)\pi.
$
Define the interpolating path $\phi_t$ by
$$
\phi_t(c_i) := R\big( (1 - t)\theta_i + t \tau_i \big), \quad 1 \leq i \leq n.
$$
Each interpolated angle $(1 - t)\theta_i + t \tau_i $ in $(0,\pi)$ for $i\leq r_a$, and in $(\pi,2\pi)$ for $i>r_a$, and the total angle remains constant,
$
\sum_{i=1}^n \left[ (1 - t)\theta_i + t \tau_i \right] = (n - m)\pi.
$
Hence, the path $\{\phi_t\}_{t \in [0,1]}$ lies entirely within $\sigma^{-1}(a) \cap \mathrm{sign}^{-1}(2m) \cap \mr{Ell}(\Sigma)$, completing the proof. \end{proof}

We now compute the number of connected components of $\mr{Ell}(\Sigma)$, we denote it by $\#\{\mr{Ell}(\Sigma)\}$, then
\begin{align*}
\#\{\mr{Ell}(\Sigma)\} &:= \#\left\{ (a, m) \in \{-1, 1\}^n \times \left( [-n + r_a + 1, \, r_a - 1] \cap \{n - 2\mathbb{Z}\} \right) \right\}\\
&=\sum_{a \in \{-1, 1\}^n} \left\lfloor \tfrac{n - 1 + (r_a \bmod 2)}{2} \right\rfloor = (n - 1) \cdot 2^{n - 1}.
\end{align*}
Thus, we conclude:

\begin{cor}
The number of connected components of $\mr{Ell}(\Sigma)$ is $(n - 1)\cdot 2^{n - 1}$.
\end{cor}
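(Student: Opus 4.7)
The plan is to reduce directly to the count furnished by Theorem~\ref{thm4}: that result identifies the connected components of $\mr{Ell}(\Sigma)$ with pairs $(a,m) \in \{-1,1\}^n \times \mathbb{Z}$ satisfying $m \in [-n+r_a+1,\, r_a-1]$ and $m \equiv n \pmod 2$, where $r_a = \#\{i : a_i = -1\}$. So the corollary is purely combinatorial: one must evaluate $\sum_{a \in \{-1,1\}^n} N(r_a)$, where $N(r)$ denotes the number of integers in the window $[-n+r+1,\, r-1]$ (of length $n-1$) having the prescribed parity. Since $N$ depends only on $r_a$ and not on $a$ itself, the sum reorganizes as $\sum_{r=0}^{n} \binom{n}{r} N(r)$.

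To evaluate this, I would split on the parity of $n$. When $n$ is odd, any window of $n-1$ consecutive integers contains exactly $(n-1)/2$ integers of each parity, so $N(r) = (n-1)/2$ for every $r$, and the sum collapses at once to $2^n \cdot (n-1)/2 = (n-1) \cdot 2^{n-1}$. When $n$ is even, one inspects the parity of the left endpoint $-n+r+1 \equiv r+1 \pmod 2$: this yields $N(r) = n/2$ when $r$ is odd and $N(r) = n/2 - 1$ when $r$ is even. Applying the elementary identity $\sum_{r \text{ even}} \binom{n}{r} = \sum_{r \text{ odd}} \binom{n}{r} = 2^{n-1}$, the sum evaluates to $(n/2) \cdot 2^{n-1} + (n/2 - 1) \cdot 2^{n-1} = (n-1) \cdot 2^{n-1}$.

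No substantive obstacle arises; the argument is pure bookkeeping. The only mild care required is in the even-$n$ case, where $N(r)$ jumps by one as $r$ switches parity, but this is handled transparently by decomposing the binomial sum into its even and odd subsums. Alternatively, one could observe the symmetry $a \leftrightarrow -a$ (equivalently $r_a \leftrightarrow n-r_a$ and $m \leftrightarrow -m$), pair up contributions, and obtain the same total without splitting cases.
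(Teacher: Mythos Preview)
Your proposal is correct and takes essentially the same approach as the paper: both invoke Theorem~\ref{thm4} to identify the components with pairs $(a,m)$ and then evaluate the resulting sum $\sum_{a}\#\{m\}$ by grouping according to $r_a$. The only cosmetic difference is that the paper encodes your parity case-split in the single expression $\lfloor (n-1+(r_a\bmod 2))/2\rfloor$ before summing, while you carry out the even/odd $n$ cases separately.
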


\subsection{The case of genus zero}

In this section, we study boundary elliptic (respectively, boundary elliptic-unipotent) representations into $\mathrm{SL}(2,\mathbb{R})$ for punctured spheres $\Sigma = \Sigma_{0,n}$.

\subsubsection{Boundary elliptic-unipotent representations}Note that
$$
\mathrm{Ell}(\Sigma) = \left\{ \phi \in \mathrm{E}(\Sigma) \,\middle|\, \det(\phi(c_i) + I) \neq 0,\; 1 \leq i \leq n \right\}.
$$
Therefore, $\mathrm{Ell}(\Sigma)$ is a dense subset of $\mathrm{E}(\Sigma)$. In particular, any boundary elliptic-unipotent representation can be deformed into a boundary elliptic representation by a small perturbation, while the signature remains unchanged under such deformation.
By \cite[Theorem 4]{KPW1}, we have
\begin{equation}\label{eqn2}
\mathrm{sign}(\mathrm{E}(\Sigma)) = \{2n - 4\mathbb{Z}\} \cap [-2(n - 2), 2(n - 2)].
\end{equation}
Next, we will show that any two representations in $\mathrm{E}(\Sigma)$ with the same signature lie in the same connected component.
\begin{lemma}\label{lemma1}
Let $A = R(\theta) \in \mathrm{SO}(2)$, where $\theta \in (0, \pi) \cup (\pi, 2\pi)$, and let
$$
B = P R(\theta_1) P^{-1} \in \mathrm{SL}(2, \mathbb{R}), \quad \text{with } P = \begin{pmatrix}
a & b \\
c & d
\end{pmatrix} \in \mathrm{SL}(2, \mathbb{R}),
$$
so that $B$ is conjugate to $R(\theta_1)$, where $\theta_1 \in (0, \pi) \cup (\pi, 2\pi)$. Assume that $\theta + \theta_1 \neq 2\pi$, and $|\mathrm{tr}(AB)| < 2$. Then there exists a continuous path $P(t) \in \mathrm{SL}(2, \mathbb{R})$, defined for $t \in [0,1]$, such that
$
P(0) = P, P(1) = I,
$
and for all $t \in [0,1]$,
$
\left| \mathrm{tr}( A B(t) ) \right| < 2,
$
where $B(t) := P(t) R(\theta_1) P(t)^{-1}$.
\end{lemma}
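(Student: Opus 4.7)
The plan is to reduce the trace condition to a one-variable inequality by exploiting the action of $\mathrm{SL}(2,\mathbb{R})$ on the upper half-plane $\mathbb{H}^2$, under which $\mathrm{SO}(2)$ is the stabilizer of $i$. Writing $P = hk$ via the Cartan (polar) decomposition with $h \in \mathrm{SL}(2,\mathbb{R})$ symmetric positive definite and $k \in \mathrm{SO}(2)$, the fact that $k$ commutes with $R(\theta_1)$ yields $B = hR(\theta_1)h^{-1}$. Further diagonalizing $h = k_0\,\mathrm{diag}(e^{d/2}, e^{-d/2})\,k_0^{-1}$ for some $k_0 \in \mathrm{SO}(2)$, and using that $\mathrm{SO}(2)$ commutes with both $R(\theta)$ and $R(\theta_1)$, one gets the key identity
\[
\mathrm{tr}(AB) \;=\; 2\bigl[\cos\theta\cos\theta_1 - \sin\theta\sin\theta_1 \cosh d\bigr],
\]
where $d := d_{\mathbb{H}^2}(i, P \cdot i)$ is the only geometric invariant of $P$ entering the trace.

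Set $f(d) := \cos\theta\cos\theta_1 - \sin\theta\sin\theta_1 \cosh d$, so that the trace hypothesis reads $|f(d_0)| < 1$, where $d_0 := d_{\mathbb{H}^2}(i, P\cdot i)$. Since $\theta, \theta_1 \in (0,\pi) \cup (\pi, 2\pi)$, one has $\sin\theta\sin\theta_1 \neq 0$, so $f$ is strictly monotonic on $[0,\infty)$ with $f(0) = \cos(\theta + \theta_1)$. The crucial claim is that $f(d) \in (-1,1)$ for every $d \in [0, d_0]$. I would split according to the sign of $\sin\theta\sin\theta_1$. If positive, $f$ is decreasing, so $f(d_0) \leq f(d) \leq f(0) = \cos(\theta+\theta_1)$; the hypothesis $\theta + \theta_1 \neq 2\pi$ (the only element of $(0,4\pi) \cap 2\pi\mathbb{Z}$) then forces $f(0) < 1$, while the lower bound $f(d) \geq f(d_0) > -1$ is exactly the trace hypothesis. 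If negative, $f$ is increasing, so $f(0) \leq f(d) \leq f(d_0) < 1$, and a direct case check on the parameter ranges shows that both $\theta + \theta_1 = \pi$ and $\theta + \theta_1 = 3\pi$ force $\sin\theta\sin\theta_1 > 0$; therefore in this case $f(0) = \cos(\theta+\theta_1) > -1$ automatically.

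Granting the crucial claim, the path is easily built. Write $h = \exp(X)$ with $X$ symmetric traceless, pick any continuous path $k(t) \in \mathrm{SO}(2)$ from $k$ to $I$, and set
\[
P(t) := \exp\bigl((1-t)X\bigr)\, k(t), \qquad t \in [0,1].
\]
Then $P(0) = P$, $P(1) = I$, and $P(t) \cdot i = \exp((1-t)X) \cdot i$ moves along the hyperbolic geodesic from $P \cdot i$ to $i$, so $d_{\mathbb{H}^2}(i, P(t) \cdot i) = (1-t)d_0$ traverses $[0, d_0]$ monotonically. By the claim, $|\mathrm{tr}(AB(t))| = 2|f((1-t)d_0)| < 2$ for all $t$.

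The main obstacle is the monotonicity step: \emph{a priori} the values $\theta + \theta_1 \in \{\pi, 3\pi\}$ would also force $|\mathrm{tr}(AB(1))| = 2$ at the endpoint $P(1) = I$, even though only $\theta + \theta_1 \neq 2\pi$ is assumed. What saves the lemma is that the range constraints $\theta, \theta_1 \in (0,\pi) \cup (\pi, 2\pi)$ preclude $\theta + \theta_1 \in \{\pi, 3\pi\}$ when $\sin\theta\sin\theta_1 < 0$; hence whenever $f(0) = -1$ we are in the decreasing case, where $f(d_0) \leq f(0) = -1$ contradicts the trace hypothesis. Pinning down this interplay between the trace bound and the parameter ranges is the delicate point; once settled, the construction is an immediate application of the Cartan decomposition.
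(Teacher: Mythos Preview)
Your argument is correct and complete. Both your proof and the paper's hinge on the same computation, namely that
\[
\mathrm{tr}(AB)=2\cos\theta\cos\theta_1-\mathrm{tr}(P^\top P)\,\sin\theta\sin\theta_1,
\]
so the trace depends on $P$ only through the single quantity $S:=\mathrm{tr}(P^\top P)\ge 2$, and the task reduces to deforming $P$ to $I$ while $S(t)$ decreases monotonically to its minimum value $2$. Your identification $S=2\cosh d_{\mathbb{H}^2}(i,P\cdot i)$ is exactly this observation in hyperbolic-geometric language.

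The difference is purely in how the path is built. The paper uses the Iwasawa decomposition $P=K A N$ and kills the three factors in two stages, checking by hand that $S(t)=r^2+r^{-2}+r^2(1-2t)^2x^2$ (and then $\lambda(t)^2+\lambda(t)^{-2}$) stays below its initial value. You instead use the polar decomposition $P=\exp(X)\,k$ and take the single path $P(t)=\exp((1-t)X)\,k(t)$, which moves $P\cdot i$ along the hyperbolic geodesic to $i$; this makes the monotonicity of $S(t)=2\cosh((1-t)d_0)$ automatic. Your treatment of the endpoint $d=0$ is also slightly sharper than the paper's: you make explicit why $\theta+\theta_1\in\{\pi,3\pi\}$ never obstructs the inequality (either the sign of $\sin\theta\sin\theta_1$ rules it out, or the decreasing case forces $f(d_0)\le -1$, contradicting the hypothesis), whereas the paper absorbs this into the formula for $S_{\max}$ without comment. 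In short, the two proofs are equivalent in content; yours trades the explicit Iwasawa bookkeeping for a cleaner geometric picture and a one-line path.
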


\begin{proof}
By direct computation,
\begin{equation}\label{eqn5}
\mathrm{tr}(AB) = 2\cos\theta\cos\theta_1 - S\sin\theta\sin\theta_1,
\end{equation}
where
\(
S := a^2 + b^2 + c^2 + d^2 = (a - d)^2 + (b + c)^2 + 2(ad - bc) \geq 2,
\)
since \( \det P = 1 \). Hence \( |\mathrm{tr}(AB)| < 2 \) is equivalent to \( S < S_{\max} \), where
\[
S_{\max} :=
\begin{cases}
\displaystyle \frac{2 + 2\cos\theta\cos\theta_1}{\sin\theta\sin\theta_1} & \text{if } \sin\theta\sin\theta_1 > 0, \\
\displaystyle \frac{-2 + 2\cos\theta\cos\theta_1}{\sin\theta\sin\theta_1} & \text{if } \sin\theta\sin\theta_1 < 0.
\end{cases}
\]

Now write \( P \) using its Iwasawa decomposition:
\[
P =
\begin{pmatrix}
\cos\tau & -\sin\tau \\
\sin\tau & \cos\tau
\end{pmatrix}
\begin{pmatrix}
r & 0 \\
0 & \frac{1}{r}
\end{pmatrix}
\begin{pmatrix}
1 & x \\
0 & 1
\end{pmatrix}, \quad r > 0.
\]
Then
\[
S = \mathrm{tr}(P^\top P) = r^2 + \frac{1}{r^2} + r^2 x^2.
\]
Define a continuous path \( P(t) \in \mathrm{SL}(2, \mathbb{R}) \) from \( P \) to \( I \) as follows:\\
\textbf{Step 1:} For \( t \in [0, \tfrac{1}{2}] \), set
\[
P(t) =
\begin{pmatrix}
\cos((1 - 2t)\tau) & -\sin((1 - 2t)\tau) \\
\sin((1 - 2t)\tau) & \cos((1 - 2t)\tau)
\end{pmatrix}
\begin{pmatrix}
r & 0 \\
0 & \frac{1}{r}
\end{pmatrix}
\begin{pmatrix}
1 & (1 - 2t)x \\
0 & 1
\end{pmatrix}.
\]
Then \( P(0) = P \), \( P(\tfrac{1}{2}) = \mathrm{diag}(r, 1/r) \), and
\[
S(t) = r^2 + \frac{1}{r^2} + r^2 (1 - 2t)^2 x^2 \leq S(0) < S_{\max}.
\]
\textbf{Step 2:} For \( t \in [\tfrac{1}{2}, 1] \), define
\[
P(t) = \begin{pmatrix}
\lambda(t) & 0 \\
0 & \lambda(t)^{-1}
\end{pmatrix}, \quad \lambda(t) := 2(1 - t)r + 2t - 1.
\]
Then \( P(1) = I \), and
\[
S(t) = \lambda(t)^2 + \frac{1}{\lambda(t)^2} \leq r^2 + \frac{1}{r^2} \leq S(0) < S_{\max}.
\]
Therefore, for all \( t \in [0,1] \), we have \( |\mathrm{tr}(AB(t))| < 2 \), as required.
\end{proof}

\begin{prop}\label{ell-prop2}
	For any $m\in [-(n-2),n-2]\cap \{n-2\mb{Z}\}$, $\mathrm{sign}^{-1}(2m)\cap \mathrm{E}(\Sigma)$ is a connected component. 
\end{prop}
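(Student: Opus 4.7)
The plan is to prove $\mathrm{sign}^{-1}(2m)\cap \mathrm{E}(\Sigma_{0,n})$ is both nonempty and path-connected. Nonemptiness is immediate from \eqref{eqn2}: choose angles $\theta_1,\ldots,\theta_n\in(0,2\pi)$ with $\sum_i\theta_i=(n-m)\pi\in 2\pi\mathbb{Z}$ and set $\phi(c_i)=R(\theta_i)$, producing an $\mathrm{SO}(2)$-valued representative of signature $2m$. The remaining task is to deform any $\phi\in\mathrm{E}(\Sigma_{0,n})$ of signature $2m$ through a path in $\mathrm{E}(\Sigma_{0,n})$ to an $\mathrm{SO}(2)$-valued representation; since the signature is locally constant on $\mathrm{E}(\Sigma_{0,n})$, it remains $2m$ along such a path, and Proposition \ref{ell-prop1} then connects any two $\mathrm{SO}(2)$-valued representations of common signature.

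The deformation into $\mathrm{SO}(2)$ is carried out by induction on $n\geq 2$. For the base case $n=2$, the relation $\phi(c_1)\phi(c_2)=I$ forces $\phi(c_2)=\phi(c_1)^{-1}$; writing $\phi(c_1)=PR(\theta_1)P^{-1}$ and choosing any continuous path $P(t)$ from $P$ to $I$ in $\mathrm{SL}(2,\mathbb{R})$, the homotopy $\phi_t(c_1)=P(t)R(\theta_1)P(t)^{-1}$, $\phi_t(c_2)=\phi_t(c_1)^{-1}$, stays inside $\mathrm{E}(\Sigma_{0,2})$ throughout and ends at the desired $\mathrm{SO}(2)$-valued representation. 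For the inductive step $n\geq 3$, first conjugate globally so that $\phi(c_1)=R(\theta_1)$, then apply Lemma \ref{lemma1} with $A=\phi(c_1)$ and $B=\phi(c_2)$ to obtain a homotopy $B(t)=P_2(t)R(\theta_2)P_2(t)^{-1}$ with $B(0)=\phi(c_2)$, $B(1)=R(\theta_2)$, and $|\mathrm{tr}(AB(t))|<2$ throughout. Decomposing $\Sigma_{0,n}$ along a simple closed curve $d$ into a pair of pants with boundaries $c_1,c_2,d$ and a subsurface $\Sigma_{0,n-1}$ with boundaries $d^{-1},c_3,\ldots,c_n$, the ellipticity of $\phi_t(c_1c_2)=AB(t)$ ensures that the induced representation on $\Sigma_{0,n-1}$ remains in $\mathrm{E}(\Sigma_{0,n-1})$; invoking the inductive hypothesis on this subrepresentation deforms the remaining holonomies $\phi(c_3),\ldots,\phi(c_n)$ into $\mathrm{SO}(2)$ through a continuous choice of tail deformations compatible with the evolving boundary $AB(t)$.

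The main obstacle is that Lemma \ref{lemma1} requires $|\mathrm{tr}(\phi(c_1)\phi(c_2))|<2$, which need not hold for arbitrary $\phi\in \mathrm{E}(\Sigma_{0,n})$: if $\phi(c_1)\phi(c_2)$ is hyperbolic, parabolic, or equal to $\pm I$, one must first perturb $\phi$ within $\mathrm{E}(\Sigma_{0,n})$ so that some pair of boundary holonomies has an elliptic product, or equivalently, that the chosen pants decomposition is compatible with the elliptic condition. Showing that such a perturbation always exists, and then coordinating the compensating deformation of the tail $\phi(c_3),\ldots,\phi(c_n)$ so that each remains elliptic-unipotent while the product relation $\prod_i\phi_t(c_i)=I$ is preserved, is the technical heart of the argument.
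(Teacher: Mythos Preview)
Your overall framework — induction on $n$, Lemma~\ref{lemma1} to straighten a boundary into $\mathrm{SO}(2)$, then cut and recurse — matches the paper's. But two genuine gaps remain.

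\textbf{The hyperbolic case is not handled.} You acknowledge that Lemma~\ref{lemma1} needs $|\mathrm{tr}(C_1C_2)|<2$ and propose to ``first perturb $\phi$ within $\mathrm{E}(\Sigma_{0,n})$ so that some pair of boundary holonomies has an elliptic product,'' but give no argument that such a perturbation exists. In general it does not exist as a small perturbation: for $n\ge 4$ one can have all adjacent products hyperbolic, and pushing any such product into the elliptic range forces a crossing through a parabolic value, at which point the relation $\prod C_i=I$ can drive some other $C_j$ out of the elliptic-unipotent locus. The paper does \emph{not} perturb this case away; it treats it directly. With $D=C_1\cdots C_{n-2}$ hyperbolic, one writes $\mathrm{tr}(D\,C_{n-1}(t))=R(t)\cos(\theta(t)-\alpha(t))$ as in \eqref{eqn16}, identifies for each $t$ the two open arcs of angles $\theta$ on which $|\mathrm{tr}|<2$, and constructs $\theta(t)$ to remain in one arc while $P_{n-1}(t)\to\mathrm{SO}(2)$. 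If $\theta(t)$ threatens to hit $0$ or $\pi$, one stops just before and observes that $C_{n-2}C_{n-1}(t)$ is then close to $\pm C_{n-2}$, hence elliptic, so one can instead cut off $\{c_{n-2},c_{n-1}\}$. This angle-tracking argument is precisely the ``technical heart'' you allude to, and without it the induction stalls.

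\textbf{The decomposition is misaligned with Lemma~\ref{lemma1}.} Even in the elliptic case your setup is more delicate than necessary. You apply Lemma~\ref{lemma1} with $A=C_1$, $B=C_2$, which controls $|\mathrm{tr}(C_1C_2(t))|$; but to keep the global relation $\prod_iC_i(t)=I$ with all $C_i(t)$ elliptic-unipotent, what you actually need to control is the full product, and deforming $C_2$ forces a compensating deformation of some $C_j$, $j\ge3$, whose ellipticity Lemma~\ref{lemma1} says nothing about. Your phrase ``continuous choice of tail deformations compatible with the evolving boundary $AB(t)$'' papers over this. The paper's cut is at the other end: take $A=C_1\cdots C_{n-2}$ and $B=C_{n-1}$, keep $C_1,\ldots,C_{n-2}$ fixed, and let $C_n(t)=(A\,C_{n-1}(t))^{-1}$. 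Then the ellipticity of $C_n(t)$ is exactly the conclusion $|\mathrm{tr}(AB(t))|<2$ of Lemma~\ref{lemma1}, and no separate tail coordination is needed during this first stage. Only afterwards, once $C_{n-1},C_n\in\mathrm{SO}(2)$, does one invoke the inductive hypothesis on the complementary piece and adjust the pair-of-pants side inside $\mathrm{SO}(2)$ to match the moving gluing curve.
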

\begin{proof}
We proceed by induction on the number of boundary components \( n \). The case \( n = 2 \) is trivial. Assume the statement holds for all \( l \leq n-1 \), that is, for any \( m \in [-(l-2), l-2] \cap \{l-2\mathbb{Z}\} \), the space \( \mathrm{sign}^{-1}(2m) \cap \mathrm{E}(\Sigma_{0,l}) \) is path-connected.

Let \( \phi \in \mathrm{sign}^{-1}(2m) \cap \mathrm{Ell}(\Sigma) \). Suppose
\[
\phi(c_i) = C_i = P_i R(\theta_i) P_i^{-1}, \theta_i \in (0,\pi) \cup (\pi, 2\pi),  P_i \in \mathrm{SL}(2, \mathbb{R}), \quad 1 \leq i \leq n.
\]
By a small perturbation, we may assume \( \mathrm{tr}(C_1 \cdots C_{n-2}) \neq \pm 2 \).

If \( |\mathrm{tr}(C_1 \cdots C_{n-2})| < 2 \), then the product is elliptic, and we can deform \( \phi \) within \( \mathrm{Ell}(\Sigma) \) while keeping the conjugacy classes of $C_1,\cdots, C_{n-2}$ fixed, such that the product \( C_1 \cdots C_{n-2}\) in \( \mathrm{SO}(2) \setminus \{\pm I\} \). By Lemma~\ref{lemma1}, we can fix \( C_1,\ldots,C_{n-2} \) and deform \( C_{n-1} \) into \( \mathrm{SO}(2) \setminus \{\pm I\} \) through a path \( C_{n-1}(t) \) satisfying \( |\mathrm{tr}(C_1 \cdots C_{n-2} C_{n-1}(t))| < 2 \). Then \( C_n(t) := (C_1 \cdots C_{n-2} C_{n-1}(t))^{-1}\)satisfying \(C_n(1) \in \mathrm{SO}(2) \setminus \{\pm I\} \), and the resulting deformed representation \( \phi' \) satisfies \( \phi'(c_{n-1}), \phi'(c_n) \in \mathrm{SO}(2) \setminus \{\pm I\} \).

Cutting the surface along a simple closed curve \( c_{0}\), so that \( \{c_{n-1}, c_n\} \) and \( \{c_1,\ldots,c_{n-2}\} \) lie in different components. Then \( \phi' \) splits into two representations \( \phi_1', \phi_2' \) with
\[
\mathrm{sign}(\phi') = \mathrm{sign}(\phi_1') + \mathrm{sign}(\phi_2').
\]
Since \( \phi_1'(c_0) = (\phi(c_{n-1})\phi(c_n))^{-1} \in \mathrm{SO}(2) \setminus \{\pm I\} \), \( \phi_1' \) is a boundary elliptic representation. By the induction hypothesis, we can deform \( \phi_2' \) into a representation in \( \mathrm{Hom}(\pi_1(\Sigma), \mathrm{SO}(2)) \). For the pair-of-pants subsurface with boundary \( \{c_0,c_{n-1}, c_n\} \), we can deform \( \phi_1' \) along a path \( \phi_1'(t) \) such that \( \phi_1'(t)(c_0) = \phi_2'(t)(c_0)^{-1} \), and glue the two deformations into a path \( \phi'(t) = \phi_1'(t) \cup \phi_2'(t) \), ending in a representation in \(\mathrm{Hom}(\pi_1(\Sigma), \mathrm{SO}(2)) \).

Now consider the case \( |\mathrm{tr}(C_1 \cdots C_{n-2})| > 2 \), so the product is hyperbolic. Without loss of generality, we assume \( C_{n-2} \in \mathrm{SO}(2) \setminus \{\pm I\} \). Write
\[
C_1\cdots C_{n-2} = P \begin{pmatrix}
\lambda & 0 \\
0 & \lambda^{-1}
\end{pmatrix} P^{-1}, \quad \lambda \neq 0, 1.
\]
We aim to deform \( C_{n-1} \) into \( \mathrm{SO}(2) \setminus \{\pm I\} \), while maintaining \( C_n(t) \) elliptic, that is,
\[
|\mathrm{tr}(C_1 \cdots C_{n-2} C_{n-1}(t))| < 2.
\]

Let \( P^{-1} P_{n-1} = \begin{pmatrix} a & b \\ c & d \end{pmatrix} \in \mathrm{SL}(2, \mathbb{R}) \), and write
\begin{equation}\label{eqn16}
\mathrm{tr}(C_1 \cdots C_{n-2} C_{n-1}) = R \cos(\theta_{n-1} - \alpha),
\end{equation}
where
\[
R = \sqrt{(\lambda + \lambda^{-1})^2 + (\lambda - \lambda^{-1})^2(ac + bd)^2} > 2,
\]
and \( \alpha \in [0,2\pi) \) is determined by
\[
\cos\alpha = \tfrac{\lambda + \lambda^{-1}}{R}, \quad \sin\alpha = \tfrac{(\lambda - \lambda^{-1})(ac + bd)}{R}.
\]

Let $P_{n-1}(t)\in \mathrm{SL}(2,\mb{R})$ be a path connecting \(P_{n-1} \) and an element in \( \mathrm{SO}(2) \).
Next, we construct a continuous path \( \theta(t) \) with \( \theta(0) = \theta_{n-1} \), so that
\[
C_{n-1}(t) = P_{n-1}(t) R(\theta(t)) P_{n-1}(t)^{-1}
\]
satisfies the inequality
\begin{equation}\label{eqn3}
|\mathrm{tr}(C_1 \cdots C_{n-2} C_{n-1}(t))| < 2.
\end{equation}

The term \( (ac + bd)(t) \) from \( P^{-1} P_{n-1}(t) \) depends continuously on \( t \), as does the resulting quantity \( R(t) \). Therefore, we seek \( \theta(t) \) such that
\[
-2 < R(t)\cos(\theta(t) - \alpha(t)) < 2.
\]
Define \( \beta(t) := \arccos\left( \frac{2}{R(t)} \right) \in \left(0, \frac{\pi}{2} \right) \), and let
\begin{align*}
S_t &= \{ x \in [0, 2\pi) : |R(t)\cos(x - \alpha(t))| < 2 \} \\
&= \left( \alpha(t) + \beta(t), \alpha(t) + \pi - \beta(t) \right) \cup \left( \alpha(t) - \pi + \beta(t), \alpha(t) - \beta(t) \right) \mod 2\pi \\
&=: I_t^1 \cup I_t^2.
\end{align*}

The sets \( I_t^1 \) and \( I_t^2 \) are two open intervals depending continuously on \( t \). Since \( \theta(0) = \theta_{n-1} \in S_0 \), we may construct a continuous path \( \theta(t) \) staying within the same component \( I_t^1 \) or \( I_t^2 \), thereby ensuring that \eqref{eqn3} holds for all \( t \in [0,1] \).

If \( \theta(t) \neq 0, \pi \) for all \( t \in [0,1] \), then \( \theta(t) \in (0, \pi)\cup(\pi,2\pi) \), and thus \( C_{n-1}(1) \in \mathrm{SO}(2) \setminus \{\pm I\} \). If \( \sin\theta(t) = 0 \) for some \( t \in (0,1] \), let \( t_0 \) be the first such point. Then for some \( t_1 < t_0 \), we can ensure that
\begin{align*}
&\quad |\mathrm{tr}(C_{n-2} C_{n-1}(t_1))|\\
 &= |2\cos\theta_{n-2}\cos\theta(t_1) - \mathrm{tr}(P_{n-1}(t_1)^\top P_{n-1}(t_1)) \sin\theta_{n-2} \sin\theta(t_1)|< 2,
\end{align*}
since \( \sin\theta(t_1) \) can be made arbitrarily small and  \( \mathrm{tr}(P_{n-1}(t)^\top P_{n-1}(t)) \) remains bounded.

Thus, after a slight perturbation and a reparameterization, we may deform $\phi$ to a new representation $\phi'$ such that $\phi'(c_{n-1}) = C_{n-1}(1) \in \mathrm{SO}(2) \setminus \{\pm I\}$, or $C_{n-1}(1)$ is sufficiently close to $\pm I$ so that $|\mathrm{tr}(C_{n-2} C_{n-1}(1))| < 2$. We also ensure that $\phi'(c_{n-2}) = \phi(c_{n-2}) \in \mathrm{SO}(2) \setminus \{\pm I\}$.

As in the elliptic case, we cut the surface along a simple closed curve $c_0$, which separates $\{c_{n-2}, c_{n-1}\}$ from $\{c_n,c_1, \ldots, c_{n-3}\}$. By Lemma \ref{lemma1}, the restriction $\phi'_1$ of $\phi'$ to the subgroup generated by $\{c_{n-2}, c_{n-1}\}$ can be deformed along a path $\phi'_1(t)$ to a representation $\phi'_1(1) \in \mathrm{Hom}(\pi_1(\Sigma), \mathrm{SO}(2))$, and preserves conjugacy class of $\phi'_1(c_0)$. Hence, we may simultaneously deform the representation $\phi'_2$ on the complementary subsurface along a path $\phi'_2(t)$ so that the gluing condition $\phi'_1(t)(c_0) = \phi'_2(t)(c_0)^{-1}$ is satisfied at each time $t$.

Gluing the deformations $\phi'_1(t)$ and $\phi'_2(t)$ together yields a path of representations $\phi'(t) := \phi'_1(t) \cup \phi'_2(t)$ with initial condition $\phi'(0) = \phi'$, and such that $\phi'(1)(c_{n-2}), \phi'(1)(c_{n-1}) \in \mathrm{SO}(2) \setminus \{\pm I\}$.

By induction, we can further deform $\phi'_2(1)$ along a path $\phi''_2(t)$ into a representation in $\mathrm{Hom}(\pi_1(\Sigma), \mathrm{SO}(2))$. Accordingly, we also deform $\phi'_1(1)$ along a path $\phi''_1(t)$ within $\mathrm{Hom}(\pi_1(\Sigma), \mathrm{SO}(2))$ so that $\phi''_1(t)(c_0) = \phi''_2(t)(c_0)^{-1}$ at each time $t$. Gluing these two paths together gives a deformation $\phi''(t)$ such that $\phi''(1) \in \mathrm{Hom}(\pi_1(\Sigma), \mathrm{SO}(2))$.

Since the set of elliptic representations $\mathrm{Ell}(\Sigma)$ is dense in $\mathrm{E}(\Sigma)$, any representation $\phi \in \mathrm{sign}^{-1}(2m) \cap \mathrm{E}(\Sigma)$ can be deformed into a representation in $\mathrm{Hom}(\pi_1(\Sigma), \mathrm{SO}(2))$. By Proposition \ref{ell-prop1}, any two boundary elliptic-unipotent representations in $\mathrm{Hom}(\pi_1(\Sigma), \mathrm{SO}(2))$ can be connected by a continuous path. This completes the proof.
\end{proof}
As a corollary, we obtain 
\begin{cor}
The number of connected components of $\mathrm{E}(\Sigma)$ is $n-1$.
\end{cor}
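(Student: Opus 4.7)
The plan is to deduce the corollary as an immediate counting consequence of the two propositions that precede it, namely Proposition~\ref{ell-prop2} (each nonempty level set of the signature is a single connected component of $\mathrm{E}(\Sigma)$) together with the characterization of the image of the signature map from equation~\eqref{eqn2}, which identifies $\mathrm{sign}(\mathrm{E}(\Sigma))$ with $[-2(n-2),2(n-2)] \cap \{2n-4\mathbb{Z}\}$.

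First, I would observe that the signature is locally constant on $\mathrm{E}(\Sigma)$: indeed, by the continuity of the Toledo invariant and the continuity of the rho invariant away from elements with eigenvalue $1$ (recalled at the end of Section~\ref{sec-Pre}), the boundary holonomies along any path in $\mathrm{E}(\Sigma)$ remain in $\mathrm{SO}(2)\setminus\{I\}$ up to conjugation, so $\mathrm{sign}$ cannot jump. Consequently, distinct values of $\mathrm{sign}$ correspond to distinct connected components, and combined with Proposition~\ref{ell-prop2} this yields a bijection between $\pi_0(\mathrm{E}(\Sigma))$ and the set of achievable signature values.

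Next, I would count this set. Writing $\mathrm{sign}(\phi) = 2m$, the admissible $m$ range over $[-(n-2),n-2] \cap \{n-2\mathbb{Z}\}$, i.e.\ integers $m$ with the same parity as $n$ and $|m| \leq n-2$. These form an arithmetic progression with common difference $2$, starting at $-(n-2)$ and ending at $n-2$, so their number is
\[
\frac{(n-2)-(-(n-2))}{2}+1 \;=\; n-1.
\]

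There is no real obstacle here: the substance is all contained in Proposition~\ref{ell-prop2}. The corollary is a direct arithmetic consequence once one knows that each nonempty level set is connected and that the signature takes exactly the $n-1$ values listed above.
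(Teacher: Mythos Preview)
Your proposal is correct and matches the paper's approach exactly: the corollary is stated immediately after Proposition~\ref{ell-prop2} with the phrase ``As a corollary, we obtain,'' and no separate proof is given, since it is precisely the counting argument you outline from \eqref{eqn2} together with the fact that each level set of the signature is a single connected component.
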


\subsubsection{Boundary elliptic representations}

Similarly, one may consider boundary elliptic representations. According to \cite[Theorem 4]{KPW1}, we have
\[
  \mathrm{sign}(\mathrm{Ell}(\Sigma)) = \mathrm{sign}(\mathrm{E}(\Sigma)) = [-2(n-2), 2(n-2)] \cap \{2n - 4\mathbb{Z}\}.
\]
Define a map \( \sigma: \mathrm{Ell}(\Sigma) \to \{\pm 1\}^n \) by
\[
  \sigma(\phi) = ({\mathrm{sgn}}(\theta_1 - \pi), \ldots, {\mathrm{sgn}}(\theta_n - \pi)),
\]
where \( \phi(c_i) = R(\theta_i) \). It is clear that if \( \phi, \psi \in \mathrm{Ell}(\Sigma) \) have different values under \( \sigma \), then they lie in different path components of \( \mathrm{Ell}(\Sigma) \). Using the same arguments as in the case of boundary elliptic representations into \( \mathrm{SO}(2) \), we obtain the following result:

\begin{prop}
Let \( a \in \{-1, 1\}^n \), and denote by \( r_a = \#\{ i : a_i = -1 \} \). Then for any
\[
  m \in [-n + r_a + 1, r_a - 1] \cap \{n - 2\mathbb{Z}\},
\]
the set \( \sigma^{-1}(a) \cap \mathrm{sign}^{-1}(2m) \cap \mathrm{Ell}(\Sigma) \) is a connected component of \( \mathrm{Ell}(\Sigma) \). For all other \( m \), this intersection is empty.
\end{prop}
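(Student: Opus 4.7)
The plan is to combine the $\mathrm{SO}(2)$-valued analogue Theorem~\ref{thm4} with the deformation scheme from Proposition~\ref{ell-prop2} (which handled $\mathrm{SL}(2,\mathbb{R})$-valued boundary elliptic-unipotent representations), while additionally tracking the sign vector $\sigma$ at every step. First I would settle emptiness and nonemptiness: for any $\phi\in\sigma^{-1}(a)\cap\mathrm{Ell}(\Sigma)$ with boundary angles $\theta_i$, the sign-vector condition forces $\theta_i\in(0,\pi)$ for the $r_a$ indices with $a_i=-1$ and $\theta_i\in(\pi,2\pi)$ otherwise, so $\tfrac{1}{2}\mathrm{sign}(\phi)=n-\tfrac{1}{\pi}\sum_i\theta_i\in(-n+r_a,r_a)$; combined with the inclusion $\mathrm{sign}(\mathrm{Ell}(\Sigma))\subset\{2n-4\mathbb{Z}\}$ recalled above, this forces $m\in[-n+r_a+1,r_a-1]\cap\{n-2\mathbb{Z}\}$. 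Conversely, for any $m$ in this range, picking angles $\theta_i$ in the prescribed intervals with $\sum_i\theta_i=(n-m)\pi$ and setting $\phi(c_i):=R(\theta_i)$ produces an explicit $\mathrm{SO}(2)$-valued representative in the intersection, since $n-m\in 2\mathbb{Z}$ gives $\prod R(\theta_i)=I$.

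For connectedness, observe first that $\sigma$ and $\mathrm{sign}$ are locally constant on $\mathrm{Ell}(\Sigma)$, so each intersection $\sigma^{-1}(a)\cap\mathrm{sign}^{-1}(2m)\cap\mathrm{Ell}(\Sigma)$ is open and closed in $\mathrm{Ell}(\Sigma)$, hence a union of components; it remains to show it is connected. Given $\phi_0,\phi_1$ in the intersection, I would deform each of them, within the same intersection, to an $\mathrm{SO}(2)$-valued representation, and then link the two resulting $\mathrm{SO}(2)$-valued endpoints by Theorem~\ref{thm4}. The deformation to $\mathrm{SO}(2)$ follows the inductive pair-of-pants argument of Proposition~\ref{ell-prop2} almost verbatim: perturb so that $\mathrm{tr}(C_1\cdots C_{n-2})\neq\pm 2$, and depending on whether this product is elliptic or hyperbolic, either apply Lemma~\ref{lemma1} to push $C_{n-1}$ into $\mathrm{SO}(2)\setminus\{\pm I\}$ while keeping $|\mathrm{tr}(C_1\cdots C_{n-2}C_{n-1}(t))|<2$, or construct the path $\theta(t)\in I_t^1\cup I_t^2$ from \eqref{eqn16} with the stopping-time device; then cut along an interior simple closed curve and induct on $n$, with base case $n=2$ (the annulus, where $\phi(c_2)=\phi(c_1)^{-1}$) trivial.

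The substantive new verification relative to Proposition~\ref{ell-prop2}, and the main obstacle I expect, is that $\sigma$ stays constant along the constructed path. This decomposes into two observations. Every deformation supplied by Lemma~\ref{lemma1} has the form $B(t)=P(t)R(\theta)P(t)^{-1}$, which fixes the eigenvalues of $B(t)$ and therefore its $\sigma$-value. For the other boundary matrices $C(t)$, which arise as inverses of products kept in the open elliptic region $\{|\mathrm{tr}|<2\}$ and hence never equal $\pm I$, the function $\sigma$ is a continuous $\{\pm 1\}$-valued function on this region and is therefore constant along the path. In the hyperbolic subcase, the stopping-time argument terminates strictly before $\theta(t)$ can reach $0$ or $\pi$, so the chosen branch of $I_t^1\cup I_t^2$ does not drag $\theta(t)$ across $\pi$, preserving the $\sigma$-value of $C_{n-1}(t)$. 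Once $\sigma$-preservation is established throughout, Theorem~\ref{thm4} links the $\mathrm{SO}(2)$-valued endpoints through a path inside $\sigma^{-1}(a)\cap\mathrm{sign}^{-1}(2m)\cap\mathrm{Ell}(\Sigma)$, yielding connectedness and completing the proof.
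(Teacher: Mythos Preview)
Your proposal is correct and follows the same approach the paper intends: the paper gives no detailed proof here, stating only that the result follows ``using the same arguments as in the case of boundary elliptic representations into $\mathrm{SO}(2)$,'' i.e., by combining Theorem~\ref{thm4} with the deformation of Proposition~\ref{ell-prop2}. Your writeup makes explicit the one point the paper leaves implicit, namely that the path produced by Proposition~\ref{ell-prop2} never leaves $\mathrm{Ell}(\Sigma)$ (each boundary holonomy stays in the open elliptic region $\{|\mathrm{tr}|<2\}$, hence never equals $\pm I$), so that the locally constant function $\sigma$ is automatically preserved; your observations about Lemma~\ref{lemma1} fixing conjugacy classes and the stopping-time device halting before $\theta(t)$ reaches $0$ or $\pi$ are exactly the checks needed.
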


\begin{cor}
The number of connected components of \( \mathrm{Ell}(\Sigma) \) is
\(
(n - 1) \cdot 2^{n - 1}.
\)
\end{cor}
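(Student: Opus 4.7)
The plan is to derive this corollary directly from the preceding proposition, which sets up a bijection between the connected components of $\mathrm{Ell}(\Sigma)$ and the pairs $(a, m)$ with $a \in \{-1,1\}^n$ and $m \in [-n+r_a+1,\, r_a-1] \cap \{n-2\mathbb{Z}\}$, where $r_a := \#\{i : a_i = -1\}$. Hence the problem reduces to a purely combinatorial enumeration of such pairs, in exact analogy with the corresponding corollary for $\mathrm{Hom}(\pi_1(\Sigma), \mathrm{SO}(2))$ computed earlier in this section.

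First, I would fix $r \in \{0, 1, \ldots, n\}$ and count the admissible values of $m$ for any $a$ with $r_a = r$. The interval $[-n+r+1,\, r-1]$ consists of exactly $n-1$ consecutive integers, and the congruence condition $m \in \{n - 2\mathbb{Z}\}$ selects those of the same parity as $n$. A short case distinction on the parities of $n$ and $r$ shows that the number of admissible $m$ equals $\left\lfloor (n-1+(r \bmod 2))/2 \right\rfloor$; in particular, when $n$ is odd this value is $(n-1)/2$ irrespective of $r$, while when $n$ is even it alternates between $n/2 - 1$ and $n/2$ according to the parity of $r$.

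Next, I would sum over all $a \in \{-1,1\}^n$, grouping by $r_a = r$ and using $\binom{n}{r}$ for the number of such $a$. This gives
\[
\#\{\mathrm{Ell}(\Sigma)\} \;=\; \sum_{r=0}^{n} \binom{n}{r} \left\lfloor \frac{n-1+(r \bmod 2)}{2} \right\rfloor .
\]
Applying the identities $\sum_{r \text{ even}} \binom{n}{r} = \sum_{r \text{ odd}} \binom{n}{r} = 2^{n-1}$ and splitting on the parity of $n$, both cases collapse to $(n-1)\cdot 2^{n-1}$.

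Since all of the geometric and deformation-theoretic content is contained in the preceding proposition, no genuine obstacle remains; the step most prone to a slip is the parity bookkeeping, which can be double-checked against the small cases $n=2, 3$ (yielding $2$ and $8$ components respectively) before presenting the final count.
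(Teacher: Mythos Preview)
Your proposal is correct and follows essentially the same approach as the paper: the paper writes the count directly as $\sum_{a \in \{-1,1\}^n} \left\lfloor \tfrac{n-1+(r_a \bmod 2)}{2} \right\rfloor = (n-1)\cdot 2^{n-1}$, and your argument supplies exactly the parity bookkeeping that justifies this identity. The only difference is that the paper compresses the computation into a single line, whereas you spell out the case split on the parities of $n$ and $r$ and invoke $\sum_{r\ \mathrm{even}}\binom{n}{r}=\sum_{r\ \mathrm{odd}}\binom{n}{r}=2^{n-1}$ explicitly.
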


\subsection{The case of genus one}

In this section, we assume that \( g = 1 \).

\subsubsection{The case \( n = 1 \)} \label{secn=1}
We first consider the once-punctured torus \( \Sigma = \Sigma_{1,1} \). In this case, a representation is given by a triple \( ((A, B), C) \) with the relation \( [A, B] C = I \). 

If \( | \mathrm{tr}(A) | = 2 \), then \( A \) is conjugate to one of the unipotent matrices
\[
\alpha^{\pm}(s) = \pm \begin{pmatrix}
1 & s \\
0 & 1
\end{pmatrix}.
\]
For any \( \beta = \begin{pmatrix}
a & b \\
c & d
\end{pmatrix} \in \mathrm{SL}(2, \mathbb{R}) \), it follows that
\begin{equation} \label{eqn10}
\mathrm{tr}([\alpha^{\pm}(s), \beta]) = 2 + s^2 c^2 \geq 2,
\end{equation}
see \cite[Page 29]{KPW1}.

If \( |\mathrm{tr}(A)| < 2 \), then \( A \sim R(\theta) \), and from \eqref{eqn5} we have
\begin{equation} \label{eqn8}
\mathrm{tr}([R(\theta), \beta]) = \mathrm{tr}(R(\theta) \beta R(\theta) \beta^{-1}) = 2 + (\mathrm{tr}(\beta^\top \beta) - 2) \sin^2\theta \geq 2,
\end{equation}
where \( \mathrm{tr}(\beta^\top \beta) = (a - d)^2 + (b + c)^2 + 2 \geq 2\).

If \( A \) is hyperbolic, then it is conjugate to 
\[
\alpha(\lambda) = \begin{pmatrix}
\lambda & 0 \\
0 & \lambda^{-1}
\end{pmatrix}, \quad 0 < |\lambda| < 1.
\]
In this case, one computes
\begin{equation} \label{eqn7}
\mathrm{tr}([\alpha(\lambda), \beta]) = 2 - bc(\lambda - \lambda^{-1})^2.
\end{equation}

Now we analyze the connected components of boundary elliptic-unipotent representations, i.e., when \( C \sim R(\theta) \) for some \( \theta \in (0, 2\pi) \). Since \( \mathrm{tr}(C) = \mathrm{tr}([A, B]) \in [-2, 2) \), both \( A \) and \( B \) must be hyperbolic by \eqref{eqn10} and \eqref{eqn8}. Up to conjugation, we may assume \( A = \alpha(\lambda) \) and \( B = \beta \). Then \( bc > 0 \), and we distinguish the subsets \( \{b > 0, c > 0\} \) and \( \{b < 0, c < 0\} \). Hence, the space of boundary elliptic-unipotent representations has eight connected components, determined by the triple
\[
N({\mathrm{sgn}}(\lambda), {\mathrm{sgn}}(\mathrm{tr}(B)), {\mathrm{sgn}}(B_{21})).
\]

Note that the signature satisfies
\[
\mathrm{sign}(\phi) = 2\mathrm{T}(\phi) + 2\boldsymbol{\rho}_\phi(C) = 2\mathrm{T}(\phi) + 2(1 - \tfrac{\theta}{\pi}).
\]
As \( \theta \to 2\pi \), we have \( T \to 0 \), and hence \( \mathrm{sign}(\phi) = -2 \). Similarly, as \( \theta \to 0 \), we again have \( T \to 0 \), and \( \mathrm{sign}(\phi) = 2 \).

Using the explicit expression
\begin{equation} \label{eqn6}
[\alpha(\lambda), \beta] = 
\begin{pmatrix}
ad - \lambda^2 bc &  ( \lambda^2-1) ba \\
(\frac{1}{\lambda^2} -1) cd & -\frac{cb}{\lambda^2} + da
\end{pmatrix},
\end{equation}
we deduce that
\[
\mathrm{sign}\big(N({\mathrm{sgn}}(\lambda), {\mathrm{sgn}}(\mathrm{tr}(B)), {\mathrm{sgn}}(B_{21}))\big) = 2 \cdot \mathrm{sgn}(|\lambda|-1){\mathrm{sgn}}(\mathrm{tr}(B)) {\mathrm{sgn}}(B_{21}).
\]

Furthermore, since \( ad = bc + 1 > 1 \), we have \( a, b, c, d \neq 0 \). By \eqref{eqn6}, it follows that \( [\alpha(\lambda), \beta] \neq -I \), and therefore each boundary elliptic-unipotent representation is actually boundary elliptic. Consequently, the space \( \mathrm{E}(\Sigma_{1,1}) \), and hence \( \mathrm{Ell}(\Sigma_{1,1}) \), has exactly eight connected components.
\begin{prop}\label{ell-prop3}
The number of connected components of \( \mathrm{E}(\Sigma_{1,1}) \) and \( \mathrm{Ell}(\Sigma_{1,1}) \) is eight.
\end{prop}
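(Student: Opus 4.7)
The plan is to combine the trace inequalities \eqref{eqn10}, \eqref{eqn8}, and \eqref{eqn7} with the boundary constraint to force both $A := \phi(a_1)$ and $B := \phi(b_1)$ to be hyperbolic, put the pair in a convenient normal form, and then exhibit three independent $\mathbb{Z}/2$-valued invariants that jointly classify the components.

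\smallskip

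First I would argue that $A$ and $B$ must both be hyperbolic. Since $C = [A,B]^{-1}$ is conjugate to $R(\theta)$ with $\theta \in (0, 2\pi)$, we have $\mathrm{tr}([A,B]) = \mathrm{tr}(C) \in [-2, 2)$; but \eqref{eqn10} and \eqref{eqn8} show $\mathrm{tr}([A,B]) \geq 2$ whenever $A$ is parabolic or elliptic, and the identity $[A,B] = A[B,A^{-1}]A^{-1}$ gives the same conclusion for $B$. After conjugating the whole representation I may normalize $A = \alpha(\lambda)$ with $0 < |\lambda| < 1$ and $B = \begin{pmatrix} a & b \\ c & d \end{pmatrix}$; formula \eqref{eqn7} gives $\mathrm{tr}([A,B]) = 2 - bc(\lambda - \lambda^{-1})^2 < 2$, hence $bc > 0$. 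In particular $ad = 1 + bc > 1$, so each of $a, b, c, d$ is nonzero, and the explicit expression \eqref{eqn6} shows $[A,B] \neq -I$; this rules out $C = -I$ and proves $\mathrm{E}(\Sigma_{1,1}) = \mathrm{Ell}(\Sigma_{1,1})$.

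\smallskip

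Next I would introduce the labelling map $N \colon \mathrm{E}(\Sigma_{1,1}) \to \{\pm 1\}^3$ sending $\phi$ to the triple $\bigl(\mathrm{sgn}(\lambda),\, \mathrm{sgn}(\mathrm{tr}(B)),\, \mathrm{sgn}(c)\bigr)$. Each coordinate is locally constant: $\lambda$ cannot cross $0$ or $\pm 1$ along a continuous path of hyperbolic $A$ subject to $|\lambda| < 1$, $\mathrm{tr}(B) = a + d$ is nonzero because $ad > 1 > -bc$, and $c = B_{21}$ is nowhere zero. All eight values of $N$ are realized by explicit examples, yielding $\#\{\mathrm{E}(\Sigma_{1,1})\} \geq 8$. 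Note that the signature alone would only separate two classes, via the identity $\mathrm{sign}(\phi) = 2\,\mathrm{sgn}(\lambda)\,\mathrm{sgn}(\mathrm{tr}(B))\,\mathrm{sgn}(c)$ read off from \eqref{eqn6} together with $\mathrm{sign}(\phi) = 2\mathrm{T}(\phi) + 2(1 - \theta/\pi)$, so the finer invariant $N$ is genuinely needed.

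\smallskip

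The main step — and the principal obstacle — is the reverse bound: showing that each fiber $N^{-1}(\varepsilon_1,\varepsilon_2,\varepsilon_3)$ is path-connected. Within the normal-form slice I would continuously interpolate the parameters $(\lambda, a, b, c)$ subject to the open constraints $0 < |\lambda| < 1$, $\mathrm{sgn}(\lambda) = \varepsilon_1$, $\mathrm{sgn}(a+d) = \varepsilon_2$, $\mathrm{sgn}(c) = \varepsilon_3$, $bc > 0$, with $d = (1+bc)/a$ determined by $\det B = 1$; this locus is easily seen to be path-connected by straight-line interpolation in each sign sector. The delicate point is that the normalization $A = \alpha(\lambda)$ is defined only up to the diagonal stabilizer, so to promote a slice-level path to a genuine path in $\mathrm{Hom}(\pi_1(\Sigma_{1,1}), \mathrm{SL}(2,\mathbb{R}))$ one must continuously conjugate back by an $\mathrm{SL}(2,\mathbb{R})$-path; this can be arranged because the subset of hyperbolic elements with a prescribed sign of $\mathrm{tr}$ is connected, so a continuous eigenbasis can be chosen along the path. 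Combining the two bounds yields exactly $2^3 = 8$ components for both $\mathrm{E}(\Sigma_{1,1})$ and $\mathrm{Ell}(\Sigma_{1,1})$.
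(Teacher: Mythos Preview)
Your approach is the same as the paper's: force $A,B$ hyperbolic via \eqref{eqn10}--\eqref{eqn8}, normalize $A=\alpha(\lambda)$, observe $bc>0$ hence $a,b,c,d\neq 0$ and $[A,B]\neq -I$, and classify by the triple $(\mathrm{sgn}\lambda,\mathrm{sgn}(\mathrm{tr} B),\mathrm{sgn} B_{21})$. The paper simply asserts that these eight sign sectors exhaust the components; you are more explicit about the path-connectedness of each sector and about lifting slice paths back to $\mathrm{Hom}$, which is to the good.

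There is one genuine omission in your parameter count. In listing the open constraints on $(\lambda,a,b,c)$ you impose $bc>0$ but forget the upper bound coming from $\mathrm{tr}([A,B])>-2$, namely $bc\,(\lambda-\lambda^{-1})^2<4$ (the case $=-2$ would force $C=-I$, which you have already excluded). With this extra inequality the sign sector is no longer convex, so straight-line interpolation can exit the region. The fix is easy: first shrink $b,c$ toward $0$ along their sign rays (this only decreases $bc$, hence stays in the region and keeps $B$ hyperbolic since $ad=1+bc>1$ implies $(a+d)^2\geq 4ad>4$), then move $\lambda$ and $a$ freely, then push $b,c$ out to the target values. A side remark: your displayed formula $\mathrm{sign}(\phi)=2\,\mathrm{sgn}(\lambda)\,\mathrm{sgn}(\mathrm{tr} B)\,\mathrm{sgn}(c)$ does not match the paper's $\mathrm{sign}(\phi)=2\,\mathrm{sgn}(|\lambda|-1)\,\mathrm{sgn}(\mathrm{tr} B)\,\mathrm{sgn}(B_{21})$, which under $|\lambda|<1$ has no $\mathrm{sgn}(\lambda)$ factor; this does not affect your component count, but you should recheck that computation.
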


Moreover, for any representation $\phi=((A,B),C)\in \mathrm{Hom}(\pi_1(\Sigma_{1,1}),\mathrm{SL}(2,\mb{R}))$, its signature is given by Table \ref{sign-table}.

\begin{table}[htbp]
\centering
\renewcommand{\arraystretch}{1.2}
\caption{The signatrue for $\mr{Hom}(\pi_1(\Sigma_{1,1}),\mathrm{SL}(2,\mb{R}))$}
\begin{tabular}{| c | c | }
\hline
     $C(\neq -I)$   & $\mathrm{sign}(\phi)$\\
\hline
   $\mathrm{tr}(C)> 2$  & $0$\\
\hline
   $C\sim\begin{pmatrix}
  1& 0\\
  \mu&1 
\end{pmatrix}$  & $\mathrm{\mathrm{sgn}}(\mu)$\\
\hline
    $C\sim R(\theta)$, $\theta\in (0,\pi)$   & $2$\\
\hline
$C\sim R(\theta)$, $\theta\in (\pi,2\pi)$  & $-2$\\
\hline
 $\mathrm{tr}(C)\leq -2$  & $\pm 2$\\
\hline
\end{tabular}	
\label{sign-table}
\end{table}

\subsubsection{The case of $n= 2$}

In this section, we assume that \( g = 1 \) and \( n = 2 \). In this case, by \cite[Theorem 4]{KPW1}, the possible values of the signature for boundary elliptic (or boundary elliptic-unipotent) representations are given by the set \( \{-4, 0, 4\} \).

Let \( \phi \in \mathrm{Hom}(\pi_1(\Sigma), \mathrm{SL}(2, \mathbb{R})) \) be a boundary elliptic-unipotent representation. Denote by \( \phi_1 = \phi|_{\pi_1(\Sigma_{1,1})} \) and \( \phi_2 = \phi|_{\pi_1(\Sigma_{0,3})} \) the restrictions of \( \phi \) to the subsurfaces \( \Sigma_{1,1} \) and \( \Sigma_{0,3} \), respectively. In fact, these two subsurfaces $\Sigma_{1,1}$ and $\Sigma_{0,3}$ are obtained by cutting the surface $\Sigma=\Sigma_{1,2}$ along a simple closed curve $c_0$, that is, $\Sigma=\Sigma_{1,1}\cup_{c_0}\Sigma_{0,3}$.
 Then the signature satisfies
\[
  \mathrm{sign}(\phi) = \mathrm{sign}(\phi_1) + \mathrm{sign}(\phi_2).
\]
\begin{lemma}\label{lemma3}
$\mathrm{sign}^{-1}(0)\cap \mathrm{E}(\Sigma)$ is a connected component. 	
\end{lemma}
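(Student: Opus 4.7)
The plan is to show that $\mathrm{sign}^{-1}(0)\cap\mathrm{E}(\Sigma_{1,2})$ is path-connected by continuously deforming every representation in it to a fixed $\mathrm{SO}(2)$-valued reference; the lemma then follows since the signature is locally constant on $\mathrm{E}(\Sigma_{1,2})$ (by the remarks at the end of Section~\ref{sec-Pre}). Fix the reference representation $\phi_0$ defined by $\phi_0(a)=\phi_0(b)=I$, $\phi_0(c_1)=R(\theta_0)$, $\phi_0(c_2)=R(2\pi-\theta_0)$ for some $\theta_0\in(0,\pi)$. Because $\phi_0$ factors through $\mathrm{SO}(2)$ its Toledo invariant vanishes, and by Table~\ref{tab:rho-invariant} the two rho contributions $2(1-\theta_0/\pi)$ and $2(1-(2\pi-\theta_0)/\pi)$ cancel; hence $\phi_0\in\mathrm{sign}^{-1}(0)\cap\mathrm{E}(\Sigma_{1,2})$.

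The deformation from an arbitrary $\phi\in\mathrm{sign}^{-1}(0)\cap\mathrm{E}(\Sigma_{1,2})$ to $\phi_0$ would proceed in two stages. In Stage~1 I abelianize the torus generators: since $\mathrm{SL}(2,\mathbb{R})$ is path-connected, choose continuous paths $a(t),b(t)$ with $(a(0),b(0))=(\phi(a),\phi(b))$ and $a(1)=b(1)=I$, and set $X(t):=[a(t),b(t)]^{-1}$. I then need a continuous family of elliptic-unipotent pairs $(c_1(t),c_2(t))$ with $c_1(t)c_2(t)=X(t)$ and $(c_1(0),c_2(0))=(\phi(c_1),\phi(c_2))$. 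This is a path-lifting problem for the product map $\mu\colon\mathcal{G}\to\mathrm{SL}(2,\mathbb{R})$, $\mu(C_1,C_2)=C_1C_2$, defined on the open set $\mathcal{G}\subset\mathrm{SL}(2,\mathbb{R})^2$ of elliptic-unipotent pairs. The map $\mu$ is surjective -- for any $X$ one may take $C_1=R(\theta)$ with $\theta$ chosen so that $\mathrm{tr}(X)\cos\theta+(X_{21}-X_{12})\sin\theta=0$, forcing $C_2=R(-\theta)X$ to have zero trace and therefore to be elliptic-unipotent -- and it is a submersion, since either factor can be varied independently to sweep out all tangent directions of $\mathrm{SL}(2,\mathbb{R})$. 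Assembling, $\phi_t(a):=a(t)$, $\phi_t(b):=b(t)$, $\phi_t(c_i):=c_i(t)$ defines a continuous family in $\mathrm{E}(\Sigma_{1,2})$ terminating at a representation with $\phi_1(a)=\phi_1(b)=I$.

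Stage~2 is short: any $\psi\in\mathrm{E}(\Sigma_{1,2})$ with $\psi(a)=\psi(b)=I$ satisfies $\psi(c_2)=\psi(c_1)^{-1}$, so such $\psi$ are parameterized by $\psi(c_1)$ in the elliptic-unipotent locus $\{A\in\mathrm{SL}(2,\mathbb{R}):|\mathrm{tr}(A)|<2\}$, a connected open subset of $\mathrm{SL}(2,\mathbb{R})$; hence $\phi_1$ is path-connected to $\phi_0$. I expect the main technical obstacle to be the global path-lifting in Stage~1: since $\mu$ is not proper, the lift could a~priori escape $\mathcal{G}$ if some $c_i(t)$ drifted toward $|\mathrm{tr}|=2$. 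The plan is to handle this by the local triviality of submersions together with compactness of $[0,1]$, re-choosing $c_1(t)$ inside the positive-dimensional fiber $\mu^{-1}(X(t))$ whenever the current lift threatens to leave $\mathcal{G}$; equivalently, by constructing an explicit lift of the form $c_1(t)=R(\theta(t))$ with $\theta(t)$ continuously interpolating from the angle of $\phi(c_1)$ to a root of $\mathrm{tr}(X(t))\cos\theta+(X_{21}(t)-X_{12}(t))\sin\theta=0$ while keeping $c_1(t)c_2(t)=X(t)$ elliptic-unipotent.
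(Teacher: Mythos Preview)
Your overall strategy---deform to a representation with $C_1C_2=[A,B]^{-1}=I$, then use connectedness of that locus---matches the paper's endpoint, but Stage~1 has a real gap. A surjective submersion does not have the path-lifting property unless it is proper or otherwise a fibration; ``local triviality together with compactness of $[0,1]$'' lets you lift on small subintervals but gives no mechanism preventing the lift from escaping $\mathcal{G}$ in finite time, and you have not shown that re-choosing within the fiber always avoids this. Your explicit alternative $c_1(t)=R(\theta(t))$ fails the initial condition, since $\phi(c_1)$ is only \emph{conjugate} to a rotation, not equal to one. There is a further structural obstacle you have overlooked: the open elliptic locus $\{|\mathrm{tr}|<2\}$ has \emph{two} connected components (rotation angle in $(0,\pi)$ versus $(\pi,2\pi)$), joined in the elliptic-unipotent set only through the single point $-I$; in particular your $\mathcal{G}$ is not open. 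The formula~\eqref{eqn5} forces $\theta_1,\theta_2$ to lie on \emph{opposite} sides of $\pi$ when $\mathrm{tr}(X)>2$ but on the \emph{same} side when $\mathrm{tr}(X)<-2$; since your uncontrolled path $X(t)=[a(t),b(t)]^{-1}$ can cross between these regimes, one of the $c_i(t)$ would be forced through $-I$ or out of the elliptic-unipotent set, and you have given no argument that this can be arranged compatibly.

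The paper avoids all of this by never attempting an abstract lift. It decomposes $\Sigma_{1,2}=\Sigma_{1,1}\cup_{c_0}\Sigma_{0,3}$, case-analyzes on $\mathrm{tr}([A,B])$ (separately $>2$, in $(-2,2)$, and $<-2$), and in each case constructs \emph{coordinated} explicit deformations $\phi_1(t)$ on the torus piece and $\phi_2(t)$ on the pants piece, using the trace formulas \eqref{eqn5}, \eqref{eqn7}, \eqref{eqn8}, \eqref{eqn18} to keep $\phi_1(t)(c_0)\sim\phi_2(t)(c_0)^{-1}$ and both $C_i(t)$ elliptic-unipotent throughout. The cost is several cases and a reduction step when $\mathrm{tr}([A,B])<-2$; the benefit is that the lifting is done by hand with full control over both sides.
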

\begin{proof}
Suppose that \( \phi \in \mathrm{sign}^{-1}(0) \cap \mathrm{E}(\Sigma) \). Then
\[
  \mathrm{sign}(\phi_1) + \mathrm{sign}(\phi_2) = 0.
\]
By a small perturbation, we may assume that \( \mathrm{tr}([A, B]), \mathrm{tr}(A), \mathrm{tr}(B) \neq \pm 2 \).

If \( \mathrm{tr}([A, B]) > 2 \), then \(  [A, B] \) is hyperbolic. Suppose \( C_i=\phi(c_i) \sim R(\theta_i) \), $i=1,2$. Then
\[
  \mathrm{tr}(C_1 C_2) = 2 \cos(\theta_1 + \theta_2) - [(a - d)^2 + (b + c)^2] \sin \theta_1 \sin \theta_2 > 2,
\]
which implies \( \sin \theta_1 \sin \theta_2 < 0 \), where $C_i=P_iR(\theta_i)P_i^{-1}$, $P=P_{1}^{-1}P_2=\begin{pmatrix}
  a&b \\
  c&d 
\end{pmatrix}\in \mathrm{SL}(2,\mb{R})$. Without loss of generality, assume \( \theta_2 \in (\pi, 2\pi) \). Define \( \theta_2(t) = (1 - t)\theta_2 + t(2\pi - \theta_1) \in (\pi,2\pi)\), and deform \( P \) so that \( \mathrm{tr}(C_1 C_2) \) remains invariant. Thus, we may assume \( \theta_1 + \theta_2 = 2\pi \), and
\[
  \mathrm{tr}(C_1 C_2) = 2 + [(a - d)^2 + (b + c)^2] \sin^2 \theta_1 > 2.
\]
Now we deform \( P \) to decrease the quantity \( (a - d)^2 + (b + c)^2 \) toward zero. Then there exists a deformation \( \phi_2(t) \) such that \( \mathrm{tr}(\phi_2(t)(c_1 c_2)) \to 2 \), and \( \phi_2(1)(c_1 c_2) = I \).

If \( A \) is elliptic, write \( A = R(\theta) \), \( B = \beta \). By \eqref{eqn8},
\[
  \mathrm{tr}([R(\theta), \beta]) \geq 2,
\]
with equality if and only if \( B = R(\theta') \). We can deform \( \beta(t) \) such that
\begin{equation} \label{eqn9}
  \mathrm{tr}([R(\theta), \beta(t)]) = \mathrm{tr}(\phi_2(t)(c_1 c_2)).
\end{equation}
Hence we obtain a deformation \( \phi_1(t) \) of \( \phi_1 \) such that \eqref{eqn9} holds. Then \( \phi_1(t)(c_0) \) is conjugate to \( \phi_2(t)(c_0)^{-1} \). Up to a conjugation, we can glue the two paths, and obtain a deformation \( \phi(t) \) with \( \phi(1)(c_1 c_2) = I \).

If \( A \) is hyperbolic, then by \eqref{eqn7},
\[
  \mathrm{tr}([\alpha(\lambda), \beta]) = 2 - bc(\lambda - \lambda^{-1})^2 > 2,
\]
so \( bc < 0 \). We can deform \( \beta \) such that \( \mathrm{tr}([\alpha(\lambda), \beta]) = \mathrm{tr}(\phi_2(t)(c_1 c_2)) \), and ensure \( b = c = 0 \) when \( \mathrm{tr}([\alpha(\lambda), \beta]) = 2 \). Similar to the elliptic case, we glue the two deformations to get a representation satisfying \( \phi(1)(c_1 c_2) = I \).

If \( |\mathrm{tr}([A, B])| < 2 \), then \( [A, B] \) is elliptic, and we can deform \( \phi \) such that \( \phi(c_1), \phi(c_2), \phi(c_1 c_2) \in \mathrm{SO}(2) \setminus \{\pm I\} \) by Lemma \ref{lemma1}. Find a deformation \( \phi_2(t) \) such that \( \phi_2(1)(c_1 c_2) = I \). The angle of \( \phi(t)(c_1 c_2)^{-1} \) converges to \( \pi(1 - \tfrac{1}{2}\mathrm{sign}(\phi_2)) \). From \eqref{eqn7},
\[
  \mathrm{tr}([\alpha(\lambda), \beta]) = 2 - bc(\lambda - \lambda^{-1})^2 < 2,
\]
so \( bc > 0 \). We can deform \( \beta \) as above, and again glue the representations to get \( \phi(1)(c_1 c_2) = I \).

If $\mathrm{tr}([A,B]) < -2$, then $[A,B]$ is hyperbolic. By \eqref{eqn8}, this implies that both $A$ and $B$ are hyperbolic. Up to conjugation, we may assume $A = \alpha(\lambda)$ and $B = \beta$. Then, by \eqref{eqn6} and \eqref{eqn7}, we have
\begin{equation}
  [A,B] = \begin{pmatrix}
    ad - \lambda^2 bc &  ab(\lambda^2-1) \\
   cd(\tfrac{1}{\lambda^2}-1) & -\tfrac{cb}{\lambda^2} + da
  \end{pmatrix},
\end{equation}
and
\begin{equation*}
  \mathrm{tr}([A,B]) = 2 - bc(\lambda - \lambda^{-1})^2 < -2.
\end{equation*}

On the other hand, by \eqref{eqn5}, we have:
\[
(C_1 C_2)_{21} = \sin \theta_1 \cos \theta_2 + \left[(a' c' + b' d')\sin \theta_1 + (c'^2 + d'^2)\cos \theta_1\right] \sin \theta_2,
\]
and
\begin{equation}\label{eqn18}
\mathrm{tr}(C_1 C_2) = 2 \cos \theta_1 \cos \theta_2 - (\mathrm{tr}(P^\top P)) \sin \theta_1 \sin \theta_2 < -2,
\end{equation}
which implies $\sin \theta_1 \sin \theta_2 > 0$. Here, $P = \begin{pmatrix} a' & b' \\ c' & d' \end{pmatrix} \in \mathrm{SL}(2, \mathbb{R})$.

We now fix $P$ and $\theta_1$, and deform $\theta_2$ such that $|\mathrm{tr}(C_1 C_2)(1)| < 2$ while keeping ${\mathrm{sgn}}((C_1 C_2)_{21})$ constant. Since $\sin \theta_1 \sin \theta_2 > 0$, we define
\[
F(\theta_1, \theta_2) := \frac{(C_1 C_2)_{21}}{\sin \theta_1 \sin \theta_2} = \cot \theta_2 + (a' c' + b' d') + (c'^2 + d'^2) \cot \theta_1.
\]
If $F(\theta_1, \theta_2) > 0$, we take $\theta_2 \to 0$ for $\theta_2 \in (0, \pi)$, or $\theta_2 \to \pi$ for $\theta_2 \in (\pi, 2\pi)$. If $F(\theta_1, \theta_2) < 0$, we take $\theta_2 \to \pi$ for $\theta_2 \in (0, \pi)$, or $\theta_2 \to 2\pi$ for $\theta_2 \in (\pi, 2\pi)$. In all cases, $|F(\theta_1, \theta_2(t))|$ increases and ${\mathrm{sgn}}(F(\theta_1, \theta_2))$ remains invariant.

Moreover, $\mathrm{tr}(C_1 C_2)(t) \to \pm 2 \cos \theta_1$, so eventually $|\mathrm{tr}(C_1 C_2)(1)| < 2$. Hence, we obtain a deformation $\phi_2(t)$ of the representation over $\{c_0,c_1, c_2\}$.

Next, we adjust $b, c$ in $B$ so that
\[
\mathrm{tr}([A,B]) = 2 - bc(\lambda - \lambda^{-1})^2 = \mathrm{tr}(\phi_2(t)(c_0)).
\]
This yields a deformed representation $\phi_1(t)$ on $\pi_1(\Sigma_{1,1})$ such that
\[
\mathrm{tr}(\phi_1(t)(c_0)) = \mathrm{tr}(\phi_2(t)(c_0)).
\]
Since $bc > 0$ and $ad > 1$, it follows that $a, b, c, d \neq 0$, so ${\mathrm{sgn}}(cd)$ is invariant. Thus,
\[
({\mathrm{sgn}}([A,B]_{21}) \cdot {\mathrm{sgn}}((C_1 C_2)_{21}))(t)
\]
remains constant. At $t=0$, we have $[A,B] = (C_1 C_2)^{-1}$, hence
\[
{\mathrm{sgn}}(cd) \cdot {\mathrm{sgn}}((C_1 C_2)_{21}) \equiv -1.
\]
It follows that $\phi_1(t)(c)$ is conjugate to $\phi_2(t)(c)^{-1}$. After a conjugate deformation of $\phi_2(t)$, we can glue $\phi_1$ and $\phi_2$ to obtain a new deformed representation $\phi(t)$.
In particular, $[A, B]$ is deformed to elliptic, reducing the situation to the case $|\mathrm{tr}([A, B])| < 2$. 

In summary, for any $\phi \in \mathrm{sign}^{-1}(0) \cap \mathrm{E}(\Sigma)$, we can find a deformation $\phi(t)$ such that $\phi(1)(c_1 c_2) = I$. Since the set of such representations is connected, it follows that $\mathrm{sign}^{-1}(0) \cap \mathrm{E}(\Sigma)$ is connected.
\end{proof}

\begin{lemma}
	$\mathrm{sign}^{-1}(\pm 4)\cap \mathrm{E}(\Sigma)$ has $4$ connected component.
\end{lemma}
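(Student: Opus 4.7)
The plan is to decompose $\Sigma=\Sigma_{1,2}$ along a separating simple closed curve $c_0$ into $\Sigma_{1,1}\cup_{c_0}\Sigma_{0,3}$ and study the two restrictions $\phi_1=\phi|_{\pi_1(\Sigma_{1,1})}$ and $\phi_2=\phi|_{\pi_1(\Sigma_{0,3})}$, as in the proof of Lemma \ref{lemma3}. Additivity of the signature combined with the Milnor--Wood inequality \eqref{sig-MW} applied to $\Sigma_{1,1}$ and $\Sigma_{0,3}$ (both with $|\chi|=1$) forces the extremal split
\[
\mathrm{sign}(\phi_1)=\mathrm{sign}(\phi_2)=\pm 2
\]
for every $\phi\in\mathrm{sign}^{-1}(\pm 4)\cap\mathrm{E}(\Sigma)$, where the common sign matches that of $\mathrm{sign}(\phi)$. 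Thus the cases $\mathrm{sign}(\phi)=\pm 4$ are completely symmetric, and I will concentrate on $\mathrm{sign}(\phi)=4$.

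First I would deform $\phi$ inside $\mathrm{sign}^{-1}(4)\cap\mathrm{E}(\Sigma)$ so that $\phi(c_0)\in\mathrm{SO}(2)\setminus\{\pm I\}$. By Table \ref{sign-table}, the constraint $\mathrm{sign}(\phi_1)=2$ leaves only three possibilities for the conjugacy class of $\phi_1(c_0)$: elliptic with rotation angle $\theta_0\in(0,\pi)$, unipotent with parameter $\mu>0$, or hyperbolic with $\mathrm{tr}(\phi_1(c_0))\leq -2$. In the elliptic case there is nothing to do. In the remaining cases I adapt the interpolation technique of Lemma \ref{lemma1} and the case analysis in Lemma \ref{lemma3}: move the trace of $\phi_1(c_0)$ continuously through $-2$ (passing through the $-I$-unipotent stratum with $\mu>0$) and into the elliptic region with $\theta_0\in(0,\pi)$, while simultaneously deforming $\phi_2$ on the pair of pants so that the gluing condition $\phi_2(c_0)=\phi_1(c_0)^{-1}$ is preserved. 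Table \ref{sign-table} guarantees that $\mathrm{sign}(\phi_1)\equiv 2$ along such a path, and Proposition \ref{ell-prop2} furnishes the compensating pair-of-pants deformation keeping $\mathrm{sign}(\phi_2)=2$.

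Once $\phi(c_0)\in\mathrm{SO}(2)\setminus\{\pm I\}$, the representation lies in the fiber product
\[
\bigl(\mathrm{sign}^{-1}(2)\cap\mathrm{E}(\Sigma_{1,1})\bigr)\times_{\theta_0}\bigl(\mathrm{sign}^{-1}(2)\cap\mathrm{E}(\Sigma_{0,3})\bigr)
\]
taken over the common gluing angle $\theta_0\in(0,\pi)$. By Proposition \ref{ell-prop3}, $\mathrm{E}(\Sigma_{1,1})$ has exactly eight connected components indexed by the triple $(\mathrm{sgn}(|\lambda|-1),\mathrm{sgn}(\mathrm{tr}(B)),\mathrm{sgn}(B_{21}))$, and precisely four of them realize the product $\mathrm{sgn}(|\lambda|-1)\mathrm{sgn}(\mathrm{tr}(B))\mathrm{sgn}(B_{21})=+1$, hence $\mathrm{sign}(\phi_1)=2$. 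Proposition \ref{ell-prop2} applied to $\Sigma_{0,3}$ shows that $\mathrm{sign}^{-1}(2)\cap\mathrm{E}(\Sigma_{0,3})$ is a single connected component. Since the gluing parameter $\theta_0$ varies in the connected interval $(0,\pi)$, the fiber product has exactly $4\cdot 1=4$ components, distinguished by the locally constant $\Sigma_{1,1}$-invariant above. The parallel argument produces another $4$ components of $\mathrm{sign}^{-1}(-4)\cap\mathrm{E}(\Sigma)$.

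The main obstacle is the first step: because the Milnor--Wood bound is saturated simultaneously on both pieces, there is no slack for $\mathrm{sign}(\phi_1)$ or $\mathrm{sign}(\phi_2)$ to fluctuate during the deformation, so the path of $\phi(c_0)$ must cross the transition loci $\{\mathrm{tr}=\pm 2\}$ only through the specific strata identified in Table \ref{sign-table} that preserve both signatures. Carrying out this constrained case analysis, which is analogous to but more rigid than the argument at the end of Lemma \ref{lemma3}, is the delicate part; once it is in place, the count of $4$ components follows immediately from Propositions \ref{ell-prop2} and \ref{ell-prop3}.
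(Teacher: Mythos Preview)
Your approach is essentially the same as the paper's: split along $c_0$, use additivity plus the Milnor--Wood bound \eqref{sig-MW} to force $\mathrm{sign}(\phi_1)=\mathrm{sign}(\phi_2)=2$, reduce to the case where $\phi(c_0)$ is elliptic by invoking the deformation analysis of Lemma~\ref{lemma3}, and then count $4\times 1$ via Propositions~\ref{ell-prop2} and~\ref{ell-prop3}. The paper phrases the exclusion of $\mathrm{tr}([A,B])>2$ as a contradiction (such $[A,B]$ deforms to the identity inside $\mathrm{E}(\Sigma)$ by the proof of Lemma~\ref{lemma3}, forcing $\mathrm{sign}(\phi_1)=0$), whereas you read it off Table~\ref{sign-table}; these are equivalent.

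One small slip in your case list: the row of Table~\ref{sign-table} with $C\sim\begin{pmatrix}1&0\\ \mu&1\end{pmatrix}$ and $\mu>0$ gives $\mathrm{sign}(\phi_1)=1$, not $2$, so ``unipotent with parameter $\mu>0$'' should not appear among the possibilities compatible with $\mathrm{sign}(\phi_1)=2$. The genuine possibilities are exactly elliptic with $\theta_0\in(0,\pi)$ and the region $\mathrm{tr}(\phi_1(c_0))\leq -2$ (the last row of the table), and the deformation you sketch is really about crossing the trace $=-2$ stratum from the latter into the former. This does not affect the argument, since that extraneous case simply does not occur.
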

\begin{proof}
Suppose \( \phi \in \mathrm{sign}^{-1}(4) \cap \mathrm{E}(\Sigma) \). Then
\[
  \mathrm{sign}(\phi_1) = \mathrm{sign}(\phi_2) = 2.
\]
By a slight perturbation, we may assume that \( | \mathrm{tr}([A, B]) | \neq 2 \). If \( \mathrm{tr}([A, B]) > 2 \), then by the proof of Lemma~\ref{lemma3}, the commutator \( [A, B] \) can be deformed to the identity, which implies \( \mathrm{sign}(\phi_1) = 0 \), a contradiction. See also \cite[Proposition 9.7]{KPW1} for further discussion.

Thus, we must have \( \mathrm{tr}([A, B]) < 2 \). From the proof of Lemma~\ref{lemma3}, the representation \( \phi \) can be deformed so that \( | \mathrm{tr}([A, B]) | < 2 \). This implies that \( \phi_1 \in \mathrm{sign}^{-1}(2) \cap \mathrm{E}(\Sigma_{1,1}) \) and \( \phi_2 \in \mathrm{sign}^{-1}(2) \cap \mathrm{E}(\Sigma_{0,3}) \). By Proposition~\ref{ell-prop2} and Proposition~\ref{ell-prop3}, we have
\[
  \#\{ \mathrm{sign}^{-1}(2) \cap \mathrm{E}(\Sigma_{1,1}) \} = 4, \quad \#\{ \mathrm{sign}^{-1}(2) \cap \mathrm{E}(\Sigma_{0,3})\} = 1.
\]
Hence, \( \mathrm{sign}^{-1}(4) \cap \mathrm{E}(\Sigma) \) has exactly 4 connected components.

By symmetry, the same argument applies to \( \mathrm{sign}^{-1}(-4) \cap \mathrm{E}(\Sigma) \), which therefore also has 4 connected components.
\end{proof}

Next, we consider the boundary elliptic representations \( \mathrm{Ell}(\Sigma_{1,2}) \). Assume each boundary element satisfies \( C_i \sim R(\theta_i) \), where \( \theta_i \in (0,\pi) \cup (\pi,2\pi) \) for \( i = 1,2 \).

Following the same reasoning as in earlier proofs, the set \( \mathrm{sign}^{-1}(0) \cap \mathrm{Ell}(\Sigma) \cap \sigma^{-1}(a) \) is connected when \( a = (1,-1) \) or \( a = (-1,1) \), as these configurations allow a deformation to a representation with \( [A, B] = I \). However, for \( a = (1,1) \) or \( a = (-1,-1) \), such a deformation is not possible—each case contributes four connected components.

For instance, consider \( a = (-1,-1) \). If \( [A, B] \) could be deformed to the identity, then there would exist smooth functions \( \theta_1(t), \theta_2(t) \in (0,\pi) \) for \( t \in [0,1] \) satisfying 
\[
\theta_1(1) + \theta_2(1) = 2\pi,
\]
which is impossible since their sum must lie in $(0,2\pi)$. Therefore,
\[
\#\{\mathrm{sign}^{-1}(0) \cap \mathrm{Ell}(\Sigma) \} = 2 + 4 + 4 = 10.
\]

Next, consider the connected components of \( \mathrm{sign}^{-1}(4) \cap \mathrm{Ell}(\Sigma) \cap \sigma^{-1}(a) \). In this case,
\[
\mathrm{sign}(\phi_1) = \mathrm{sign}(\phi_2) = 2.
\]
From the proof of Lemma~\ref{lemma3}, we may assume that \( [A, B] \) is elliptic. Let \( \theta_{[A,B]} \) denote the rotation angle associated with \( [A,B] \). Then \( \mathrm{sign}(\phi_1) = 2 \) implies
\(
\theta_{[A,B]} \in (\pi,2\pi).
\)
For the representation \( \phi_2 \), we assume all boundary images lie in \( \mathrm{SO}(2) \setminus \{\pm I\} \). The condition \( \mathrm{sign}(\phi_2) = 2 \) yields:
\[
2 = 2(3 - \tfrac{\theta_1 + \theta_2 + \theta_{[A,B]}}{\pi} ),
\]
which implies
\(
\theta_1 + \theta_2 \in (0, \pi).
\)
Hence, if \( \sigma(\phi) \neq (-1,-1) \), this condition cannot be satisfied, and therefore
\[
\mathrm{sign}^{-1}(4) \cap \mathrm{Ell}(\Sigma) \cap \sigma^{-1}(a) = \emptyset \quad \text{for } a \neq (-1,-1).
\]
Thus,
\(
\#\{ \mathrm{sign}^{-1}(4) \cap \mathrm{Ell}(\Sigma) \} = 4.
\)
Similarly,
\(
\#\{ \mathrm{sign}^{-1}(-4) \cap \mathrm{Ell}(\Sigma) \} = 4.
\)
\begin{prop}
The space \( \mathrm{E}(\Sigma_{1,2}) \) has $9$ connected components, and \( \mathrm{Ell}(\Sigma_{1,2}) \) has $18$ connected components
\end{prop}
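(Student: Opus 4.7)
The plan is to collect the component counts established in the preceding lemmas and sum them. Since no boundary holonomy of a representation in $\mathrm{E}(\Sigma_{1,2})$ or $\mathrm{Ell}(\Sigma_{1,2})$ has eigenvalue $1$, the signature $\mathrm{sign}(\phi)$ is locally constant on each space; moreover, by \cite[Theorem 4]{KPW1} together with $|\chi(\Sigma_{1,2})|=2$, the set of attained values is confined to $\{-4,0,4\}$. Thus each of $\mathrm{E}(\Sigma_{1,2})$ and $\mathrm{Ell}(\Sigma_{1,2})$ decomposes as a disjoint union of signature-level subsets, and the task reduces to tallying the component counts over these three levels.

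For $\mathrm{E}(\Sigma_{1,2})$ I would simply quote Lemma \ref{lemma3}, which gives one component at signature $0$, together with the subsequent lemma, which gives four components at each of $\pm 4$, obtaining $1+4+4=9$. For $\mathrm{Ell}(\Sigma_{1,2})$ I would sum the refined counts from the paragraphs immediately before the proposition: ten components at signature $0$ (the $\sigma$-fibers over $(1,-1)$ and $(-1,1)$ each contribute a single component through a deformation to $[A,B]=I$, while the $\sigma$-fibers over $(1,1)$ and $(-1,-1)$ each split into four components, since the constraint $\theta_1+\theta_2=2\pi$ cannot hold with both angles on the same side of $\pi$, so one is reduced to the eight-component description of $\mathrm{Ell}(\Sigma_{1,1})$ from Section \ref{secn=1} cut in half by the induced $\sigma$-sign of the separating curve $c_0$), and four components at each of $\pm 4$ (forced into a single $\sigma$-fiber by the identity $\mathrm{sign}(\phi_2)=2(3-(\theta_1+\theta_2+\theta_{[A,B]})/\pi)$ combined with $\theta_{[A,B]}\in(0,\pi)\cup(\pi,2\pi)$, and thereafter identified with the $4$ components of $\mathrm{sign}^{-1}(\pm 2)\cap\mathrm{E}(\Sigma_{1,1})$). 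The total is $10+4+4=18$.

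Essentially all of the work has already been done in the preceding lemmas; the only points that need a brief verification are that the case division is exhaustive and that components cannot merge across different signature levels. Both are immediate from the local constancy of the signature on $\mathrm{Ell}(\Sigma_{1,2})$: along any continuous path in $\mathrm{Ell}(\Sigma_{1,2})$ the boundary angles $\theta_i$ stay in $(0,\pi)\cup(\pi,2\pi)$, no boundary eigenvalue equals $1$, and the rho invariant together with the continuous Toledo invariant keeps $\mathrm{sign}(\phi_t)$ constant. Consequently the total count is the sum over the three levels, and no cancellation or identification can reduce the numbers $9$ and $18$ obtained above.
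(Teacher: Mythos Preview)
Your proposal is correct and follows essentially the same approach as the paper: both decompose each space into the three signature levels $\{-4,0,4\}$, invoke Lemma~\ref{lemma3} and the subsequent lemma for the $\mathrm{E}$-counts $1+4+4=9$, and for $\mathrm{Ell}$ refine the signature-$0$ level by the $\sigma$-map into $2+4+4=10$ components while showing the extremal levels are trapped in a single $\sigma$-fiber contributing $4$ each. Your phrasing ``cut in half by the induced $\sigma$-sign of the separating curve $c_0$'' is a slightly different way of saying what the paper derives from $\theta_1+\theta_2\in(0,2\pi)$ forcing a unique sign for $\phi_1$, but the content is identical.
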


\subsubsection{The case of $n\geq 3$} In this subsection, $\Sigma=\Sigma_{1,n}$, $n\geq 3$.

\begin{lemma}\label{lemma5}
For any $m \in [-(n-2), n-2] \cap \{n - 2\mathbb{Z}\}$, the set $\mathrm{sign}^{-1}(2m) \cap \mathrm{E}(\Sigma)$ is a connected component.
\end{lemma}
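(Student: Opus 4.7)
The plan is to generalize the argument of Lemma \ref{lemma3}, which handled the case $n=2$, $m=0$, and reduce to the genus-zero result of Proposition \ref{ell-prop2}. Non-emptiness is immediate: the range $m \in [-(n-2), n-2] \cap \{n - 2\mathbb{Z}\}$ is precisely the achievable signature range on $\mathrm{E}(\Sigma_{0,n})$ by Proposition \ref{prop-sign1}, and any such representation, viewed as one on $\Sigma_{1,n}$ with $A = B = I$, lies in $\mathrm{sign}^{-1}(2m) \cap \mathrm{E}(\Sigma_{1,n})$. Since the signature is locally constant on $\mathrm{E}(\Sigma_{1,n})$, it remains to establish path-connectedness.

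The main step is a cut-and-deform argument. I will cut $\Sigma_{1,n}$ along a simple closed curve $c_0$ separating off the torus handle, yielding a one-holed torus $\Sigma_{1,1}$ and an $(n+1)$-holed sphere $\Sigma_{0,n+1}$. Denote by $\phi_1, \phi_2$ the respective restrictions of $\phi$, so that $\phi_1(c_0) = [A,B] = \phi_2(c_0)^{-1}$. Since $c_0$ is interior, there is no constraint on $\phi_1(c_0)$ beyond compatibility. The goal is to deform $\phi$ inside $\mathrm{E}(\Sigma_{1,n})$ to a representation with $A = B = I$, which amounts to a representation factoring through $\pi_1(\Sigma_{0,n})$. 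Once this is achieved, Proposition \ref{ell-prop2} identifies the resulting level set $\mathrm{sign}^{-1}(2m) \cap \mathrm{E}(\Sigma_{0,n})$ as a single connected component, and the conclusion follows.

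The deformation proceeds by case analysis on $\mathrm{tr}([A,B])$, directly mirroring the three cases in the proof of Lemma \ref{lemma3}. After a generic perturbation I may assume $|\mathrm{tr}([A,B])| \neq 2$. In the hyperbolic subcases $\mathrm{tr}([A,B]) > 2$ and $\mathrm{tr}([A,B]) < -2$, I adapt the explicit deformations of $(A,B)$ from Lemma \ref{lemma3}, using the trace formulas developed there, while simultaneously modifying $\phi_2$ so that $\phi_2(c_0) = [A,B]^{-1}$ is preserved at each time. In the elliptic case $|\mathrm{tr}([A,B])| < 2$, I conjugate so that $[A,B] \in \mathrm{SO}(2) \setminus \{I\}$ and then rotate along $\mathrm{SO}(2)$ toward $I$, again adjusting $\phi_2$ accordingly. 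When $[A,B] = I$ is reached, the commuting pair $(A,B)$ can be deformed to $(I,I)$ through commuting paths in $\mathrm{SL}(2,\mathbb{R})$, using that the connected components of centralizers in $\mathrm{SL}(2,\mathbb{R})$ that can appear here are path-connected to the identity.

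The principal obstacle is ensuring that the compatible family $\phi_2(t) \in \mathrm{Hom}(\pi_1(\Sigma_{0,n+1}), \mathrm{SL}(2,\mathbb{R}))$, with $\phi_2(t)(c_i)$ elliptic-unipotent for $1 \leq i \leq n$, exists continuously as $\phi_2(t)(c_0)$ traces out the prescribed path. This is precisely where the hypotheses $m \in [-(n-2), n-2]$ and $n + 1 \geq 4$ enter: the surface $\Sigma_{0,n+1}$ carries enough boundary components to absorb the signature budget $2m$ while allowing $\phi_2(c_0)$ to traverse the full range of conjugacy classes in $\mathrm{SL}(2,\mathbb{R})$, and the target signature $2m$ lies within the achievable signature range of $\mathrm{E}(\Sigma_{0,n})$ by Proposition \ref{prop-sign1}, so that the terminal representation with $A = B = I$ genuinely exists.
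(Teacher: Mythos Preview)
Your overall structure is correct and matches the paper's: cut along $c_0$, deform to $[A,B]=I$, and invoke the genus-zero result (Proposition~\ref{ell-prop2}) together with the connectedness of $\mathrm{Hom}(\pi_1(\Sigma_{1,0}),\mathrm{SL}(2,\mathbb{R}))$. However, the execution differs at a key point, and your handling of the hyperbolic cases has a gap.

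The paper does \emph{not} perform a case analysis on $\mathrm{tr}([A,B])$. Instead it first applies Lemma~\ref{lemma6} to deform $\phi$ within $\mathrm{E}(\Sigma_{1,n})$ so that every $C_i\in\mathrm{SO}(2)\setminus\{\pm I\}$. This forces $[A,B]=(C_1\cdots C_n)^{-1}\in\mathrm{SO}(2)$, and after a small perturbation $[A,B]$ is elliptic. The entire deformation then takes place inside $\mathrm{SO}(2)$: one moves the angle of $C_0=[A,B]$ toward $0$ or $2\pi$ (the direction dictated by $\mathrm{sign}(\phi_1)=\pm 2$), and the compatible deformation of $\phi_2$ is just a path of angle vectors $(\theta_0,\theta_1,\dots,\theta_n)$ with fixed sum in $2\pi\mathbb{Z}$. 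On the $\phi_1$ side, since $[A,B]$ is elliptic, $A$ and $B$ are hyperbolic, and one deforms $\beta$ explicitly via \eqref{eqn7} so that $\mathrm{tr}([A,B])$ tracks $\mathrm{tr}(C_0(t))$. The hyperbolic cases of $[A,B]$ never arise.

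By contrast, you propose to treat the hyperbolic cases by ``adapting'' the explicit arguments of Lemma~\ref{lemma3}. But those arguments are computations tailored to \emph{two} boundary components on the sphere side: they manipulate $\theta_1,\theta_2$ and the conjugator $P$ explicitly (via \eqref{eqn18}) to drive $\mathrm{tr}(C_1C_2)$ to $2$, or into $(-2,2)$, while keeping both $C_i$ elliptic and controlling the sign of $(C_1C_2)_{21}$. For $n\geq 3$ on $\Sigma_{0,n+1}$ you would need to exhibit a path of $(C_1(t),\dots,C_n(t))$, each elliptic-unipotent, with $C_1(t)\cdots C_n(t)$ following the prescribed hyperbolic-to-identity path of $[A,B]^{-1}(t)$. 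Your final paragraph correctly flags this as the principal obstacle, but ``enough boundary components to absorb the signature budget'' is a heuristic, not a construction; and the sign-tracking in the case $\mathrm{tr}([A,B])<-2$ of Lemma~\ref{lemma3} does not generalize in any obvious way. The cleanest remedy is to invoke Lemma~\ref{lemma6} at the outset, as the paper does, and bypass the hyperbolic cases entirely.
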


\begin{proof}
Let $\phi = ((A,B), C_1, \ldots, C_n) \in \mathrm{sign}^{-1}(2m) \cap \mathrm{E}(\Sigma)$. By Lemma~\ref{lemma6}, we may assume that $C_i = R(\theta_i) \in \mathrm{SO}(2) \setminus \{\pm I\}$ for all $i$. After a slight perturbation, we can also assume that $\frac{1}{\pi} \sum_{i=1}^n \theta_i \notin \mathbb{Z}$.

Note that $C_0 := [A,B] =( C_1 \cdots C_n )^{-1}\in \mathrm{SO}(2) \setminus \{\pm I\}$. Consider the representation $\phi_1 = ((A,B), C_0^{-1})$. Then
\(
  \mathrm{sign}(\phi_1) = \pm 2,
\)
which implies that
\(
  \mathrm{sign}(\phi_2) = 2m \mp 2,
\)
where $\phi_2 = (C_0, C_1, \ldots, C_n)$.

Now we deform $\phi_2$ to a path $\phi_2(t)$ such that $C_0(1) = I$. There are two possible deformations of $C_0(t)$ to the identity; we choose the one where the angle of $C_0(t)$ converges to $\pi \left(1 + \frac{1}{2} \mathrm{sign}(\phi_1)\right)$. This choice ensures that
\begin{align*}
  \mathrm{sign}(\phi_2(t))&=2(n+1-\tfrac{\theta_0+\theta_1+\cdots+\theta_{n}}{\pi})\\
  &=2m\mp 2\in [-2(n-1),2(n-1)]\cap \{2(n-1)-4\mb{Z}\},
\end{align*}
for all $t \in [0,1)$, where $C_i=R(\theta_i)$, $0\leq i\leq n$.

Assume $A = U \alpha(\lambda) U^{-1}$ and $B = U \beta U^{-1}$. By \eqref{eqn7}, we have
\[
  \mathrm{tr}([A,B]) = \mathrm{tr}([\alpha(\lambda), \beta]) = 2 - bc(\lambda - \lambda^{-1})^2 < 2,
\]
so $bc > 0$. We may deform $\beta$ such that $\mathrm{tr}([\alpha(\lambda), \beta]) \equiv \mathrm{tr}(\phi_2(t)(c_1 \cdots c_n))$, and ensure that $b = c = 0$ when $\mathrm{tr}([\alpha(\lambda), \beta]) = 2$. This deformation makes the angle of $[\alpha(\lambda), \beta]$ converge to $\pi \left(1 + \frac{1}{2} \mathrm{sign}(\phi_1)\right)$, so that $[A,B](t) \sim C_0(t)$.

Up to conjugation, we glue $\phi_1(t)$ and $\phi_2(t)$ to obtain a representation $\phi(t)$ such that $\phi(1)$ is boundary $\mathrm{SO}(2) \setminus \{\pm I\}$ and satisfies $\phi(1)([a,b]) = I$.

Let $W$ denote the set of boundary $\mathrm{SO}(2) \setminus \{\pm I\}$ representations with $[A,B] = I$ and signature $2m$. Then
\[
  W = \{ (\psi, \phi) \mid \psi \in \mathrm{Hom}(\pi_1(\Sigma_{1,0}), \mathrm{SL}(2,\mathbb{R})),\ \phi \in \mathrm{Ell}(\Sigma_{0,n}) \cap \mathrm{sign}^{-1}(2m) \}.
\]
Since both $\mathrm{Hom}(\pi_1(\Sigma_{1,0}), \mathrm{SL}(2,\mathbb{R}))$ and $\mathrm{Ell}(\Sigma_{0,n}) \cap \mathrm{sign}^{-1}(2m)$ are connected, it follows that $W$ is connected. The proof is complete.
\end{proof}

\begin{lemma}\label{lemma8}
The set $\mathrm{sign}^{-1}(\pm 2n) \cap \mathrm{E}(\Sigma)$ has four connected components.
\end{lemma}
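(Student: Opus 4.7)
The plan is to split $\Sigma_{1,n}$ along a separating simple closed curve $c_0$ into a once-holed torus $\Sigma_{1,1}$ and an $(n{+}1)$-holed sphere $\Sigma_{0,n+1}$, use signature additivity together with the Milnor--Wood bound \eqref{sig-MW} on each piece to rigidify the decomposition, and then read off the component count from Proposition~\ref{ell-prop3}. Given any $\phi \in \mathrm{sign}^{-1}(2n) \cap \mathrm{E}(\Sigma_{1,n})$, Lemma~\ref{lemma6} lets me deform $\phi$ within $\mathrm{E}(\Sigma)$ so that every $\phi(c_i) \in \mathrm{SO}(2)\setminus\{\pm I\}$, and a generic further perturbation makes $C_0 := [A,B] = (\phi(c_1)\cdots\phi(c_n))^{-1}$ lie in $\mathrm{SO}(2)\setminus\{\pm I\}$.

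With $\phi_1 = \phi|_{\pi_1(\Sigma_{1,1})}$ and $\phi_2 = \phi|_{\pi_1(\Sigma_{0,n+1})}$, additivity of signature yields $\mathrm{sign}(\phi_1) + \mathrm{sign}(\phi_2) = 2n$. Table~\ref{sign-table} bounds $|\mathrm{sign}(\phi_1)| \leq 2$ and \eqref{sig-MW} on $\Sigma_{0,n+1}$ bounds $|\mathrm{sign}(\phi_2)| \leq 2(n-1)$, so both inequalities must saturate: $\mathrm{sign}(\phi_1) = 2$ and $\mathrm{sign}(\phi_2) = 2(n-1)$. Since $C_0 \in \mathrm{SO}(2)\setminus\{\pm I\}$, Table~\ref{sign-table} further forces $C_0^{-1} \sim R(\theta_0)$ with $\theta_0 \in (0,\pi)$, so $\phi_1 \in \mathrm{E}(\Sigma_{1,1})$; inequalities \eqref{eqn10} and \eqref{eqn8} then force both $A$ and $B$ to be hyperbolic, exactly as in Section~\ref{secn=1}.

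By Proposition~\ref{ell-prop2}, $\mathrm{sign}^{-1}(2(n-1)) \cap \mathrm{E}(\Sigma_{0,n+1})$ is a single connected component, while Proposition~\ref{ell-prop3} together with Section~\ref{secn=1} shows that $\mathrm{sign}^{-1}(2) \cap \mathrm{E}(\Sigma_{1,1})$ splits into exactly four components, indexed by the sign triple $(\mathrm{sgn}(\lambda), \mathrm{sgn}(\mathrm{tr}(B)), \mathrm{sgn}(B_{21}))$ after normalising $A = U\alpha(\lambda)U^{-1}$, $B = U\beta U^{-1}$ with $0 < |\lambda| < 1$. Given two representations sharing the same sign triple, I would follow the proof of Lemma~\ref{lemma5}: connect their $\phi_2$-parts by a path $\phi_2(t)$ inside the single sphere component, then lift by deforming $\beta(t)$ so that $[\alpha(\lambda(t)), \beta(t)] = \phi_2(t)(c_0)^{-1}$ while preserving the sign triple, and glue along $c_0$ to get a path in $\mathrm{sign}^{-1}(2n) \cap \mathrm{E}(\Sigma)$. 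The case $\mathrm{sign}^{-1}(-2n)$ is symmetric under orientation reversal, which negates the signature.

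The main obstacle is distinctness of the four sign classes in $\mathrm{sign}^{-1}(2n) \cap \mathrm{E}(\Sigma_{1,n})$: a path in the extremal locus might a priori move $A(t)$ or $B(t)$ out of the hyperbolic stratum, or let $B_{21}(t)$ vanish, so that the sign triple becomes ill-defined. I would rule these out using the same extremality rigidity. If $A(t)$ (or $B(t)$) degenerated to a non-hyperbolic element, \eqref{eqn10} or \eqref{eqn8} would give $\mathrm{tr}([A,B](t)) \geq 2$, so Table~\ref{sign-table} would drop $\mathrm{sign}(\phi_1(t))$ to $\{0, \pm 1\}$, forcing $\mathrm{sign}(\phi_2(t)) \geq 2n - 1$, which violates $|\mathrm{sign}(\phi_2)| \leq 2(n-1)$. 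Likewise $B_{21}(t) = 0$ with $A$ diagonal makes $[A,B](t)$ upper-triangular with trace $2$, again incompatible with $\mathrm{sign}(\phi_1) = 2$. Hence $A, B$ remain hyperbolic with $B_{21} \neq 0$ throughout any such path, so the sign triple is a continuous discrete invariant and therefore constant, giving four distinct components for each of $\mathrm{sign}^{-1}(\pm 2n) \cap \mathrm{E}(\Sigma)$.
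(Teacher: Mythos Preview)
Your approach is correct and essentially the same as the paper's: both split $\Sigma_{1,n}$ along $c_0$, use additivity together with the Milnor--Wood bound to force $\mathrm{sign}(\phi_1)=2$ and $\mathrm{sign}(\phi_2)=2(n-1)$, reduce to $[A,B]$ elliptic, and then read off $4\times 1$ components from Propositions~\ref{ell-prop3} and~\ref{ell-prop2}. The only differences are cosmetic---you invoke Lemma~\ref{lemma6} to force $[A,B]\in\mathrm{SO}(2)$ whereas the paper cites the deformation argument inside Lemma~\ref{lemma3}, and you spell out explicitly (via the extremality rigidity forcing $A,B$ to stay hyperbolic) why the four sign classes are genuinely distinct, a step the paper compresses to a single phrase about the $\pm I$ action.
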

\begin{proof}
Suppose $\phi \in \mathrm{sign}^{-1}(2n) \cap \mathrm{E}(\Sigma)$. Then
\[
  \mathrm{sign}(\phi_1) = 2, \quad \mathrm{sign}(\phi_2) = 2(n-1),
\]
where $\phi_1$ is the restriction of $\phi$ to the genus-one subsurface, and $\phi_2$ is the restriction to the remaining punctured sphere.

Since $\mathrm{tr}([A,B]) < 2$, it follows from formulas \eqref{eqn8} and \eqref{eqn10} that both $A$ and $B$ are hyperbolic. Therefore, the possible sign data give rise to at least four connected components, which are permuted by  $-I$.
From the argument in the proof of Lemma~\ref{lemma3}, we know that the representation can be deformed so that $|\mathrm{tr}([A, B])| < 2$. Consequently, we have
\[
  \phi_1 \in \mathrm{sign}^{-1}(2) \cap \mathrm{E}(\Sigma_{1,1}), \quad \phi_2 \in \mathrm{sign}^{-1}(2n-2) \cap \mathrm{E}(\Sigma_{0,n+1}).
\]

By the classification of connected components in the cases $\Sigma_{1,1}$ and $\Sigma_{0,n+1}$, we obtain
\[
  \#\{\mathrm{sign}^{-1}(2) \cap \mathrm{E}(\Sigma_{1,1})\} = 4, \quad \#\{\mathrm{sign}^{-1}(2n-2) \cap \mathrm{E}(\Sigma_{0,n+1})\} = 1.
\]
It follows that $\mathrm{sign}^{-1}(2n) \cap \mathrm{E}(\Sigma)$ has four connected components. The same argument applies to $\mathrm{sign}^{-1}(-2n) \cap \mathrm{E}(\Sigma)$, which also has four connected components.
\end{proof}
Next, we consider the boundary elliptic representations \( \mathrm{Ell}(\Sigma_{1,n}) \). We assume that each boundary element satisfies \( C_i \sim R(\theta_i) \), with \( \theta_i \in (0,\pi) \cup (\pi,2\pi) \), for \( 1 \leq i \leq n \).

For the maximal signature, the set \( \mathrm{sign}^{-1}(2n) \cap \mathrm{Ell}(\Sigma) \cap \sigma^{-1}(a) \) is connected only when \( r_a := \#\{a_i = -1\} = n \), and is empty otherwise. Thus,
\[
\#\{\mathrm{sign}^{-1}(2n) \cap \mathrm{Ell}(\Sigma) \} = 4.
\]
Similarly,
\[
\#\{ \mathrm{sign}^{-1}(-2n) \cap \mathrm{Ell}(\Sigma) \} = 4.
\]

Next, we analyze the number of connected components of the sets \( \mathrm{sign}^{-1}(2m) \cap \mathrm{Ell}(\Sigma) \cap \sigma^{-1}(a) \) for \( m \neq \pm n \). As in the previous proof, we have:
\begin{align*}
\mathrm{sign}(\phi_2(t))& = 2(n+1 - \tfrac{1}{\pi}(\theta_0+\theta_1 + \cdots + \theta_{n})) \\
&= 2m \mp 2 \in [-2(n-1), 2(n-1)] \cap \{ 2(n-1) - 4\mathbb{Z} \}
\end{align*}
for any \( t \in [0,1) \). Compared to the elliptic-unipotent case, the angles \( \theta_i \) cannot cross \( \pi \) during deformation. Since $\phi_2$ is a representation, \( \sum_{i=0}^{n} \theta_i =2k\pi\) for some \( k \in [1,n] \cap \mathbb{Z} \).

If \( \theta_{0} \to 0 \), then the tuple \( (\theta_0,\theta_1,\ldots,\theta_n) \) contains \( r_a + 1 \) angles in \( (0,\pi) \), so
\[
2k = \tfrac{1}{\pi} \sum_{i=0}^{n} \theta_i \in (n - r_a, 2n - r_a + 1).
\]
If \( 2k = 2n - r_a \), then there is no configuration of \( (\theta_1,\ldots,\theta_n) \) with exactly \( r_a \) angles in \( (0,\pi) \), hence \( [A,B] \) cannot be deformed to the identity. In this case, \( r_a \) must be even, and
\[
\mathrm{sign}(\phi) = 2m = 2(n + 1 - 2k) - 2 = 2(r_a - n) \in [-2n + 4, 2n - 4].
\]
This implies \( r_a \in [2,n] \cap 2\mathbb{Z} \) and \( \mathrm{sign}(\phi) \leq 0 \).

If \( \theta_{0} \to 2\pi \), then \( (\theta_0,\theta_1,\ldots,\theta_n) \) has \( r_a \) entries in \( (0,\pi) \), hence
\[
2k = \tfrac{1}{\pi} \sum_{i=0}^{n} \theta_i \in (n + 1 - r_a, 2n - r_a + 2),
\]
and \( \sum_{i=1}^n \theta_i \to 2(k-1)\pi \), so
\[
2(k-1) \in (n - r_a-1, 2n - r_a).
\]
If \( 2(k-1) = n-r_a \), then again the deformation to identity is not possible. In this case, \( r_a \in \{n-2\mb{Z}\}\) ,
\[
\mathrm{sign}(\phi) = 2m = 2(n + 1 - 2k) + 2 = 2r_a \in [-2n + 4, 2n - 4].
\]
This implies that $n-r_a\in[2,n]\cap 2\mb{Z}$.
Thus, in the following situations, \( \theta_{0} \) cannot be deformed to \( 0 \) or \( 2\pi \),
\begin{equation}\label{eqn19}
\mathrm{sign}(\phi) = 
\begin{cases}
2(r_a - n) & \text{if } r_a \in [2, n] \cap 2\mathbb{Z} \text{ and } \mathrm{sign}(\phi|_{\pi_1(\Sigma_{1,1})}) = -2, \\
2r_a & \text{if } n-r_a\in[2,n]\cap 2\mb{Z} \text{ and } \mathrm{sign}(\phi|_{\pi_1(\Sigma_{1,1})}) = 2.
\end{cases}
\end{equation}
The number of connected components is given as follows
\begin{align*}
\#\{ \mathrm{Ell}(\Sigma_{1,n}) &= 4 + 4 + 4 \sum_{r_a \in [2,n] \cap 2\mathbb{Z}} \binom{n}{r_a} + 4 \sum_{r_a \in [0, n-2] \cap \{n-2\mb{Z}\}} \binom{n}{r_a} + (n-1) 2^{n-1} \\
&= 2^{n+2} + (n - 1) 2^{n-1} \\
&= 2^{n - 1}(n + 7).
\end{align*}

\begin{prop}
$\mathrm{E}(\Sigma_{1,n})$ has $n+7$ connected components, and $\mathrm{Ell}(\Sigma_{1,n})$ has $2^{n - 1}(n + 7)$ components.
\end{prop}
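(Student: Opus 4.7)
The plan is to derive both counts by aggregating the connected-component totals from the preceding signature-level-set analyses, and in the boundary elliptic case, by refining further according to the sigma invariant.

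For $\mathrm{E}(\Sigma_{1,n})$, Lemma~\ref{lemma5} provides exactly one connected component for each of the $n-1$ admissible values of $m \in [-(n-2), n-2] \cap \{n - 2\mathbb{Z}\}$, while Lemma~\ref{lemma8} contributes $4$ additional components at each of the extremal signatures $\pm 2n$. Summing gives $(n-1) + 4 + 4 = n + 7$, which establishes the first assertion.

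For $\mathrm{Ell}(\Sigma_{1,n})$, I would fix a sigma vector $a \in \{\pm 1\}^n$ with $r_a = \#\{i : a_i = -1\}$ and partition the space by the pair $(\sigma(\phi), \mathrm{sign}(\phi))$. At the extremal signatures $\pm 2n$, the boundary ellipticity constraint forces, respectively, all $\theta_i \in (0,\pi)$ or all $\theta_i \in (\pi, 2\pi)$, so only the configurations $r_a = 0$ (for $+2n$) and $r_a = n$ (for $-2n$) are realized; each extremal set therefore contributes exactly $4$ components by the same argument as in the elliptic-unipotent case. For intermediate signatures, I would rerun the deformation of Lemma~\ref{lemma5} but stay inside $\mathrm{Ell}$: a representation is deformed so that $[A,B] = I$, reducing the problem to gluing an element of $\mathrm{Hom}(\pi_1(\Sigma_{1,0}), \mathrm{SL}(2,\mathbb{R}))$ with a boundary elliptic representation on $\Sigma_{0,n}$ of the prescribed signature. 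The obstruction to deforming $[A,B]$ to $I$ within $\mathrm{Ell}$ occurs precisely in the exceptional configurations described in \eqref{eqn19}, each of which contributes $4$ additional components.

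The bookkeeping step assembles the contributions as follows. The non-exceptional pairs $(a,m)$ give $(n-1) \cdot 2^{n-1}$ components via the sigma refinement of the intermediate signature level sets, the extremal signatures contribute $4 + 4 = 8$ further components, and the exceptional configurations from \eqref{eqn19} yield
\[
4 \sum_{r_a \in [2,n] \cap 2\mathbb{Z}} \binom{n}{r_a} + 4 \sum_{r_a \in [0,n-2] \cap \{n - 2\mathbb{Z}\}} \binom{n}{r_a}.
\]
Using the standard identity $\sum_{r \in [0,n] \cap 2\mathbb{Z}} \binom{n}{r} = 2^{n-1}$ (and its odd-parity analogue), this combinatorial sum reduces to $2^{n+2} + (n-1) \cdot 2^{n-1} = 2^{n-1}(n+7)$, matching the claim.

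The main obstacle will be a clean verification that the $\theta_i = \pi$ obstruction inside $\mathrm{Ell}$ produces exactly the connected components counted by the binomial sums, with no hidden splitting or double-counting within a fixed sigma stratum. Concretely, one must show that whenever $(a, m)$ falls into one of the exceptional cases of \eqref{eqn19}, the four components are still distinguished only by the $\mathrm{Sigma}_{1,1}$ data $N(\mathrm{sgn}(\lambda), \mathrm{sgn}(\mathrm{tr}(B)), \mathrm{sgn}(B_{21}))$ from Section~\ref{secn=1}, and that conversely, when $(a,m)$ is non-exceptional, the Lemma~\ref{lemma5}-style deformation genuinely collapses these four labelings into a single component inside $\mathrm{Ell}$; this is the only point where the argument differs from the elliptic-unipotent case.
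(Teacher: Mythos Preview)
Your proposal is correct and follows essentially the same approach as the paper: both decompose the count into the extremal signatures (contributing $4+4$), the exceptional strata identified by \eqref{eqn19} (contributing the two binomial sums with factor~$4$), and the remaining genus-zero-achievable pairs (contributing $(n-1)2^{n-1}$), then sum. One small slip: for the extremal signature $+2n$ the condition ``all $\theta_i\in(0,\pi)$'' corresponds to $r_a=n$ (since $a_i=-1\Leftrightarrow\theta_i<\pi$), not $r_a=0$; you have the $r_a$ labels swapped, but this does not affect the count.
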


\subsection{The case of general genus} In this section, we consider the case where the genus $g \geq 2$. Before proving the main results, we first establish a useful deformation lemma that simplifies subsequent arguments.
\begin{lemma}\label{lemma6}
Let $\phi \in \mathrm{Hom}(\pi_1(\Sigma_{g,n}), \mathrm{SL}(2,\mathbb{R}))$ be a boundary elliptic (or boundary elliptic-unipotent) representation. Then $\phi$ can be deformed to a boundary $\mathrm{SO}(2) \setminus \{\pm I\}$ representation in $\mathrm{Ell}(\Sigma_{g,n})$.
\end{lemma}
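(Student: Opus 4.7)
We argue by induction on the genus $g$, using the boundary-elliptic variant of Proposition~\ref{ell-prop2} in Section~3.2 for the base case $g=0$, and the results of Section~3.3 (notably Proposition~\ref{ell-prop3}, Lemma~\ref{lemma3}, and Lemma~\ref{lemma5}) for $g=1$. In each of those proofs the deformation is explicitly constructed to stay inside $\mathrm{Ell}(\Sigma_{g,n})$ and to terminate at a representation whose boundary holonomies lie in $\mathrm{SO}(2)\setminus\{\pm I\}$, so the base cases give exactly the statement claimed here; the analogous arguments for $\mathrm{E}(\Sigma_{g,n})$ (where $\theta_j=\pi$ is allowed) are identical. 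It remains to treat the inductive step $g\geq 2$.

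Given $\phi\in \mathrm{Ell}(\Sigma_{g,n})$, I choose a simple closed separating curve $c_0\subset\Sigma_{g,n}$ that bounds a one-holed torus $\Sigma_{1,1}$ containing the handle generated by $a_g,b_g$. This yields a decomposition $\Sigma_{g,n}=\Sigma_{1,1}\cup_{c_0}\Sigma_{g-1,n+1}$; write $\phi_1:=\phi|_{\pi_1(\Sigma_{1,1})}$ and $\phi_2:=\phi|_{\pi_1(\Sigma_{g-1,n+1})}$. By construction $\phi_2$ is elliptic on $c_1,\dots,c_n$, but $\phi(c_0)=[\phi(a_g),\phi(b_g)]$ is a priori arbitrary, so the induction hypothesis does not yet apply to $\phi_2$.

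The plan is to carry out the deformation in two stages. In the first stage I deform $\phi$ so that $\phi(c_0)$ becomes elliptic. Starting from a hyperbolic or parabolic $\phi(c_0)$, I use the explicit identities \eqref{eqn5}, \eqref{eqn6}, \eqref{eqn7} together with Lemma~\ref{lemma1} to push $\mathrm{tr}([\phi(a_g),\phi(b_g)])=2-bc(\lambda-\lambda^{-1})^2$ into the interval $(-2,2)$, exactly as was done in the proof of Lemma~\ref{lemma5}; simultaneously I deform $\phi_2$ so that the gluing identity $\phi_1(c_0)=\phi_2(c_0)^{-1}$ holds at every time, with Lemma~\ref{lemma1} ensuring that each $\phi(c_j)$ $(1\le j\le n)$ stays elliptic along the way. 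In the second stage $\phi_2$ is now a boundary elliptic representation on $\Sigma_{g-1,n+1}$, so by the induction hypothesis it admits a deformation $\phi_2(t)$ landing in $\mathrm{SO}(2)\setminus\{\pm I\}$ on every boundary. I then lift this path to a compatible deformation $\phi_1(t)$ on $\Sigma_{1,1}$ by using the surjectivity of the commutator map $(A,B)\mapsto [A,B]$ onto the elliptic locus of $\mathrm{SL}(2,\mathbb{R})$ to choose, for each $t$, pairs $(A(t),B(t))$ with $[A(t),B(t)]=\phi_2(t)(c_0)^{-1}$, taking $(A(0),B(0))$ to match the endpoint from the first stage.

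The main obstacle is the compatibility of the two simultaneous deformations across $c_0$: we need $\phi_1(t)(c_0)\phi_2(t)(c_0)=I$ for all $t$, and we need every $\phi_2(t)(c_j)$ $(1\le j\le n)$ to remain elliptic throughout. Controlling this is exactly the kind of gluing analysis already carried out in the $g=1$ arguments of Lemma~\ref{lemma3} and Lemma~\ref{lemma5}: the explicit trace formulas \eqref{eqn5}--\eqref{eqn7} together with the Iwasawa parametrization from the proof of Lemma~\ref{lemma1} make it possible to track the effect of the deformation on each $\mathrm{tr}(\phi(c_j))$ and to make small adjustments, so that no $\phi(c_j)$ acquires eigenvalue $\pm 1$. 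Once both paths are glued, the composite deformation stays in $\mathrm{Ell}(\Sigma_{g,n})$ and lands at a boundary $\mathrm{SO}(2)\setminus\{\pm I\}$ representation, completing the inductive step.
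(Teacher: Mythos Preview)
There are two genuine gaps. First, your $g=1$ base case is circular for $n\geq 3$: the proof of Lemma~\ref{lemma5} \emph{opens} with ``By Lemma~\ref{lemma6}, we may assume that $C_i=R(\theta_i)\in\mathrm{SO}(2)\setminus\{\pm I\}$,'' so Lemma~\ref{lemma5} cannot furnish the $g=1$ case of Lemma~\ref{lemma6}. Second, and more seriously, your first stage is unsupported at the decisive step. Changing the conjugacy class of $[A_g,B_g]$ on the $\Sigma_{1,1}$ side forces $\phi_2(c_0)$ to follow the inverse path of conjugacy classes; but $\phi_2$ lives on $\Sigma_{g-1,n+1}$ with genus $g-1\geq 1$, and you must say which generators of $\phi_2$ move and why such a lift exists. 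Lemma~\ref{lemma1} is the wrong tool: it deforms a \emph{conjugator} to the identity while keeping a product of two elliptic elements elliptic; it does not move an element across conjugacy classes, nor does it lift a path in $\mathrm{SL}(2,\mathbb{R})$ to a path of representations on a positive-genus surface. (The surjectivity of the commutator map you invoke in the second stage is likewise insufficient---continuous path-lifting is what is needed.)

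The paper avoids both issues with a different decomposition and a different key input. It cuts $\Sigma_{g,n}=\Sigma_{g,1}\cup_{c_0}\Sigma_{0,n+1}$, isolating \emph{all} handles on one side and \emph{all} original boundaries on the other, and does not induct on $g$. If $C_0=(C_1\cdots C_n)^{-1}$ is elliptic, Proposition~\ref{ell-prop2} handles the $\Sigma_{0,n+1}$ side directly while the handle side is merely conjugated. If $C_0$ is hyperbolic, the paper caps off $\Sigma_{g,1}$ with an auxiliary $\Sigma_{1,1}$ to form the \emph{closed} surface $\Sigma_{g+1,0}$ and invokes Goldman's Theorem~A on $\mathrm{Hom}(\pi_1(\Sigma_{g+1,0}),\mathrm{SL}(2,\mathbb{R}))$ to deform so that the auxiliary commutator lands in $\mathrm{SO}(2)$; restricting back yields $\phi_1(1)(c_0)\in\mathrm{SO}(2)$, and an induction on $n$ (using Lemma~\ref{lemma1} and Lemma~\ref{lemma3}) finishes the boundary side. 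This closed-surface trick is the idea you are missing.
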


\begin{proof}
Let $\phi \in \mr{Ell}(\Sigma_{g,n})$. Decompose the surface as $\Sigma_{g,n} = \Sigma_{g,1} \cup_{c_0} \Sigma_{0,n+1}$, and denote the restrictions by $\phi_1 := \phi|_{\pi_1(\Sigma_{g,1})}$ and $\phi_2 := \phi|_{\pi_1(\Sigma_{0,n+1})}$. Let $a_1, b_1, \dots, a_g, b_g, c_1, \dots, c_n$ be generators of $\pi_1(\Sigma_{g,n})$, and set $\phi(a_i) = A_i$, $\phi(b_i) = B_i$, and $\phi(c_j) = C_j$ for $1 \leq i \leq g$ and $1 \leq j \leq n$.

After a small perturbation, we may assume that
\[
  |\mathrm{tr}(C_{i_1} \cdots C_{i_r})| \neq 2
\]
for all distinct indices $1 \leq i_1, \dots, i_r \leq n$. %Up to conjugation, we may assume $C_1 \in \mathrm{SO}(2) \setminus \{\pm I\}$.

Let $C_0 := (C_1 \cdots C_n)^{-1}$. If $C_0$ is elliptic, then by the proof of Proposition~\ref{ell-prop2}, the representation $\phi_2$ can be deformed to a boundary $\mathrm{SO}(2) \setminus \{\pm I\}$ representation, while preserving the conjugacy class of $\phi_2(c_0)$. Applying a conjugate deformation to $\phi_1$ such that $\phi_1(t)(c_0) = \phi_2(t)(c_0)^{-1}$, we can glue $\phi_1(t)$ and $\phi_2(t)$ to obtain a deformation $\phi(t)$ of $\phi$ satisfying $\phi(1)(c_i) = \phi_2(1)(c_i) \in \mathrm{SO}(2) \setminus \{\pm I\}$ for all $i$. Thus, $\phi(1)$ is a boundary $\mathrm{SO}(2) \setminus \{\pm I\}$ representation.

If $C_0$ is hyperbolic, then since the space 
$\mathrm{Hom}(\pi_1(\Sigma_{g,1}),\mathrm{SL}(2,\mathbb{R}))$ 
is connected, there exists a path $\phi_1(t)$ starting from $\phi_1$ such that 
$\phi_1(1)(c_0) \in \mathrm{SO}(2)$. Moreover, the path $\phi_1(t)$ can be chosen so that there exists a unique
$t \in [0,1]$ at which $|\mathrm{tr}(\phi_1(t)(c_0))| = 2$, and 
\(\phi_1(1)(c_0) = I \) if \( \mathrm{tr}(C_0) > 2,\)
 \(\phi_1(1)(c_0) \in \mathrm{SO}(2)\setminus \{\pm I\}\) if \( \mathrm{tr}(C_0) < -2.\)

Set $P(t) := \phi_1(t)(c_0)$. We now construct deformations $C_1(t), \dots, C_n(t)$ such that
\begin{equation}\label{eqn-deformC}
  C_1(t) \cdots C_n(t) = P(t)^{-1}, \quad \text{and} \quad C_i(1) \in \mathrm{SO}(2) \setminus \{\pm I\}.
\end{equation}

For $n = 2$, the existence of such $C_i(t)$ with the conjugacy class of $C_1$ or $C_2$ fixed follows from the proof of Lemma~\ref{lemma3}.  
Assume that \eqref{eqn-deformC} holds for $n-1$, and that the conjugacy class of any one of the $C_i$ can be fixed during deformation.  
We now prove the case for $n$, under the condition that the conjugacy class of an arbitrarily chosen $C_i$ is fixed.  
Without loss of generality, we fix the conjugacy class of $C_1$ and solve \eqref{eqn-deformC} accordingly.

If $C_2 C_3$ is hyperbolic, then by the proof of Proposition~\ref{ell-prop2} ($C_2,C_3$ play the role of $C_{n-1},C_n$ there), we can obtain the deformtion $\widetilde{C}_i(t)$ of $C_i$ for $i=2,3$ such that $\widetilde{C}_2(1)$ commutes with $C_1$ or very closed to $\pm I$, and $\widetilde{C}_2(t)\widetilde{C}_3(t)$ is constant. Up to a small perturbation on $\wt{C}_2(1)$, we may then regard $C_2' := C_1 C_2$ as elliptic (when $\widetilde{C}_2(1)$ commutes with $C_1$, it is obvious, and when it is very close to $\pm I$,  $|\mathrm{tr}(C_1\widetilde{C}_2(1))|<2$). Thus, equation \eqref{eqn-deformC} reduces to the $(n-1)$-term case, and we can find deformations $C_2'(t), C_3(t), \dots, C_n(t)$ such that $C_2'(1), C_3(1), \dots, C_n(1) \in \mathrm{SO}(2) \setminus \{\pm I\}$ and the conjugacy class of $C_2'$ is preserved.

%{Since $$(C_2')(t)=h_t C_1C_2 h_t^{-1}=(h_t C_1 h_t^{-1}) (h_t C_2 h_t^{-1}),$$ we can glue $\phi_1(t)$ and $\phi_2(t)=(P(t)^{-1},h_t C_1 h_t^{-1}, h_t C_2 h_t^{-1}, C_3(t),\cdots, C_n(t))$ along $c_0$ to obtain the deformation  $\phi(t)$ of $\phi$ such that $\phi(1)(c_0)$ is elliptic. If we knew that $h_t C_1 h_t^{-1},h_t C_2 h_t^{-1}\in \mr{SO}(2)$, we could stop here, but in general it is not guaranteed. Now we are back to the case when $\phi(1)(c_0)$ is elliptic and further deformation gives the desired property.}

By Lemma \ref{lemma1} (using $C_1^{-1}C_2'=C_2$ we obtain $C_1''(t)$ such that $C_1''(t)^{-1}C_2'=C_2''(t)$), we can fix \( C_2' \) and the conjugacy class of \( C_1 \), and obtain deformation \( C_1''(t) \) (resp. \( C_2''(t) \)) of $C_1$ (resp. $C_2$)  such that
\( C_2' = C_1''(t) C_2''(t),\) where \( C_2' \), \( C_1''(1) \), and \( C_2''(1) \) commute. Since the conjugacy class of $C_2'(t)$ is invariant, there exists $U(t)\in \mr{SL}(2,\mb{R})$ with $U(0)=I$ such that  \( C_2'(t) = U(t) C_2' U(t)^{-1} \), then define \[  C_1(t) := U(t) C_1''(t) U(t)^{-1}, \quad C_2(t) := U(t) C_2''(t) U(t)^{-1}.\] With this construction, the conjugacy class of \( C_1(t) \) is preserved, and we have \( C_1(t) C_2(t) = C_2'(t).\) Moreover, since \( C_2'(1) \in \mathrm{SO}(2) \setminus \{\pm I\} \) and the matrices \( C_1(1), C_2(1), C_2'(1) \) are commute, it follows that \( C_1(1), C_2(1) \in \mathrm{SO}(2) \setminus \{\pm I\}.\) In one word,  $C_1(t),C_2(t),C_3(t),\cdots,C_n(t)$ solve \eqref{eqn-deformC} and the conjucay class of $C_1(t)$ is preserved.

If $C_2C_3$ is elliptic, we treat it as a single unit, thereby reducing the situation to the previous case.  
More precisely, by Lemma~\ref{lemma1} (again using $C_2^{-1}C_2C_3=C_3$), we may keep $C_2C_3$ fixed while deforming $C_2$ and $C_3$ so that, in the end, they commute. Hence, without loss of generality, we may assume that $C_2$ and $C_3$ commute. Suppose $C_i \sim R(\theta_i)$ for $i=2,3$. Then we can deform $\theta_2$ and $\theta_3$ in such a way that $\theta_2+\theta_3$ remains invariant, while $R(\theta_2)$ approaches $\pm I$. Consequently, the product $C_1C_2$ can be deformed to an elliptic element $C_2'$. The remainder of the proof then follows exactly as in the previous case.
 Now the proof is complete.
\end{proof}

\begin{lemma}\label{lemma12}
Let \( \phi \in \mathrm{E}(\Sigma_{g,1}) \) be a boundary elliptic (or elliptic-unipotent) representation with \( \mathrm{sign}(\phi) < 2|\chi(\Sigma_{g,1})| \) (resp. \( \mathrm{sign}(\phi) > -2|\chi(\Sigma_{g,1})| \)). Then there exists a deformation \( \phi(t) \) such that \( \phi(t) \in \mathrm{E}(\Sigma_{g,1}) \) for all \( t \neq t_0 \), with \( t_0 \in (0,1) \), and such that \( \phi(t_0)(c_0) = I \), where \( c_0 = \partial \Sigma_{g,1} \). In particular, \( \theta_0(t) \to 2\pi \) (resp. \( \theta_0(t) \to 0 \)) as \( t \to t_0^- \), where \( \phi(t)(c_0) \sim R(\theta_0(t)) \).
\end{lemma}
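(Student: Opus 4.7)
The argument is symmetric under orientation reversal, so we focus on the case $\mathrm{sign}(\phi) < 2|\chi(\Sigma_{g,1})| = 4g-2$ and aim to produce a deformation with $\theta_0(t) \to 2\pi$. Since the signature takes values of the form $4g-2-4k$ with $k \geq 0$, the hypothesis forces $\mathrm{sign}(\phi) \leq 4g-6$, so the ``target Toledo value'' $T_0 := \tfrac{1}{2}\mathrm{sign}(\phi)+1$ lies in $[-(2g-2),\,2g-2]$. This is precisely the Milnor--Wood range for representations of the closed surface $\pi_1(\Sigma_{g,0})$, which will be crucial: the putative limit representation at $t = t_0$ factors through a closed surface relation and is forced into the non-maximal component of a nearby closed-surface representation space.

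The plan is to realize $\phi(t_0)$ as the restriction to $\Sigma_{g,1}$ of a representation on the closed surface $\Sigma_{g+1,0}$ whose attached handle has trivial commutator. First, by Lemma~\ref{lemma6} we reduce to $\phi(c_0) = R(\theta_0) \in \mathrm{SO}(2) \setminus \{\pm I\}$. If $\theta_0 \in (0,\pi)$, a preliminary deformation within $\mathrm{E}(\Sigma_{g,1})$ moves $\theta_0$ into $(\pi, 2\pi)$; this path is permitted to cross $\theta_0 = \pi$, because $-I$ has eigenvalue $-1$ and the signature remains locally constant there. Next, we attach a genus-one piece $\Sigma_{1,1}$ to $\Sigma_{g,1}$ along $c_0$ to form $\Sigma_{g+1,0}$, and extend $\phi$ by choosing an auxiliary representation $\psi \in \mathrm{E}(\Sigma_{1,1})$ with $\psi(\partial\Sigma_{1,1}) = C_0^{-1} = R(2\pi - \theta_0)$ and $\mathrm{sign}(\psi) = 2$. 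Such $\psi$ exists by Proposition~\ref{ell-prop3} and the analysis in Section~\ref{secn=1}: since $2\pi - \theta_0 \in (0,\pi)$, four components of $\mathrm{E}(\Sigma_{1,1})$ carry signature $2$ and are compatible with this boundary. The glued representation $\widetilde{\phi}\colon\pi_1(\Sigma_{g+1,0}) \to \mathrm{SL}(2,\mathbb{R})$ then satisfies
\[
\mathrm{sign}(\widetilde{\phi}) = \mathrm{sign}(\phi) + 2 \leq 4g-4 < 4g = 2|\chi(\Sigma_{g+1,0})|,
\]
so $\widetilde{\phi}$ lies in a non-maximal signature component of $\mathrm{Hom}(\pi_1(\Sigma_{g+1,0}), \mathrm{SL}(2,\mathbb{R}))$.

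By Goldman's Theorem~A in~\cite{Gold1}, as invoked in the proof of Lemma~\ref{lemma6}, $\widetilde{\phi}$ admits a continuous deformation $\widetilde{\phi}(t)$ ending in a representation whose commutator on the attached $\Sigma_{1,1}$ equals $I$; equivalently, $\phi(t_0)(c_0) = [\widetilde{A}(t_0),\widetilde{B}(t_0)]^{-1} = I$. Along the path, the $\Sigma_{1,1}$-restriction $\psi(t)$ retains its locally constant signature $2$, so its boundary rotation angle stays in $(0,\pi]$ and tends to $0$ as $t \to t_0^-$. Consequently $\theta_0(t) = 2\pi - (\text{boundary angle of } \psi(t))$ tends to $2\pi^-$, giving the required behavior.

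The main obstacle is ensuring that Goldman's deformation can be arranged so that $\psi(t) \in \mathrm{E}(\Sigma_{1,1})$ for all $t < t_0$, i.e.\ the boundary $\psi(t)(\partial)$ remains elliptic-unipotent (trace strictly less than $2$) along the path. Using the explicit computations of Section~\ref{secn=1}---keeping both generators of $\Sigma_{1,1}$ hyperbolic and using the trace formula $\mathrm{tr}([\alpha(\lambda),\beta]) = 2 - bc(\lambda-\lambda^{-1})^2$ from equation~\eqref{eqn7} in the regime $bc > 0$---one arranges a path along which $bc(\lambda-\lambda^{-1})^2$ decreases monotonically to $0$. This produces the desired elliptic deformation with the rotation angle of $\psi(t)(\partial)$ shrinking monotonically to $0$, and hence $\theta_0(t) \nearrow 2\pi$ as $t \nearrow t_0$, completing the construction.
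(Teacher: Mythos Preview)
Your approach has a genuine circularity. Goldman's Theorem~A gives you a path $\widetilde{\phi}(t)$ in $\mathrm{Hom}(\pi_1(\Sigma_{g+1,0}),\mathrm{SL}(2,\mathbb{R}))$ ending at a representation with trivial handle commutator, but it gives \emph{no control over $[\widetilde{A}(t),\widetilde{B}(t)]$ along the way}. Nothing prevents the trace of this commutator from exceeding $2$ at intermediate times, in which case $\phi(t)(c_0)$ is hyperbolic and $\phi(t)\notin\mathrm{E}(\Sigma_{g,1})$; correspondingly your claim that ``$\psi(t)$ retains its locally constant signature $2$'' is unjustified, since the signature of a piece with boundary jumps precisely when the boundary eigenvalue hits $1$. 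Your final paragraph tries to repair this by prescribing the $\Sigma_{1,1}$ deformation explicitly via~\eqref{eqn7}, forcing $bc(\lambda-\lambda^{-1})^2$ to decrease monotonically. But once you do that you have abandoned Goldman's global path: now the $\Sigma_{g,1}$ side must be deformed independently so that $\phi(t)(c_0)$ matches the prescribed $\psi(t)(\partial)^{-1}$, i.e.\ you must find $\phi(t)\in\mathrm{E}(\Sigma_{g,1})$ realizing an arbitrary prescribed path $\theta_0(t)\nearrow 2\pi$. That is exactly the content of the lemma you are proving.

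There is also a gap at your preliminary Step~3: you assert that $\theta_0$ can be moved from $(0,\pi)$ into $(\pi,2\pi)$ within $\mathrm{E}(\Sigma_{g,1})$, but changing $\theta_0$ means changing the angle of $\prod_i[A_i,B_i]$, and you give no mechanism for doing so while keeping the product elliptic-unipotent. This is again a weak form of the lemma itself. The paper avoids both issues by an induction on $g$: it splits off a single handle via $C_1=[A_1,B_1]$, $C_2=\prod_{i\ge 2}[A_i,B_i]$, treats the possible types of $(C_1,C_2)$ case by case, and crucially proves an auxiliary Claim (that maximal-signature representations on $\Sigma_{g,1}$ deform to negative-trace hyperbolic boundary) to handle the situation where the inductive hypothesis does not directly apply. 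That casework is where the real content lies; your closed-surface shortcut does not bypass it.
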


\begin{proof}
Note that reversing the orientation of the surface reverses the sign of the signature, and simultaneously changes each boundary angle \( \theta \) of a boundary elliptic representation to \( 2\pi - \theta \). Hence, the two cases \( \mathrm{sign}(\phi) < 2|\chi(\Sigma_{g,1})| \) and \( \mathrm{sign}(\phi) > -2|\chi(\Sigma_{g,1})| \) are equivalent. Thus, it suffices to prove the lemma in the case \( \mathrm{sign}(\phi) < 2|\chi(\Sigma_{g,1})| \).

For any \( \phi \in \mathrm{Ell}(\Sigma_{g,1}) \), up to conjugation, write
\begin{equation}\label{eqn17}
\phi = ((A_1,B_1),\dots,(A_g,B_g), C_0),
\end{equation}
with \( C_0 = \phi(c_0) = R(\theta_0) \), \( \theta_0 \in (0,2\pi) \). Let \( C_1 = [A_1,B_1] \), and \( C_2 = [A_2,B_2]\cdots[A_g,B_g] \), so that \( C_1 C_2 C_0 = I \).

\begin{figure}[ht]
\centering
\includegraphics[width=0.7\textwidth]{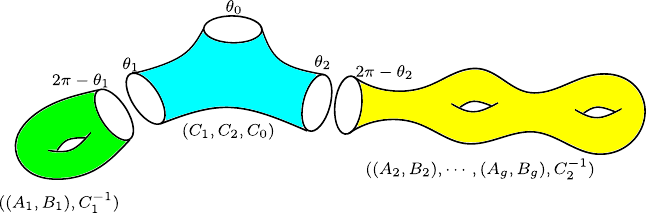}
\caption{The decomposition of $\phi=((A_1,B_1),\dots,(A_g,B_g), C_0)$.}
\end{figure}

For \( g=1 \), if \( \mathrm{sign}(\phi) < 2|\chi(\Sigma_{1,1})| = 2 \), then \( \mathrm{sign}(\phi) = -2 \), so \( \theta_0 \in (\pi,2\pi) \). From Section~\ref{secn=1}, such a representation \( \phi(t) \) exists in which \( \theta_0(t) \) passes through \( 2\pi \), with \( \phi(t)(c_0) = R(\theta_0(t)) \).

Assume the lemma holds for genus \( \leq g-1 \). For genus \( g \), assume (after a small perturbation) that both \( C_1 \) and \( C_2 \) are either elliptic or hyperbolic.

{\bf Case 1: }If both \( C_1 \) and \( C_2 \) are hyperbolic, we may conjugate so that \( C_2 = \mathrm{diag}(\lambda, 1/\lambda) \) and \( C_0 = P R(\theta_0) P^{-1} \) for some \( P \in \mathrm{SL}(2,\mathbb{R}) \). Then, by~\eqref{eqn16},
\[
\mathrm{tr}(C_2 C_0) = R \cos(\theta_0 - \alpha), \quad (C_2 C_0)_{21} = \lambda^{-1}(c^2 + d^2)\sin \theta_0.
\]
For \( \theta_0 = 0, \pi, 2\pi \), we have \( C_0 = \pm I \), and so \( |\mathrm{tr}(C_2 C_0)| = |\mathrm{tr}(C_2)| > 2 \). Define
\[
S := \{ \theta \in [0,2\pi] : |R \cos(\theta - \alpha)| > 2 \},
\]
which has two connected components, one containing \( 0, 2\pi \), and the other containing \( \pi \). The complement \( \overline{S}^c \) also has two components \( I_1, I_2 \), on which \( C_2 C_0 \) is elliptic.

Now we can always find a deformation path \( \theta_0(t) \) starting from the initial angle \( \theta_0 \), such that \( |\lambda^{-1} \sin \theta_0(t)| \) increases monotonically along the path. This ensures that \( \theta_0(t) \) enters one of the elliptic intervals \( I_1 \) or \( I_2 \), where the composition \( C_2 C_0(t) \) becomes elliptic, and the sign of $(C_2C_0(t))_{21}$ remains unchanged.

We now aim to find deformations \( A_1(t), B_1(t) \) of \( A_1, B_1 \) such that
\[
C_1(t) := [A_1(t), B_1(t)] \sim (C_2 C_0(t))^{-1}, \quad \text{for all } t \in [0,1].
\]

If \( \mathrm{tr}(C_1) < -2 \), then by equations~\eqref{eqn7} and~\eqref{eqn6}, we may assume that \( A = P_0 \alpha(\lambda) P_0^{-1} \), \( B = P_0 \beta P_0^{-1} \). According to the construction in Section~\ref{secn=1}, we can fix \( A \) and deform \( \beta \) such that \( \mathrm{tr}([A,B]) \) increases into the range \( (-2,2) \). Along this deformation, the sign of the $(2,1)$-entry of the commutator remains constant, i.e.,
\(
\mathrm{sgn}((C_1(t))_{21}) = \mathrm{sgn}((C_1(0))_{21}).
\)
Therefore, we achieve:
\begin{equation}\label{eqn20}
  \mathrm{tr}(C_1(t)) = \mathrm{tr}(C_2 C_0(t)),
\end{equation}
and
\begin{align}\label{eqn21}
\begin{split}
  &\quad \mathrm{sgn}((C_1(t))_{21}) \cdot \mathrm{sgn}((C_2 C_0(t))_{21}) \\
&  = \mathrm{sgn}((C_1(0))_{21}) \cdot \mathrm{sgn}((C_2 C_0(0))_{21}) = -1.
\end{split}
\end{align}
Hence, \( C_1(t) \sim (C_2 C_0(t))^{-1} \) for all \( t \in [0,1] \).

If \( \mathrm{tr}(C_1) > 2 \), then we can similarly choose deformations \( A_1(t), B_1(t) \) such that \( \mathrm{tr}(C_1(t)) \) decreases into the interval \( (-2,2) \), while preserving the sign of \( (C_1(t))_{21} \). 
In fact, by the path-lifting property \cite[Corollary 7.8]{Gold1}, we can construct deformations \( A_1(t), B_1(t) \) such that \( \mathrm{sgn}((C_1(t))_{21}) \) remains constant throughout the deformation, where $C(t)$ is hyperbolic for $[0,1)$, and $C(1)$ is parabolic. Moreover, we can arrange that
\[
C_1(1) = \begin{pmatrix}
  1 & 0 \\
  \mathrm{sgn}((C_1)_{21}) & 1
\end{pmatrix}.
\]
Hence, as \( C_1(t) \) deforms into an elliptic element, the sign of \( (C_1(t))_{21} \) remains unchanged by continuity. This sign preservation is crucial for verifying condition~\eqref{eqn21}, and thus for ensuring that \( C_1(t) \sim (C_2 C_0(t))^{-1} \) holds throughout the deformation.

Finally, after applying a suitable conjugation to $((A_1(t),B_1(t)),C_1^{-1}(t))$, the product \( C_2 C_0(t) \) becomes elliptic and is the inverse of \( C_1(t) \), so \( C_1 \) is reduced to the elliptic case. Therefore, the situation reduces to {\bf Case 3}.

{\bf Case 2:} If \( C_2 \) is elliptic, we reduce to {\bf Case 3} by considering the boundary elliptic representation \( ((A_1,B_1), C_2, C_0) \), and applying Lemma~\ref{lemma6}, fixing the conjugacy class of \( C_2 \).

\textbf{Case 3.} 
Assume now that \(C_{1}\) is elliptic.  
Then the restriction  
\[
\bigl((A_{2},B_{2}),\dots,(A_{g},B_{g}),\,C_{0},C_{1}\bigr)\;\in \mr{E}(\Sigma_{g-1,2})
\]
is a boundary elliptic representation of the twice-punctured surface \(\Sigma_{g-1,2}\).
By Lemma~\ref{lemma6}, we may deform it so that every boundary image lies in 
\(\mathrm{SO}(2)\setminus\{\pm I\}\).  
Consequently, we may—and do—assume  
\[
C_{i}=R(\theta_{i})\in \mathrm{SO}(2)\setminus\{\pm I\}, 
\qquad\theta_{i}\in(0,\pi)\cup(\pi,2\pi),\quad i=0,1,2,
\]
and therefore 
\[
\theta_{0}+\theta_{1}+\theta_{2}=2\pi\quad\text{or}\quad4\pi.
\]

For \(\theta_{0}+\theta_{1}+\theta_{2}=4\pi\).
Suppose first that one of \(\theta_{1},\theta_{2}\) lies in \((0,\pi)\).
Fix \(\theta_{2}\) and deform $\theta_{0}(t)\nearrow2\pi$ and $\theta_{1}(t)\searrow 2\pi-\theta_{2}$,
so that  
\(\theta_{1}(t)\in(2\pi-\theta_{2},\,\theta_{1})\) for every \(t\).
Hence \(\theta_{1}(t)\neq\pi\) along the path, and we reach \(\theta_{0}=2\pi\). By a slight perturbation, we can find a deformation $\phi(t)$ of $\phi$ such that its angle passes through $2\pi$.

Now consider the subtler situation where \(\theta_{1},\theta_{2}\in(\pi,2\pi)\).
Set  
\[
\phi_2=\bigl((A_{2},B_{2}),\dots,(A_{g},B_{g}),\,C_{2}^{-1}\bigr)
      \in \mr{E}\left(\Sigma_{g-1,1}\right).
\]
If \(\mathrm{sign}(\phi_2)<2\lvert\chi(\Sigma_{g-1,1})\rvert\), then by the induction
hypothesis the angle \(2\pi-\theta_{2}\) can be deformed through \(2\pi\) so that
\(\theta_{2}\) passes through \(0\).  
Using this, choose a small \(\varepsilon>0\) and set
\[
\begin{aligned}
\theta_{0}(t)&=\theta_{0}(1-t)+2\pi t,\\
\theta_{1}(t)&=\theta_{1}(1-t)+(\pi+\varepsilon)t,\\
\theta_{2}(t)&=\theta_{2}(1-t)+(\pi-\varepsilon)t,
\end{aligned}
\qquad t\in[0,1].
\]
Throughout the deformation we have \(\theta_{1}(t)\in(\pi,2\pi)\);
moreover \(\theta_{0}(1)=2\pi\), so the required deformation is achieved while
staying in \(\mathrm{SO}(2)\setminus\{\pm I\}\). By a slight perturbation, we obtain the desired deformation.

{\bf Claim.} For any $\phi \in \mathrm{Ell}(\Sigma_{g,1})$ with maximal signature (i.e., $\mathrm{sign}(\phi) = 2|\chi(\Sigma_{g,1})|$), we can deform $\phi$ to a boundary hyperbolic representation with negative trace.

\begin{proof}[Proof of the Claim]
We first consider the base case $g = 1$. For any $\phi \in \mathrm{Ell}(\Sigma_{1,1})$ with $\mathrm{sign}(\phi) = 2$, it follows from equation~\eqref{eqn6} that $\phi$ can be deformed to a boundary hyperbolic representation with negative trace.

Assume by induction that the claim holds for all genera up to $g-1$. We now prove the case for genus $g$. Suppose $\phi$ is given as in equation~\eqref{eqn17}, and define $C_1, C_2$ accordingly, as in the preceding construction. If both $C_1$ and $C_2$ are elliptic, then by Lemma~\ref{lemma6}, we may assume without loss of generality that $C_0, C_1, C_2 \in \mathrm{SO}(2) \setminus \{\pm I\}$.

Since $\phi$ has maximal signature, the restriction of $\phi$ to each subsurface also attains maximal signature. This implies that the corresponding rotation angles satisfy $\theta_1, \theta_2 \in (\pi, 2\pi)$. Consider now a conjugate deformation of $C_1$ of the form $P^{-1} R(\theta_1) P$. From equation~\eqref{eqn18}, we have:
\begin{equation}
\mathrm{tr}(R(\theta_2) P R(\theta_1) P^{-1}) = 2 \cos \theta_2 \cos \theta_1 - (\mathrm{tr}(P^\top P)) \sin \theta_2 \sin \theta_1.
\end{equation}
Since $\sin(\theta_1)\sin(\theta_2) > 0$, by deforming $P$ so that $\mathrm{tr}(P^\top P)$ is sufficiently large, we can ensure that the trace becomes less than $-2$. Hence, $C_0 = (C_1(t) C_2)^{-1}$ can be deformed to a hyperbolic element with negative trace.

Finally, from the analysis in Cases 1, 2, and 3, we know that $C_1$ and $C_2$ can always be deformed into a configuration where both are elliptic. This completes the proof.
\end{proof}
 Assume now that $\mathrm{sign}(\phi_2) = 2|\chi(\Sigma_{g-1,1})|$. By the Claim above, we may deform $\phi_2$ such that $\mathrm{tr}(C_2)$ decreases monotonically to a value less than $-2$. Since both the Toledo invariant and the rho invariant vary continuously along this deformation path, $\phi_2$ can be deformed to a boundary hyperbolic representation with negative trace, while preserving the signature invariant.

This yields a deformation $C_2(t)$ of $C_2$ such that $C_2(1)$ is hyperbolic. By the solution of \eqref{eqn-deformC}, there exist elliptic paths $C_0(t)$ and $C_1(t)$ such that $(C_0(t) C_1(t))^{-1} = C_2(t)$. Consequently, we obtain a deformation $\psi$ of $\phi$ such that $C_0$ and $C_1$ are elliptic, while $C_2$ is hyperbolic.

Assume further that $C_2 =P^{-1} \mathrm{diag}(\lambda,1/\lambda)P$ with $\lambda < 0$, and $C_0 = R(\theta_0)$. From equation~\eqref{eqn16}, we have
\begin{equation*}
  \mathrm{tr}(C_2 C_0) = R \cos(\theta_0 - \alpha) \in (-2, 2).
\end{equation*}
From {\bf Case 1}, we obtain:
\begin{equation*}
  \mathrm{sgn}(\lambda^{-1} \sin \theta_0) = \mathrm{sgn}((C_2 C_0)_{21}) = -\mathrm{sgn}((C_1)_{21}) = -\mathrm{sgn}(\sin \theta_1).
\end{equation*}
Since $\theta_1 \in (\pi, 2\pi)$ and $\lambda < 0$, it follows that $\theta_0 \in (\pi, 2\pi)$. By reversing the process described in {\bf Case 1}, we can keep $C_2$ fixed while deforming $\theta_0$ across $2\pi$. During this deformation, there exist matrices $A(t), B(t)$ such that $[A(t), B(t)] = C_1(t) = (C_2 C_0(t))^{-1}$. Thus, we obtain a deformation $\phi(t)$ of $\phi$ such that $\phi(t)(c_0) \in \mathrm{SO}(2)$ and the rotation angle of $C_0(t)$ passes through $2\pi$.

Now consider the case when \( \theta_0 + \theta_1 + \theta_2 = 2\pi \). Then the representation $(C_1, C_2, C_0)$ is maximal. If \( \theta_1 \in (\pi, 2\pi) \), then the representation \( ((A_1, B_1), C_1^{-1}) \) is also maximal. Since \( \mathrm{sign}(\phi) < 2|\chi(\Sigma_{g,1})| \), the representation \( \phi_2 \) is not maximal. By the inductive hypothesis, the angle $\theta_2$ $(\in (0,\pi))$ corresponding to $C_2$ can be deformed through zero. Keeping $\theta_1$ fixed and letting $\theta_0 \nearrow 2\pi - \theta_1$, as $\theta_2$ passes through zero (i.e., as $C_2$ crosses the identity), we obtain a path where 
$\theta_2(t) \searrow 2\pi - \varepsilon$ and $\theta_0(t) \nearrow 2\pi - \theta_1 + \varepsilon < 2\pi$. Eventually, we reach the configuration $\theta_0 + \theta_1 + \theta_2 = 4\pi$, which reduces to a previously treated case.

In the case where \( \theta_1 \in (0, \pi) \), we fix $\theta_2$ and deform $\theta_0$ through $2\pi$, forcing $\theta_1$ to pass through zero. This corresponds to a deformation of a representation in $\mathrm{Hom}(\pi_1(\Sigma_{g-1,2}), \mathrm{SL}(2,\mathbb{R}))$ that extends to a representation of $\pi_1(\Sigma_{g,1})$, as explained in Section~\ref{secn=1}.
This completes the proof.
\end{proof}
We now state a lemma which provides a useful deformation of boundary elliptic-unipotent representations.
Let \( \{a_1, b_1, \dots, a_g, b_g, c_1, \dots, c_n \}\) be a standard set of generators for \( \pi_1(\Sigma_{g,n}) \). 
\begin{lemma}\label{lemma7}
If \( \phi \in \mathrm{Ell}(\Sigma_{g,n}) \) satisfies \( |\mathrm{sign}(\phi)| < 2|\chi(\Sigma_{g,n})| \) and any $1\leq i\leq g$, then there exists a deformation $\phi(t)\in \mathrm{E}(\Sigma_{g,n})$ of $\phi$ such that \(\phi(1)= \phi' \in \mathrm{Ell}(\Sigma_{g,n}) \) and $\phi'(a_{i}) = \phi'(b_{i}) = I$.
\end{lemma}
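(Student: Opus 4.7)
The plan is to trivialize the $i$-th handle by first making the commutator $[A_i,B_i]$ equal to the identity and then deforming the pair $(A_i,B_i)$ itself to $(I,I)$. Cut the surface along a separating simple closed curve $c_0$ isotopic to a loop representing $[a_i,b_i]$, so that $\Sigma_{g,n}=\Sigma_{1,1}\cup_{c_0}\Sigma_{g-1,n+1}$, with $a_i,b_i$ on the once-punctured torus side. Write $\phi_1,\phi_2$ for the two restrictions, so $\phi_1(c_0)=[A_i,B_i]$ and $\phi_2(c_0)=[A_i,B_i]^{-1}$. By Lemma~\ref{lemma6} we may assume every $\phi(c_j)\in\mathrm{SO}(2)\setminus\{\pm I\}$, and by the techniques developed in the proof of Lemma~\ref{lemma12} we may further arrange that $[A_i,B_i]\in\mathrm{SO}(2)\setminus\{\pm I\}$; in particular $\phi_1\in\mathrm{Ell}(\Sigma_{1,1})$ and $\phi_2\in\mathrm{Ell}(\Sigma_{g-1,n+1})$.

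From Table~\ref{sign-table} we have $\mathrm{sign}(\phi_1)\in\{\pm 2\}$, so at least one of the hypotheses $\mathrm{sign}(\phi_1)<2$ or $\mathrm{sign}(\phi_1)>-2$ of Lemma~\ref{lemma12} is fulfilled for $\phi_1$. Applying Lemma~\ref{lemma12} yields a deformation $\phi_1(t)\in\mathrm{E}(\Sigma_{1,1})$ with $\phi_1(t_0)(c_0)=I$, i.e.\ $[A_i(t_0),B_i(t_0)]=I$. The additivity $\mathrm{sign}(\phi)=\mathrm{sign}(\phi_1)+\mathrm{sign}(\phi_2)$ together with $|\mathrm{sign}(\phi)|<2|\chi(\Sigma_{g,n})|=2+2|\chi(\Sigma_{g-1,n+1})|$ forces $\phi_2$ to be non-extremal in the direction opposite to that in which the angle of $[A_i,B_i]$ passes through $I$. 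Via Proposition~\ref{ell-prop2} when $g=1$, or an induction on the genus built on Lemmas~\ref{lemma3},~\ref{lemma5},~\ref{lemma8} when $g\geq 2$, we can deform $\phi_2$ so that $\phi_2(t)(c_0)=\phi_1(t)(c_0)^{-1}$ along the whole path, while keeping each $\phi(c_j)$ ($j\geq 1$) inside $\mathrm{SO}(2)\setminus\{\pm I\}$. Gluing along $c_0$ yields a deformation $\phi(t)\in\mathrm{E}(\Sigma_{g,n})$ with $[A_i(t_0),B_i(t_0)]=I$.

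At $t=t_0$ the pair $(A_i,B_i)$ commutes in $\mathrm{SL}(2,\mathbb{R})$. Any commuting pair can be connected to $(I,I)$ through a continuous path of commuting pairs: first contract $A_i$ toward $I$ along a one-parameter subgroup containing it, then similarly contract $B_i$; at each stage the pair stays commuting, so $[A_i,B_i]=I$ is preserved and none of the other generators or boundary images of $\phi$ need to change. Continuing the deformation along such a path produces $\phi'$ with $\phi'(a_i)=\phi'(b_i)=I$, and a small final perturbation of the remaining boundary angles guarantees $\phi'\in\mathrm{Ell}(\Sigma_{g,n})$.

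The principal difficulty is the synchronization at the singular time $t_0$: on the torus side, the rotation angle of $[A_i,B_i]$ must traverse $0$ or $2\pi$, while on the opposite side the angle of $\phi_2(c_0)$ must traverse the same value from the complementary direction, all while the other boundary angles $\theta_j$ ($j\geq 1$) remain in $(0,2\pi)$. Without the strict inequality $|\mathrm{sign}(\phi)|<2|\chi(\Sigma_{g,n})|$, the representation $\phi_2$ could be signature-extremal and hence too rigid to admit a compatible deformation; this is precisely why the hypothesis is imposed, and it is the conceptual heart of the argument.
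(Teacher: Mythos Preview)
Your overall strategy—cut off the $i$-th handle along $c_0$, push the angle of $[A_i,B_i]$ through $0$ or $2\pi$ via Lemma~\ref{lemma12} on the $\Sigma_{1,1}$ side, and then trivialize the commuting pair—is sound, and your signature bookkeeping showing that $\phi_2$ is non-extremal in the correct direction is accurate. The gap is on the $\Sigma_{g-1,n+1}$ side. You assert that $\phi_2$ can be deformed so that $\phi_2(t)(c_0)=\phi_1(t)(c_0)^{-1}$ while the remaining $n$ boundary angles stay in $(0,2\pi)$, citing Proposition~\ref{ell-prop2} and Lemmas~\ref{lemma3},~\ref{lemma5},~\ref{lemma8}. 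None of these results says that. They are connectedness statements for level sets of the signature; they do not give you the ability to steer one specific boundary angle through $2\pi$ (or $0$) while holding the others inside $(0,2\pi)$. What you actually need is a multi-boundary analogue of Lemma~\ref{lemma12}, and you neither state nor prove it. The vague ``induction on the genus'' does not help: the cited lemmas are all genus-one or genus-zero, and even if you could repeatedly invoke the present lemma inductively on $\phi_2$ to strip handles, the signature on the successively smaller surfaces can become extremal before you reach genus zero, so the inductive step is not available in general.

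The paper avoids this difficulty by a different decomposition. It first proves the $n=1$ case directly: with $C_1=[A_1,B_1]$, $C_2=\prod_{j\ge2}[A_j,B_j]$ and $C_0$ the single boundary, the complementary piece carrying $C_2$ is $\Sigma_{g-1,1}$, which has \emph{one} boundary component, so Lemma~\ref{lemma12} applies to it verbatim and the angles $\theta_0,\theta_1,\theta_2$ can be manipulated subject only to $\theta_0+\theta_1+\theta_2\in\{2\pi,4\pi\}$. For $n\ge2$ the paper then inducts on $n$ by cutting off a pair of pants containing two boundary components; the pair of pants is trivial to control, and the inductive hypothesis on $\Sigma_{g,n-1}$ already gives the handle-trivialization there. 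In short, the paper arranges the decomposition so that the ``hard'' side always has a single boundary, exactly where Lemma~\ref{lemma12} is available, whereas your decomposition puts all $n$ original boundaries on the hard side. A minor additional point: your one-parameter-subgroup argument for deforming a commuting pair to $(I,I)$ fails when $A_i=-I$; the correct statement (used implicitly in the paper) is that $\mathrm{Hom}(\pi_1(\Sigma_{1,0}),\mathrm{SL}(2,\mathbb{R}))$, the variety of commuting pairs, is connected.
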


\begin{proof}
Since reversing the orientation of the surface changes the sign of the signature and maps a boundary elliptic representation with angle \( \theta \) to one with angle \( 2\pi - \theta \), we may assume without loss of generality that \( 0 \leq \mathrm{sign}(\phi) < 2|\chi(\Sigma_{g,n})| \). The case \( g = 1 \) is covered by Lemma~\ref{lemma3}, so we assume \( g \geq 2 \). 
On the other hand, since \(\mathrm{Hom}(\pi_1(\Sigma_{1,1}),\mathrm{SL}(2,\mb{R})) \) is connected, any representation \( \phi \in {Ell}(\Sigma_{g,n}) \) satisfying \( \phi([a_{i},b_{i}]) = I \) can be further deformed to a representation \( \phi' \in {Ell}(\Sigma_{g,n}) \) such that \( \phi'(a_{i}) = \phi'(b_{i}) = I \). Therefore, for our purpose, it suffices to deform \( \phi \) to a representation \( \phi' \) with \( \phi'([a_{i},b_{i}]) = I \). For simplicity, we prove the case where $i = 1$; the argument for a general index $i$ is entirely analogous.

We first consider the case \( n = 1 \). By the proof of Lemma~\ref{lemma12}, the representation \( \phi \) can be deformed to a representation \( \phi' \), such that \( C_1, C_2 \in \mathrm{SO}(2) \), and the boundary angles satisfy
\[
  \theta_0 + \theta_1 + \theta_2 = 4\pi \quad \text{or} \quad \theta_0 + \theta_1 + \theta_2 = 2\pi \text{ with } \theta_1 \in (0,\pi).
\]

In the latter case, we consider the two representations \( ((A_1, B_1), C_1^{-1}) \) and \( (C_1, C_2, C_0) \), whose signatures are \( -2 \) and \( 2 \), respectively. Thus, the remaining representation
\[
  \phi_2 := ((A_2, B_2), \dots, (A_g, B_g), C_2^{-1})
\]
has a non-negative signature, which cannot be minimal. By Lemma~\ref{lemma12}, we can deform \( \phi_2 \) such that the angle of \( C_2^{-1} \) passes through 0, i.e., the angle of \( C_2 \) increases through \( 2\pi \). During this deformation, we fix \( \theta_0 \), so \( \theta_2 \) increases to \( 2\pi - \theta_0 \), and \( \theta_1 \) decreases to 0. This deformation of \( \phi_2 \) extends to a deformation \( \phi' \) of \( \phi \) with \( \phi'([a_1, b_1]) = I \).

Now consider the case \( \theta_0 + \theta_1 + \theta_2 = 4\pi \).  If \( \theta_1 \in (0,\pi) \), then both \( ((A_1, B_1), C_1^{-1}) \) and \( (C_1, C_2, C_0) \) have signature \( -2 \), so the remaining piece
\[
  \phi_2 := ((A_2, B_2), \dots, (A_g, B_g), C_2^{-1})
\]
has a positive signature and is not minimal. By Lemma~\ref{lemma12}, \( C_2 \)'s angle can be deformed to cross \( 2\pi \), and fixing \( \theta_0 \), the \( \theta_1 \) still lies in \( (0,\pi) \), reducing to the previous case since now $\theta_0+\theta_1+\theta_2=2\pi$.
If \( \theta_1 \in (\pi, 2\pi) \), then we can deform \( \theta_1 \nearrow 2\pi \), while keeping \( \theta_2 \) fixed, and simultaneously deform \( \theta_0 \searrow 2\pi - \theta_2 \). This ensures that \( \theta_0(t) \in (0, 2\pi) \) throughout the deformation.

We now proceed by induction on \( n \). Suppose the lemma holds for all boundary components up to \( n-1 \), and consider the case of \( n \). By Lemma~\ref{lemma6}, we may assume that
\[
  \phi = ((A_1, B_1), \dots, (A_g, B_g), C_1, \dots, C_n)
\]
is a boundary \( \mathrm{SO}(2) \setminus \{\pm I\} \) representation, and \( C_0 := C_{n-1}C_n \in \mathrm{SO}(2) \setminus \{\pm I\} \). We decompose \( \phi \) along the separating curve \( c_0 \) into two pieces:
\[
  \phi_1 := ((A_1, B_1), \dots, (A_g, B_g), C_1, \dots, C_{n-2}, C_0^{-1}), \quad
  \phi_2 := (C_0, C_{n-1}, C_n).
\]
Then both \( \phi_1 \) and \( \phi_2 \) are boundary elliptic representations, and
\[
  \mathrm{sign}(\phi_1) + \mathrm{sign}(\phi_2) = \mathrm{sign}(\phi) \geq 0.
\]
Hence, \( \mathrm{sign}(\phi_1) \geq -2 > -2|\chi(\Sigma_{g,n-1})| \). 

If \( \mathrm{sign}(\phi_1) < 2|\chi(\Sigma_{g,n-1})| \), then by the inductive hypothesis, we can deform \( \phi_1(t) \in \mathrm{Ell}(\Sigma_{g,n-1}) \) to \( \phi_1' \) such that \( \phi_1'([a_1, b_1]) = I \). On the other hand, we can deform \( \phi_2(t) \) such that \( \phi_2(t)(c_0) \phi_1(t)(c_0) = I \). Gluing these two deformations gives a deformation \( \phi(t) = \phi_1(t) \cup \phi_2(t) \) with \( \phi(1)([a_1, b_1]) = I \).

If instead \( \mathrm{sign}(\phi_1) = 2|\chi(\Sigma_{g,n-1})| \),  then $\phi_1$ has maximal signature and is a boundary $\mathrm{SO}(2)\backslash \{\pm I\}$ representation. Hence,
 we can deform \( \phi_1 \) so that the angle \( 2\pi-\theta_0 \) of \( C_0^{-1} \) crosses 0, i.e., the angle \( \theta_0 \) increases through \( 2\pi \). Since \( \mathrm{sign}(\phi) < 2|\chi(\Sigma_{g,n})| \), we must have \( \mathrm{sign}(\phi_2) = -2 \), so
\[
  \theta_0 + \theta_{n-1} + \theta_n = 4\pi.
\]
As \( \theta_0 \) increases through \( 2\pi\), and to $\epsilon>0$, we fix \( \theta_{n-1} \), and decrease \( \theta_n \) to \( 2\pi - \theta_{n-1} - \epsilon >0\), so that, in the end, their sum is equal to \( 2\pi \). Therefore, \( \phi \) admits a deformation \( \phi(t) \in \mathrm{E}(\Sigma_{g,n}) \) to \( \phi(1) \), where \( \phi(1) \) decomposes as \( \phi_1(1) \) and \( \phi_2(1) \) with
\[
  \mathrm{sign}(\phi_1(1)) = 2|\chi(\Sigma_{g,n-1})| - 4 < 2|\chi(\Sigma_{g,n-1})|, \quad \mathrm{sign}(\phi_2(1)) = 2,
\]
which reduces to the case already covered. The proof is complete.
\end{proof}
\begin{rem}
By equation~\eqref{eqn19}, the deformation $\phi(t)$ does not, in general, remain in $\mathrm{Ell}(\Sigma_{g,n})$. 	
\end{rem}

\begin{lemma}\label{lemma11}
We have
\[
\# \left\{ \mathrm{sign}^{-1} \left(2|\chi(\Sigma_{g,n})| \right) \cap \mathrm{E}(\Sigma_{g,n}) \right\} = 4^g.
\]
\end{lemma}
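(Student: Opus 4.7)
The plan is to argue by induction on the genus $g$. The base cases are already in hand: for $g=0$, Proposition~\ref{ell-prop2} yields a single component, matching $4^0 = 1$; for $g=1$, Lemma~\ref{lemma8} gives four components, matching $4^1 = 4$.

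For the inductive step with $g \geq 2$, I would decompose $\Sigma_{g,n} = \Sigma_{1,1} \cup_{c_0} \Sigma_{g-1,n+1}$ along a separating simple closed curve $c_0$, and write $\phi_1 = \phi|_{\pi_1(\Sigma_{1,1})}$, $\phi_2 = \phi|_{\pi_1(\Sigma_{g-1,n+1})}$ for any $\phi \in \mathrm{sign}^{-1}(2|\chi(\Sigma_{g,n})|) \cap \mathrm{E}(\Sigma_{g,n})$. Since $|\chi(\Sigma_{1,1})| + |\chi(\Sigma_{g-1,n+1})| = |\chi(\Sigma_{g,n})|$, the additivity of signature across separating curves together with the piecewise Milnor--Wood bound \eqref{sig-MW} forces $\mathrm{sign}(\phi_1) = 2$ and $\mathrm{sign}(\phi_2) = 2|\chi(\Sigma_{g-1,n+1})|$.

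The central step is to deform $\phi$ within the maximal stratum so that $\phi(c_0) \in \mathrm{SO}(2) \setminus \{\pm I\}$, whereupon $\phi_1 \in \mathrm{sign}^{-1}(2) \cap \mathrm{E}(\Sigma_{1,1})$ and $\phi_2 \in \mathrm{sign}^{-1}(2|\chi(\Sigma_{g-1,n+1})|) \cap \mathrm{E}(\Sigma_{g-1,n+1})$. For this, I would invoke the Claim proven inside Lemma~\ref{lemma12}, which says that any maximal-signature representation on a surface with one boundary can be deformed between boundary elliptic and boundary hyperbolic-negative-trace configurations. Applying this on the $\Sigma_{1,1}$ side (or the $\Sigma_{g-1,n+1}$ side) produces a deformation of $\phi_1(c_0)$ through elliptic conjugacy classes, which I would match simultaneously on the other side through a compatible conjugate deformation, in the style of Lemma~\ref{lemma6}, so that the gluing condition $\phi_1(c_0)\phi_2(c_0) = I$ is preserved throughout. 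Once $\phi(c_0)$ is elliptic, Proposition~\ref{ell-prop3} supplies exactly $4$ components for $\mathrm{sign}^{-1}(2) \cap \mathrm{E}(\Sigma_{1,1})$, distinguished by the sign triple $(\mathrm{sgn}(\lambda), \mathrm{sgn}(\mathrm{tr}(B)), \mathrm{sgn}(B_{21}))$ subject to the constraint that their product equals $+1$, while the inductive hypothesis applied to $\Sigma_{g-1,n+1}$ supplies $4^{g-1}$ components. These two sets of invariants are independent and locally constant on the maximal-signature stratum, so the counts multiply to yield $4 \cdot 4^{g-1} = 4^g$.

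The main obstacle is the compatible deformation across the gluing locus: because both factors attain the Milnor--Wood bound, each is rigid in its own right, and the freedom to move $\phi_1(c_0)$ is coupled to a forced corresponding move of $\phi_2(c_0)$. Carrying out this coupled homotopy while staying inside $\mathrm{E}(\Sigma_{g,n})$ at every instant---in particular preventing the conjugacy class on $c_0$ from ever acquiring eigenvalue $1$---is the heart of the argument and requires a careful merging of the techniques developed in Lemmas~\ref{lemma6} and~\ref{lemma12}.
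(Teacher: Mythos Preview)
Your inductive approach is correct and reaches the same conclusion, but the paper takes a more direct route that avoids the recursion. Rather than peeling off one handle at a time, the paper observes immediately that maximal signature forces $\mathrm{tr}([A_i,B_i]) < 2$ for \emph{every} $i$ simultaneously (since each $\Sigma_{1,1}$ subsurface must itself carry signature $2$), and hence by \eqref{eqn10} and \eqref{eqn8} every $A_i$ and $B_i$ is hyperbolic. This produces $4^g$ sign-types in one stroke --- the tuples $(\mathrm{sgn}(\lambda_i),\, \mathrm{sgn}(\mathrm{tr}(B_i)))_{i=1}^g$ --- which are locally constant on the maximal stratum, and the paper then simply invokes the mechanism of Lemma~\ref{lemma8} to assert that each such type is realized and connected. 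Your induction makes the connectedness of each sign-type stratum more explicit, at the price of having to carry out the coupled deformation across $c_0$ at every step; the paper's all-at-once version sidesteps that bookkeeping entirely by leaning on the argument already established for $g=1$. In effect, the paper's proof is the ``unrolled'' form of your induction, and the main obstacle you flag (the gluing-compatible homotopy) is absorbed into the reference to Lemma~\ref{lemma8} rather than being redone for each genus.
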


\begin{proof}
If $\phi \in \mathrm{sign}^{-1} (2|\chi(\Sigma_{g,n})|) \cap \mathrm{E}(\Sigma)$, then $\phi$ has maximal signature. In particular, its restriction to each punctured torus $\Sigma_{1,1}$ contributes signature $2$, so
\[
\mathrm{tr} \left( \phi([a_i,b_i]) \right) = \mathrm{tr}([A_i,B_i]) < 2.
\]
Hence, both $A_i$ and $B_i$ are hyperbolic matrices. The argument in Lemma~\ref{lemma8} shows that each pair $(A_i,B_i)$ with $\mathrm{sign}((A_i,B_i),[A_i,B_i]^{-1}) = 2$ corresponds to $4$ connected components.
Therefore, the total number of components of $\mathrm{sign}^{-1}(2|\chi(\Sigma_{g,n})|) \cap \mathrm{E}(\Sigma_{g,n})$ is $4^g$.
\end{proof}

\begin{lemma}\label{lemma23}
Let $m \in [\chi(\Sigma_{g-1,n}),\, |\chi(\Sigma_{g-1,n})|] \cap \{|\chi(\Sigma_{g-1,n})| - 2\mathbb{Z}\}$. Then $\mathrm{sign}^{-1}(2m) \cap \mathrm{E}(\Sigma_{g,n})$ is a connected component.
\end{lemma}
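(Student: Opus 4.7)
The plan is to argue by induction on the genus $g$. The base cases $g=1$ (for any $n\geq 2$) are furnished by Lemmas~\ref{lemma3} and~\ref{lemma5}. For $g\geq 2$, suppose the statement is already known for every surface of genus less than $g$. Given $\phi_0,\phi_1\in \mathrm{sign}^{-1}(2m)\cap \mathrm{E}(\Sigma_{g,n})$, the hypothesis on $m$ gives $|2m|\leq 2|\chi(\Sigma_{g-1,n})|<2|\chi(\Sigma_{g,n})|$, so after a small perturbation into $\mathrm{Ell}(\Sigma_{g,n})$---which does not change the signature---Lemma~\ref{lemma7} deforms each $\phi_i$ within $\mathrm{E}(\Sigma_{g,n})$ to a representation $\phi_i'$ satisfying $\phi_i'(a_g)=\phi_i'(b_g)=I$. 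The restrictions $\widetilde{\phi}_i:=\phi_i'|_{\pi_1(\Sigma_{g-1,n})}$ then lie in $\mathrm{sign}^{-1}(2m)\cap \mathrm{E}(\Sigma_{g-1,n})$.

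If $|2m|\leq 2|\chi(\Sigma_{g-2,n})|$, the restricted signature is non-extremal on $\Sigma_{g-1,n}$, so the induction hypothesis produces a path from $\widetilde{\phi}_0$ to $\widetilde{\phi}_1$ in $\mathrm{sign}^{-1}(2m)\cap \mathrm{E}(\Sigma_{g-1,n})$; freezing $(A_g,B_g)=(I,I)$ lifts this path to one in $\mathrm{E}(\Sigma_{g,n})$ joining $\phi_0'$ and $\phi_1'$.

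The essential case is $2m=\pm 2|\chi(\Sigma_{g-1,n})|$, where $\widetilde{\phi}_i$ attains the extremal signature on $\Sigma_{g-1,n}$ and by Lemma~\ref{lemma11} may occupy any of $4^{g-1}$ components, distinguished by the sign patterns of the hyperbolic pairs $(A_i,B_i)$ with $i<g$. To merge these, I would decompose $\Sigma_{g,n}=\Sigma_{g-1,n+1}\cup_{c_0}\Sigma_{1,1}$ along a separating curve $c_0$ so that the $\Sigma_{1,1}$-piece carries the pair $(a_g,b_g)$, and then deform $\phi_i'$ within $\mathrm{E}(\Sigma_{g,n})$ so that $(A_g,B_g)$ leaves $(I,I)$ and $\phi_i'(c_0)$ becomes a non-trivial elliptic rotation. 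By Table~\ref{sign-table} the $\Sigma_{1,1}$-piece then contributes signature $\pm 2$ (choosing the angle of $\phi_i'(c_0)$ in $(0,\pi)$ or $(\pi,2\pi)$ according to the sign of $m$), which forces the $\Sigma_{g-1,n+1}$-piece to carry signature $2m\mp 2$, satisfying $|2m\mp 2|<2|\chi(\Sigma_{g-1,n+1})|$. The induction hypothesis applied to $\Sigma_{g-1,n+1}$ therefore connects any two resulting configurations inside $\mathrm{sign}^{-1}(2m\mp 2)\cap \mathrm{E}(\Sigma_{g-1,n+1})$, and re-gluing with a compatible path on the $\Sigma_{1,1}$-side produces the desired path in $\mathrm{E}(\Sigma_{g,n})$ between $\phi_0'$ and $\phi_1'$.

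The main obstacle is executing the deformation across the degenerate slice $\phi_i'(c_0)=I$: as $\phi_i'(c_0)$ leaves the identity, the signature on each side can jump, and one must simultaneously arrange both restrictions so that the total signature stays at $2m$, both pieces remain in the appropriate $\mathrm{E}$-space, and the holonomies match at $c_0$ at every instant. This gluing mirrors the compatibility constructions used in the proofs of Lemmas~\ref{lemma3}, \ref{lemma6} and~\ref{lemma12}, together with the explicit signature analysis on $\Sigma_{1,1}$ from Section~\ref{secn=1}.
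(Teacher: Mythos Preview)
Your inductive framework and the use of Lemma~\ref{lemma7} to drop the genus by one match the paper's opening move, and the non-extremal sub-case is handled exactly as you describe.

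In the extremal case $2m=\pm 2|\chi(\Sigma_{g-1,n})|$, however, the paper takes a different and more direct route than your re-inflation plan. After reducing (by repeated use of Lemma~\ref{lemma7}) to a fixed representation $\phi_0$ that is extremal on some $\Sigma_{g_0,n}$ with the remaining $g-g_0$ handles trivial, Lemma~\ref{lemma11} shows the only residual ambiguity is the $\Lambda_{g_0,n}$-orbit, i.e.\ sign flips $A_i\mapsto -A_i$ or $B_i\mapsto -B_i$ for $i\le g_0$. To connect $\phi_0$ to $\alpha_i\cdot\phi_0$, the paper applies Lemma~\ref{lemma7} \emph{again on the full $\Sigma_{g,n}$}, this time targeting the index $i$ on which $\alpha_i$ acts: since $|2m|<2|\chi(\Sigma_{g,n})|$, one deforms $\phi_0$ within $\mathrm{E}(\Sigma_{g,n})$ to $\psi_0$ with $\psi_0(a_i)=\psi_0(b_i)=I$. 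Because $[-A_i,B_i]=[A_i,B_i]$, the identical deformation applied to $\alpha_i\cdot\phi_0$ produces $\alpha_i\cdot\psi_0$, agreeing with $\psi_0$ on every generator except $a_i$. The two restrictions to handle $i$ now lie in $\mathrm{Hom}(\pi_1(\Sigma_{1,0}),\mathrm{SL}(2,\mathbb{R}))$, which is connected; extending that path constantly on the other generators stays in $\mathrm{E}(\Sigma_{g,n})$ and finishes the argument.

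Your recursive route---re-activating the $g$-th handle, splitting off $\Sigma_{1,1}$, and invoking induction on $\Sigma_{g-1,n+1}$---could plausibly be pushed through, but it carries a hazard beyond the $c_0=I$ crossing you flag. The inductive path you obtain in $\mathrm{E}(\Sigma_{g-1,n+1})$ is free to let $c_0(t)$ pass through $-I$, and by the analysis of Section~\ref{secn=1} there is no pair $(A_g,B_g)$ with $[A_g,B_g]=-I$; the lift to the $\Sigma_{1,1}$ side would therefore break at such a point, and you would need an additional genericity argument to steer the path away. The paper's trick---kill the very handle carrying the sign discrepancy and then use the connectivity of commuting pairs in $\mathrm{SL}(2,\mathbb{R})$---sidesteps all gluing along an elliptic $c_0$ and avoids both difficulties at once.
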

\begin{proof}
Let \( \Lambda_{g,n} \subset \mathrm{Hom}(\pi_1(\Sigma_{g,n}), \{\pm I\}) \) denote the subgroup generated by the representations \( \alpha_1, \beta_1, \dots, \alpha_g, \beta_g \), where each \( \alpha_i \) (resp. \( \beta_i \)) sends \( a_i \) (resp. \( b_i \)) to \( -I \) and all other generators to the identity.

By repeatedly applying Lemma~\ref{lemma7}, one can deform any representation \( \phi \in \mathrm{sign}^{-1}(2m) \cap \mathrm{E}(\Sigma_{g,n}) \) either to a representation with maximal or minimal signature on a proper subsurface \( \Sigma_{g_0,n} \) where $g_0=1+(|m|-n)/2$, or to a representation defined on a genus-zero surface. Note that a representation on \( \pi_1(\Sigma_{g_0,n}) \) naturally identified to a representation on \( \pi_1(\Sigma_{g,n}) \) by defining it trivially on the remaining generators.

If \( \phi \) is deformed to a representation with maximal or minimal signature on a subsurface \( \Sigma_{g_0,n} \), then by Lemma~\ref{lemma11}, it lies in the orbit \( \gamma \cdot \phi_0 \) for some fixed \( \phi_0 \in \mathrm{sign}^{-1}(2m) \cap \mathrm{E}(\Sigma_{g_0,n}) \) and some \( \gamma \in \Lambda_{g_0,n} \). In the genus-zero case, Proposition~\ref{ell-prop2} implies that the space is already connected.

Therefore, it remains to show that for any \( \gamma \in \Lambda_{g_0,n} \), the representations \( \phi_0 \) and \( \gamma \cdot \phi_0 \) lie in the same connected component of \( \mathrm{E}(\Sigma_{g,n}) \). Suppose \( \gamma = \alpha_i \in \Lambda_{g_0,n} \). Then \( \phi_0 \) and \( \gamma \cdot \phi_0 \) differ only in their values on \( a_i \), by multiplication with \( -I \).
By Lemma~\ref{lemma7}, both \( \phi_0 \) and \( \gamma \cdot \phi_0 \) can be deformed to representations \( \psi_0 \) and \( \psi_1 \), respectively, such that \( \psi_j([a_i, b_i]) = I \) for \( j = 0,1 \), and \( \psi_0 \) and \( \psi_1 \) agree on all generators except possibly \( a_i \) and \( b_i \). Thus, their restrictions to the subsurface \( \Sigma_{1,0} \) define representations in \( \mathrm{Hom}(\pi_1(\Sigma_{1,0}), \mathrm{SL}(2,\mathbb{R})) \), which is connected. Therefore, \( \psi_0 \) and \( \psi_1 \) are path-connected within this space.

It follows that \( \phi_0 \) and \( \gamma \cdot \phi_0 \) lie in the same connected component of \( \mathrm{E}(\Sigma_{g,n}) \). The case \( \gamma = \beta_i \) is handled in the same way. The proof is complete.
\end{proof}

Let $\#\{\mathrm{E}(\Sigma_{g,n})\}$ denote the number of connected components of boundary elliptic-unipotent representations. Then 
\begin{align*}
 &\quad\,\,\#\{\mathrm{E}(\Sigma_{g,n})\}
 =\sum_{m\in [\chi(\Sigma_{g-1,n}),|\chi(\Sigma_{g-1,n})|]}\#\{\mathrm{sign}^{-1}(2m)\cap \mathrm{E}(\Sigma_{g,n})\}\\
&+\#\{\mathrm{sign}^{-1}(2|\chi(\Sigma_{g,n})|)\cap \mathrm{E}(\Sigma_{g,n})\}+\#\{\mathrm{sign}^{-1}(-2|\chi(\Sigma_{g,n})|)\cap \mathrm{E}(\Sigma_{g,n})\}\\
 &=2g+n-3+2\#\{\mathrm{sign}^{-1}(2|\chi(\Sigma_{g,n})|)\cap \mathrm{E}(\Sigma_{g,n})\}\\
 &=2^{2g+1}+2g+n-3,
\end{align*}
where the second equality by Lemma \ref{lemma23}, the last equality follows from Lemma \ref{lemma11}.
It follows that
\begin{equation*}
\#\{\mathrm{E}(\Sigma_{g,n})\}=  2^{2g+1}+2g+n-3,\quad\text{ for } n\geq 1, g\geq 0.
\end{equation*}
Indeed, we obtain the following corollary for closed surfaces.
\begin{cor}[{\cite[Theorem A (i)]{Gold1}}]The number of connected components for $\mathrm{Hom}(\pi_1(\Sigma_{g,0}),\mathrm{SL}(2,\br))$ is $2^{2g+1}+2g-3$.
\end{cor}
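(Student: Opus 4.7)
The plan is to extend the inductive counting just used for $\mathrm{E}(\Sigma_{g,n})$ to the closed case $n=0$. For a closed surface $\Sigma_{g,0}$ there is no boundary contribution, so \eqref{Sig-Tol} reduces to $\mathrm{sign}(\phi)=2\mathrm{T}(\phi)$. Since every representation $\phi\colon\pi_1(\Sigma_{g,0})\to\mathrm{SL}(2,\mathbb{R})$ is a lift of a $\mathrm{PSL}(2,\mathbb{R})$-representation, and the obstruction to such a lift is $w_2\equiv e\pmod 2$, the Toledo invariant equals the Euler class and is an even integer satisfying $|\mathrm{T}(\phi)|\le 2g-2$. Hence the possible signature values form the $(2g-1)$-element set $\{-4(g-1),-4(g-2),\dots,4(g-1)\}$. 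By the local constancy of the signature recalled in Section~\ref{sec-Pre}, connected components are detected by the signature, with extremal values potentially splitting into several components.

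I would count in two steps. First, for the extremal level sets $\mathrm{sign}^{-1}(\pm 4(g-1))$ I would argue as in Lemma~\ref{lemma11}: a representation realizing the maximal signature must contribute the maximal value $+2$ on each of the $g$ punctured-torus pieces of a pants decomposition, and the analysis of Section~\ref{secn=1} then forces each pair $(A_i,B_i)$ to be hyperbolic with $[A_i,B_i]$ of negative trace, classified by the sign data $\bigl(\mathrm{sgn}(\lambda_i-1),\mathrm{sgn}(\mathrm{tr}\,B_i),\mathrm{sgn}((B_i)_{21})\bigr)$; this gives $4^g=2^{2g}$ components at each extreme. Second, for each of the remaining $2g-3$ non-extremal signature values I would show that the level set is a single connected component. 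Given $\phi\in\mathrm{sign}^{-1}(2m)$ with $|m|<2(g-1)$, proceed as in Lemmas~\ref{lemma6}, \ref{lemma7}, and \ref{lemma23}: cut along a separating simple closed curve to write $\Sigma_{g,0}=\Sigma_{g_0,1}\cup_c\Sigma_{g-g_0,1}$ with at least one piece non-extremal, deform the holonomy of $c$ into $\mathrm{SO}(2)\setminus\{\pm I\}$ and, via Lemma~\ref{lemma12}, push it across $\pm I$ so as to trivialize the commutators $[a_i,b_i]$ one at a time, then invoke the connectedness of $\mathrm{Hom}(\pi_1(\Sigma_{1,0}),\mathrm{SL}(2,\mathbb{R}))$ on each liberated handle. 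Summing the two contributions yields $2\cdot 2^{2g}+(2g-3)=2^{2g+1}+2g-3$, as claimed.

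The main obstacle is that Lemmas~\ref{lemma7} and \ref{lemma12} exploit the freedom of a boundary rotation angle $\theta\in(0,2\pi)$ to absorb signature discrepancies, a tool unavailable on a closed surface. The workaround is to introduce an auxiliary puncture and embed $\mathrm{Hom}(\pi_1(\Sigma_{g,0}),\mathrm{SL}(2,\mathbb{R}))$ inside the closure of $\mathrm{E}(\Sigma_{g,1})$ as the locus $\{\phi(c_1)=I\}$. The two boundary limits $\theta_1\to 0^+$ and $\theta_1\to 2\pi^-$ both produce holonomy $I$, but yield signatures $2\mathrm{T}+2$ and $2\mathrm{T}-2$ respectively in the $\Sigma_{g,1}$ framework; so each pair of adjacent non-extremal components of $\mathrm{E}(\Sigma_{g,1})$ (of which there are $\#\{\mathrm{E}(\Sigma_{g,1})\}=2^{2g+1}+2g-2$ in total) collapses to a single component of the closed space, accounting precisely for the drop from $2g-2$ non-extremal classes on $\Sigma_{g,1}$ to $2g-3$ on $\Sigma_{g,0}$, and recovering the Goldman count.
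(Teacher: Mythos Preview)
Your approach is essentially the same as the paper's, which is equally terse: it counts $4^g$ components at each extremal Toledo value via the $(A_i,B_i)\mapsto(\pm A_i,\pm B_i)$ action, and for the $2g-3$ non-extremal (even) Toledo values it asserts that each level set is connected by the same mechanism as Lemmas~\ref{lemma7} and~\ref{lemma23}, deferring to Goldman's original \cite[Lemma~10.5]{Gold1} for the closed-surface details. Your observation that the Toledo invariant of an $\mathrm{SL}(2,\mathbb{R})$-representation of a closed surface must be even (via the $w_2\equiv e\bmod 2$ lifting obstruction) is correct and is exactly what makes the count $2g-3$ rather than $4g-5$; the paper uses this implicitly.

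One caveat: your third paragraph overstates the obstacle and then offers a workaround whose final sentence does not parse. The ``drop from $2g-2$ to $2g-3$'' is a loss of one, not a pairing, so ``each pair of adjacent non-extremal components collapses'' cannot be the mechanism. In fact your second paragraph already contains the right idea and there is no genuine obstacle: once you cut $\Sigma_{g,0}=\Sigma_{g_0,1}\cup_c\Sigma_{g-g_0,1}$ along a separating curve, each piece \emph{has} a boundary (namely $c$), so Lemmas~\ref{lemma6}, \ref{lemma12}, and the arguments of Lemmas~\ref{lemma7}, \ref{lemma23} apply directly to the pieces. This is exactly Goldman's route, and the paper simply cites him rather than reproving it. The auxiliary-puncture embedding you sketch is an interesting alternative viewpoint but is not needed and, as written, is not a complete argument.
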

\begin{proof}
	For representations with maximal signature, equivalently those with maximal Toledo invariant, there are \( 4^g \) connected components, arising from the action \( (A_i, B_i) \mapsto (\pm A_i, \pm B_i) \). The same holds for representations with minimal Toledo invariant, giving another \( 4^g \) components.

	For representations satisfying \( |\mathrm{T}(\phi)| < |\chi(\Sigma_{g,0})| \), the argument is the same as in Lemmas~\ref{lemma7} and~\ref{lemma23}, as detailed in \cite[Lemma 10.5]{Gold1}. This shows that for each such Toledo invariant value, the set of corresponding representations forms a connected component, and there are \( 2g - 3 \) such values.

	In total, we obtain \( 2 \times 4^g + (2g - 3) \) connected components.
\end{proof}
Next, we determine the number of connected components of the space of boundary elliptic representations, denoted by \( \#\{\mathrm{Ell}(\Sigma_{g,n})\} \).

Assume that \( C_i \sim R(\theta_i) \) with \( \theta_i \in (0,\pi) \cup (\pi,2\pi) \) for each \( 1 \leq i \leq n \). For any representation 
\[
\phi \in \mathrm{sign}^{-1}(2|\chi(\Sigma_{g,n})|) \cap \mathrm{Ell}(\Sigma_{g,n}) \cap \sigma^{-1}(a),
\]
the signature is maximal. By Lemma~\ref{lemma6}, we may assume each \( C_i = R(\theta_i) \). Define \( \phi_1 = (C_0, C_1, \dots, C_n) \), where \( C_0 = (C_1 \cdots C_n)^{-1} \). Since \( \phi \) has maximal signature, so does \( \phi_1 \), and we must have
\[
2|\chi(\Sigma_{0,n+1})| = 2(n-1) = 2(n+1 - \tfrac{1}{\pi}( \theta_0+\theta_1 + \cdots + \theta_n )),
\]
which implies
\[
\theta_0+\theta_1 + \cdots + \theta_n  = 2\pi.
\]
On the other hand, the representation \[ \phi_2 = \phi|_{\pi_1(\Sigma_{g,1})} = ((A_1,B_1),\dots,(A_g,B_g),C_0^{-1}) \] is also maximal, which implies \( \theta_0 \in (\pi,2\pi) \) (see Equation~\eqref{eqn15}). Therefore,
\[
\theta_1 + \cdots + \theta_n \in (0, \pi),
\]
forcing \( a = (-1, \ldots, -1) \). Hence,
\[
\#\left\{\mathrm{sign}^{-1}(2|\chi(\Sigma_{g,n})|) \cap \mathrm{Ell}(\Sigma_{g,n})\right\} = 4^g.
\]
Similarly,
\[
\#\left\{\mathrm{sign}^{-1}(2\chi(\Sigma_{g,n})) \cap \mathrm{Ell}(\Sigma_{g,n})\right\} = 4^g.
\]
We now consider the case where \( |\mathrm{sign}(\phi)| < 2|\chi(\Sigma_{g,n})| \). 

If \( n = 1 \), then for any representation \( \phi \) with \( |\mathrm{sign}(\phi)| < 2|\chi(\Sigma_{g,1})| \), by Lemma~\ref{lemma12}, the corresponding rotation angle \( \theta_1 \) of boundary can be deformed continuously to both \( 0 \) and \( 2\pi \), provided \( \theta_1 \in (0,2\pi) \). This implies that, for each
\[
m \in [-\chi(\Sigma_{g,1}) + 2, |\chi(\Sigma_{g,1})| - 2] \cap \{ |\chi(\Sigma_{g,1})| - 2\mathbb{Z} \},
\]
both of the subsets
\[
\mathrm{sign}^{-1}(2m) \cap \{\theta_1 \in (0, \pi)\} \quad \text{and} \quad \mathrm{sign}^{-1}(2m) \cap \{\theta_1 \in (\pi, 2\pi)\}
\]
are non-empty.
On the other hand, since \( \mathrm{sign}^{-1}(2m) \cap \{\theta_1 \in (0, 2\pi)\} \) is connected, it follows that each of the above two subsets is itself connected. Therefore, for those boundary elliptic representations \( \phi \) with \( |\mathrm{sign}(\phi)| < 2|\chi(\Sigma_{g,1})| \), the space consists of exactly \( 2(2g - 2) \) connected components.
Combining this with the known count for the maximal and minimal signature cases, we obtain
\begin{equation*}
  \#\{ \mathrm{Ell}(\Sigma_{g,1}) \} = 2 \cdot 4^g + 2(2g - 2) = 2^{2g+1} + 4g - 4.
\end{equation*}

We now turn to the case \( n \geq 2 \). We begin by considering representations of neither maximal nor minimal signatures for which the product \( C_1 \cdots C_n \) cannot be deformed to the identity. If \( |\mathrm{sign}(\phi_2)| < 2|\chi(\Sigma_{g,1})| \), then by Lemma~\ref{lemma12}, the angle \( \theta_0 \) can be deformed to any value in \( [0, 2\pi] \). Therefore, there exists a deformation \( \phi_2(t) \) of \( \phi_2 \) such that
\(
\phi_2(t)(c_0) = \phi_1(t)(c_0)^{-1}.
\)
As a result, the glued representation \( \phi(t) = \phi_1(t) \cup_{c_0} \phi_2(t) \) satisfies
\(
\phi(1)(c_1 \cdots c_n)\). Hence, by Lemma~\ref{lemma12}, we must have \( \mathrm{sign}(\phi_2) = \pm 2\chi(\Sigma_{g,1}) \). 
  
 By the proof of \eqref{eqn19}, we obtain, in the following situations, \( C_1\cdots C_n \) cannot be deformed to \( I \),
\begin{equation}\label{eqn24}
\mathrm{sign}(\phi) = 
\begin{cases}
2(1-n+r_a )+\mathrm{sign}(\phi_2) & \text{if } r_a \in [2, n] \cap 2\mathbb{Z},\quad \mathrm{sign}(\phi_2) = 2\chi(\Sigma_{g,1}), \\
2(-1+r_a)+\mathrm{sign}(\phi_2) & \text{if } n-r_a\in[2,n]\cap 2\mb{Z},\,\mathrm{sign}(\phi_2) = 2|\chi(\Sigma_{g,1})|.
\end{cases}
\end{equation}
 These correspond to
\[
2 \cdot 4^g\sum_{r_a \in [2,n] \cap 2\mathbb{Z}} \binom{n}{r_a} =2^{2g+1}(2^{n-1}-1)\]
connected components.

For other representations $\phi\in \mathrm{Ell}(\Sigma_{g,n})$ with $|\mathrm{sign}(\phi)|<2|\chi(\Sigma_{g,n})|$ such that the representation $C_1\cdots C_n$ can be deformed to $I$. Then we have
\[
\mathrm{sign}(\phi) = \mathrm{sign}(\phi_2') + \mathrm{sign}(\phi_1'),
\]
where \( \phi_2' \in \mathrm{Hom}(\pi_1(\Sigma_{g,0}), \mathrm{SL}(2,\mathbb{R})) \) and \( \phi_1' \) is a boundary \( \mathrm{SO}(2)\setminus \{\pm I\} \) representation of \( \pi_1(\Sigma_{0,n}) \). Hence, the possible values of such non-maximal and non-minimal signatures are given by
\begin{align*}
\tfrac{1}{2} \mathrm{sign}(\phi) &= \tfrac{1}{2} \mathrm{sign}(\phi_2') + \tfrac{1}{2} \mathrm{sign}(\phi_1') \\
&\in \mathrm{T}(\phi_2') + \left[ -n + r_a + 1, r_a - 1 \right] \cap \{n - 2\mathbb{Z}\} \\
&\in \left[ -n + r_a + 1 - (2g - 2), r_a - 1 + (2g - 2) \right] \cap \{n - 2\mathbb{Z}\},
\end{align*}
for any \( a \in \{-1,1\}^n \), as in Theorem~\ref{thm4}. Observe that, by Lemma~\ref{lemma12} and \eqref{eqn24}, any two representations with the same signature and the same \( \sigma \)-value lie in the same connected component.
 The number of such values is
\begin{align*}
&\quad\,\,\#\left\{ \left[ -n + r_a + 1 - (2g - 2), r_a - 1 + (2g - 2) \right] \cap \{ n - 2\mathbb{Z} \} \right\} \\
&= \#\left\{ [r_a + 1, r_a - 1 + n + 2(2g - 2)] \cap 2\mathbb{Z} \right\} \\
&= \left\lfloor \tfrac{n - 1 + (r_a \bmod 2)}{2} \right\rfloor + 2g - 2.
\end{align*}
Therefore, summing over all \( a \in \{-1,1\}^n \), we obtain
\begin{align*}
&\quad\,\,\sum_{a \in \{-1,1\}^n} \left( \left\lfloor \tfrac{n - 1 + (r_a \bmod 2)}{2} \right\rfloor + 2g - 2 \right) \\
&= \sum_{r=0}^n \left( \left\lfloor \tfrac{n - 1 + (r \bmod 2)}{2} \right\rfloor + 2g - 2 \right) \binom{n}{r} \\
&= (n - 1)2^{n-1} + (2g - 2)2^n = (4g + n - 5)2^{n-1}.
\end{align*}

Putting everything together, the total number of connected components is
\begin{align*}
\#\{\mathrm{Ell}(\Sigma_{g,n})\} &= 4^g + 4^g +2^{2g+1}(2^{n-1}-1) + (4g + n - 5)2^{n-1} \\
&= 2^{n-1} \left( 2^{2g + 1} + 4g + n - 5 \right).
\end{align*}
In one word, we obtain
\begin{thm}
The number of connected components of the space of boundary elliptic-(unipotent) representations in \( \mathrm{SL}(2,\mathbb{R}) \) is given by
\begin{align*}
\#\{\mathrm{E}(\Sigma_{g,n})\} &= 2^{2g+1} + 2g + n - 3, \\
\#\{\mathrm{Ell}(\Sigma_{g,n})\} &= 2^{n-1} (2^{2g+1} + 4g + n - 5),
\end{align*}
for any \( g \geq 1 \) and \( n \geq 1 \). For $g=0$, we have $\#\{\mathrm{E}(\Sigma_{0,n})\}=n-1$ and $\#\{\mathrm{Ell}(\Sigma_{0,n})\}=2^{n-1}(n-1)$.
\end{thm}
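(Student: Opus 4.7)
The plan is to combine the signature invariant $\mathrm{sign}\colon \mathrm{E}(\Sigma_{g,n})\to\mathbb{Z}$ and, for the elliptic case, the sign vector $\sigma(\phi)=(\mathrm{sgn}(\theta_i-\pi))_i$ into a complete index set for the connected components, and then count. First I would dispose of genus zero: for $\mathrm{E}(\Sigma_{0,n})$, Proposition~\ref{ell-prop2} shows each admissible signature level set is a component and the set of admissible signatures has $n-1$ elements; for $\mathrm{Ell}(\Sigma_{0,n})$, refining by $\sigma$ as in Theorem~\ref{thm4} yields exactly $\sum_{a\in\{\pm 1\}^n}\lfloor (n-1+(r_a\bmod 2))/2\rfloor=(n-1)2^{n-1}$ components, recovering both of the $g=0$ formulas.

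For $g\ge 1$ my plan for $\mathrm{E}(\Sigma_{g,n})$ is to split into the two extremal signature level sets $\mathrm{sign}^{-1}(\pm 2|\chi(\Sigma_{g,n})|)$ and the $2g+n-3$ intermediate ones. Each intermediate level set should be a single component, and each extremal level set should consist of exactly $4^g$ components by Lemma~\ref{lemma11}. Connectivity of the intermediate level sets rests on three previously established inputs that I would chain together: Lemma~\ref{lemma6}, which deforms any boundary-elliptic representation to a boundary $\mathrm{SO}(2)\setminus\{\pm I\}$-valued one inside $\mathrm{E}(\Sigma_{g,n})$; Lemma~\ref{lemma7}, which trivializes a handle pair $(a_i,b_i)$ whenever the signature is strictly sub-extremal; and Lemma~\ref{lemma23}, which iterates the handle-killing to reduce to a representation supported on a subsurface of smaller genus $g_0=1+(|m|-n)/2$ (or on a genus-zero surface) and absorbs the residual $\pm I$-ambiguities using connectedness of $\mathrm{Hom}(\pi_1(\Sigma_{1,0}),\mathrm{SL}(2,\mathbb{R}))$. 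Adding $2\cdot 4^g+(2g+n-3)=2^{2g+1}+2g+n-3$ delivers the first formula.

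For $\mathrm{Ell}(\Sigma_{g,n})$ I would refine the count by $\sigma$ and split according to whether $C_1\cdots C_n$ can be deformed to $I$ inside $\mathrm{E}(\Sigma_{g,n})$. When it can, $\phi$ is homotopic to a gluing of a free representation on $\Sigma_{g,0}$ with an $\mathrm{SO}(2)\setminus\{\pm I\}$-valued representation on $\Sigma_{0,n}$; summing the $\lfloor (n-1+(r_a\bmod 2))/2\rfloor+(2g-2)$ admissible values of $\tfrac{1}{2}\mathrm{sign}$ over $a\in\{\pm 1\}^n$ yields $(4g+n-5)2^{n-1}$ components. When it cannot, Equation~\eqref{eqn24} forces $\mathrm{sign}(\phi|_{\pi_1(\Sigma_{g,1})})=\pm 2|\chi(\Sigma_{g,1})|$, so Lemma~\ref{lemma11} contributes $4^g$ components per admissible $\sigma$-class, while $a$ is constrained by a parity condition, yielding $2\cdot 4^g\bigl(\sum_{r\in[2,n]\cap 2\mathbb{Z}}\binom{n}{r}\bigr)=2^{2g+1}(2^{n-1}-1)$. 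Finally, the two extremal level sets of $\mathrm{sign}$ themselves force $a=(\mp 1,\dots,\mp 1)$ and contribute $2\cdot 4^g$. Adding $2\cdot 4^g+2^{2g+1}(2^{n-1}-1)+(4g+n-5)2^{n-1}=2^{n-1}(2^{2g+1}+4g+n-5)$ gives the second formula.

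The main obstacle is not the arithmetic but proving that each intermediate signature level set is connected; this requires chaining Lemmas~\ref{lemma6},~\ref{lemma7},~\ref{lemma12} while simultaneously controlling the conjugacy classes of the $C_i$ (so that neither $\sigma$ nor the signature jumps) and the signs of off-diagonal entries of the commutators (so that two subsurface deformations can be glued after a conjugation). I expect the single most delicate technical point to be the subcase of Lemma~\ref{lemma12} in which $\theta_0+\theta_1+\theta_2=2\pi$ with all three angles lying in $(\pi,2\pi)$: there one must invoke an auxiliary extremal deformation on a smaller subsurface to push one angle through $0$, while choosing a simultaneous path of the remaining angles that avoids $\{0,\pi,2\pi\}$ except at the prescribed transition, and it is precisely this maneuver that makes the induction on $g$ close up cleanly.
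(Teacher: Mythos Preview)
Your proposal is correct and follows essentially the same route as the paper: split by signature into the two extremal level sets (each with $4^g$ components by Lemma~\ref{lemma11}) and the $2g+n-3$ intermediate ones (each connected by the chain Lemma~\ref{lemma6} $\to$ Lemma~\ref{lemma7} $\to$ Lemma~\ref{lemma23}), then for $\mathrm{Ell}$ refine by $\sigma$ exactly as you describe, with the same three-way split and the same arithmetic. One small slip: the delicate subcase you flag in Lemma~\ref{lemma12} should have $\theta_0+\theta_1+\theta_2=4\pi$ (not $2\pi$) with $\theta_1,\theta_2\in(\pi,2\pi)$, since three angles all exceeding $\pi$ cannot sum to $2\pi$; but your description of the mechanism there is right.
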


\subsection{The case of $\mathrm{PSL}(2,\mathbb{R})$}

In this section, we compute the number of connected components of boundary elliptic representations in $\mathrm{PSL}(2,\mathbb{R})$.

Let $\pi: \mathrm{SL}(2,\mathbb{R}) \to \mathrm{PSL}(2,\mathbb{R})$ denote the natural projection. This induces a map which is a covering map on its inverse of a connected component (\cite[Lemma 2.2]{Gold1})
\begin{equation*}
  \pi_*: \mathrm{Hom}(\pi_1(\Sigma_{g,n}), \mathrm{SL}(2,\mathbb{R})) \to \mathrm{Hom}(\pi_1(\Sigma_{g,n}), \mathrm{PSL}(2,\mathbb{R})), \quad \phi \mapsto [\phi] := \pi \circ \phi.
\end{equation*}
Let $\mr{Ell}_P(\Sigma_{g,n})$ denote the set of boundary elliptic representations in $\mathrm{PSL}(2,\mathbb{R})$. The restriction of $\pi_*$ induces a surjective map
\begin{equation*}
  \pi_*: \mr{Ell}(\Sigma_{g,n}) \to \mr{Ell}_P(\Sigma_{g,n}),
\end{equation*}
since $n \geq 1$.

For any $\phi \in \mr{Ell}(\Sigma_{g,n})$, recall that $\sigma(\phi) = (\mathrm{\mathrm{sgn}}(\theta_1 - \pi), \dots, \mathrm{\mathrm{sgn}}(\theta_n - \pi))$, and define $r_{\sigma(\phi)} := \#\{i : \sigma(\phi)_i = -1\}$. For $[\phi] \in \mr{Ell}_P(\Sigma_{g,n})$, we define its signature by
\begin{equation*}
  \mathrm{sign}([\phi]) = 2\mathrm{T}(\phi) + 2(n - \tfrac{\sum_{i=1}^n [\theta_i]}{\pi}),
\end{equation*}
where $[\theta_i] := \theta_i$ (or $\theta_i - \pi$) is taken to lie in $(0,\pi)$. This signature is well-defined and continuous on $\mathrm{Ell}_P(\Sigma_{g,n})$. Moreover,
\begin{align*}
  \mathrm{sign}(\phi) &= 2\mathrm{T}(\phi) + 2(n - \tfrac{1}{\pi} \sum_{i=1}^n \theta_i) = \mathrm{sign}([\phi]) - 2(n - r_{\sigma(\phi)}),
\end{align*}
which yields the identity
\begin{equation}\label{eqn22}
  \mathrm{sign}([\phi]) = \mathrm{sign}(\phi) + 2(n - r_{\sigma(\phi)}).
\end{equation}

There is an involutive operation $\mu_{i,j} : \mr{Ell}(\Sigma_{g,n}) \to \mr{Ell}(\Sigma_{g,n})$ that maps $(\theta_i, \theta_j)$ to $(\theta_i \pm \pi, \theta_j \pm \pi)$ in $(0, 2\pi)$. It satisfies $[\mu_{i,j}(\phi)] = [\phi]$.

\begin{lemma}\label{lemma10}
For any $k \in \mathbb{Z}$, the set $\mathrm{sign}^{-1}(k) \cap \mr{Ell}_P(\Sigma_{g,n})$ is either empty or a connected component.
\end{lemma}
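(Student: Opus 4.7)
The plan is to first show that $\mathrm{sign}\colon\mr{Ell}_P(\Sigma_{g,n})\to\mb{Z}$ is locally constant, so that each $\mathrm{sign}^{-1}(k)\cap\mr{Ell}_P$ is automatically a union of connected components, and then to prove connectedness of each non-empty level set by combining the classification of components of $\mr{Ell}(\Sigma_{g,n})$ obtained earlier with the action of the involutions $\mu_{i,j}$. Continuity of $\mathrm{sign}([\phi])$ is immediate: $\mr{T}$ descends to $\mr{PSL}(2,\mb{R})$ and is continuous, and each $[\theta_i]\in(0,\pi)$ depends continuously on the $\mr{PSL}(2,\mb{R})$-conjugacy class of $[\phi(c_i)]$. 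Integrality follows by choosing any lift $\phi\in\mr{Ell}$ and invoking \eqref{eqn22}, so $\mathrm{sign}$ is a continuous integer-valued function on $\mr{Ell}_P$ and hence locally constant.

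For path-connectedness, I would take $[\phi_0],[\phi_1]\in\mathrm{sign}^{-1}(k)\cap\mr{Ell}_P$ with lifts $\phi_0,\phi_1\in\mr{Ell}$, and align their sigma vectors. The key arithmetic point is that every signature on $\mr{Ell}(\Sigma)$ lies in $\{2|\chi(\Sigma)|-4\mb{Z}\}$, whence $\mathrm{sign}(\phi_0)\equiv\mathrm{sign}(\phi_1)\pmod 4$; combined with \eqref{eqn22} this forces $r_{\sigma(\phi_0)}\equiv r_{\sigma(\phi_1)}\pmod 2$. Since each $\mu_{i,j}$ projects trivially to $\mr{PSL}(2,\mb{R})$ and flips exactly the pair of signs $\sigma_i,\sigma_j$, composites of $\mu_{i,j}$'s realise every sigma vector within a prescribed parity class. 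After replacing the lifts by suitable $\mu_{i,j}$-images I may therefore assume $\sigma(\phi_0)=\sigma(\phi_1)=a$, and then \eqref{eqn22} forces $\mathrm{sign}(\phi_0)=\mathrm{sign}(\phi_1)=k-2(n-r_a)$.

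Next I would invoke the classification of components of $\mr{Ell}(\Sigma_{g,n})$ from the preceding subsections. For generic $(a,m)$ the intersection $\mathrm{sign}^{-1}(2m)\cap\sigma^{-1}(a)\cap\mr{Ell}$ is a single connected component (as shown for $g=0$ in Theorem \ref{thm4}, for $g=1$ in the analysis following Lemma \ref{lemma5}, and for $g\geq 2$ in Lemma \ref{lemma23}), so $\phi_0$ and $\phi_1$ are already path-connected in $\mr{Ell}$ and projecting yields the desired path in $\mr{Ell}_P$. In the remaining extremal and ``locked'' cases coming from Lemma \ref{lemma11} together with \eqref{eqn19}--\eqref{eqn24}, the intersection splits into $4^g$ components indexed by the choices of hyperbolic connected component (in $\mathrm{Hyp}_0$ or $\mathrm{Hyp}_1=-I\cdot\mathrm{Hyp}_0$) containing each $\phi(a_i),\phi(b_i)$. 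These $4^g$ components are interchanged by the deck transformations $A_i\mapsto -A_i,\ B_i\mapsto -B_i$, which lie in $\ker\pi$ and hence project trivially; so $\pi_*$ sends all of them into the same component of $\mr{Ell}_P$, producing a path joining $[\phi_0]$ to $[\phi_1]$.

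The main obstacle I anticipate is precisely this last point: reconciling the $4^g$-fold component multiplicity on the $\mr{SL}(2,\mb{R})$-side with its absence on the $\mr{PSL}(2,\mb{R})$-side, and checking case by case against the locked configurations \eqref{eqn19}--\eqref{eqn24} that no further discrepancy survives projection. The resolution is that this ambiguity is confined to the signs of the hyperbolic generators $A_i,B_i$---equivalently, it lies in $\ker\pi$---so it is automatically killed by $\pi_*$, leaving the signature as the sole discriminating invariant on $\mr{Ell}_P(\Sigma_{g,n})$.
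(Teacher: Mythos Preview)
Your proposal is correct and follows the paper's strategy: align sigma vectors of lifts via the $\mu_{i,j}$ operators using the parity constraint derived from \eqref{eqn22}, then reduce to the $\mr{SL}(2,\mb{R})$ component classification together with the observation that the residual $\Lambda_{g,n}$-ambiguity is killed by $\pi_*$. One citation slip: Lemma~\ref{lemma23} concerns $\mr{E}(\Sigma_{g,n})$, not $\mr{Ell}(\Sigma_{g,n})$; the single-component statement you need for $\sigma^{-1}(a)\cap\mathrm{sign}^{-1}(2m)\cap\mr{Ell}$ in the generic range is the ``Observe that'' remark in the paragraph following \eqref{eqn24}. The paper's own argument for the unlocked case is packaged slightly differently---rather than quoting the $\mr{SL}$ single-component result, it deforms both lifts until $C_1\cdots C_n=I$ and then invokes Goldman's Theorem~A for $\mr{Hom}(\pi_1(\Sigma_{g,0}),\mr{PSL}(2,\mb{R}))$ directly---but the content is the same.
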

\begin{proof}
Suppose \( [\phi], [\psi] \in \mr{Ell}_P(\Sigma_{g,n}) \) have the same signature. By equation~\eqref{eqn22}, we have
\begin{equation*}
  \mathrm{sign}(\phi) + 2(n - r_{\sigma(\phi)}) = \mathrm{sign}(\psi) + 2(n - r_{\sigma(\psi)}),
\end{equation*}
which implies
\begin{equation}\label{eqn23}
  r_{\sigma(\phi)} - r_{\sigma(\psi)} = \tfrac{1}{2}(\mathrm{sign}(\phi) - \mathrm{sign}(\psi)) \in 2\mathbb{Z}.
\end{equation}
Thus, we may apply a finite sequence of transformations 
\(
\mu = \mu_{i_1, j_1} \circ \cdots \circ \mu_{i_s, j_s}
\)
such that
\[
\sigma(\mu(\phi)) = \sigma(\psi)
\quad \text{and hence} \quad
\mathrm{sign}(\mu(\phi)) = \mathrm{sign}(\psi).
\]

 If \( \mathrm{sign}(\mu(\phi)) = \pm 2|\chi(\Sigma_{g,n})| \), then by Lemma~\ref{lemma11}, \( [\phi] = [\mu(\phi)] \) and \( [\psi] \) are connected in \( \mathrm{Ell}_P(\Sigma_{g,n}) \).
If instead \( |\mathrm{sign}(\mu(\phi))| < 2|\chi(\Sigma_{g,n})| \), and we assume that \( \mu(\phi)(c_1 \cdots c_n) \) cannot be deformed to the identity, then by \eqref{eqn24}, we know that \( \mu(\phi) \) can be deformed to \( \gamma \cdot \psi \), where \( \gamma \in \Lambda_{g,n} \). Hence \( [\phi] = [\mu(\phi)] \) and \( [\psi] = [\gamma \cdot \psi] \) are connected in \( \mr{Ell}_P(\Sigma_{g,n}) \).
If, on the other hand, \( \mu(\phi)(c_1 \cdots c_n) \) can be deformed to the identity, then so can \( \psi(c_1 \cdots c_n) \) by \eqref{eqn24}. Let us assume \( \psi \) is deformed to \( \psi' \) with \( \psi'(c_1 \cdots c_n) = I \) and  \( \mu(\phi) \)   to \( \phi' \) with \( \phi'(c_1 \cdots c_n) = I \) by Lemma~\ref{lemma12} and \eqref{eqn24}, and moreover,
\[
\mathrm{sign}(\phi'|_{\pi_1(\Sigma_{g,0})}) = \mathrm{sign}(\psi'|_{\pi_1(\Sigma_{g,0})}).
\]
Since \( [\phi'|_{\pi_1(\Sigma_{g,0})}] \) and \( [\psi'|_{\pi_1(\Sigma_{g,0})}] \) are connected in \( \mathrm{Hom}(\pi_1(\Sigma_{g,0}), \mr{PSL}(2, \mathbb{R})) \) (see \cite[Theorem A (i)]{Gold1}), and \( \phi'|_{\pi_1(\Sigma_{0,n})} \) and \( \psi'|_{\pi_1(\Sigma_{0,n})} \) are also connected, it follows that \( [\phi'] \) and \( [\psi'] \) are connected in \( \mathrm{Ell}_P(\Sigma_{g,n}) \). Consequently, \( [\phi] = [\mu(\phi)] \) and \( [\psi] \) lie in the same connected component of \( \mathrm{Ell}_P(\Sigma_{g,n}) \).
\end{proof}

Thus, the number of connected components of $\mathrm{Ell}_P(\Sigma_{g,n})$ is given by
\begin{equation*}
  \#\{\mathrm{Ell}_P(\Sigma_{g,n})\} = \#\{k \in \mathbb{Z} : \mathrm{sign}^{-1}(k) \cap \mathrm{Ell}_P(\Sigma_{g,n}) \neq \emptyset\}.
\end{equation*}
For $g = 0$, we have
\begin{equation}\label{eqn11}
  \mathrm{sign}(\mathrm{Ell}_P(\Sigma_{0,n})) = [2, 2(n-1)] \cap 2\mathbb{Z}.
\end{equation}
Indeed, for any $[\phi] \in \mathrm{Ell}_P(\Sigma_{0,n})$, we have $\phi \in \mathrm{Ell}(\Sigma_{0,n})$, so
\begin{equation*}
\mathrm{sign}([\phi]) = 2(n - \sum_i \tfrac{[\theta_i]}{\pi}) = 2(n - a), \quad \text{where } a \in [1, n-1] \cap \mathbb{Z}.
\end{equation*}

Recall that
\begin{equation*}
    \mathrm{sign}([\phi]) = \mathrm{sign}(\phi) + 2(n - r_{\sigma(\phi)}) \in [2\chi(\Sigma_{g,n}), 2|\chi(\Sigma_{g,n})|] + 2(n - r_{\sigma(\phi)}).
\end{equation*}
By applying the $\mu$-action, we may assume $r_{\sigma(\mu(\phi))} \geq n-1$ or $r_{\sigma(\mu(\phi))} \leq 1$, with $[\phi]$ invariant. Then
\begin{align*}
  \mathrm{sign}([\phi]) &\in [2\chi(\Sigma_{g,n}) + 2(n-1), 2|\chi(\Sigma_{g,n})| + 2] \cap 2\mathbb{Z} \\
  &= [-4g + 2, 4g - 2 + 2n] \cap 2\mathbb{Z}.
\end{align*}
For $g = 0$, this reduces to $[2, 2(n-1)] \cap 2\mathbb{Z}$, agreeing with \eqref{eqn11}. For $g \geq 1$, note that if $\mathrm{sign}([\phi]) = 2|\chi(\Sigma_{g,n})| + 2$, then $\mathrm{sign}(\phi) = 2|\chi(\Sigma_{g,n})|$ and $r_{\sigma(\phi)} = n - 1$, implying one boundary value lies in $(\pi, 2\pi)$. However, in the previous section, we proved that for any $\phi \in \mathrm{sign}^{-1}(2|\chi(\Sigma_{g,n})|) \cap \mathrm{Ell}(\Sigma_{g,n})$ with $g \geq 1$, the vector $a = (-1,\ldots,-1)$ must hold, implying $r_a = n$, a contradiction. Therefore,
\begin{equation*}
  \mathrm{sign}([\phi]) \neq 2|\chi(\Sigma_{g,n})| + 2 \quad \text{and} \quad \mathrm{sign}([\phi]) \neq 2\chi(\Sigma_{g,n}) + 2(n-1).
\end{equation*}
Hence,
\begin{equation}\label{eqn12}
  \mathrm{sign}(\mathrm{Ell}_P(\Sigma_{g,n})) \subset [-4g + 4, 4g - 4 + 2n] \cap 2\mathbb{Z} \quad \text{for } g \geq 1.
\end{equation}

We now consider the case $n = 1$. For any $\phi \in \mathrm{Ell}(\Sigma_{g,1})$, we have
\begin{equation}\label{eqn14}
  \mathrm{sign}([\phi]) = 
  \begin{cases}
    \mathrm{sign}(\phi) + 2 & \text{if } \theta \in (\pi, 2\pi), \\
    \mathrm{sign}(\phi)     & \text{if } \theta \in (0, \pi).
  \end{cases}
\end{equation}
Note that $\mathrm{sign}(\phi) = 2|\chi(\Sigma)|$ cannot occur when $\theta \in (\pi, 2\pi)$, as
\begin{equation}\label{eqn15}
\mathrm{T}(\phi)= |\chi(\Sigma)| +\tfrac{\theta-\pi}{\pi}>|\chi(\Sigma)|\text{ for }\theta>\pi
\end{equation}
which contradicts the Milnor-Wood inequality \eqref{eqn25}. Similarly, for $\theta \in (0,\pi)$, one cannot have $\mathrm{sign}(\phi) = -2|\chi(\Sigma)|$. Hence,
\begin{align*}
  \mathrm{sign}(\mathrm{Ell}_P(\Sigma_{g,1})) &= [2\chi(\Sigma_{g,1}) + 2, 2|\chi(\Sigma_{g,1})|] \cap 2\mathbb{Z} \\
  &= [-4g + 4, 4g - 2] \cap 2\mathbb{Z}.
\end{align*}

Now let $\phi_1 \in \mathrm{Ell}(\Sigma_{g,1})$ and $\phi_2 \in \mathrm{Ell}(\Sigma_{0,n+1})$ be representations that can be glued along a boundary $c$. Denote their union by $\phi = \phi_1 \cup_c \phi_2$. Then
\begin{equation*}
  \mathrm{sign}([\phi]) = \mathrm{sign}([\phi_1]) + \mathrm{sign}([\phi_2]) - 2,
\end{equation*}
so the signature of $[\phi]$ lies in
\begin{equation*}
  [-4g + 4, 4g + 2n - 4] \cap 2\mathbb{Z} \subset \mathrm{sign}(\mathrm{Ell}_P(\Sigma_{g,n})).
\end{equation*}

Combining with \eqref{eqn12}, we obtain:

\begin{thm}
The possible values of the signature for boundary elliptic representations in $\mathrm{PSL}(2,\mathbb{R})$ are given by
\begin{equation}\label{eqn13}
  \mathrm{sign}(\mathrm{Ell}_P(\Sigma_{g,n})) = 
  \begin{cases}
    [2, 2(n-1)] \cap 2\mathbb{Z} & \text{for } g = 0, \\
    [-4g + 4, 4g - 4 + 2n] \cap 2\mathbb{Z} & \text{for } g \geq 1.
  \end{cases}
\end{equation}
In particular, the number of connected components is given by
\[
\#\{\mathrm{Ell}_P(\Sigma_{0,n})\} = n - 1, \quad \#\{\mathrm{Ell}_P(\Sigma_{g,n})\} = 4g + n - 3 \quad \text{for } g \geq 1.
\]
\end{thm}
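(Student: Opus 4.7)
The plan is to invoke Lemma~\ref{lemma10}, which asserts that for each $k \in \mathbb{Z}$ the level set $\mathrm{sign}^{-1}(k) \cap \mathrm{Ell}_P(\Sigma_{g,n})$ is either empty or a single connected component. With this in hand, both the description of the signature values and the count of connected components reduce to a single question: which integers actually arise as $\mathrm{sign}([\phi])$ for some $[\phi] \in \mathrm{Ell}_P(\Sigma_{g,n})$? Thus the theorem becomes an exercise in identifying the image set of the signature map.

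For the case $g = 0$, I would argue directly: any $[\phi] \in \mathrm{Ell}_P(\Sigma_{0,n})$ admits a lift $\phi \in \mathrm{Ell}(\Sigma_{0,n})$ with each normalized angle $[\theta_i] \in (0,\pi)$, and the relation $\prod \phi(c_i) = \pm I$ (projecting to $I$ in $\mathrm{PSL}(2,\mathbb{R})$) forces $\sum [\theta_i]/\pi$ to equal an integer $a \in [1, n-1]$. The formula $\mathrm{sign}([\phi]) = 2(n - \sum [\theta_i]/\pi) = 2(n-a)$ then yields exactly the interval $[2, 2(n-1)] \cap 2\mathbb{Z}$, which contains $n-1$ integers.

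For $g \geq 1$, I would proceed in two steps. \emph{Upper bound:} Using the identity $\mathrm{sign}([\phi]) = \mathrm{sign}(\phi) + 2(n - r_{\sigma(\phi)})$ from \eqref{eqn22}, the Milnor--Wood bound $|\mathrm{sign}(\phi)| \leq 2|\chi(\Sigma_{g,n})|$, and the observation that the involutions $\mu_{i,j}$ (which preserve $[\phi]$) let us reduce to $r_{\sigma(\phi)} \in \{0,1,n-1,n\}$, one obtains the \emph{a priori} range $[-4g+2, 4g-2+2n] \cap 2\mathbb{Z}$. The two extremal values $\pm(4g-2+2n)$ are then excluded using the structural fact already established in the preceding analysis of $\mathrm{Ell}(\Sigma_{g,n})$: a representation with maximal signature $2|\chi(\Sigma_{g,n})|$ must satisfy $\sigma(\phi) = (-1, \dots, -1)$, so $r_{\sigma(\phi)} = n$ rather than $n-1$, ruling out $\mathrm{sign}([\phi]) = 2|\chi(\Sigma_{g,n})| + 2$, and symmetrically on the other side. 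This yields the sharp range $[-4g+4, 4g-4+2n] \cap 2\mathbb{Z}$, which has $4g+n-3$ elements. \emph{Achievability:} For every target $k$ in this range, I would decompose $\Sigma_{g,n} = \Sigma_{g,1} \cup_c \Sigma_{0,n+1}$ and use the additive-shift relation $\mathrm{sign}([\phi_1 \cup_c \phi_2]) = \mathrm{sign}([\phi_1]) + \mathrm{sign}([\phi_2]) - 2$ together with the already computed ranges $\mathrm{sign}(\mathrm{Ell}_P(\Sigma_{g,1})) = [-4g+4, 4g-2] \cap 2\mathbb{Z}$ and $\mathrm{sign}(\mathrm{Ell}_P(\Sigma_{0,n+1})) = [2, 2n] \cap 2\mathbb{Z}$ to realize $k$ by a suitable sum.

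The main technical obstacle is the sharpness at the two extreme signatures: showing that $\pm(4g - 2 + 2n)$ cannot be achieved hinges on the nontrivial input (from the preceding subsection) that a maximal-signature boundary elliptic representation in $\mathrm{SL}(2,\mathbb{R})$ has $\sigma = (-1,\dots,-1)$, and correspondingly for the minimal signature. Once this structural constraint is granted, everything else is a routine assembly of the pieces already constructed, combined with the connectedness statement of Lemma~\ref{lemma10}, which does the heavy lifting of turning a count of signature values into a count of components.
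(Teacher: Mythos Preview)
Your proposal is correct and follows essentially the same approach as the paper's own proof: both reduce the count to the image of the signature via Lemma~\ref{lemma10}, handle $g=0$ by direct angle computation, obtain the a priori range for $g\geq 1$ from the identity \eqref{eqn22} combined with the $\mu$-action and Milnor--Wood, exclude the two extremal values using the fact that maximal signature forces $\sigma=(-1,\ldots,-1)$, and then verify achievability by gluing along $\Sigma_{g,n}=\Sigma_{g,1}\cup_c\Sigma_{0,n+1}$ with the shift formula $\mathrm{sign}([\phi_1\cup_c\phi_2])=\mathrm{sign}([\phi_1])+\mathrm{sign}([\phi_2])-2$. The only point you leave implicit is the separate verification of the base case $n=1$ (needed before invoking $\mathrm{sign}(\mathrm{Ell}_P(\Sigma_{g,1}))=[-4g+4,4g-2]\cap 2\mathbb{Z}$ in the gluing step), which the paper does directly via \eqref{eqn14} and the Milnor--Wood inequality for the Toledo invariant.
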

Again, we obtain a corollary for the case of closed surfaces.
\begin{cor}[{\cite[Theorem A (i)]{Gold1}}]The number of connected components of $\mathrm{Hom}(\pi_1(\Sigma_{g,0}), \mathrm{PSL}(2,\br))$ is $4g -3$.
\end{cor}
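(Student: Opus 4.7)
The plan is to follow the same strategy already used in the \(\mathrm{SL}(2,\mathbb{R})\) corollary, but with the finer integrality available over \(\mathrm{PSL}(2,\mathbb{R})\). For a closed surface the rho invariant vanishes, so \(\mathrm{sign}(\phi)=2\mathrm{T}(\phi)\) and the Toledo invariant of a \(\mathrm{PSL}(2,\mathbb{R})\)-representation coincides with Goldman's Euler class \(e(\phi)\in H^{2}(\Sigma_{g,0},\mathbb{Z})\cong\mathbb{Z}\) (by Proposition~\ref{pre-prop1}, since the boundary is empty the representation is vacuously non-elliptic on the boundary). The Milnor--Wood inequality \eqref{eqn25} then forces \(e(\phi)\in[-(2g-2),2g-2]\cap\mathbb{Z}\), giving \(4g-3\) potential values of \(e\). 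Since \(e\) is locally constant, the count of components is bounded above by \(4g-3\), and the problem reduces to showing (i) every such value is realized, and (ii) each level set \(e^{-1}(k)\) is connected.

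For the extremal values \(k=\pm(2g-2)\), classical Teichmüller theory identifies \(e^{-1}(2g-2)\) with the Teichmüller space \(\mathcal{T}(\Sigma_{g,0})\cong\mathbb{R}^{6g-6}\), which is connected; the case \(k=-(2g-2)\) is obtained by reversing orientation and is therefore also connected. Realization of these extremal values is witnessed by Fuchsian and anti-Fuchsian uniformizations.

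For intermediate values \(|k|<2g-2\), the argument mirrors Lemmas~\ref{lemma7} and~\ref{lemma23}. Given \(\bar\phi\in e^{-1}(k)\), I would first lift locally and argue directly on \(\mathrm{PSL}(2,\mathbb{R})\)-representations, showing that \(\bar\phi\) can be deformed so that \(\bar\phi([a_g,b_g])=I\). The key mechanism is exactly the one used for the boundary case: by the analogue of Lemma~\ref{lemma6} one first brings the commutator \([A_g,B_g]\) into an elliptic (or identity) configuration, and then, using that \(|k|<2g-2\) leaves ``room'' in the Milnor--Wood bound on the complementary subsurface \(\Sigma_{g-1,1}\), one homotopes the boundary loop \([A_g,B_g]\) through \(\mathrm{PSL}(2,\mathbb{R})\) to the identity (this is the PSL-version of Lemma~\ref{lemma12}, available precisely because the restriction to \(\Sigma_{g-1,1}\) is not of extremal signature). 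Once \(\bar\phi([a_g,b_g])=I\), the representation splits as the union of a representation of \(\pi_1(\Sigma_{g-1,1})\) with trivial boundary holonomy and a representation of \(\pi_1(\Sigma_{1,0})\); by induction on the genus the first factor lies in a connected set of fixed relative Euler class, and the second lies in the connected space \(\mathrm{Hom}(\pi_1(\Sigma_{1,0}),\mathrm{PSL}(2,\mathbb{R}))\). Realization of each intermediate \(k\) is obtained by interpolating explicitly between the Fuchsian model and a trivial representation on one handle.

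The main obstacle is step (ii) for intermediate values: in contrast to the bounded case treated in Lemma~\ref{lemma7}, we have no free boundary angle to absorb the change in the commutator, so the deformation has to be engineered entirely in the interior. The point that makes it work is the strict inequality \(|k|<2g-2\), which guarantees that the restriction to the complementary \(\Sigma_{g-1,1}\) (or to any chosen one-handle subsurface) is non-maximal, opening enough room to apply the PSL-analogue of Lemma~\ref{lemma12} and then glue, exactly as in Goldman's \cite[Lemma~10.5]{Gold1}.
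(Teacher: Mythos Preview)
Your proposal is correct and follows essentially the same route as the paper. The paper gives no explicit proof for this corollary but states it as Goldman's result, recovered by the same argument as the preceding \(\mathrm{SL}(2,\mathbb{R})\) corollary: the extremal Toledo levels are connected (your Teichm\"uller argument, or equivalently the triviality of the \(\Lambda_{g,0}\)-ambiguity in \(\mathrm{PSL}\)), and the intermediate levels are connected by the handle-killing deformation of Lemmas~\ref{lemma7} and~\ref{lemma23} (equivalently \cite[Lemma~10.5]{Gold1}), exactly as you outline.
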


\section{Boundary hyperbolic representations}\label{sec-hyperbolic}

In this section, we study the connected components of the space of boundary hyperbolic representations. Let $\mathrm{Hyp}(\Sigma_{g,n})$ denote the set of all representations $\phi \in \mathrm{Hom}(\pi_1(\Sigma_{g,n}), \mathrm{SL}(2,\mathbb{R}))$ such that each boundary holonomy $\phi(c_i)$ is hyperbolic, where $c_i$, $1 \leq i \leq n$, represent the boundary curves of $\Sigma_{g,n}$.

Any hyperbolic element $C \in \mathrm{SL}(2, \mathbb{R})$ is conjugate to a matrix of the form
\[
\alpha(\lambda) = \begin{pmatrix}
\lambda & 0 \\
0 & \frac{1}{\lambda}
\end{pmatrix}, \quad \lambda \in \mathbb{R} \setminus \{0, \pm 1\}.
\]
For each hyperbolic element $C$, we define the \emph{sign-type} invariant $\sigma(C)$ as
\[
\sigma(C) = \begin{cases}
0 & \text{if } \mathrm{tr}(C) < -2, \\ 
1 & \text{if } \mathrm{tr}(C) > 2.
\end{cases}
\]
For any $\phi \in \mathrm{Hyp}(\Sigma_{g,n})$, we define its boundary type invariant as
\[
\sigma(\phi) = \left( \sigma(\phi(c_1)), \dots, \sigma(\phi(c_n)) \right) \in \{0,1\}^n.
\]

Since hyperbolic elements have vanishing rho invariants, it follows that for any $\phi \in \mathrm{Hyp}(\Sigma_{g,n})$,
\[
\mathrm{T}(\phi) = \tfrac{1}{2} \cdot \mathrm{sign}(\phi) \in [-|\chi(\Sigma_{g,n})|, |\chi(\Sigma_{g,n})|] \cap \mathbb{Z}.
\]

\subsection{The case of pairs of pants}
In this section, we consider the pair of pants. We have the following lemma.
\begin{lemma}\label{lemma0}
Let $a = (a_1, a_2, a_3) \in \{0,1\}^3$. Then the subset
\[
\sigma^{-1}(a) \subset \mathrm{Hyp}(\Sigma_{0,3})
\]
is nonempty. Moreover, $\sigma^{-1}(a)$ is connected if $\sum_{i=1}^3 a_i$ is odd, and has two connected components if $\sum_{i=1}^3 a_i$ is even. In particular, the number of connected components of $\mathrm{Hyp}(\Sigma_{0,3})$ is $12$.
\end{lemma}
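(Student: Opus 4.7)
The plan is to combine the integrality of the Toledo invariant on $\mr{Hyp}(\Sigma_{0,3})$ with Goldman's classification of the components of $\mr{Hyp}_P(\Sigma_{0,3})$, and read off the $\mr{SL}(2,\br)$ components from the $\mr{PSL}(2,\br)$ ones via the covering map $\pi_*$. The crucial algebraic input is a parity identity linking $\sigma(\phi)$ to the relative Euler class $e(\phi) := e(\pi\circ\phi)$. For $\phi \in \mr{Hyp}(\Sigma_{0,3})$, let $C_i^+ \in \mr{Hyp}_0 \subset \mr{SL}(2,\br)$ be the positive-trace lift of $[\phi(c_i)] \in \mr{PSL}(2,\br)$, and write $\phi(c_i) = \epsilon_i C_i^+$ with $\epsilon_i \in \{\pm 1\}$; note $\sigma(\phi(c_i)) = 1$ iff $\epsilon_i = 1$. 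Goldman's canonical lifts $\widetilde{C}_i \in \mr{Hyp}_0 \subset \widetilde G$ satisfy $\widetilde{C}_1 \widetilde{C}_2 \widetilde{C}_3 = z^{e(\phi)}$, and projecting to $\mr{SL}(2,\br)$ gives $C_1^+ C_2^+ C_3^+ = (-I)^{e(\phi)}$. Combined with $\phi(c_1)\phi(c_2)\phi(c_3) = I$ this forces $\epsilon_1\epsilon_2\epsilon_3 = (-1)^{e(\phi)}$, whence
\[
\sum_{i=1}^3 \sigma(\phi(c_i)) \equiv 1 + e(\phi) \pmod 2.
\]
Since $|e(\phi)| \le |\chi(\Sigma_{0,3})| = 1$ by Milnor--Wood and Proposition~\ref{pre-prop1}, it follows that $\sigma^{-1}(a) \subset e^{-1}(0)$ when $\sum a_i$ is odd, and $\sigma^{-1}(a) \subset e^{-1}(\{-1,1\})$ when $\sum a_i$ is even.

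Nonemptiness is then elementary. Since $\pi_1(\Sigma_{0,3})$ is free on $c_1,c_2$, every $(C_1,C_2) \in \mr{SL}(2,\br)^2$ defines a representation, so one need only realize each sign triple $(\mr{sgn}\,\mr{tr}(C_1),\mr{sgn}\,\mr{tr}(C_2),\mr{sgn}\,\mr{tr}(C_1 C_2)) = (2a_1-1,2a_2-1,2a_3-1)$ with each trace of absolute value exceeding $2$. Taking $C_1 = \pm\mr{diag}(\lambda,1/\lambda)$ and solving linearly for the diagonal entries of $C_2$ (with the off-diagonal entries adjusted so $\det C_2 = 1$) accomplishes this for any prescribed $a$.

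For connectedness, I invoke \cite[Theorem D]{Gold1}: the relative Euler class classifies the components of $\mr{Hyp}_P(\Sigma_{0,3})$, so this space has exactly three components $e^{-1}(\{-1\}),\,e^{-1}(\{0\}),\,e^{-1}(\{1\})$. The covering $\pi_*$ has fiber $\mr{Hom}(\pi_1(\Sigma_{0,3}),\{\pm I\}) \cong (\bz/2)^2$ of order four, and its deck group acts on the fiber over a fixed $[\phi]$ by $(\delta_1,\delta_2)\cdot(C_1,C_2) = (\delta_1 C_1,\delta_2 C_2)$, sending $C_3 \mapsto \delta_1\delta_2 C_3$. By the parity identity above, the four lifts of a $\mr{PSL}(2,\br)$ representation with Euler class $e_0$ realize precisely the four sign patterns compatible with $e_0$, each exactly once. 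Consequently $\pi_*$ restricts to a bijective covering—hence a homeomorphism—from $\sigma^{-1}(a) \cap e^{-1}(\{e_0\})$ onto $e^{-1}(\{e_0\})$ whenever the parities match, so $\sigma^{-1}(a) \cap e^{-1}(\{e_0\})$ inherits connectedness from Goldman's component.

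When $\sum a_i$ is odd only $e_0 = 0$ is compatible and $\sigma^{-1}(a)$ is a single component; when $\sum a_i$ is even both $e_0 = 1$ and $e_0 = -1$ contribute, yielding two components separated by the continuous integer-valued invariant $T(\phi) = e(\phi)$. Summing gives $4\cdot 1 + 4\cdot 2 = 12$ components of $\mr{Hyp}(\Sigma_{0,3})$. The main technical point is the verification of the parity identity together with the free transitive action of the deck group on sign patterns of the appropriate parity; once these are established the result follows from Goldman's theorem by direct bookkeeping.
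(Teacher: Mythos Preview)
Your proof is correct and takes a genuinely different route from the paper's. The paper argues by direct computation: it parametrizes $\phi\in\sigma^{-1}(a)$ by writing $\phi(c_1)$ in diagonal form and $\phi(c_2)=\begin{pmatrix}a&b\\ c&d\end{pmatrix}$ up to sign, computes $\mr{tr}(\phi(c_1c_2))$ explicitly, and then constructs explicit continuous paths between any two representations in $\sigma^{-1}(a)$ (solving linear systems for the deformation parameters). In the even-parity case it observes that the $(2,1)$-entry $c$ can never vanish, and its sign separates the two components. Your argument instead leverages Goldman's Theorem~D for $\mr{Hyp}_P(\Sigma_{0,3})$ together with the degree-$4$ covering $\pi_*$: the parity identity $\sum_i\sigma(\phi(c_i))\equiv 1+e(\phi)\pmod 2$ (which the paper proves separately as Lemma~\ref{par-lemma10}) shows that the deck group $(\bz/2)^2$ acts simply transitively on the sign patterns compatible with a given Euler class, so $\pi_*$ restricts to a homeomorphism from each $\sigma^{-1}(a)\cap e^{-1}(\{e_0\})$ onto the Goldman component $e^{-1}(\{e_0\})$.

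Your approach is cleaner and more conceptual, and it makes the role of the Toledo/Euler invariant transparent from the outset. The paper's hands-on approach, by contrast, produces explicit deformations of the matrix entries; these concrete paths (especially the control over $\mr{tr}(C_1C_2)$ as one varies $d$ with $a+d$ fixed, equation~\eqref{hyp-eqn4}) are reused later in the proofs of Lemmas~\ref{hyp-lemma12} and~\ref{par-lemma20}, so the computational investment is not wasted. Either way the count $4\cdot 1+4\cdot 2=12$ drops out.
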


\begin{proof}
Let $a = (a_1, a_2, a_3)$ with $a_i \in \{0,1\}$. Consider the following representation:
\begin{equation}\label{hyp-eqn1}
\phi(c_1) = (-1)^{a_1 + 1} \begin{pmatrix}
\lambda_1 & 0 \\
0 & \frac{1}{\lambda_1}
\end{pmatrix}, \quad 
\phi(c_2) = (-1)^{a_2 + 1} \begin{pmatrix}
a & b \\
c & d
\end{pmatrix} \in \mathrm{SL}(2, \mathbb{R}),
\end{equation}
where $\lambda_1 \in (0,1)$, $c \neq 0$, and $a + d > 2$. The $\phi(c_3)$ is determined by the condition $\phi(c_3) = (\phi(c_1 c_2))^{-1}$ and lie in $\sigma^{-1}(a_3)$.

We compute:
\begin{equation}\label{hyp-eqn4.0}
\phi(c_1 c_2) = (-1)^{a_1 + a_2} \begin{pmatrix}
\lambda_1 a & \lambda_1 b \\
\frac{1}{\lambda_1} c & \frac{1}{\lambda_1} d
\end{pmatrix},
\end{equation}
and hence
\begin{align}\label{hyp-eqn4}
\begin{split}
\mathrm{tr}(\phi(c_1 c_2))& = (-1)^{a_1 + a_2} \left( \lambda_1 a + \tfrac{1}{\lambda_1} d \right)\\
& = 
(-1)^{a_1 + a_2} \left[ \lambda_1(a + d) + d \left( \tfrac{1}{\lambda_1} - \lambda_1 \right) \right].
\end{split}
\end{align}
We fix $b$, $c$ and $a + d$, and vary $|d|$ sufficiently large so that the trace $\mathrm{tr}(\phi(c_1 c_2))$ can be either positive or negative. Thus, $\phi(c_3)$ can be placed in $\sigma^{-1}(a_3)$, and $\phi \in \sigma^{-1}(a)$, so this subset is nonempty.

In particular, if $c = 0$, then $ad = 1$, which implies $a,d > 0$, and therefore
\[
\mathrm{sgn}(\mathrm{tr}(\phi(c_1 c_2))) = (-1)^{a_1 + a_2},
\]
which shows that $a_1 + a_2 + a_3$ must be odd. Conversely, if $a_1 + a_2 + a_3$ is odd, then we can take $\phi' \in \sigma^{-1}(a)$ defined by
\begin{equation}\label{hyp-eqn3}
\phi'(c_1) = (-1)^{a_1 + 1} \begin{pmatrix}
\frac{1}{2} & 0 \\
0 & 2
\end{pmatrix}, \quad
\phi'(c_2) = (-1)^{a_2 + 1} \begin{pmatrix}
\frac{1}{2} & 0 \\
0 & 2
\end{pmatrix}.
\end{equation}
Thus, $a_1 + a_2 + a_3$ is odd if and only if there exists a representation $\phi' \in \sigma^{-1}(a)$ with $c = 0$.

To prove connectivity of $\sigma^{-1}(a)$ when $\sum_{i=1}^3 a_i$ is odd, suppose $\phi \in \sigma^{-1}(a)$ is given by \eqref{hyp-eqn1}. When $c \neq 0$, define the deformation:
\[
\lambda_1(t) = \lambda_1 (1 - t) + \tfrac{1}{2}t \in (0,1), \quad c(t) = c(1 - t),
\]
and define $a(t), b(t), d(t)$ by solving
\begin{equation*}
\begin{cases}
\lambda_1(t)a(t) + \lambda_1(t)^{-1} d(t) = (\lambda_1 a + \lambda_1^{-1} d)(1 - t) + \left( \tfrac{1}{4} + 4 \right)t, \\
a(t) + d(t) = (a + d)(1 - t) + \left(2 + \tfrac{1}{2} \right)t, \\
a(t)d(t) - b(t)c(t) = 1.
\end{cases}
\end{equation*}
This gives a smooth path $\phi(t) \in \sigma^{-1}(a)$ connecting $\phi$ to the representation $\phi'$ given by \eqref{hyp-eqn3}.
If $c = 0$, then $a = \tfrac{1}{d}$, and we define $d(t) = d(1 - t) + 2t$, $c(t) \equiv 0$, and $b(t) = b(1 - t)$, which again defines a path in $\sigma^{-1}(a)$ connecting $\phi$ to $\phi'$.

Therefore, $\sigma^{-1}(a)$ is connected when $\sum_{i=1}^3 a_i$ is odd.

If $\sum_{i=1}^3 a_i$ is even, and two representations $\phi, \phi' \in \sigma^{-1}(a)$ have parameters $c, c'$ satisfying $cc' > 0$, then a similar deformation as above shows that $\phi$ and $\phi'$ lie in the same connected component.
However, if $cc' < 0$, then any path $\phi(t)$ connecting $\phi$ to $\phi'$ must pass through some $t_0$ with $c(t_0) = 0$. This implies that $\sum a_i$ is odd at $t_0$, which contradicts the assumption that all $\phi(t) \in \sigma^{-1}(a)$ with fixed $a$. Therefore, $\phi$ and $\phi'$ lie in different connected components.

This completes the proof.
\end{proof}

\begin{lemma}\label{par-lemma10}
For any representation $\phi \in \mathrm{Hyp}(\Sigma_{0,3})$, we have $\mathrm{T}(\phi) = 0$ if and only if $\sum_{i=1}^3 a_i$ is odd, where $a_i = \sigma(\phi(c_i))$.
\end{lemma}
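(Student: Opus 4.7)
The plan is to identify $\mathrm{T}(\phi)$ with the relative Euler class $e(\phi) \in \mathbb{Z}$ via Proposition~\ref{pre-prop1}, and then pin down its parity by a direct lift computation in the universal cover $\widetilde{G}$ of $\mathrm{SL}(2,\mathbb{R})$. First I would apply the Milnor--Wood inequality~\eqref{eqn25} to $\Sigma_{0,3}$, which gives $|e(\phi)| \leq |\chi(\Sigma_{0,3})| = 1$, hence $e(\phi) \in \{-1,0,1\}$. Consequently, showing $\mathrm{T}(\phi) = 0$ reduces to showing $e(\phi)$ is even, and it suffices to compute $e(\phi) \bmod 2$.

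For the parity, I would pass to $\mathrm{PSL}(2,\mathbb{R})$ by setting $\bar\phi = \pi \circ \phi$, using $\mathrm{T}(\phi) = \mathrm{T}(\bar\phi)$ and hence $e(\phi) = e(\bar\phi)$. For each boundary generator, let $\widetilde{\bar{C}_i} \in \mathrm{Hyp}_0 \subset \widetilde{G}$ denote the canonical lift of $\pi(\phi(c_i))$ employed in the definition of the relative Euler class. The relation $c_1 c_2 c_3 = 1$ in $\pi_1(\Sigma_{0,3})$ then produces
\[
\widetilde{\bar{C}_1}\, \widetilde{\bar{C}_2}\, \widetilde{\bar{C}_3} = z^{e(\phi)}.
\]
I would then project both sides under $\pi : \widetilde{G} \to \mathrm{SL}(2,\mathbb{R})$, using $\pi(z) = -I$ together with the key identification $\pi(\widetilde{\bar{C}_i}) = (-1)^{1-a_i}\, \phi(c_i)$: the canonical lift in $\mathrm{Hyp}_0$ sits over the positive-trace representative of the $\mathrm{PSL}$-class of $\phi(c_i)$, which equals $\phi(c_i)$ precisely when $a_i = 1$ and equals $-\phi(c_i)$ when $a_i = 0$. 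Combined with $\phi(c_1)\phi(c_2)\phi(c_3) = I$, this yields
\[
(-1)^{e(\phi)}\, I = (-1)^{3 - (a_1+a_2+a_3)}\, I,
\]
so $e(\phi) \equiv a_1 + a_2 + a_3 + 1 \pmod{2}$. Together with $e(\phi) \in \{-1,0,1\}$, this forces $e(\phi) = 0$ exactly when $\sum_i a_i$ is odd, completing the proof.

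The only delicate step is the sign identification $\pi(\widetilde{\bar{C}_i}) = (-1)^{1-a_i}\, \phi(c_i)$ for the negative-trace case; this is a bookkeeping check once one fixes the convention that $\mathrm{Hyp}_0 \subset \widetilde{G}$ covers the positive-trace component of $\mathrm{Hyp} \subset \mathrm{SL}(2,\mathbb{R})$. Apart from this, no genuine obstacle appears, and no deformation argument or appeal to reducible model representations (such as those constructed in the proof of Lemma~\ref{lemma0}) is needed.
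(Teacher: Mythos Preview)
Your argument is correct and follows essentially the same route as the paper: both reduce to the parity statement $\mathrm{T}(\phi)\equiv a_1+a_2+a_3+1\pmod 2$ and then invoke $\mathrm{T}(\phi)\in\{-1,0,1\}$. The paper obtains this parity by writing $(-1)^{a_i+1}C_i$ as the positive-trace representative and observing that the product equals $(-1)^{a_1+a_2+a_3+1}I$, which is exactly your lift computation in $\widetilde{G}$ projected to $\mathrm{SL}(2,\mathbb{R})$, only stated without the explicit passage through the universal cover.
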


\begin{proof}
Suppose $\phi = (C_1, C_2, C_3) \in \sigma^{-1}(a)$, where each $C_i$ is hyperbolic and $\sigma(C_i) = a_i \in \{0,1\}$. Consider the following identity:
\[
(-1)^{a_1+1} C_1 \cdot (-1)^{a_2+1} C_2 \cdot (-1)^{a_3+1} C_3 = (-1)^{a_1 + a_2 + a_3 + 1} I.
\]
Here, each matrix $(-1)^{a_i + 1} C_i$ is a hyperbolic element with positive trace, since multiplying by $-I$ switches the sign of the trace while preserving hyperbolicity.

The product of three positive-trace hyperbolic elements equals $(-1)^{a_1 + a_2 + a_3 + 1} I$. By the additivity properties of the Toledo invariant and its parity interpretation in this context, we conclude:
\[
\mathrm{T}(\phi) \equiv a_1 + a_2 + a_3 + 1 \mod 2\mathbb{Z}.
\]
Since $\mathrm{T}(\phi) \in \{-1, 0, 1\}$ for representations on a pair of pants, it follows that $\mathrm{T}(\phi) = 0$ if and only if $a_1 + a_2 + a_3$ is odd.
\end{proof}

\subsection{The general case}
Let $\mathrm{Hyp}_P(\Sigma_{g,n})$ denote the space of boundary hyperbolic representations from $\pi_1(\Sigma_{g,n})$ into $\mr{PSL}(2,\mathbb{R})$, that is, those representations in $\mathrm{Hom}(\pi_1(\Sigma_{g,n}), \mr{PSL}(2,\mathbb{R}))$ for which each boundary curve is mapped to a hyperbolic element. The following fundamental result is due to Goldman:

\begin{thm}[{\cite[Theorem 3.3]{Gold1}}]\label{thm:Gold1}
The connected components of $\mathrm{Hyp}_P(\Sigma_{g,n})$ are precisely the level sets $\mathrm{T}^{-1}(k)$ for integers $k$ satisfying $|k| \leq |\chi(\Sigma_{g,n})|$.
\end{thm}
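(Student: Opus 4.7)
The plan is to reduce this statement to the identification of the Toledo invariant with the relative Euler class (Proposition \ref{pre-prop1}), together with a pants decomposition/gluing argument. Since $\mathrm{T}$ is continuous on the whole representation variety and takes only integer values on $\mathrm{Hyp}_P(\Sigma_{g,n})$, it is locally constant there, so each level set $\mathrm{T}^{-1}(k) \cap \mathrm{Hyp}_P(\Sigma_{g,n})$ is automatically a union of components. The task thus splits into two parts: (i) show the level set is non-empty exactly when $|k| \leq |\chi(\Sigma_{g,n})|$, and (ii) show it is then connected.

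For (i), the upper bound is forced by the Milnor--Wood inequality \eqref{eqn25}. For non-emptiness at each admissible $k$, I would realize the value by connect-summing Fuchsian uniformizations of smaller pieces of controlled signs: a hyperbolic structure on $\Sigma_{g,n}$ contributes $\pm|\chi(\Sigma_{g,n})|$, and intermediate values are obtained by gluing pieces with opposite orientations along separating hyperbolic curves, using additivity of the relative Euler class.

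For (ii), choose a pants decomposition $\Sigma_{g,n} = P_1 \cup \cdots \cup P_{2g-2+n}$ cut along $3g-3+n$ interior simple closed curves $\gamma_j$. Given $\phi_0, \phi_1 \in \mathrm{T}^{-1}(k) \cap \mathrm{Hyp}_P(\Sigma_{g,n})$, a small generic perturbation puts all $\phi_\ell(\gamma_j)$ in the hyperbolic locus. On each pair of pants $P_i$, a boundary hyperbolic $\mathrm{PSL}(2,\mathbb{R})$-representation is determined up to conjugacy by the three boundary conjugacy classes together with the discrete choice $e_i \in \{-1,0,1\}$ of relative Euler number, and the total satisfies $e(\phi) = \sum_i e_i$. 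One then (a) interpolates the conjugacy classes of $\phi_0(\gamma_j)$ and $\phi_1(\gamma_j)$ continuously inside the hyperbolic locus, (b) matches the boundary data piece by piece, and (c) carries out an elementary redistribution of the $e_i$'s across adjacent pants (a $\pm 1$ move sliding across a common $\gamma_j$) so that the local profile $(e_1,\dots,e_{2g-2+n})$ of $\phi_0$ becomes that of $\phi_1$, using that both sum to $k$. Once the local data agree, the fiber over the gluing data is connected, which produces the required path.

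The main obstacle is that a continuous interpolation along one $\gamma_j$ may force its holonomy to leave the hyperbolic locus through a parabolic or elliptic arc, and across such an arc the local invariants $e_i$ on the two adjacent pants jump individually. The resolution is Goldman's path-lifting framework (\cite[Corollary 7.8]{Gold1}): by tracking canonical lifts in $\widetilde{\mathrm{SL}(2,\mathbb{R})}$, the jumps of $e_{i_1}$ and $e_{i_2}$ across a crossing of $\gamma_j$ are equal and opposite, so the global class $e(\phi) = k$ is preserved. This is the delicate combinatorial point that makes any redistribution of local Euler numbers with fixed sum $k$ realizable by a continuous path in $\mathrm{Hyp}_P(\Sigma_{g,n})$, completing the proof.
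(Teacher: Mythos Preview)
The paper does not give its own proof of this statement: it is quoted verbatim as Goldman's result \cite[Theorem~3.3]{Gold1} and used as a black box throughout Section~\ref{sec-hyperbolic}. So there is no ``paper's proof'' to compare against; your proposal is essentially a sketch of Goldman's original argument.

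That said, your outline is in the right spirit but has a structural gap in step~(c). You write that the path-lifting framework ensures that when an interior curve $\gamma_j$ crosses the non-hyperbolic locus, the jumps of $e_{i_1}$ and $e_{i_2}$ cancel, ``so the global class $e(\phi)=k$ is preserved,'' and conclude that ``any redistribution of local Euler numbers with fixed sum $k$'' is realizable. This is backwards: preservation of the total class is a \emph{consequence} of having a path, not a mechanism for producing one. What you actually need is the converse direction---that for any desired $\pm1$ transfer between adjacent $e_i$'s there \emph{exists} a path of representations realizing it. This requires constructing an explicit one-parameter family on the two-pant union (or on a $\Sigma_{1,1}$ piece) that pushes the shared curve's holonomy through the elliptic region and back, while keeping the outer boundary conjugacy classes fixed. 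Goldman does this via the surjectivity and path-lifting of the trace map $\chi$ onto $\Delta\subset\mathbb{R}^3$ (Theorem~\ref{lifting} here), together with the explicit analysis on $\Sigma_{1,1}$ and $\Sigma_{0,4}$ (cf.\ the paper's Lemmas~\ref{par-lemma5}, \ref{par-lemma4}, \ref{hyp-lemma12}). Your sketch invokes \cite[Corollary~7.8]{Gold1} but does not say how it produces the move; filling this in is the actual content of the proof.

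A secondary issue: your step~(b), ``match the boundary data piece by piece,'' glosses over the gluing constraint. Adjacent pants share an actual group element along $\gamma_j$, not just a conjugacy class, so after deforming each pant separately you must conjugate to make the restrictions agree on the common boundary; this is routine but should be stated.
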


We now define two canonical paths in $\mathrm{SL}(2,\mathbb{R})\backslash\{\pm I\}$.
\begin{defn}\label{defn1}
Let $\gamma_0 = \gamma_0(t)$ and $\gamma_1 = \gamma_1(t)$, for $t \in [0,1]$, be paths in $\mathrm{SL}(2,\mathbb{R}) \setminus \{\pm I\}$ satisfying the following conditions:

\begin{itemize}
  \item[(i)] $\mathrm{tr}(\gamma_i(0)) < -2$ and $\mathrm{tr}(\gamma_i(1)) > 2$, for $i = 0,1$;
  \item[(ii)] The trace function $\mathrm{tr}(\gamma_i(t))$ is strictly increasing in $t$;
  \item[(iii)] For any $t$ such that $\gamma_i(t)$ is elliptic, it is conjugate to a rotation matrix $R(\theta)$ with $\theta \in (0,\pi)$ if $i = 0$, and $\theta \in (\pi,2\pi)$ if $i = 1$.
\end{itemize}
We denote by $\gamma_i^{-1}$ the reversed path of $\gamma_i$.
\end{defn}

For any $\phi \in \mathrm{Hyp}(\Sigma_{g,n})$, suppose we fix the $\sigma$-values on all boundary components except $c_1$. Let $C_1 := \phi(c_1)$. 

If $\mathrm{tr}(C_1) < -2$, then deforming $C_1$ along the path $\gamma_0(t)$ necessarily crosses a parabolic element with trace $2$ and $\sigma$-value $+1$. Since the signature can only jump by $\pm 2$ when passing through a parabolic element with trace $2$, the signature jumps by $-2$ along $\gamma_0(t)$. Similarly, deforming $C_1$ along the path $\gamma_1(t)$ causes the signature to jump by $+2$.

On the other hand, if $\mathrm{tr}(C_1) > 2$, then deforming $C_1$ along $\gamma_0^{-1}(t)$ results in a signature jump of $+2$, while deforming along $\gamma_1^{-1}(t)$ results in a jump of $-2$.

In general, for any boundary hyperbolic representation, the Toledo invariant equals half the signature. Therefore, the Toledo invariant jumps by $k(2i - 1)$ along the path $\gamma_i^k(t)$, where $k = \pm 1$ and $i = 0,1$.

\begin{lemma}\label{hyp-lemma12}
Let $\phi \in \mathrm{Hyp}(\Sigma_{g,n})$ with $\mathrm{T}(\phi) < |\chi(\Sigma_{g,n})|$ (resp. $\mathrm{T}(\phi) > -|\chi(\Sigma_{g,n})|$). Then there exists a deformation $\phi(t)$, $t \in [0,1]$, of $\phi$ such that $\sigma(\phi(t)(c_i))$ remains constant for all $i \geq 2$, and $\phi(t)(c_1)$ satisfies:
\begin{itemize}
  \item[(i)] If $\mathrm{tr}(\phi(c_1)) < -2$ and $\phi(c_1) = \gamma_1(0)$ (resp. $\phi(c_1) = \gamma_0(0)$), then $\phi(t)(c_1) = \gamma_1(t)$ (resp. $\gamma_0(t)$);
  \item[(ii)] If $\mathrm{tr}(\phi(c_1)) > 2$ and $\phi(c_1) = \gamma_0^{-1}(0)$ (resp. $\phi(c_1) = \gamma_1^{-1}(0)$), then $\phi(t)(c_1) = \gamma_0^{-1}(t)$ (resp. $\gamma_1^{-1}(t)$).
\end{itemize}
\end{lemma}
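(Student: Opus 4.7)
The plan is to reduce the four cases to a single one by symmetries, then perform the deformation on a small subsurface containing $c_1$. Orientation reversal of $\Sigma_{g,n}$ flips the sign of $\mathrm{T}(\phi)$ and swaps $\gamma_0 \leftrightarrow \gamma_1$ (since reversing orientation sends a rotation by $\theta$ to one by $2\pi-\theta$, exchanging elliptic angles in $(0,\pi)$ with those in $(\pi,2\pi)$); combined with the time-reversal involution $\gamma_i \leftrightarrow \gamma_i^{-1}$ (which swaps the initial trace conditions $\mathrm{tr}<-2$ and $\mathrm{tr}>2$), this reduces the statement to the single case $\mathrm{T}(\phi)<|\chi(\Sigma_{g,n})|$, $\mathrm{tr}(\phi(c_1))<-2$, with $\phi(c_1)=\gamma_1(0)$ and target $\phi(t)(c_1)=\gamma_1(t)$.

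Next I would cut $\Sigma_{g,n}$ along a simple closed curve $c_0$ so that a small subsurface $\Sigma_1$ containing $c_1$ is separated from the rest. If $g \geq 1$, take $\Sigma_1=\Sigma_{1,2}$ with $\partial\Sigma_1 = c_1 \cup c_0$, and $\Sigma_2=\Sigma_{g-1,n-1}$ containing $c_2,\dots,c_n$. Keep $\phi|_{\Sigma_2}$ fixed throughout, so $\phi(c_0)$ and $\sigma(\phi(c_i))$ for $i \geq 2$ are automatically preserved. On $\Sigma_1$ the relation $[A_1,B_1]\,C_1 C_0 = I$ reduces the construction to finding a continuous family $(A_1(t),B_1(t))$ with $[A_1(t),B_1(t)]=(\gamma_1(t)\,\phi(c_0))^{-1}$. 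Such a family exists because the commutator map $\mathrm{SL}(2,\mathbb{R})^2 \to \mathrm{SL}(2,\mathbb{R})\setminus\{-I\}$ is surjective, and, combined with the connectedness of $\mathrm{Hom}(\pi_1(\Sigma_{1,1}),\mathrm{SL}(2,\mathbb{R}))$, admits path-lifting along any continuous arc avoiding $-I$.

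If $g = 0$ (so necessarily $n \geq 3$), the only available decomposition is $\Sigma_1=\Sigma_{0,3}$ with $\partial\Sigma_1 = c_1 \cup c_2 \cup c_0$, and the pants relation $C_1 C_2 C_0 = I$ forces $\phi(t)(c_2)$ to vary with $\phi(t)(c_1)$. Before deforming, I would use Goldman's Theorem~\ref{thm:Gold1} together with the fact that $\mathrm{T}(\phi)+1 \leq |\chi(\Sigma_{0,n})|$ to deform $\phi$ within its connected component in $\mathrm{Hyp}_P(\Sigma_{0,n})$ (lifted to $\mathrm{SL}(2,\mathbb{R})$), preserving all boundary $\sigma$-values, to a standard form in which $\phi(c_0)$ is hyperbolic with a controlled trace of specified sign and large absolute value. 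Then set $\phi(t)(c_1) = \gamma_1(t)$ and $\phi(t)(c_2) = \gamma_1(t)^{-1}\phi(c_0)^{-1}$, leaving $\phi|_{\Sigma_2}$ and $\phi(c_0)$ unchanged. The explicit trace computations \eqref{hyp-eqn4.0}--\eqref{hyp-eqn4}, applied to the product $\gamma_1(t)^{-1}\phi(c_0)^{-1}$, then show that after the standardization one has $|\mathrm{tr}(\phi(t)(c_2))|>2$ with constant sign of the trace throughout $t \in [0,1]$.

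The principal obstacle is the pants subcase: guaranteeing that $\mathrm{sgn}(\mathrm{tr}(\phi(t)(c_2)))$ stays constant and $\phi(t)(c_2)$ stays hyperbolic along the entire path. This requires both a careful preliminary standardization of $\phi(c_0)$ (available only because the hypothesis $\mathrm{T}(\phi)<|\chi(\Sigma_{g,n})|$ keeps the target Toledo invariant within the Milnor--Wood range) and an explicit trace estimate exploiting the monotonicity of $\mathrm{tr}(\gamma_1(t))$ from Definition~\ref{defn1}. In the handle case $g \geq 1$, the handle absorbs all the variation in the boundary data and no such obstruction arises; the entire difficulty is concentrated in the genus-zero case.
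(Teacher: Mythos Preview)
Your symmetry reduction to a single case is correct and matches the paper. The substantive gap is in the $g\ge 1$ argument. You keep $\phi|_{\Sigma_2}$ fixed and deform only on $\Sigma_1=\Sigma_{1,2}$, asserting that the commutator map ``admits path-lifting along any continuous arc avoiding $-I$.'' But surjectivity plus connectedness of the source does not give path-lifting, and in fact your claim is false as stated: if $\mathrm{T}(\phi|_{\Sigma_1})=|\chi(\Sigma_{1,2})|=2$ is already maximal---which the global hypothesis $\mathrm{T}(\phi)<|\chi(\Sigma_{g,n})|$ does \emph{not} prevent---then any deformation with $\phi(t)(c_1)=\gamma_1(t)$ and $\phi(t)(c_0)\equiv\phi(c_0)$ would force $\mathrm{T}(\phi(1)|_{\Sigma_1})=3$, violating Milnor--Wood on $\Sigma_1$. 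So no continuous lift $(A_1(t),B_1(t))$ with $[A_1(t),B_1(t)]=(\gamma_1(t)\,\phi(c_0))^{-1}$ can exist in this case, contrary to your final sentence that ``the handle absorbs all the variation \ldots\ and no such obstruction arises.'' The hypothesis $\mathrm{T}(\phi)<|\chi(\Sigma_{g,n})|$ must enter the proof, and in your scheme it never does.

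The paper's proof addresses exactly this: it works by induction (first on $n$ for $g=0$, then on $g$), and when the piece $P_1$ (or $\Sigma_{1,1}$) adjacent to $c_1$ already has maximal Toledo invariant, it first uses the inductive hypothesis on the complementary piece to change the $\sigma$-value along the interior curve $d_1$, thereby dropping $\mathrm{T}(\phi|_{P_1})$ to $0$, and only then performs the $\gamma$-deformation at $c_1$. Your $g=0$ sketch has the same defect: the appeal to Theorem~\ref{thm:Gold1} to ``standardize $\phi(c_0)$'' while preserving all boundary $\sigma$-values is not justified (that theorem is about $\mathrm{PSL}(2,\mathbb{R})$ and ignores the $\sigma$-data), and the trace estimate via \eqref{hyp-eqn4.0}--\eqref{hyp-eqn4} does not apply once $\gamma_1(t)$ becomes elliptic. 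The missing idea in both cases is the redistribution step that moves the non-maximality to the pair of pants containing $c_1$ before you touch it.
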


\begin{proof}
By reversing the orientation of the surface, the Toledo invariant changes its sign. Therefore, the case $\mathrm{T}(\phi) > -|\chi(\Sigma_{g,n})|$ can be reduced to $\mathrm{T}(\phi) < |\chi(\Sigma_{g,n})|$. It suffices to prove the lemma under the assumption $\mathrm{T}(\phi) < |\chi(\Sigma_{g,n})|$.

We begin with the case \( g = 0 \). When \( n = 3 \), by Lemma~\ref{lemma0}, we can deform the representation \( \phi \) while keeping the conjugacy classes of \( C_1 \) and \( C_2 \) fixed, in such a way that the trace of the product \( C_1C_2 \) varies monotonically from greater than 2 to less than \(-2\), or vice versa. By Lemma~\ref{par-lemma10}, the Toledo invariant correspondingly changes from \( \pm 1 \) to 0, or from 0 to \( \pm 1 \). Hence, the lemma holds in the case \( n = 3 \).

Assume the lemma holds for all $\leq n $. For $ n+1$, consider the maximal dual-tree decomposition as illustrated on the right in Figure~\ref{fig:decomposition}. By \cite[Lemma 10.1]{Gold1}, we may assume that $D_1, \dots, D_{n-2}$ are hyperbolic elements.

If $\mathrm{T}(\phi|_{\pi_1(P_1)}) < 1$, then by the inductive hypothesis for $n=3$, there exists a deformation $\phi_1(t)$ of $\phi|_{\pi_1(P_1)}$ such that $D_1$ and the conjugacy class of $C_0$ are fixed, and $\phi_1(t)(c_1) = \gamma_1(t)$ or $\gamma_0^{-1}(t)$ depending on the sign of $\mathrm{tr}(C_1)$.

If $\mathrm{T}(\phi|_{\pi_1(P_1)}) = 1$, then $\mathrm{T}(\phi|_{\Sigma_{0,n+1} \setminus P_1})$ is not maximal. By induction, we may deform $\phi|_{\pi_1(P_1)}$ so that the conjugacy classes of $C_0$ and $C_1$ remain unchanged, and modify $D_1 = \phi|_{\pi_1(P_1)}(d_1)$ to alter its $\sigma$ value. Denote this deformation by $\phi_1(t)$, then $\mathrm{T}(\phi_1(1)) = 0$.
Similarly, we can construct a deformation $\phi_2(t)$ of $\phi|_{\Sigma_{0,n+1} \setminus P_1}$ such that its Toledo invariant increases by 1, and $\phi_1(t)(d_1) = \phi_2(t)(d_1)^{-1}$ for all $t$. Thus, we can glue the deformations $\phi_1(t)$ and $\phi_2(t)$ to obtain a deformation of $\phi$ whose restriction to $P_1$ has Toledo invariant zero. This reduces to the previous case $\mathrm{T}(\phi|_{\pi_1(P_1)}) < 1$, completing the proof for $g = 0$.

Now assume the lemma holds for all genus $\leq g-1$. Consider the case of genus $g$. Let 
\[
C_{-1} := C_2 \cdots C_{n} [A_1, B_1] \cdots [A_{g-1}, B_{g-1}], \quad C_0 := [A_g, B_g],
\]
so that $C_{-1} C_0 C_1 = I$, see Figure \ref{fig:decomposition3}.

\begin{figure}[ht]
\centering
\includegraphics[width=0.7\textwidth]{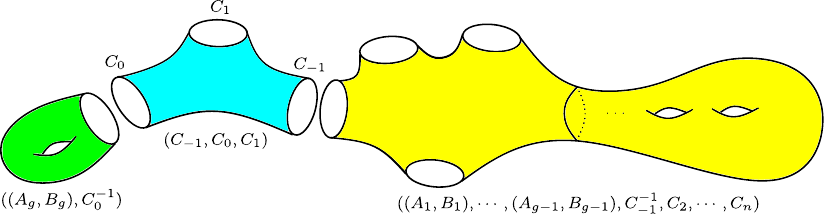}
\caption{Decomposition $\Sigma_{g,n} = \Sigma_{1,1} \cup \Sigma_{0,3}\cup\Sigma_{g-1,n}$}
\label{fig:decomposition3}
\end{figure}

 Define the following subrepresentations:
\begin{align*}
\phi_1 &:= (C_{-1}, C_0, C_1), \\
\phi_2 &:= ((A_1, B_1), \dots, (A_{g-1}, B_{g-1}), C_{-1}^{-1}, C_2, \dots, C_n), \\
\phi_3 &:= ([A_g, B_g], C_0^{-1}).
\end{align*}

By \cite[Lemma 10.1]{Gold1}, we may assume that $C_{-1}$ and $C_1$ are hyperbolic. If $\mathrm{T}(\phi_1) < 1$, then as in the case $n=3, g=0$, we may deform $\phi_1$ along a $\gamma$-path while fixing the conjugacy classes of $C_{-1}$ and $C_0$, so that the Toledo invariant of $\phi_1$ increases by 1.

If $\mathrm{T}(\phi_1) = 1$, then either $\mathrm{T}(\phi_2)$ or $\mathrm{T}(\phi_3)$ is not maximal. Suppose $\mathrm{T}(\phi_2)$ is not maximal. By induction, we can deform $\phi_1$ and $\phi_2$ into paths $\phi_1(t)$ and $\phi_2(t)$ such that the representations at the common boundary are mutual inverses. These can then be glued to obtain a global deformation. Furthermore, $\phi_1(t)$ preserves the conjugacy classes at the other two boundaries. Up to conjugation, we can glue $\phi_1(t)$, $\phi_2(t)$, and $\phi_3$ to construct a global deformation $\phi(t)$. Since $\phi_1(1)$ has Toledo invariant 0, this reduces the problem to the case $\mathrm{T}(\phi_1) < 1$.

The proof is complete.
\end{proof}

\begin{lemma}\label{par-lemma8}
The subset
\[
\mathrm{Hyp}(\Sigma_{g,n}) \cap \sigma^{-1}(a) \cap \mathrm{T}^{-1}(k)
\]
is either empty or a connected component of $\mathrm{Hyp}(\Sigma_{g,n})$ for any integer $k$ with $|k| \leq |\chi(\Sigma_{g,n})| - 1$.
\end{lemma}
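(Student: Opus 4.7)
My plan is to adapt the strategy used in the elliptic setting (Lemma~\ref{lemma23}), routing through Goldman's classification of components of $\mathrm{Hyp}_P(\Sigma_{g,n})$ (Theorem~\ref{thm:Gold1}) and handling the sign ambiguities coming from lifting to $\mathrm{SL}(2,\mathbb{R})$.

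First I would verify that both $\sigma$ and $\mathrm{T}$ are locally constant on $\mathrm{Hyp}(\Sigma_{g,n})$, so that the set $\mathrm{Hyp}(\Sigma_{g,n}) \cap \sigma^{-1}(a) \cap \mathrm{T}^{-1}(k)$ is a union of connected components; it then suffices to prove it is path-connected when nonempty. Local constancy of $\sigma$ is immediate from $\mathrm{Hyp} = \mathrm{Hyp}_0 \sqcup \mathrm{Hyp}_1$. For $\mathrm{T}$, on $\mathrm{Hyp}(\Sigma_{g,n})$ every boundary rho invariant vanishes (Table~\ref{tab:rho-invariant}), hence $\mathrm{T}=\tfrac{1}{2}\mathrm{sign}$, and the signature can jump only when a boundary holonomy acquires $1$ as an eigenvalue, which never happens for hyperbolic elements.

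Next, given $\phi_0,\phi_1$ in the subset, since $\mathrm{T}(\phi_i)=\mathrm{T}([\phi_i])=k$, Goldman's Theorem~\ref{thm:Gold1} provides a path $[\phi_t]$ from $[\phi_0]$ to $[\phi_1]$ inside $\mathrm{Hyp}_P(\Sigma_{g,n})$. Using that $\pi_*$ is a covering on its image component, I would lift this path to a path $\phi_t$ in $\mathrm{Hyp}(\Sigma_{g,n})$ starting at $\phi_0$; continuity forces each $\phi_t(c_i)$ to stay in its initial component $\mathrm{Hyp}_{a_i}$, so the lift stays in $\sigma^{-1}(a)$. The endpoint is $\phi_1'=\epsilon\cdot\phi_1$ for some $\epsilon\in\mathrm{Hom}(\pi_1(\Sigma_{g,n}),\{\pm I\})$, and the constraint $\sigma(\phi_1')=a$ forces $\epsilon(c_i)=I$ for every boundary generator. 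Hence $\epsilon$ lies in the subgroup generated by the flips $\alpha_i,\beta_i$ ($1\le i\le g$), and it suffices to show that $\phi$ and $\alpha_i\cdot\phi$ (resp.\ $\beta_i\cdot\phi$) can be joined inside the subset for each $i$.

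To handle a single flip $\alpha_i$ I would follow the template of Lemma~\ref{lemma23}: first deform $\phi$ within $\mathrm{Hyp}(\Sigma_{g,n})\cap\sigma^{-1}(a)\cap \mathrm{T}^{-1}(k)$ to a representation $\psi$ satisfying $\psi([a_i,b_i])=I$; then the restriction $(\psi(a_i),\psi(b_i))$ determines an element of $\mathrm{Hom}(\pi_1(\Sigma_{1,0}),\mathrm{SL}(2,\mathbb{R}))$, which is path-connected, so one can continuously change $(A_i,B_i)$ to $(-A_i,B_i)$ while keeping the commutator equal to $I$ and fixing $\psi$ on the other generators; reversing the first deformation then joins $\phi$ to $\alpha_i\cdot\phi$.

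The main obstacle is the commutator-trivialization step, which is the hyperbolic analog of Lemma~\ref{lemma7}. I would prove it by splitting $\Sigma_{g,n}=\Sigma_{1,1}\cup_{c_0}\Sigma_{g-1,n+1}$ with $c_0$ separating $(a_i,b_i)$ from the remaining generators. The hypothesis $|k|\le|\chi(\Sigma_{g,n})|-1$ guarantees that, after possibly passing across $c_0$, one of the two sides has non-extremal Toledo invariant, leaving budget to shift Toledo between subsurfaces. Using Lemma~\ref{hyp-lemma12}, one can then deform the interior holonomy along $c_0$ along a $\gamma_0$- or $\gamma_1$-path of Definition~\ref{defn1}, passing through the elliptic regime all the way to the identity, while compensating with an opposite $\gamma$-deformation on the other side so the total Toledo invariant stays fixed at $k$; because $c_0$ is not a boundary of $\Sigma_{g,n}$, it may freely become non-hyperbolic during this move, and the genuine boundaries $c_1,\ldots,c_n$ remain hyperbolic with $\sigma$-values preserved throughout. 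A case analysis following Lemma~\ref{lemma7} (depending on whether the Toledo invariants on each side are extremal) handles every configuration and is the technical heart of the proof.
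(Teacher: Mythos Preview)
Your overall architecture --- lift Goldman's path from $\mathrm{Hyp}_P$ through the covering $\pi_*$, observe that the endpoint discrepancy lies in $\Lambda_{g,n}$ because the boundary $\sigma$-values are pinned, and then absorb each generator $\alpha_i,\beta_i$ of $\Lambda_{g,n}$ by a commutator argument on the $i$-th handle --- is exactly the paper's strategy. The divergence, and the gap, is in the commutator step.

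You assert that Lemma~\ref{hyp-lemma12} lets you push the holonomy along the interior curve $c_0$ ``along a $\gamma_0$- or $\gamma_1$-path \ldots\ all the way to the identity''. But by Definition~\ref{defn1} the paths $\gamma_0,\gamma_1$ live in $\mathrm{SL}(2,\mathbb{R})\setminus\{\pm I\}$: they run from $\mathrm{tr}<-2$ to $\mathrm{tr}>2$, crossing the elliptic region and two parabolic walls, but never hit $I$. Lemma~\ref{hyp-lemma12} therefore lets you transfer one unit of Toledo invariant across $c_0$, not drive $\psi([a_i,b_i])$ to the identity; reaching $I$ would require a hyperbolic analogue of Lemma~\ref{lemma12}, which you have not supplied and which is precisely the content hidden in your ``case analysis following Lemma~\ref{lemma7}''. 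The paper avoids this problem by never asking for $[A_i,B_i]=I$. Instead it uses Lemma~\ref{hyp-lemma12} only to arrange $\mathrm{T}(\phi|_{\pi_1(\Sigma_{1,1})})=0$, so that $\phi(c_0)\in\mathrm{Hyp}_0$ stays hyperbolic, and then invokes the fact that $\mathrm{Hyp}(\Sigma_{1,1})\cap\mathrm{T}^{-1}(0)$ is itself connected (via Goldman \cite[Lemma~10.5]{Gold1}, using that this locus contains pairs $(A_i,B_i)$ with $A_i$ elliptic, for which $A_i$ and $-A_i$ are joined through rotations). Since $\phi_0|_{\pi_1(\Sigma_{1,1})}$ and $(\alpha_i\phi_0)|_{\pi_1(\Sigma_{1,1})}$ both lie in this connected set and share the same boundary holonomy class, the handle deformation glues to the complement by conjugation and the flip is absorbed without ever leaving the boundary-hyperbolic regime.
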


\begin{proof}
The proof follows a similar strategy to that of Lemma \ref{lemma23}.
For any representation 
\[
\phi \in \mathrm{Hyp}(\Sigma_{g,n}) \cap \sigma^{-1}(a) \cap \mathrm{T}^{-1}(k),
\]
 By \cite[Lemma 10.1]{Gold1}, we may assume that it is an interior hyperbolic representation with respect to a maximal dual tree decomposition, see also Lemma \ref{par-lemma2}.
 
By repeatedly applying Lemma~\ref{hyp-lemma12} to the boundary hyperbolic representations \( \phi|_{\pi_1(\Sigma_{1,1})} \) and \( \phi|_{\pi_1(\Sigma_{g,n} \setminus \Sigma_{1,1})} \) associated with each subsurface \( \Sigma_{1,1} \subset \Sigma_{g,n} \), the representation \( \phi \) can be deformed into a new representation such that its restrictions to certain subsurfaces of type \( \Sigma_{1,1} \) lie in \( \mathrm{Hyp}(\Sigma_{1,1}) \cap \mathrm{T}^{-1}(0) \), while the remainder of the surface is covered either by maximal or minimal representations, or by representations of type \( \pi_1(\Sigma_{0,g+n}) \). We denote by $\phi_0$ such a fixed representation, which is also in $\mathrm{Hyp}(\Sigma_{g,n}) \cap \sigma^{-1}(a) \cap \mathrm{T}^{-1}(k)$.

Note that \( \mathrm{Hyp}(\Sigma_{1,1}) \cap \mathrm{T}^{-1}(0) \) is connected. This follows from the same argument as in \cite[Lemma 10.5]{Gold1} and the sigma value of each representation in $\mathrm{Hyp}(\Sigma_{1,1}) \cap \mathrm{T}^{-1}(0)$ is always equal to $1$ (see Table \ref{sign-table}), since one can construct a representation \( \phi_1 \in \mathrm{Hyp}(\Sigma_{1,1}) \cap \mathrm{T}^{-1}(0) \) such that \( \phi_1(a_1) \) or \( \phi_1(b_1) \) is elliptic. Furthermore, if the set
\(
\mathrm{Hyp}(\Sigma_{0,g+n}) \cap \mathrm{T}^{-1}(l) \cap \sigma^{-1}(a)
\)
is non-empty, by Theorem~\ref{thm:Gold1} and the fact that $\sigma^{-1}(a)$ is fixed, it is connected.
By Theorem~\ref{thm:Gold1}, any $\phi$ as above can be deformed into a representation of the form $\gamma \phi_0$, where $
\gamma \in \Lambda_{g,n}$. 

Using Lemma~\ref{hyp-lemma12} and the argument from Lemma \ref{lemma23}, one can show that $\phi_0$ and $\gamma \phi_0$ lie in the same connected component. Hence any $\phi$ in the given subset can be connected to $\phi_0$, completing the proof.
\end{proof}

\begin{lemma}\label{lemma9}
Let $\phi \in \mathrm{Hyp}(\Sigma_{g,n})$ be a boundary hyperbolic representation with $\mathrm{T}(\phi) = \pm |\chi(\Sigma_{g,n})|$. Then the number of entries equal to $1$ in $\sigma(\phi)$ is even.
\end{lemma}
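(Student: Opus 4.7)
The plan is to exploit the additivity of the Toledo invariant over a pants decomposition, and combine it with the pair-of-pants formula of Lemma~\ref{par-lemma10}. Without loss of generality, assume $\mathrm{T}(\phi) = |\chi(\Sigma_{g,n})| = 2g+n-2$; the opposite sign is handled identically after reversing orientation (which negates $\mathrm{T}$ and preserves $\sigma$-values on hyperbolic elements). Fix any pants decomposition of $\Sigma_{g,n}$ into $N := 2g+n-2$ pairs of pants $P_1,\dots,P_N$ by cutting along $3g+n-3$ simple closed curves $\gamma_1,\dots,\gamma_{3g+n-3}$.

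The first step is to deform $\phi$, within its connected component of $\mathrm{Hyp}(\Sigma_{g,n})\cap \mathrm{T}^{-1}(|\chi|)$, to a representation for which each curve $\gamma_l$ is sent to a hyperbolic element. Since $\sigma(\phi)$ is locally constant on $\mathrm{Hyp}(\Sigma_{g,n})$, such a deformation preserves the quantity we want to compute. The existence of such a deformation follows from \cite[Lemma~10.1]{Gold1} (interior-hyperbolic representative with respect to a maximal dual tree), exactly as it has already been invoked in the proof of Lemma~\ref{par-lemma8} and Lemma~\ref{hyp-lemma12}. After this deformation we may therefore restrict $\phi$ to each pants $P_k$, obtaining a boundary hyperbolic representation $\phi|_{\pi_1(P_k)}\in\mathrm{Hyp}(P_k)$.

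Next, I would apply additivity of the Toledo invariant across the cutting curves, together with the Milnor--Wood bound on each pants:
\[
|\chi(\Sigma_{g,n})|\;=\;\mathrm{T}(\phi)\;=\;\sum_{k=1}^N \mathrm{T}(\phi|_{\pi_1(P_k)}),\qquad \bigl|\mathrm{T}(\phi|_{\pi_1(P_k)})\bigr|\le 1=|\chi(P_k)|.
\]
Since the left-hand side equals the number $N$ of pants, the only possibility is $\mathrm{T}(\phi|_{\pi_1(P_k)})=1$ for every $k$. By Lemma~\ref{par-lemma10} this forces the three $\sigma$-values on the boundary of each $P_k$ to sum to an even integer.

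Finally, sum the parities over all pants. Each boundary curve $c_j$ of $\Sigma_{g,n}$ appears in exactly one pants, while each interior curve $\gamma_l$ is shared by two adjacent pants, contributing $\sigma(\phi(\gamma_l))+\sigma(\phi(\gamma_l)^{-1})=2\sigma(\phi(\gamma_l))$ because $\mathrm{tr}(A)=\mathrm{tr}(A^{-1})$ implies $\sigma(A)=\sigma(A^{-1})$ for hyperbolic $A$. Therefore
\[
\sum_{j=1}^{n}\sigma(\phi(c_j))\;+\;2\sum_{l}\sigma(\phi(\gamma_l))\;=\;\sum_{k=1}^N\sum_{c\subset\partial P_k}\sigma(\phi(c))\;\equiv\;0\pmod 2,
\]
which yields $\sum_{j=1}^n\sigma(\phi(c_j))\equiv 0\pmod 2$, i.e.\ the number of entries of $\sigma(\phi)$ equal to $1$ is even. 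The only genuine obstacle in the argument is Step~1: making every $\gamma_l$ hyperbolic after a deformation that preserves the connected component of $\phi$ in $\mathrm{Hyp}(\Sigma_{g,n})\cap\mathrm{T}^{-1}(\pm|\chi|)$; this is precisely the content already extracted from \cite[Lemma~10.1]{Gold1} in the preceding lemmas, so no new technique is needed.
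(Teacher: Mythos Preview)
Your approach is correct and cleaner than the paper's. The paper proceeds by induction on $g$ for the $n=1$ case and then, for $n\ge 2$, uses the decomposition $\Sigma_{g,n}=\Sigma_{g,1}\cup_{c_0}\Sigma_{0,n+1}$ together with the sign-flip operations $\mu_{ij}$ to normalize all interior traces to be negative before reading off the admissible $\sigma$-vectors; your direct parity count over a pants decomposition bypasses both the induction and the normalization step, and makes the role of Lemma~\ref{par-lemma10} completely transparent.

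One point needs tightening. As invoked throughout this paper, \cite[Lemma~10.1]{Gold1} produces an interior-hyperbolic representative only for a \emph{maximal dual tree} decomposition, whose pieces are $g$ once-punctured tori together with $g+n-2$ pairs of pants---not a full pants decomposition. For $g\ge 1$ the dual graph of any full pants decomposition has $2g+n-2$ vertices and $3g+n-3$ edges, hence is never a tree, so the citation does not literally cover your Step~1. Two easy fixes are available. Either run the parity argument directly on the maximal dual tree decomposition, using that a $\Sigma_{1,1}$ piece with hyperbolic boundary and $\mathrm{T}=1$ necessarily has $\sigma=0$ on its boundary (Table~\ref{sign-table}), which is even and fits into the parity sum just as the pants pieces do; or observe that in the maximal-Toledo situation each such $\Sigma_{1,1}$ piece already has $\mathrm{tr}([A,B])<-2$, which by the analysis in Section~\ref{secn=1} forces $A$ and $B$ to be hyperbolic, so cutting each torus along its $a$-curve refines the dual-tree decomposition to a full pants decomposition with all interior curves hyperbolic, no further deformation required. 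Either route completes your argument unchanged.
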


\begin{proof}
We first consider the case $n = 1$. For $g = 1$, by the discussion in Section \ref{secn=1}, we know that $\mathrm{T}(\phi) = \pm 1$ if and only if $\sigma(C_1) = 0$, where $C_1 := \phi(c_1)$. Assume inductively that the lemma holds for genus at most $g - 1$, and consider the case $g$.

Write $C_3 := \Pi_{i=1}^{g-1} \phi([a_i, b_i])$ and $C_2 := \phi([a_g, b_g])$, so that $C_3 C_2 C_1 = I$. By \cite[Lemma 10.1]{Gold1}, we may assume that $C_2$ and $C_3$ are both hyperbolic. Since $\mathrm{T}(\phi) = \pm |\chi(\Sigma_{g,1})| = \pm (2g - 1)$, and by the inductive assumption, we have $\sigma(C_2) = \sigma(C_3) = 0$. Furthermore, the triple $(C_1, C_2, C_3)$ forms a representation of $\Sigma_{0,3}$ with Toledo invariant zero if and only if $\sum_{i=1}^3 \sigma(C_i)$ is odd. Hence, $T((C_1, C_2, C_3)) = \pm 1$ implies $\sigma(C_1) = 0$.

We now consider the case $n \geq 2$. Again, by \cite[Lemma 10.1]{Gold1}, we may assume that $C_0$, $D_1, \dots, D_{n-2}$ are all hyperbolic (see Figure~\ref{fig:decomposition}).

\begin{figure}[ht]
\centering
\includegraphics[width=0.7\textwidth]{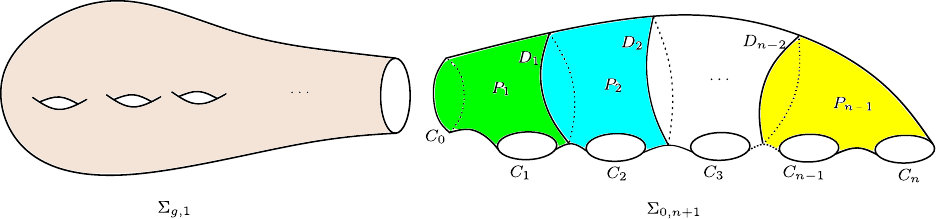}
\caption{Decomposition $\Sigma_{g,n} = \Sigma_{g,1} \cup_{c_0} \Sigma_{0,n+1}$}
\label{fig:decomposition}
\end{figure}

Since $\mathrm{T}(\phi)$ is maximal, the Toledo invariants of the restrictions of $\phi$ to both $\pi_1(\Sigma_{g,1})$ and to each pair of pants $P_i \subset \Sigma_{0,n+1}$ are also maximal. In particular, we have $\mathrm{tr}(C_0) < -2$.

If $\mathrm{tr}(D_1) > 2$, we can apply the sign-change operation $\mu_{12}$, which multiplies $-I$ to the boundary monodromies $C_1$ and $C_2$, producing a new representation $\mu_{12}(\phi)$. This operation preserves the Toledo invariant and stays within $\mathrm{Hyp}(\Sigma_{g,n})$, while changing the sign of the trace. Repeating this procedure, we may assume that all $D_i$ have negative trace, leading to a representation
\[
\phi' = \mu_{i_1j_1} \circ \cdots \circ \mu_{i_rj_r}(\phi),
\]
where each $\mu_{ij}$ acts by multiplying $-I$ on the $i$-th and $j$-th boundaries.

Since each pair of pants contributes $\pm 1$ to the Toledo invariant, the signature of $\phi'$ must satisfy
\[
\sigma(\phi') \in \{(0,\dots,0,0,0),\ (0,\dots,0,1,1)\}.
\]
In other words,
\[
\sigma(\phi') \in \mathcal{S} := \left\{ \mu_{i_1j_1} \circ \cdots \circ \mu_{i_rj_r}(0,\dots,0) \right\}.
\]
Since the operation $\mu_{ij}$ flips two components of the vector, each application preserves the parity of the number of 1's. Thus, every element of $\mathcal{S}$ contains an even number of 1's, implying that $\sigma(\phi) \in \mathcal{S}$ must also contain an even number of 1's. This completes the proof.
\end{proof}

As a consequence of Lemma~\ref{lemma9}, we conclude that the number of connected components in
\[
\mathrm{Hyp}(\Sigma_{g,n}) \cap \mathrm{T}^{-1}(|\chi(\Sigma_{g,n})|)
\]
is given by $4^g \cdot 2^{n-1}$, where the factor $4^g$ comes from the action $(A,B)\to (\pm A,\pm B)$ for each genus one surface. Similarly, the number of connected components on $\mathrm{Hyp}(\Sigma_{g,n}) \cap \mathrm{T}^{-1}(-|\chi(\Sigma_{g,n})|)$ is also $4^g\cdot 2^{n-1}$.
\begin{lemma}
	For any integer $k \in \mathbb{Z}$ with $|k| < |\chi(\Sigma_{g,n})|$, the space 
	\[
	\mr{Hyp}(\Sigma_{g,n}) \cap \mathrm{T}^{-1}(k)
	\]
	consists of exactly $2^{n-1}$ connected components.
\end{lemma}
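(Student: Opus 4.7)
The plan is to combine Lemma~\ref{par-lemma8}---which shows that each nonempty stratum $\mathrm{Hyp}(\Sigma_{g,n})\cap\sigma^{-1}(a)\cap\mathrm{T}^{-1}(k)$ is a single connected component---with a parity computation identifying exactly $2^{n-1}$ of the $2^n$ possible boundary sigma-values as being realized.

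\textbf{Step 1 (Parity identity).} I plan first to establish that for every $\phi\in\mathrm{Hyp}(\Sigma_{g,n})$,
\begin{equation*}
\mathrm{T}(\phi)+\sum_{j=1}^n\sigma(\phi(c_j))\equiv n\pmod{2}.
\end{equation*}
Both $\mathrm{T}$ and $\sigma(\phi)$ are locally constant on $\mathrm{Hyp}(\Sigma_{g,n})$, so after a small deformation (available via \cite[Lemma~10.1]{Gold1}) I may assume there is a pair-of-pants decomposition of $\Sigma_{g,n}$, consisting of $2g+n-2$ pants and $3g+n-3$ interior curves, in which every interior holonomy is hyperbolic. Applying Lemma~\ref{par-lemma10} to each pants and summing modulo~$2$, the Toledo contributions add to $\mathrm{T}(\phi)$ by additivity, while in the $\sigma$-sum each interior curve appears in two pants boundaries whose holonomies are $C$ and $C^{-1}$ (so it contributes $2\sigma(C)\equiv 0$ using $\sigma(C^{-1})=\sigma(C)$), whereas each $c_j$ contributes $\sigma(\phi(c_j))$ exactly once. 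Since each pants contributes $1\pmod{2}$, the total right-hand side is $2g+n-2\equiv n\pmod{2}$.

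\textbf{Step 2 (Realization of all admissible sigmas).} By Theorem~\ref{thm:Gold1} (and the fact that any $\mathrm{PSL}(2,\mathbb{R})$-representation lifts to $\mathrm{SL}(2,\mathbb{R})$ since $\pi_1(\Sigma_{g,n})$ is free for $n\geq 1$), I fix one $\phi_0\in\mathrm{Hyp}(\Sigma_{g,n})\cap\mathrm{T}^{-1}(k)$; set $a_0:=\sigma(\phi_0)$, so that Step~1 gives $\sum_j a_{0,j}\equiv n+k\pmod{2}$. For any even-cardinality subset $S\subseteq\{1,\ldots,n\}$, define $\phi_S$ by $\phi_S(c_j)=-\phi_0(c_j)$ for $j\in S$ and by agreement with $\phi_0$ on all other generators. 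Since commutators are insensitive to central signs and $|S|$ is even, $\phi_S$ still satisfies the surface relation $\prod[A_i,B_i]\prod\phi_S(c_j)=I$, and each $\phi_S(c_j)$ remains hyperbolic; hence $\phi_S\in\mathrm{Hyp}(\Sigma_{g,n})$. Moreover $\pi\circ\phi_S=\pi\circ\phi_0$, so by the $\mathrm{PSL}$-invariance of the Toledo invariant $\mathrm{T}(\phi_S)=\mathrm{T}(\phi_0)=k$. As $S$ varies, the vectors $\sigma(\phi_S)$ exhaust every $a\in\{0,1\}^n$ with $\sum a_j\equiv n+k\pmod{2}$.

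\textbf{Step 3 (Counting).} Step~1 rules out every sigma-vector of the opposite parity, Step~2 realizes every one of the correct parity, and Lemma~\ref{par-lemma8} (applicable since $|k|\leq|\chi(\Sigma_{g,n})|-1$) asserts that each realized stratum is a single connected component. Since $\{a\in\{0,1\}^n:\sum_j a_j\equiv n+k\pmod{2}\}$ has cardinality $2^{n-1}$, the claim follows.

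The hardest part will be Step~1: it requires arranging a pair-of-pants decomposition with hyperbolic interior holonomies and carefully tracking the modulo-$2$ cancellations on internal curves via $\sigma(C^{-1})=\sigma(C)$. Once the parity identity is in hand, Steps~2 and~3 are essentially formal, relying only on the lifting construction $\phi_0\mapsto\phi_S$ and the previously established Lemma~\ref{par-lemma8}.
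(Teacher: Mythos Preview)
Your proposal is correct and follows essentially the same approach as the paper: both arguments combine the parity constraint $\mathrm{T}(\phi)+\sum_j\sigma(\phi(c_j))\equiv n\pmod 2$ (which the paper states more briefly as following ``from the same proof of Lemma~\ref{par-lemma10}''), the realization of all admissible $\sigma$-vectors via even sign-flips (your $\phi_S$ is exactly the paper's iterated $\mu_{ij}$), and Lemma~\ref{par-lemma8} for connectedness of each stratum. Your Step~1 is in fact a more explicit justification of the parity identity than the paper provides; the only minor caveat is that Goldman's \cite[Lemma~10.1]{Gold1} and the paper's Lemma~\ref{par-lemma222} are stated for maximal dual-tree decompositions (with $g$ once-punctured tori among the pieces) rather than full pants decompositions, but the parity for $\Sigma_{1,1}$ with hyperbolic boundary follows from Table~\ref{sign-table}, so your mod-$2$ summation goes through in either setting.
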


\begin{proof}
	Let $\phi_0 \in \mr{Hyp}(\Sigma_{g,n}) \cap \mathrm{T}^{-1}(k)$ be a representation, and denote $a := \sigma(\phi_0) \in \{0,1\}^n$. For any pair of indices $i, j$, the representation $\mu_{ij}(\phi_0)$ also lies in $\mr{Hyp}(\Sigma_{g,n}) \cap \mathrm{T}^{-1}(k)$ and differs from $\phi_0$ by flipping the $i$-th and $j$-th entries of $\sigma$. Thus, $\phi_0$ and $\mu_{ij}(\phi_0)$ belong to different connected components.

	By iterating such operations, the set 
	\[
	\left\{ \mu_{i_1j_1} \circ \cdots \circ \mu_{i_rj_r}(\phi_0) \right\}
	\subset \mr{Hyp}(\Sigma_{g,n}) \cap \mathrm{T}^{-1}(k)
	\]
	contains exactly $2^{n-1}$ distinct representations, all in different connected components. Therefore, the space has at least $2^{n-1}$ connected components.

	On the other hand, from the same proof of Lemma~\ref{par-lemma10}, we know that
	\[
	k \equiv \sum_{i=1}^n a_i + n \quad \text{mod }2\mb{Z}.
	\]
	This restricts the possible values of $\sigma(\phi)$ to a subset of $\{0,1\}^n$ of size $2^{n-1}$. Since for each such admissible $\sigma$-value $a$, the set
	\[
	\mr{Hyp}(\Sigma_{g,n}) \cap \mathrm{T}^{-1}(k) \cap \sigma^{-1}(a)
	\]
	is connected by Lemma \ref{par-lemma8}, it follows that $\mr{Hyp}(\Sigma_{g,n}) \cap \mathrm{T}^{-1}(k)$ has at most $2^{n-1}$ connected components.

	Combining both bounds, we conclude that the number of connected components of $\mr{Hyp}(\Sigma_{g,n}) \cap \mathrm{T}^{-1}(k)$ is $2^{n-1}$.
\end{proof}

Hence, we obtain that
\begin{thm}
The number of connected components of the space of boundary hyperbolic representations $\mr{Hyp}(\Sigma_{g,n})$ is given by
\begin{align*}
  \#\left\{ \mr{Hyp}(\Sigma_{g,n}) \right\} 
  &= 2^n \cdot 4^g + (2|\chi(\Sigma_{g,n})| - 1)\cdot 2^{n-1} \\
  &= 2^{2g + n} + 2^{n-1}(4g + 2n - 5).
\end{align*}
\end{thm}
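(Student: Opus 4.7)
The plan is to stratify $\mathrm{Hyp}(\Sigma_{g,n})$ by the Toledo invariant and sum the component counts established in the preceding two lemmas. First I would observe that for any $\phi \in \mathrm{Hyp}(\Sigma_{g,n})$, the rho invariant vanishes, so by the identity $\mathrm{sign}(\phi) = 2\mathrm{T}(\phi) + \rro(\phi)$ we have $\mathrm{T}(\phi) = \tfrac{1}{2}\mathrm{sign}(\phi) \in \mathbb{Z}$. Since the signature is locally constant away from representations with $+1$ as a boundary eigenvalue, and no such degeneracy can occur within $\mathrm{Hyp}(\Sigma_{g,n})$, the Toledo invariant is locally constant on this space. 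Combined with the Milnor--Wood inequality $|\mathrm{T}(\phi)| \leq |\chi(\Sigma_{g,n})|$, this shows that $\mathrm{T}$ partitions $\mathrm{Hyp}(\Sigma_{g,n})$ into at most $2|\chi(\Sigma_{g,n})|+1$ disjoint open-closed strata indexed by the integers in $[-|\chi(\Sigma_{g,n})|, |\chi(\Sigma_{g,n})|]$.

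Next I would invoke the two component-counting lemmas just proved. For the two extremal strata $\mathrm{T}^{-1}(\pm|\chi(\Sigma_{g,n})|) \cap \mathrm{Hyp}(\Sigma_{g,n})$, the discussion following Lemma~\ref{lemma9} shows that each contributes $4^g \cdot 2^{n-1}$ connected components, where the factor $4^g$ arises from the $(A_i,B_i) \mapsto (\pm A_i, \pm B_i)$ ambiguity in the genus-one pieces and the factor $2^{n-1}$ from the constraint on $\sigma$-values forced by Lemma~\ref{lemma9}. For each of the remaining $2|\chi(\Sigma_{g,n})|-1$ intermediate values of $k$, the lemma immediately preceding the theorem gives exactly $2^{n-1}$ components.

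Finally I would add everything up:
\begin{align*}
\#\{\mathrm{Hyp}(\Sigma_{g,n})\} &= 2 \cdot 4^g \cdot 2^{n-1} + (2|\chi(\Sigma_{g,n})|-1)\cdot 2^{n-1} \\
&= 2^{2g+n} + 2^{n-1}(4g+2n-5),
\end{align*}
using $|\chi(\Sigma_{g,n})| = 2g+n-2$, which matches the claimed formula. Since every step reduces to summing the two stratum counts, the only potential subtlety is confirming that $\mathrm{T}$ really separates components across distinct values; this however is immediate from local constancy and connectedness of each stratum guaranteed by the previous lemmas. The argument is therefore essentially an arithmetic tally, with no substantive new obstacle beyond what has already been done.
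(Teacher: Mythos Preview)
Your proposal is correct and follows essentially the same approach as the paper: the theorem is obtained by summing the component counts from the two preceding lemmas over the Toledo strata, with the paper simply writing ``Hence, we obtain that'' before displaying the identical arithmetic. Your additional explanation of why $\mathrm{T}$ is locally constant on $\mathrm{Hyp}(\Sigma_{g,n})$ is a helpful elaboration but not a departure from the paper's logic.
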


\section{Boundary parabolic representations}\label{sec-parabolic}

In this section, we consider representations \( \phi \colon \pi_1(\Sigma) \to \mathrm{SL}(2,\mathbb{R}) \) of the fundamental group of a surface \( \Sigma=\Sigma_{g,n} \) of genus \( g \) with \( n \) boundary components, such that the image of each boundary curve \( \phi(c_i) \) is parabolic.

\subsection{Pairs of pants and punctured torus}
In this section, we consider the case of surfaces \( \Sigma = \Sigma_{0,3} \) and $\Sigma_{1,1}$. 

\subsubsection{The case of pairs of pants}

Any parabolic element \( C \in \mathrm{SL}(2,\mathbb{R}) \) is conjugate to an element of the form
\begin{equation*}
  \alpha^\pm(s) = \pm\begin{pmatrix}
  1 & s \\
  0 & 1
  \end{pmatrix},
\end{equation*}
where \( s = \pm 1 \). If \( C \sim \alpha^\pm(s) \), we define the associated \( \sigma \)-value as
\begin{equation}\label{par-sigma map}
  \sigma(C) =
  \begin{cases}
    -\mathrm{sgn}(s) & \text{if } \mathrm{tr}(C) = 2, \\
    \mathrm{sgn}(s)\sqrt{-1} & \text{if } \mathrm{tr}(C) = -2.
  \end{cases}
\end{equation}
In particular, the rho invariant is given by \( \boldsymbol{\rho}(C) = \mathrm{Re}(\sigma(C)) \), and we have the identity \( \sigma(C^{-1}) = -\sigma(C) \).

Let \( \mathrm{Par}(\Sigma) \) denote the space of boundary parabolic representations of \( \pi_1(\Sigma) \) into \( \mathrm{SL}(2,\mathbb{R}) \). We define the {\it Toledo map}:
\begin{equation*}
  \mathrm{T} \colon \mathrm{Par}(\Sigma) \to \mathbb{Z}, \quad \phi \mapsto \mathrm{T}(\phi),
\end{equation*}
and the {\it sigma map}:
\begin{equation}\label{par-eqn-sigma}
  \sigma \colon \mathrm{Par}(\Sigma) \to \{ \pm 1, \pm \sqrt{-1} \}^n, \quad \phi \mapsto \left( \sigma(\phi(c_1)), \dots, \sigma(\phi(c_n)) \right).
\end{equation}

For any $\phi \in \mathrm{Par}(\Sigma)$, we typically use a vector 
$a \in \{ \pm 1, \pm \sqrt{-1} \}^n$ to represent its sigma value, 
that is, $a = \sigma(\phi)$.  
For a given sigma value $a$, there is an associated vector, called the \emph{$s$-value}, 
defined by
\begin{equation}\label{par-eqn-s}
  s=s_a = (s_1, \dots, s_n) \in \{\pm 1\}^n, 
  \quad \text{where} \quad s_i = \mathrm{Im}(a_i) - \mathrm{Re}(a_i).
\end{equation}
Note that the $s$-value, like the Toledo invariant, satisfies the following property:  
if two representations in $\mathrm{Par}(\Sigma)$ are both liftings of the same 
representation in $\mathrm{Hom}(\pi_1(\Sigma_{g,n}),\mathrm{PSL}(2,\mathbb{R}))$, 
then their $s$-values are identical.

It is known that two representations in \( \mathrm{Par}(\Sigma) \) lie in different connected components if their Toledo invariants or \( \sigma \)-values differ. Conversely, we have the following:

\begin{lemma}
For any \( a \in \{ \pm 1, \pm \sqrt{-1} \}^3 \), the subset
\[
\sigma^{-1}(a) \subset \mathrm{Par}(\Sigma_{0,3})
\]
is either empty or connected.
\end{lemma}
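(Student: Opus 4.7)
The plan is to parameterize $\sigma^{-1}(a)$ explicitly after a global conjugation, in the spirit of Lemma~\ref{lemma0}. Write $a_i = (\epsilon_i, s_i) \in \{\pm 1\}^2$ so that $C_i := \phi(c_i) \sim \alpha^{\epsilon_i}(s_i)$, where $\epsilon_i$ records the sign of $\mathrm{tr}(C_i)$. Using the $\mathrm{SL}(2,\mathbb{R})$-conjugation freedom, I first place $C_1 = \alpha^{\epsilon_1}(s_1)$ in canonical form. The remaining datum is then $C_2 = P\,\alpha^{\epsilon_2}(s_2)\,P^{-1}$ for some $P = \left(\begin{smallmatrix} a & b \\ c & d \end{smallmatrix}\right) \in \mathrm{SL}(2,\mathbb{R})$, defined up to the centralizer of $\alpha^{\epsilon_2}(s_2)$, and $C_3 = (C_1 C_2)^{-1}$ is determined.

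A direct computation yields
\[
  \mathrm{tr}(C_1 C_2) = \epsilon_1 \epsilon_2\,(2 - s_1 s_2\, c^2), \qquad (C_1 C_2)_{21} = -\epsilon_1 \epsilon_2\, s_2\, c^2.
\]
Imposing $\mathrm{tr}(C_3) = 2\epsilon_3$ splits the analysis into two mutually exclusive branches: either (A) $\epsilon_1 \epsilon_2 \epsilon_3 = +1$ and $c = 0$, or (B) $\epsilon_1 \epsilon_2 \epsilon_3 = -1$ with $s_1 = s_2$ and $c^2 = 4$. In each branch, $s_3$ is recovered as the sign of the appropriate off-diagonal entry of $C_1 C_2$, so the condition $\sigma(C_3) = a_3$ carves out an explicit semi-algebraic subset of the remaining parameter space.

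In branch~(A), the requirement $s_3 = -\mathrm{sgn}(s_1 + s_2 a^2)$ either yields a connected subset of $\{P : c = 0\}$ or no solution at all; the residual identification $P \sim -P$ (which leaves both $C_1$ and $C_2$ unchanged) glues the two signs of the parameter $a$, and the excluded locus $s_1 + s_2 a^2 = 0$ (where $C_3 = \pm I$) does not separate the remaining region. The main obstacle lies in branch~(B), where $c = +2$ and $c = -2$ a priori produce two disjoint strata. The resolution is the same residual $\mathbb{Z}/2$-symmetry: the element $-I$ centralizes $C_1$, so replacing $P$ by $-P$ preserves $C_2$ while sending $c \mapsto -c$, thereby gluing the two strata into a single connected piece homeomorphic to $\mathbb{R}^2$. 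Since exactly one branch can be nonempty for a given $a$, this shows $\sigma^{-1}(a)$ is empty or connected, as claimed.
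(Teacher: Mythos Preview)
Your proposal is correct and follows essentially the same route as the paper: normalize $C_1$ by conjugation, write $C_2 = P\,\alpha^{\epsilon_2}(s_2)\,P^{-1}$, and split into the two cases $c=0$ versus $c^2=4$ according to the parity of $\epsilon_1\epsilon_2\epsilon_3$. The paper constructs an explicit linear path in the parameter $a$ (after using the $P\sim -P$ freedom to take $a\ge 0$), whereas you argue more topologically that the constraint set in each branch is an interval times a Euclidean factor; both arguments are equivalent, and your use of $P\sim -P$ to identify the $c=2$ and $c=-2$ strata in branch~(B) is exactly the paper's ``choose $P$ with $c\ge 0$'' normalization.
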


\begin{proof}
Assume the set \(\sigma^{-1}(a)\subset\mathrm{Par}(\Sigma_{0,3})\) is non-empty. For any two representations 
\(\phi,\psi\in\sigma^{-1}(a)\),  
we must construct a path
\[
\phi(t)\in\sigma^{-1}(a),
\quad t\in[0,1],
\]
with \(\phi(0)=\phi\) and \(\phi(1)=\psi\).

Because each boundary image is parabolic, after conjugation, we may assume
\[
C_1=\Phi(a_1), 
\quad 
C_2=P\Phi(a_2)P^{-1},
\quad
P=\begin{pmatrix}a&b\\ c&d\end{pmatrix}\in \mathrm{SL}(2,\mathbb{R}),
\]
where, for \(a\in\{\pm1,\pm\sqrt{-1}\}\),
\begin{equation}\label{par-eqn-Phi}
\Phi(a)=(-1)^{\mathrm{Re}(a)+1}\begin{pmatrix}1&\mathrm{Im}(a)-\mathrm{Re}(a)\\ 0&1\end{pmatrix}.
\end{equation}
Set \(s=\mathrm{Im}(a_1)-\mathrm{Re}(a_1)\) and \(\mu=\mathrm{Im}(a_2)-\mathrm{Re}(a_2)\). 
A direct computation shows
\begin{equation}\label{par-eqn2}
C_1C_2
=(-1)^{\mathrm{Re}(a_1)+\mathrm{Re}(a_2)}
\begin{pmatrix}
1-\mu ac-\mu sc^2 & s+\mu a^2+\mu sac \\
-\mu c^2           & 1+\mu ac
\end{pmatrix}.
\end{equation}
This product is parabolic precisely if \(c=0\), or \(c=\pm2\) and \(\mu s=1\). The condition \(\sigma(C_1C_2)=-a_3\) is equivalent to
\[
\begin{cases}
(-1)^{\mathrm{Re}(a_1)+\mathrm{Re}(a_2)}(2-\mu sc^2)=
(-1)^{\mathrm{Re}(-a_3)+1}\cdot 2,\\[4pt]
\mathrm{sgn}\bigl((-1)^{\mathrm{Re}(a_1)+\mathrm{Re}(a_2)}(s+\mu a^2)\bigr)\\=
\mathrm{sgn}\bigl((-1)^{\mathrm{Re}(-a_3)+1}(\mathrm{Im}(-a_3)-\mathrm{Re}(-a_3))\bigr),\quad \text{ for }c=0,\\[4pt]
\mathrm{sgn}\Bigl((-1)^{\mathrm{Re}(a_1)+\mathrm{Re}(a_2)+1}\bigl(-\tfrac1{\mu c^2}\bigr)\Bigr)\\=
\mathrm{sgn}\bigl((-1)^{\mathrm{Re}(-a_3)+1}(\mathrm{Im}(-a_3)-\mathrm{Re}(-a_3))\bigr),\quad\text{ for } c=\pm2.
\end{cases}
\]

\smallskip
\emph{Case (a): \(\mathrm{Re}(a_1)+\mathrm{Re}(a_2)+\mathrm{Re}(a_3)\) is odd.}  
Then the first equation forces \(c=0\). By suitably choosing \(P\) and \(P'\), we may assume \(a,a'\ge0\) for
\[
\phi(c_2)=P\Phi(a_2)P^{-1},\quad \psi(c_2)=P'\Phi(a_2)P'^{-1}.
\]
Then define \(a(t)=a+(a'-a)t\). Without loss of generality assume \(a\leq a'\); hence \(a(t)\in[a,a']\) is increasing, and we have
\[
s+\mu a^2\le s+\mu a(t)^2\le s+\mu a'^2.
\]
Thus, \(\mathrm{sgn}(s+\mu a(t)^2)\) is constant and equal to \(\mathrm{sgn}(-\mathrm{Im}(a_3)+\mathrm{Re}(a_3))\). Hence the resulting path \(\phi(t)\) lies entirely in \(\sigma^{-1}(a)\).

\smallskip
\emph{Case (b): \(\mathrm{Re}(a_1)+\mathrm{Re}(a_2)+\mathrm{Re}(a_3)\) is even.}  
In this case, \(\mu s=1\), and we have \(c=2\) by choosing $P\in \mathrm{SL}(2,\mb{R})$ with $c\geq 0$. The third equation reduces to
\[
\mathrm{sgn}(\mu)=\mathrm{sgn}(\mathrm{Im}(a_3)-\mathrm{Re}(a_3)).
\]
We can deform \(P\) into \(P'\) continuously while keeping \(c(t)\equiv 2\). This deformation produces a path \(\phi(t)\) connecting \(\phi\) and \(\psi\) within \(\sigma^{-1}(a)\).

Thus, \(\sigma^{-1}(a)\) is connected when \(n=3\).
\end{proof}

\begin{lemma}\label{par-lemma1}
For any \( a \in \{ \pm 1, \pm \sqrt{-1} \}^3 \) and $\phi\in \sigma^{-1}(a)$, the Toledo invariant is given as follows.
\[
\mathrm{T}(\phi)=\mathrm{T}(a):= \begin{cases}
 	\pm 1&\text{ for }a=\pm (\sqrt{-1},\sqrt{-1},\sqrt{-1}),\pm (-1,-1,\sqrt{-1}),\\
 	&\quad \quad\pm (-1,\sqrt{-1},-1),\pm (\sqrt{-1},-1,-1),\\
 	0&\text{ for }\sum_{i=1}^3 \mathrm{Re}(a_i) \text{ is odd}.
 \end{cases}
\]

\end{lemma}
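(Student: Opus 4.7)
The plan is to derive a parity constraint on $\mathrm{T}(\phi)$ from the universal-cover picture, then to pin down $\mathrm{T}(\phi)\in\{-1,0,1\}$ using the Milnor--Wood bounds on signature and Toledo invariant together with one explicit fixed-point computation. Write each boundary holonomy as $C_i=(-1)^{\epsilon_i}C_i'$, where $C_i'$ is the parabolic representative of trace $+2$: then $\epsilon_i = 0$ when $a_i\in\{\pm 1\}$ and $\epsilon_i = 1$ when $a_i\in\{\pm\sqrt{-1}\}$, equivalently $\epsilon_i\equiv 1+\mathrm{Re}(a_i)\pmod 2$. Each $C_i'$ admits a canonical lift $\widetilde{C}_i\in\mathrm{Par}_0^{\pm}\subset\widetilde{G}$ that projects back to $C_i'$ in $\mathrm{SL}(2,\mathbb{R})$. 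Using $\mathrm{T}(\phi)=\mathrm{T}(\pi\circ\phi)$ together with Proposition~\ref{pre-prop1} to identify this with the relative Euler class on the $\mathrm{PSL}$-side, one has $\widetilde{C}_1\widetilde{C}_2\widetilde{C}_3 = z^{\mathrm{T}(\phi)}$ in $\widetilde{G}$. Projecting back to $\mathrm{SL}(2,\mathbb{R})$ gives $C_1'C_2'C_3' = (-I)^{\mathrm{T}(\phi)}$, while $C_1C_2C_3 = I$ rewrites as $C_1'C_2'C_3' = (-1)^{\sum\epsilon_i}I$. Comparing yields
\[
\mathrm{T}(\phi)\equiv \sum_{i=1}^{3}\epsilon_i \equiv 1+\sum_{i=1}^{3}\mathrm{Re}(a_i)\pmod{2},
\]
and combined with $|\mathrm{T}(\phi)|\leq 1$ this forces $\mathrm{T}(\phi)=0$ whenever $\sum\mathrm{Re}(a_i)$ is odd, proving the second case of the lemma.

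For the eight listed values of $a$ the sum is even and the congruence restricts $\mathrm{T}(\phi)$ to $\{\pm 1\}$. For the six configurations with $\rho(\phi)=\sum\mathrm{Re}(a_i)=\pm 2$, namely $\pm(-1,-1,\sqrt{-1})$ and its two coordinate permutations, the identity $\mathrm{sign}(\phi)=2\mathrm{T}(\phi)+\rho(\phi)$ together with the bound $|\mathrm{sign}(\phi)|\leq 2$ from \eqref{sig-MW} determines the sign uniquely: $\rho=-2$ forces $\mathrm{sign}(\phi)=0$ and hence $\mathrm{T}(\phi)=+1$, while $\rho=+2$ forces $\mathrm{T}(\phi)=-1$. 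The remaining configurations where the signature bound is not enough are $a=\pm(\sqrt{-1},\sqrt{-1},\sqrt{-1})$, in which $\rho(\phi)=0$.

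I would resolve the all-imaginary case by exhibiting the explicit representative $C_1 = \alpha^-(1)$, $C_2 = PC_1P^{-1}$ with $P=\bigl(\begin{smallmatrix}1&0\\ 2&1\end{smallmatrix}\bigr)$, and $C_3 = (C_1C_2)^{-1}$. A direct check verifies that all three are parabolic with $\mathrm{tr}=-2$ and $\sigma=\sqrt{-1}$, and that their parabolic fixed points on $\partial\mathbb{H}^2$ are $(p_1,p_2,p_3)=(\infty,\tfrac{1}{2},1)$, which realizes a Fuchsian uniformization of $\Sigma_{0,3}$ as a thrice-punctured sphere with all three cusps in positive cyclic order relative to the chosen boundary orientation. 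Since a Fuchsian representation attains the maximal relative Euler class, $\mathrm{T}(\phi_0)=+1$, and because $\sigma^{-1}((\sqrt{-1},\sqrt{-1},\sqrt{-1}))$ is connected by the preceding lemma while $\mathrm{T}$ is continuous and integer-valued, this value extends to the whole stratum. The opposite case $a=(-\sqrt{-1},-\sqrt{-1},-\sqrt{-1})$ then follows from the orientation-reversal symmetry $\mathrm{T}(-a)=-\mathrm{T}(a)$.

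The main obstacle is the sign verification in this last step: confirming that a positively cyclically ordered Fuchsian representation gives $\mathrm{T}=+1$ rather than $-1$ requires matching the geometric convention on $\partial\mathbb{H}^2$ with the algebraic choice of generator $z\in Z(\widetilde{G})$ fixed in Section~\ref{sec-Pre}. A purely algebraic alternative that avoids any geometric input would be to compute the translation number of $\widetilde{C}_1\widetilde{C}_2\widetilde{C}_3$ directly on the universal cover of $\partial\mathbb{H}^2$; both routes should yield the same sign, at the cost of a slightly longer calculation in the algebraic approach.
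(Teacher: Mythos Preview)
Your parity argument and the treatment of the six $|\boldsymbol\rho|=2$ configurations match the paper exactly. The difference lies in the remaining case $a=\pm(\sqrt{-1},\sqrt{-1},\sqrt{-1})$ with $\boldsymbol\rho=0$.

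The paper avoids your sign-convention obstacle by an algebraic reduction rather than a geometric computation: starting from $a=(\sqrt{-1},\sqrt{-1},\sqrt{-1})$, multiply any two boundary holonomies $C_i,C_j$ by $-I$. This preserves the $\mathrm{PSL}(2,\mathbb{R})$ representation and hence the Toledo invariant, but sends $a_i\mapsto -1$ and $a_j\mapsto -1$, landing in the configuration $(-1,-1,\sqrt{-1})$ (up to coordinate permutation), which you have already shown has $\mathrm{T}=+1$ via the signature bound. The case $a=-(\sqrt{-1},\sqrt{-1},\sqrt{-1})$ reduces the same way to $(1,1,-\sqrt{-1})$ with $\mathrm{T}=-1$. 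This closes the argument without ever needing to pin down which cyclic order on $\partial\mathbb{H}^2$ corresponds to $\mathrm{T}=+1$.

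Your explicit Fuchsian construction is a legitimate alternative, and the example you write down does lie in $\sigma^{-1}((\sqrt{-1},\sqrt{-1},\sqrt{-1}))$; but as you correctly identify, extracting the sign of $\mathrm{T}$ from it requires either a translation-number computation on $\widetilde{\partial}$ or a careful orientation check against the paper's choice of generator $z\in Z(\widetilde G)$. The paper's $-I$ trick buys you that sign for free, at the cost of one extra sentence, so it is both shorter and more self-contained.
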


\begin{proof}
Suppose \(\phi=(C_1,C_2,C_3)\in\sigma^{-1}(a)\), so that each \(C_i\) is parabolic and \(C_i\sim \Phi(a_i)\). We then have the identity
\[
(-1)^{\mathrm{Re}(a_1)+1}C_1\cdot (-1)^{\mathrm{Re}(a_2)+1}C_2\cdot(-1)^{\mathrm{Re}(a_3)+1}C_3=(-1)^{\mathrm{Re}(a_1)+\mathrm{Re}(a_2)+\mathrm{Re}(a_3)+1}I.
\]

Since each element \((-1)^{\mathrm{Re}(a_i)+1}C_i\) lies in the set of parabolic matrices with trace \(2\), we obtain
\[
\mathrm{T}(\phi)\equiv \boldsymbol{\rho}+1 \mod 2\mathbb{Z},
\]
where \(\boldsymbol{\rho}=\sum_{i=1}^3\mathrm{Re}(a_i)\in [-3,3]\) is the rho invariant. Note that \(\mathrm{T}(\phi)\in \{-1,0,1\}\). Thus,
\[
\mathrm{T}(\phi)=0,\quad\text{whenever \(\boldsymbol{\rho}\) is odd.}
\]

If \(\boldsymbol{\rho}\) is even, we have \(\boldsymbol{\rho}\in\{-2,0,2\}\) and consequently \(\mathrm{T}(\phi)\in\{-1,1\}\). For \(\boldsymbol{\rho}=2\), there must be exactly two values of \(\mathrm{Re}(a_i)\) equal to \(1\) and one equal to \(0\). Since
\[
\mathrm{sign}(\phi)=2\mathrm{T}(\phi)+\boldsymbol{\rho}\in [-2,2],
\]
it follows that \(\mathrm{T}(\phi)=-1\). Similarly, if \(\boldsymbol{\rho}=-2\), we obtain \(\mathrm{T}(\phi)=1\).

For \(\boldsymbol{\rho}=0\), we must have \(\{\mathrm{Re}(a_i)\}=\{-1,0,1\}\) or $Re(a)=(0,0,0)$. In this case \(\{\mathrm{Re}(a_i)\}=\{-1,0,1\}\), we can multiply by $-I$ to the boundary with $\sigma$-value of $\pm 1$, such that $\mathrm{Re}(a) = (0,0,0)$.
Hence, we consider the case \(\mathrm{Re}(a)=(0,0,0)\). Multiplying two boundary elements by \(-I\) leaves the Toledo invariant unchanged, but the rho invariant becomes \((-\,\mathrm{Im}(a_i), -\,\mathrm{Im}(a_j),0)\). By the previous argument, in this case, we have \(\mathrm{Im}(a_i)\mathrm{Im}(a_j)=1\), implying \(\mathrm{Im}(a_i)=\mathrm{Im}(a_j)\). Hence, we have
\(
\mathrm{T}(\phi)=\mathrm{Im}(a_i).
\)
\end{proof}

By conjugation, we may assume
\begin{equation}\label{par-eqn11}
C_1=\Phi(a_1),\quad C_2=P\Phi(a_2)P^{-1},\quad P=\begin{pmatrix}a&b\\ c&d\end{pmatrix}\in \mathrm{SL}(2,\mathbb{R}),c\geq 0.
\end{equation}
Then we have
\begin{equation}\label{par-eqn3}
C_1C_2
=(-1)^{\mathrm{Re}(a_1)+\mathrm{Re}(a_2)}
\begin{pmatrix}
1-\mu ac-\mu sc^2 & s+\mu a^2+\mu sac \\
-\mu c^2          & 1+\mu ac
\end{pmatrix},
\end{equation}
where \( s=\mathrm{Im}(a_1)-\mathrm{Re}(a_1) \) and \( \mu=\mathrm{Im}(a_2)-\mathrm{Re}(a_2) \).

For fixed \( a_1, a_2 \), the trace of \( C_1C_2 \) is given by
\begin{equation}\label{par-eqn4}
\mathrm{tr}(C_1C_2)=(-1)^{\mathrm{Re}(a_1)+\mathrm{Re}(a_2)}(2-\mu s c^2).
\end{equation}

If \( C_1C_2 \) is parabolic, we must have either \( c=0 \) or \( c=2 \) with \( \mu s=1 \).

\medskip
\noindent\textbf{Case 1: \( c=0 \).} Then
\begin{equation}\label{par-eqn5}
C_1C_2=(-1)^{\mathrm{Re}(a_1)+\mathrm{Re}(a_2)}
\begin{pmatrix}
1 & s+\mu a^2 \\
0 & 1
\end{pmatrix}.
\end{equation}

If \( s\mu=1 \), then the sign of \( s+\mu a^2 \) is fixed, and the conjugacy class of \( C_3=(C_1C_2)^{-1} \) is uniquely determined. 

If \( s\mu=-1 \), then the sign of \( s+\mu a^2 \) may be positive or negative, hence there are precisely two distinct conjugacy classes for \( C_3=(C_1C_2)^{-1} \).

\medskip
\noindent\textbf{Case 2: \( c=2 \).} Then \(\mu s=1\), and we have \(2-\mu s c^2=-2\) and \(-\mu c^2=-4\mu\). Thus, the conjugacy class of \( C_3=(C_1C_2)^{-1} \) is uniquely determined.

\medskip
In conclusion, the number of connected components of boundary parabolic representations is
\begin{align}\label{par-eqn13}
\begin{split}
\#\{\mr{Par}(\Sigma_{0,3})\}&=\#\{a_1,a_2 : \mu s=1, c\in\{0,2\}\}\\
&\quad +\#\{a_1,a_2 : \mu s=-1, c=0, s+\mu a^2\neq 0\}\\
&=2\,\#\{a_1,a_2 : \mu s=1\}+2\#\{a_1,a_2 : \mu s=-1\}\\
&=2\times 8+2\times 8=32.
\end{split}
\end{align}
\begin{rem}
Since \(\mathrm{tr}(C_3)=(-1)^{\mathrm{Re}(a_3)+1}\cdot 2\), in the case \( c=0 \), the condition 
\[
\mathrm{tr}(C_3)=\mathrm{tr}(C_1C_2)=(-1)^{\mathrm{Re}(a_1)+\mathrm{Re}(a_2)}\cdot 2
\]
implies that \(\sum_{i=1}^3 \mathrm{Re}(a_i)\) is odd, hence \(T=0\). Thus, we have
\begin{align*}
 &\quad \,\,\#\{\mathrm{T}^{-1}(0)\}\\
 &= \#\{(a_1,a_2) : \mu s=1, c=0\}+\#\{(a_1,a_2) : \mu s=-1, c=0, s+\mu a^2\neq 0\}\\[4pt]
 &=\#\{(a_1,a_2) : \mu s=1\}+2\,\#\{(a_1,a_2) : \mu s=-1\}\\[4pt]
 &=8+2\times 8=24.
\end{align*}
Consequently, we have
\(
\#\{\mathrm{T}^{-1}(1)\}=\#\{\mathrm{T}^{-1}(-1)\}=4.
\)
\end{rem}

\subsubsection{The case of  $g=n=1$} \label{sec=4.1.2}

In this subsection, we consider the case $g=n=1$, that is, $\Sigma=\Sigma_{1,1}$. For any boundary parabolic representation $\phi=((A,B),C)$, since $C$ is parabolic, $A,B\neq \pm I$. Note that $\mr{tr}([A,B])=\mr{tr}([B,A])$, 
by Section \ref{secn=1}, we know that one of $A,B$ is hyperbolic, and the other is either parabolic or hyperbolic. Without loss of generality, assume $A$ is hyperbolic with $A=\mathrm{diag}(\lambda,1/\lambda)$, $0<|\lambda|<1$.

If $B$ is parabolic, then by \eqref{eqn10}, we have $\mathrm{tr}([A,B])=2$. If $B$ is hyperbolic, then from \eqref{eqn7}, we know
\[
\mathrm{tr}([A,B])= 
\begin{cases}
2 & \text{if } B_{21}B_{12}=0, \\
-2 & \text{if } B_{21}B_{12}=\tfrac{4}{(\lambda - \lambda^{-1})^2} > 0.
\end{cases}
\]

Next, we compute $\#\{\mathrm{Par}(\Sigma_{1,1})\}$. If $\sigma(C)=1$, then $C\sim \Phi(1)$, which is equivalent to
\begin{equation}
\label{par-eqn6}
[A,B]=C^{-1} \sim 
\begin{pmatrix}
1 & 1 \\
0 & 1
\end{pmatrix}.
\end{equation}
Define $\Phi_{\lambda,\mu} = 
\begin{pmatrix}
\lambda & \mu \\
0 & 1/\lambda
\end{pmatrix}$. Then
\begin{equation}
\label{par-eqn7}
[\Phi_{\lambda,\mu}, \Phi_{a,b}] = 
\begin{pmatrix}
1 & (\lambda^2 - 1)ab + \lambda \mu (1 - a^2) \\
0 & 1
\end{pmatrix}.
\end{equation}

Now we analyze the number of connected components satisfying \eqref{par-eqn6}. Assume $A$ is hyperbolic, so $B$ is either hyperbolic or parabolic. By conjugation, suppose $A = \Phi_{\lambda,0}$ with $0 < |\lambda| < 1$. From \eqref{eqn7}, we know either $b=0$ or $c=0$, and not both. 

If $c=0$, i.e., $B = \Phi_{a,b}$, then from \eqref{par-eqn7} and \eqref{par-eqn6}, we get $ab < 0$. Thus, $(A,B)$ can be deformed into the set
\[
S = \left\{ (\Phi_{\lambda,\mu}, \Phi_{a,b}) : |\lambda|, |a| \in (0,1), ab \leq 0, \lambda\mu \geq 0, -ab + \lambda\mu \neq 0 \right\}.
\]

If $b=0$, then $B = \Phi_{a,c}^{\top}$, so
\[
[A,B] = [\Phi_{\lambda,0}, \Phi_{a,c}^{\top}] = [\Phi_{1/a,-c}, \Phi_{1/\lambda,0}]^{\top} = 
\begin{pmatrix}
1 & 0 \\
-\tfrac{c}{a}(1 - \lambda^{-2}) & 1
\end{pmatrix}.
\]
From \eqref{par-eqn6}, we get $ac < 0$. Thus $(A,B)$ can be deformed into $(\Phi_{\lambda,\mu}, \Phi_{a,c}^{\top})$ with $\lambda\mu \geq 0$, $ac \leq 0$, and $|a|, |\lambda| \in (0,1)$. In particular, $(A,B)$ can be deformed into $(\Phi_{\lambda,\lambda}, \Phi_{a,0}) \in S$.

If $A$ is parabolic, then $B$ is hyperbolic. Suppose $A = \Phi_{\lambda,\mu}$ with $|\lambda| = 1$. From \eqref{eqn10}, we must have $B = \Phi_{a,b}$. Then from \eqref{par-eqn7}, we compute:
\[
D_0 = (\lambda^2 - 1) ab + \lambda\mu (1 - a^2) = \lambda \mu (1 - a^2) > 0.
\]
Fixing $\lambda, \mu, a$, we deform $b$ so that $ab < 0$. Then, fixing $a,b,\mu$, we slightly perturb $\lambda$ to ensure $|\lambda|<1$ and maintain the sign of $D_0$. Fixing $\lambda,b$, we deform $a, \mu$ such that $|a|\in(0,1)$, $\lambda\mu\geq 0$, and $\lambda\mu(1-a^2)\geq 0$ throughout. Thus, $(A,B)$ can be deformed into $S$.

Note that $S$ has four connected components corresponding to the signs of $\lambda$ and $a$. Hence, 
\(
\sigma^{-1}(1) \cap \mathrm{Par}(\Sigma_{1,1})
\)
has four connected components. Similarly, 
\(
\sigma^{-1}(-1) \cap \mathrm{Par}(\Sigma_{1,1})
\)
has four connected components, since $\sigma(C^{-1}) = -\sigma(C)$.

Now consider the case $\sigma(C) = \sqrt{-1}$, so $C\sim \Phi(\sqrt{-1})$, which is equivalent to
\begin{equation}
\label{par-eqn8}
[A,B] = C^{-1} \sim 
\begin{pmatrix}
-1 & 1 \\
0 & -1
\end{pmatrix}.
\end{equation}
In this case, both $A$ and $B$ are hyperbolic. Assume $A = \mathrm{diag}(\lambda,1/\lambda)$ with $0<|\lambda|<1$, and let
\[
B = \begin{pmatrix}
a & b \\
c & d
\end{pmatrix} \in \mathrm{SL}(2,\mathbb{R}).
\]
Then $bc = 4/(\lambda - \lambda^{-1})^2>0$, and so $a\neq 0$ since $ad-bc=1$. From \eqref{eqn6}, the $(1,2)$-entry of $[A,B]$ is $(\lambda^2-1)ab$ which is positive  by \eqref{par-eqn8}, hence $ac =(ab)(bc)b^{-2}< 0$, $ad=bc+1 > 0$. For fixed $A$, the matrix $B$ has two connected components distinguished by the sign of $a$. Since $\lambda$ can also be chosen with either sign, we conclude that
\(
\sigma^{-1}(\sqrt{-1}) \cap \mathrm{Par}(\Sigma_{1,1})
\)
has four connected components. Similarly, 
\(
\sigma^{-1}(-\sqrt{-1}) \cap \mathrm{Par}(\Sigma_{1,1})
\)
also has four connected components. Therefore, we conclude:

\begin{prop}\label{par-prop1}
The space $\mathrm{Par}(\Sigma_{1,1})$ has $16$ connected components.
\end{prop}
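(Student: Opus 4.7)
The plan is to decompose $\mathrm{Par}(\Sigma_{1,1})$ according to the sigma map $\sigma\colon\mathrm{Par}(\Sigma_{1,1})\to\{\pm1,\pm\sqrt{-1}\}$, show that the four fibers are mutually disconnected (since $\sigma$ is locally constant on $\mathrm{Par}(\Sigma)$), and then argue that each fiber breaks into exactly four components. Summing gives $4\cdot 4=16$. All of the analysis needed has essentially been assembled in the paragraphs preceding the statement, so the proof will simply organize that work.

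First I would record the structural constraint: for $\phi=((A,B),C)\in\mathrm{Par}(\Sigma_{1,1})$ we have $[A,B]=C^{-1}$, and by the trace computations in Section \ref{secn=1} (equations \eqref{eqn10}, \eqref{eqn8}, \eqref{eqn7}), the conditions $\mathrm{tr}([A,B])=\pm 2$ force at least one of $A,B$ to be hyperbolic; when $\mathrm{tr}([A,B])=-2$ both $A$ and $B$ must be hyperbolic (since a parabolic commuted with anything yields $\mathrm{tr}\geq 2$). This governs the subsequent casework.

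Next, for each value $a\in\{\pm1,\pm\sqrt{-1}\}$, I would establish that $\sigma^{-1}(a)\cap\mathrm{Par}(\Sigma_{1,1})$ has exactly four components. For $\sigma(C)=1$: reduce to the normal form $A=\Phi_{\lambda,0}$, $0<|\lambda|<1$, use formulas \eqref{par-eqn6} and \eqref{par-eqn7} to show that after deformation every representation lies in the model set
\[
S=\{(\Phi_{\lambda,\mu},\Phi_{a,b}):|\lambda|,|a|\in(0,1),\ ab\leq 0,\ \lambda\mu\geq 0,\ -ab+\lambda\mu\neq 0\},
\]
and verify that $S$ has four path-components indexed by $(\mathrm{sgn}(\lambda),\mathrm{sgn}(a))$. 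The case $\sigma(C)=-1$ follows by multiplication by $-I$, and the case $\sigma(C)=\pm\sqrt{-1}$ is handled by the formula \eqref{eqn6}: with $A=\mathrm{diag}(\lambda,1/\lambda)$ and $B=\begin{pmatrix}a&b\\ c&d\end{pmatrix}$, the constraints $bc=4/(\lambda-\lambda^{-1})^2>0$ and the sign of the $(1,2)$-entry of $[A,B]$ force $a\neq 0$, $ad>0$, $ac<0$, so again the four components are indexed by $(\mathrm{sgn}(\lambda),\mathrm{sgn}(a))$.

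Finally, I would verify that the four components per sigma value are genuinely distinct: the signs $\mathrm{sgn}(\lambda)$ and $\mathrm{sgn}(a)$ (in the chosen normal form) are locally constant under any path inside $\sigma^{-1}(a)$ because the relevant entries never vanish along such paths (the parabolicity of $C=[A,B]^{-1}$ is a codimension-one condition that is preserved, but the individual sign constraints above are preserved by continuity). The main obstacle, and the only technically delicate step, is showing that representations with different configurations (e.g., the $c=0$ vs.\ $b=0$ split in the $\sigma(C)=\pm 1$ case, or the case when $A$ is parabolic rather than hyperbolic) can be deformed into the single model set $S$ while keeping $[A,B]$ in the prescribed conjugacy class; this is accomplished by the explicit path constructions sketched before the statement, perturbing one parameter at a time to remain in $\mathrm{Par}(\Sigma_{1,1})$. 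Gathering the four fibers then yields $\#\{\mathrm{Par}(\Sigma_{1,1})\}=16$.
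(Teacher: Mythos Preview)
Your proposal follows essentially the same route as the paper: decompose $\mathrm{Par}(\Sigma_{1,1})$ by the four values of $\sigma(C)$, reduce each fiber to a model set via the explicit normal forms and deformations in Section~\ref{sec=4.1.2}, and observe that the model set has four components indexed by $(\mathrm{sgn}(\lambda),\mathrm{sgn}(a))$. One small slip: the passage from $\sigma(C)=1$ to $\sigma(C)=-1$ is \emph{not} given by multiplication by $-I$ (that would send $\sigma=1$ to $\sigma=-\sqrt{-1}$, since it flips the trace to $-2$); the correct symmetry, used in the paper, is $\sigma(C^{-1})=-\sigma(C)$, realized by the involution $((A,B),C)\mapsto((B,A),C^{-1})$.
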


From the above discussion, we know that any boundary parabolic representation $\phi \in \mathrm{Hom}(\pi_1(\Sigma_{1,1}), \mathrm{SL}(2,\mathbb{R}))$ can be deformed into a boundary identity representation. Such a representation lies on the boundary of $\sigma^{-1}(\pm 1) \cap \mathrm{Par}(\Sigma_{1,1})$, and hence its Toledo invariant vanishes.
On the other hand, for any $\phi \in \sigma^{-1}(\sqrt{-1}) \cap \mathrm{Par}(\Sigma_{1,1})$, which lying in the closure of the set of boundary elliptic representations $\mathrm{Ell}(\Sigma_{1,1})$ with rotation angle $\theta \in (\tfrac{\pi}{2}, \pi)$, and thus $\mathrm{T}(\phi) = 1$. Similarly, if $\phi \in \sigma^{-1}(-\sqrt{-1}) \cap \mathrm{Par}(\Sigma_{1,1})$, then $\mathrm{T}(\phi) = -1$.

In summary, we obtain the following:
\begin{cor}\label{par-cor1}
For any $\phi \in \mathrm{Par}(\Sigma_{1,1})$, the Toledo invariant is given by
\[
\mathrm{T}(\phi) = \mathrm{Im}(\sigma(\phi)).
\]
\end{cor}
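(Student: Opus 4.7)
The plan is to apply the signature formula $\mathrm{sign}(\phi) = 2\mathrm{T}(\phi) + \rho(\phi(c))$ together with Tables \ref{sign-table} and \ref{tab:rho-invariant}, splitting into the two cases $\mathrm{tr}(\phi(c)) = 2$ and $\mathrm{tr}(\phi(c)) = -2$.

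When $\mathrm{tr}(\phi(c)) = 2$, one has $\sigma(\phi) \in \{\pm 1\}$ and hence $\mathrm{Im}(\sigma(\phi)) = 0$. Writing $\phi(c) \sim \begin{pmatrix} 1 & 0 \\ \mu & 1 \end{pmatrix}$ with $\mu \neq 0$, Table \ref{sign-table} gives $\mathrm{sign}(\phi) = \mathrm{sgn}(\mu)$. After conjugation to the upper-triangular form $\alpha^{+}(-\mu)$ used in Table \ref{tab:rho-invariant}, one obtains $\rho(\phi(c)) = \mathrm{sgn}(\mu)$ as well. The signature formula then forces $\mathrm{T}(\phi) = 0 = \mathrm{Im}(\sigma(\phi))$. (Alternatively, one can use the observation recorded in the paragraph preceding the corollary that each such component has the trivial boundary representation in its closure, and that any representation of the closed torus has vanishing Toledo invariant by the Milnor--Wood inequality \eqref{eqn25}.)

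When $\mathrm{tr}(\phi(c)) = -2$, one has $\sigma(\phi) \in \{\pm\sqrt{-1}\}$ and $\mathrm{Im}(\sigma(\phi)) = \pm 1$. Table \ref{tab:rho-invariant} gives $\rho(\phi(c)) = 0$, and Table \ref{sign-table} gives $\mathrm{sign}(\phi) = \pm 2$, so $\mathrm{T}(\phi) \in \{\pm 1\}$. To pin down the sign, I would deform $\phi$ within $\mathrm{Hom}(\pi_1(\Sigma_{1,1}), \mathrm{SL}(2,\mathbb{R}))$ into a nearby boundary elliptic representation $\phi_\theta \in \mathrm{Ell}(\Sigma_{1,1})$ whose rotation angle $\theta$ approaches $\pi$ from $(\tfrac{\pi}{2},\pi)$ when $\sigma(\phi) = \sqrt{-1}$, and from $(\pi, \tfrac{3\pi}{2})$ when $\sigma(\phi) = -\sqrt{-1}$. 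Since the boundary holonomy avoids eigenvalue $1$ along this deformation, both $\mathrm{sign}$ and $\rho$ vary continuously; Table \ref{sign-table} gives $\mathrm{sign}(\phi_\theta) = \pm 2$ with the matching sign, and $\rho(\phi_\theta(c)) = 2(1-\theta/\pi) \to 0$ as $\theta \to \pi$. Passing to the limit in the signature formula yields $\mathrm{T}(\phi) = \pm 1 = \mathrm{Im}(\sigma(\phi))$, as required.

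The main technical point is the sign determination in the second case: verifying that each of the four connected components of $\sigma^{-1}(\sqrt{-1}) \cap \mathrm{Par}(\Sigma_{1,1})$ (and similarly for $-\sqrt{-1}$) lies in the closure of the correct family of elliptic representations rather than its reflected counterpart. This is supplied by the explicit parameterization of these components in Section \ref{sec=4.1.2}, which realizes each representative as a limit of boundary elliptic configurations with the appropriate rotation angles on the stipulated side of $\pi$.
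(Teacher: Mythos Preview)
Your proposal is correct and follows essentially the same route as the paper. For the case $\mathrm{tr}(\phi(c)) = -2$ your deformation-to-elliptic argument is exactly what the paper does in the paragraph preceding the corollary. For the case $\mathrm{tr}(\phi(c)) = 2$ you give a cleaner direct computation from the signature formula and Tables~\ref{tab:rho-invariant} and~\ref{sign-table}, whereas the paper instead deforms to a boundary-identity representation and invokes continuity of $\mathrm{T}$; your parenthetical alternative is precisely the paper's argument.
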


\subsection{Interior hyperbolic representations}

In this section, we consider deforming a given representation to an interior hyperbolic representation with respect to a maximal dual tree decomposition. As an application, we examine the possible boundary sigma values for boundary parabolic representations with maximal Toledo invariant.

\subsubsection{Interior hyperbolic representations}
In \cite[Lemma~10.1]{Gold1}, Goldman proved that any boundary hyperbolic representation $\phi \in \mathrm{Hyp}(\Sigma)$ can be deformed to another boundary hyperbolic representation $\phi'$, such that for a maximal dual tree decomposition $\Sigma = \bigcup_{i=1}^{|\chi(\Sigma)|} M_i$ (see \cite[Section~3.8]{Gold1}), the restriction $\phi'|_{\pi_1(M_i)}$ is hyperbolic on $\partial M_i$ for each $i$. We refer to such a representation $\phi'$ as an \emph{interior hyperbolic representation}. 

In this subsection, we demonstrate that any boundary parabolic representation can similarly be continuously deformed into one that is hyperbolic on every interior pair of pants for $g\geq 1$.

Let $\chi: \mathrm{SL}(2,\mathbb{R}) \times \mathrm{SL}(2,\mathbb{R}) \to \mathbb{R}^3$ be the map
\[
\chi(X,Y) = (\mathrm{tr}(X), \mathrm{tr}(Y), \mathrm{tr}(XY)),
\]
and let $\kappa: \mathbb{R}^3 \to \mathbb{R}$ be the polynomial
\[
\kappa(x,y,z) = x^2 + y^2 + z^2 - xyz - 2.
\]
Then
\[
\kappa(\chi(X,Y)) = \mathrm{tr}([X,Y]).
\]

The following theorem will be essential in our analysis:

\begin{thm}[Goldman {\cite[Proposition 4.1, Theorem 4.3, Corollary 4.5]{Gold1}}]\label{lifting}
Let $(x,y,z) \in \mathbb{R}^3$. Then there exists $(X,Y) \in \mathrm{SL}(2,\mathbb{R}) \times \mathrm{SL}(2,\mathbb{R})$ such that $\chi(X,Y) = (x,y,z)$ if and only if either $\kappa(x,y,z) \geq 2$ or at least one of $|x|, |y|, |z|$ is greater than or equal to 2.
Let
\[
\Omega_{\mathbb{R}} := \left\{ (X,Y) \in \mathrm{SL}(2,\mathbb{R}) \times \mathrm{SL}(2,\mathbb{R}) \mid [X,Y] \neq I \right\}.
\]
Then the map
\[
\chi : \Omega_{\mathbb{R}} \to \Delta := \mathbb{R}^3 \setminus \left( [-2,2]^3 \cap \kappa^{-1}([-2,2]) \right)
\]
has the path-lifting property, and in particular, it is surjective.

Moreover, $\kappa(\chi(X,Y)) = 2$ if and only if the group generated by $X$ and $Y$ is reducible, hence contained in a Borel or abelian subgroup of $\mathrm{SL}(2,\mathbb{R})$. If $\kappa(x,y,z) \neq 2$, then $\chi^{-1}(x,y,z)$ consists of two $\mathrm{SL}(2,\mathbb{R})$-orbits, but only one $\mathrm{GL}(2,\mathbb{R})$-orbit.
\end{thm}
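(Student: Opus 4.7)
The plan is to organize the proof around three pieces, all driven by the Fricke identity $\mathrm{tr}([X,Y])=\kappa(\chi(X,Y))$, which is a direct Cayley--Hamilton calculation. To exploit it, I would first use $\mathrm{SL}(2,\mathbb{R})$-conjugation to normalize $X$ into one of three canonical forms according to whether $|x|>2$, $=2$, or $<2$, then parameterize $Y\in\mathrm{SL}(2,\mathbb{R})$ by three free entries subject to $\mathrm{tr}(Y)=y$ and $\det(Y)=1$, and treat $\mathrm{tr}(XY)=z$ as the remaining scalar constraint.

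For the image characterization, if some $|x|,|y|,|z|\geq 2$ I would put the corresponding generator in hyperbolic or parabolic canonical form and produce an explicit $(X,Y)$ hitting any target by a direct linear computation. The delicate case is when $\max(|x|,|y|,|z|)<2$: after setting $X=R(\theta)$ and $Y=PR(\theta')P^{-1}$, the identity $\mathrm{tr}(XY)=2\cos\theta\cos\theta'-S\sin\theta\sin\theta'$ of \eqref{eqn5}, with $S=\mathrm{tr}(P^{\top}P)\geq 2$, confines $z$ to a segment whose endpoints, after rearrangement, translate precisely into $\kappa(x,y,z)\geq 2$. Both necessity and sufficiency then fall out by tracking this constraint as a quadratic inequality in $S$.

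For the path-lifting statement, the plan is to verify that the differential $d\chi$ has full rank at every $(X,Y)\in\chi^{-1}(\Delta)$: on the open dense locus of irreducible pairs the tangent-space computation is direct, and the critical locus coincides with the reducible set, which by the Fricke identity is exactly $\{\kappa=2\}$. The main obstacle I anticipate is controlling the lift globally, so that the lifted path never runs into $[X_t,Y_t]=I$ nor leaves $\Omega_{\mathbb{R}}$. This is precisely why $[-2,2]^3\cap\kappa^{-1}([-2,2])$ must be excised from the target: if the lift ever approached $[X,Y]=I$, then $(x_t,y_t,z_t)$ would have to enter the excluded cube with $\kappa=2$, contradicting that the base path lies in $\Delta$.

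For the last two assertions, I would show that $\kappa(\chi(X,Y))=2$ forces $[X,Y]$ to be unipotent (or trivial), whose fixed line is simultaneously preserved by $X$ and $Y$, yielding reducibility into a Borel subgroup; the converse is immediate since upper-triangular commutators are unipotent. Finally, in the irreducible case the trace coordinates form a complete set of $\mathrm{GL}(2,\mathbb{R})$-conjugacy invariants by the standard character-variety argument, giving a single $\mathrm{GL}(2,\mathbb{R})$-orbit. The splitting into two $\mathrm{SL}(2,\mathbb{R})$-orbits then follows because the $\mathbb{Z}/2$-quotient $\mathrm{GL}(2,\mathbb{R})/(\mathrm{SL}(2,\mathbb{R})\cdot\mathbb{R}^{\times}_{>0})$ acts nontrivially on any irreducible pair, which I would confirm by a concrete calculation using a reflection such as $\mathrm{diag}(1,-1)$.
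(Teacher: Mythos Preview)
The paper does not give its own proof of this statement: Theorem~\ref{lifting} is quoted verbatim from Goldman \cite[Proposition~4.1, Theorem~4.3, Corollary~4.5]{Gold1} and used as a black box in the subsequent lemmas (e.g.\ Lemmas~\ref{par-lemma5}, \ref{par-lemma4}, \ref{par-lemma6}). There is therefore nothing in this paper to compare your proposal against; any assessment has to be relative to Goldman's original arguments.

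That said, your outline contains a genuine gap in the path-lifting step. You propose to show that $d\chi$ has full rank at every point of $\chi^{-1}(\Delta)$, and you identify the critical locus with the reducible set $\{\kappa=2\}$. But $\{\kappa=2\}$ is not disjoint from $\Delta$: any pair of commuting hyperbolic elements, or any upper-triangular pair with one factor of trace $>2$, is reducible yet has image in $\Delta$ (since $\Delta$ only excludes $[-2,2]^3\cap\kappa^{-1}([-2,2])$). Hence $\chi$ is \emph{not} a submersion over all of $\Delta$, and a full-rank argument cannot by itself produce path-lifting. Goldman's actual proof handles this by analyzing the fiber structure directly and showing that over $\Delta$ the fibers are either single $\mathrm{GL}(2,\mathbb{R})$-orbits (irreducible case) or still controllable (reducible case), and that one can lift paths through the reducible stratum by hand. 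Your plan would need to be supplemented by such a fiberwise analysis near $\kappa=2$.

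The remaining parts of your sketch---the Fricke identity, the case split on $|x|$ for the image description, the use of \eqref{eqn5} in the all-elliptic case, and the reducibility/orbit-count arguments---are sound and broadly in line with Goldman's approach.
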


A \emph{maximal dual tree decomposition} of $\Sigma=\Sigma_{g,n}$ is a decomposition $\Sigma = \bigcup_{i=1}^{|\chi(\Sigma)|} M_i$ into $g$ punctured tori and $g - 2 + n$ pairs of pants, satisfying the following conditions:
\begin{enumerate}
    \item Each $M_i$ has Euler characteristic $\chi(M_i) = -1$;
    \item The graph dual to the decomposition $\{M_i\}_{i=1}^{2g-2+n}$ is a tree $T$, where two vertices are connected by an edge if and only if the corresponding $M_i$ share a common boundary component. In particular, adjacent $M_i$ meet along exactly one boundary component.
\end{enumerate}

Let $\Sigma=\Sigma_{g,n}$ be a surface of genus $g$ and $n$ boundaries. We assume that $\chi(\Sigma)\leq -1$. 
Let $\Sigma=\bigcup_{i=1}^{|\chi(\Sigma)|} M_i$ be a maximal dual tree decomposition. Let $\mc{R}_{h}(\Sigma)$ be the set of all representations $\phi\in \mr{Hom}(\pi_1(\Sigma),\mathrm{SL}(2,\mb{R}))$ such that one of boundary representation is hyperbolic.  
Let $\mathcal{R}'_{h}(\Sigma)\subset \mc{R}_{h}(\Sigma)$ be the subset such that $\phi|_{\pi_1(M_i)}$ is non-abelian for each $i$.  Hence, \(\mathcal{R}'_{h}(\Sigma)\) is open and dense in \(\mathcal{R}_{h}(\Sigma)\), since there is at least one boundary component whose holonomy is hyperbolic and can be freely varied.

The following lemmas were originally established by Goldman in \cite{Gold1} for representations into $\mathrm{PSL}(2,\mathbb{R})$. Via the  following map 
\begin{equation}\label{par-eqn}
\pi_*: \mathrm{Hom}(\pi_1(\Sigma), \mathrm{SL}(2,\mathbb{R})) \to \mathrm{Hom}(\pi_1(\Sigma), \mathrm{PSL}(2,\mathbb{R})),
\end{equation}
which is a covering map on the inverse image of a connected component, see \cite[Lemma 2.2]{Gold1}.
Hence, any path in $\mathrm{Hom}(\pi_1(\Sigma), \mathrm{PSL}(2,\mathbb{R}))$ can be lifted to a path in $\mathrm{Hom}(\pi_1(\Sigma), \mathrm{SL}(2,\mathbb{R}))$ starting from a fixed base point. Therefore, we obtain the following:

\begin{lemma}[Goldman {\cite[Lemma 9.3]{Gold1}}]\label{par-lemma5}
Let $\Sigma_{1,2}$ be a surface of genus one with two boundary components $c_1$ and $c_2$. Let $c \subset \Sigma_{1,2}$ be a separating simple closed curve that separates $c_1$ and $c_2$ from the rest of the surface. Let $P \subset \Sigma_{1,2}$ denote the pair of pants bounded by $c_1$, $c_2$, and $c$, and let $T \subset \Sigma_{1,2}$ be the torus with a boundary component $c$ (i.e., a torus with one boundary component). Suppose that 
\(
\phi \in \mathrm{Hom}(\pi_1(\Sigma_{1,2}), \mathrm{SL}(2,\mathbb{R}))
\)
is a representation such that both restrictions $\phi|_{\pi_1(P)}$ and $\phi|_{\pi_1(T)}$ are non-abelian. Then there exists a path $\{\phi_t\}_{0 \leq t \leq 1}$ in $\mathrm{Hom}(\pi_1(\Sigma_{1,2}), \mathrm{SL}(2,\mathbb{R}))$ such that:
\begin{itemize}
  \item[(i)] $\phi_0 = \phi$;
  \item[(ii)] for all $t \in [0,1]$, the elements $\phi_t(c_1)$ and $\phi_t(c_2)$ remain in the conjugacy classes of $\phi(c_1)$ and $\phi(c_2)$, respectively;
  \item[(iii)] $\phi_1(c)$ is hyperbolic.
\end{itemize}
\end{lemma}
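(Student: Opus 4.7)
The plan is to exploit the path-lifting property of the character map $\chi$ from Theorem~\ref{lifting} on the torus side $T$, and then build a compatible deformation on the pair-of-pants side $P$ so that the two can be glued into a path in $\mathrm{Hom}(\pi_1(\Sigma_{1,2}),\mathrm{SL}(2,\mathbb{R}))$ with the required properties.

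First I would work on $T$. Writing $\pi_1(T)=\langle a,b\rangle$ with $c=[a,b]$, set $(x_0,y_0,z_0):=\chi(\phi(a),\phi(b))$. Since $\phi|_{\pi_1(T)}$ is non-abelian, $(\phi(a),\phi(b))\in\Omega_{\mathbb{R}}$, so $(x_0,y_0,z_0)\in\Delta$ and $\mathrm{tr}(\phi(c))=\kappa(x_0,y_0,z_0)$. I would then choose a smooth path $\gamma:[0,1]\to\Delta$ with $\gamma(0)=(x_0,y_0,z_0)$ and $|\kappa(\gamma(1))|>2$; such a path exists because $\Delta$ is open and $\kappa$ is a non-constant polynomial, for instance by keeping $(x,y)$ fixed while pushing $|z|$ large so the $z^2$-term in $\kappa$ dominates (and avoiding the reducible stratum $\kappa=2$). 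By Theorem~\ref{lifting}, $\gamma$ lifts to a continuous path $(\phi^T_t(a),\phi^T_t(b))$ in $\Omega_{\mathbb{R}}$ starting at $(\phi(a),\phi(b))$, producing a deformation $\phi^T_t$ of $\phi|_T$ through non-abelian representations with $\phi^T_1(c)$ hyperbolic.

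Next I would build a matching deformation $\phi^P_t$ of $\phi|_P$ satisfying two conditions: (i) $\phi^P_t(c_i)$ stays in the conjugacy class of $\phi(c_i)$ for $i=1,2$; and (ii) $\phi^P_t(c)=\phi^T_t(c)^{-1}$ as actual elements of $\mathrm{SL}(2,\mathbb{R})$, compatibly with the relation $c_1c_2c=e$ in $\pi_1(P)$. For a pair of pants, the representation variety fibres smoothly over the triple of boundary conjugacy classes at non-abelian points; concretely, using a normal form as in \eqref{hyp-eqn1} together with the pair-of-pants deformation arguments of Lemma~\ref{lemma0}, I would write $\phi^P_t$ explicitly in terms of $\phi^T_t(c)$ and a continuous family of auxiliary parameters that preserve the conjugacy classes of $\phi(c_1)$ and $\phi(c_2)$. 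Gluing $\phi^T_t$ and $\phi^P_t$ along the amalgamation $\pi_1(\Sigma_{1,2})=\pi_1(P)*_{\langle c\rangle}\pi_1(T)$ then yields the desired path $\phi_t$.

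The main obstacle will be controlling the $P$-side deformation uniformly over $[0,1]$: one must ensure that no degeneration to an abelian representation of $\pi_1(P)$ occurs and that the varying $\phi^T_t(c)$ never forces $\phi^P_t(c_i)$ to leave its prescribed conjugacy class. I would address this by perturbing $\gamma$ to avoid the reducible locus $\kappa^{-1}(2)$ and to cross $\kappa^{-1}(\pm2)$ transversely, and by appealing to the covering map \eqref{par-eqn} to lift Goldman's $\mathrm{PSL}(2,\mathbb{R})$-argument in \cite[Lemma~9.3]{Gold1} to $\mathrm{SL}(2,\mathbb{R})$: a path in $\mathrm{Hom}(\pi_1(\Sigma_{1,2}),\mathrm{PSL}(2,\mathbb{R}))$ lifts uniquely to a path in $\mathrm{Hom}(\pi_1(\Sigma_{1,2}),\mathrm{SL}(2,\mathbb{R}))$ once a basepoint preimage is chosen, and conjugacy classes of boundary images are preserved under such lifts.
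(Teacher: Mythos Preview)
The paper does not give a direct construction at all: it simply observes that Goldman proved the statement for $\mathrm{PSL}(2,\mathbb{R})$ in \cite[Lemma~9.3]{Gold1}, and that the map $\pi_*$ of \eqref{par-eqn} is a covering on the preimage of each component, so the $\mathrm{PSL}(2,\mathbb{R})$-path lifts uniquely to $\mathrm{SL}(2,\mathbb{R})$ once the initial point $\phi$ is fixed. Your final paragraph already contains exactly this argument, and it is a complete proof by itself; the preceding two paragraphs are unnecessary.

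Your direct approach is more ambitious and mostly reasonable on the torus side, but the pants-side step has a genuine gap as written. You invoke the normal form \eqref{hyp-eqn1} and Lemma~\ref{lemma0}, but both of those are specific to \emph{hyperbolic} boundary holonomies, whereas Lemma~\ref{par-lemma5} places no restriction on the conjugacy classes of $\phi(c_1)$ and $\phi(c_2)$: they may be elliptic, parabolic, or hyperbolic. Constructing a continuous family $\phi^P_t$ with $\phi^P_t(c)=\phi^T_t(c)^{-1}$ as actual matrices (not merely conjugate) while $\mathrm{tr}(\phi^T_t(c))$ ranges over an interval and the conjugacy classes of $c_1,c_2$ are arbitrary is not covered by those lemmas; you would need a version of Lemma~\ref{par-lemma6} valid for general boundary types, which the paper does not supply. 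Since the covering-map argument already gives the result cleanly, I would drop the direct construction and keep only your last paragraph.
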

\begin{lemma}[Goldman {\cite[Lemma 9.6]{Gold1}}]\label{par-lemma4}
Let $\phi = \phi(a,b,c_1,c_2) = (A,B,C_1,C_2) \in \mathrm{Hom}(\pi_1(\Sigma_{0,4}), \mathrm{SL}(2,\mathbb{R}))$ be a representation such that one of $A,B, C_1,C_2$ is hyperbolic, see Figure \ref{fig:decomposition-4}. Assume that the restrictions $\phi|_{\pi_1(M_i)}$, $i = 1,2$, are non-abelian, where
\[
\phi|_{\pi_1(M_1)} = (A, B, C := (AB)^{-1}) \quad \text{and} \quad \phi|_{\pi_1(M_2)} = (C_1, C_2, C^{-1}).
\]
Then $\phi$ can be deformed to an interior hyperbolic representation $\phi'$ such that the conjugacy classes of the boundary components are preserved and $\phi'(c)$ is hyperbolic, where $c = \partial M_1 \cap \partial M_2$.
\end{lemma}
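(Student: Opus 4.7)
The plan is to deform $\phi$ so that the trace $z(t)$ of the gluing curve $\phi_t(c)$ moves continuously from its initial value $z_0 = \mathrm{tr}(AB)$ to a hyperbolic value $|z(1)|>2$, while preserving the conjugacy classes of the four boundary holonomies $A,B,C_1,C_2$. This is executed by applying the path-lifting property of Goldman's character map $\chi$ (Theorem~\ref{lifting}) separately to the two pairs of pants $M_1$ and $M_2$, and then regluing by a continuous conjugation.

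Set $\alpha = \mathrm{tr}(A)$, $\beta = \mathrm{tr}(B)$, and $\gamma_i = \mathrm{tr}(C_i)$ for $i=1,2$. The hypothesis that $\phi|_{\pi_1(M_j)}$ is non-abelian for $j=1,2$ guarantees $(A,B),(C_1,C_2)\in \Omega_{\mathbb{R}}$, hence $(\alpha,\beta,z_0)$ and $(\gamma_1,\gamma_2,z_0)$ both lie in $\Delta$. Moreover, one of $|\alpha|,|\beta|,|\gamma_1|,|\gamma_2|$ exceeds $2$; by the symmetric roles of $M_1$ and $M_2$ we may assume $|\alpha|>2$. Then $(\alpha,\beta,z)\in\Delta$ for every $z\in\mathbb{R}$, so path-lifting produces, for any continuous $z\colon[0,1]\to\mathbb{R}$ with $z(0)=z_0$, a continuous deformation $(A_t,B_t)\in\Omega_{\mathbb{R}}$ starting at $(A,B)$ with $\mathrm{tr}(A_t)=\alpha$, $\mathrm{tr}(B_t)=\beta$, and $\mathrm{tr}(A_tB_t)=z(t)$. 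On the $M_2$-side, the set
\[
\Delta_2 := \{\, w\in\mathbb{R} : (\gamma_1,\gamma_2,w)\in\Delta \,\}
\]
has bounded complement (when $|\gamma_1|,|\gamma_2|\le2$ the complement is contained in $[-2,2]$, and in any case $\kappa(\gamma_1,\gamma_2,w)\to+\infty$ as $|w|\to\infty$), and contains $z_0$ because $(\gamma_1,\gamma_2,z_0)=\chi(C_1,C_2)$. Hence one may choose $z(t)$ inside $\Delta_2$ with $|z(1)|>2$, and a second application of path-lifting yields $(C_{1,t},C_{2,t})$ with $\mathrm{tr}(C_{i,t})=\gamma_i$ and $\mathrm{tr}(C_{1,t}C_{2,t})=z(t)$, starting at $(C_1,C_2)$.

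What remains is the regluing. At each $t$ the elements $A_tB_t$ and $(C_{1,t}C_{2,t})^{-1}$ share the trace $z(t)$, and at $t=0$ they are equal; so there should exist a continuous family $g_t\in\mathrm{SL}(2,\mathbb{R})$ with $g_0=I$ and $g_t(A_tB_t)g_t^{-1}=(C_{1,t}C_{2,t})^{-1}$. Replacing $(A_t,B_t)$ by $(g_tA_tg_t^{-1},g_tB_tg_t^{-1})$ then produces the desired deformation $\phi_t$, preserving all four boundary conjugacy classes and with $\phi_1(c)$ hyperbolic. The main obstacle is precisely this last step when $z(t)$ crosses $\pm 2$: at a parabolic value the conjugacy class of an element in $\mathrm{SL}(2,\mathbb{R})$ is not pinned down by trace alone, so a priori the lifted paths on the two sides could land in different conjugacy classes after crossing. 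This is resolved by the orbit-matching part of Theorem~\ref{lifting}: since we have kept $z(t)\in\Delta_2$, the two $\mathrm{SL}(2,\mathbb{R})$-orbits of $\chi^{-1}(\alpha,\beta,z(t))$ (respectively $\chi^{-1}(\gamma_1,\gamma_2,z(t))$) form disjoint continuous families, so the initial matching $A_0B_0=(C_{1,0}C_{2,0})^{-1}$ propagates, and the conjugating family $g_t$ can be chosen continuously throughout.
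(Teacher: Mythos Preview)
Your overall strategy—lift the trace path $z(t)$ on each pair of pants via Theorem~\ref{lifting}, then conjugate to reglue—is Goldman's strategy and is sound, and your observation that the connected component of $z_0$ in $\Delta_2$ always meets $\{|z|>2\}$ is correct (one checks that when $|\gamma_1|,|\gamma_2|\le 2$ the complement of $\Delta_2$ is a single closed subinterval of $[-2,2]$, so every component of $\Delta_2$ is an unbounded half-line). The problem is the final regluing paragraph.

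You need $A_tB_t$ to be $\mathrm{SL}(2,\mathbb{R})$-conjugate to $(C_{1,t}C_{2,t})^{-1}$ for \emph{every} $t$, but this is not automatic when $z(t)=\pm 2$: at a parabolic trace there are two $\mathrm{SL}(2,\mathbb{R})$-conjugacy classes, and your two lifts $(A_t,B_t)$ and $(C_{1,t},C_{2,t})$ are produced independently, so nothing forces the parabolic signs of $A_{t_0}B_{t_0}$ and $(C_{1,t_0}C_{2,t_0})^{-1}$ to agree at a crossing $t_0$. Your ``orbit-matching'' appeal to Theorem~\ref{lifting} concerns the $\mathrm{SL}(2,\mathbb{R})$-orbits of \emph{pairs} $(X,Y)$ in a single $\chi$-fiber, not the conjugacy class of the \emph{product} $XY$; and even granting that the two orbits in $\chi^{-1}(\alpha,\beta,z)$ correspond to the two parabolic classes of $XY$, this says nothing about matching across the two \emph{different} fibers $\chi^{-1}(\alpha,\beta,z)$ and $\chi^{-1}(\gamma_1,\gamma_2,z)$. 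So ``the initial matching propagates'' is asserted, not proved.

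The paper avoids this issue entirely by a different (and shorter) route: it does not reprove the lemma, but invokes Goldman's $\mathrm{PSL}(2,\mathbb{R})$ version and then lifts the whole deformation of representations of $\pi_1(\Sigma_{0,4})$ through the covering
\[
\pi_*\colon \mathrm{Hom}(\pi_1(\Sigma_{0,4}),\mathrm{SL}(2,\mathbb{R}))\longrightarrow \mathrm{Hom}(\pi_1(\Sigma_{0,4}),\mathrm{PSL}(2,\mathbb{R})).
\]
Since $\pi_*$ is a covering over each connected component, the $\mathrm{PSL}$-path lifts to an $\mathrm{SL}$-path starting at $\phi$; there is no separate regluing step, so the parabolic-sign ambiguity never appears. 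Preservation of $\mathrm{SL}$-boundary conjugacy classes then follows because the two $\mathrm{SL}$-lifts of a non-central $\mathrm{PSL}$-conjugacy class are disjoint connected sets, so a continuous path cannot switch between them. If you want to keep your direct approach, you would need either to construct the second lift with the constraint $C_{1,t}C_{2,t}=(A_tB_t)^{-1}$ imposed from the start, or to argue carefully (and it is not a one-liner) why the parabolic signs must agree at each crossing.
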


\begin{lemma}\label{par-lemma6}
Let $\phi \in \mathrm{Hom}(\pi_1(\Sigma_{0,3}), \mathrm{SL}(2,\mathbb{R}))$ be a representation such that $\phi(c_3)$ is hyperbolic. Then for any path $\gamma(t)$ of hyperbolic elements with $\gamma(0) = \phi(c_3)$, there exists a  path of representations $\phi_t$ starting at $\phi$ such that $\phi_t(c_3) = \gamma(t)$ and the conjugacy classes of $\phi_t(c_i)$ remain constant for $i = 1, 2$.
\end{lemma}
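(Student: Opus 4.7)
The plan is to reduce the statement to the path-lifting property of the character map $\chi$ established in Theorem~\ref{lifting}, then correct the resulting lift by a continuous family of conjugations so that $\phi_t(c_3)$ equals $\gamma(t)$ on the nose and not merely up to conjugacy.

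Write $C_i = \phi(c_i)$. The relation $c_1 c_2 c_3 = e$ gives $C_1 C_2 = C_3^{-1} = \gamma(0)^{-1}$, and the path of character triples
\[
\beta(t) := \bigl(\mathrm{tr}(C_1),\,\mathrm{tr}(C_2),\,\mathrm{tr}(\gamma(t))\bigr)
\]
lies in $\Delta$ for every $t$, since $|\mathrm{tr}(\gamma(t))| > 2$ throughout. In the main (irreducible) case $(C_1, C_2) \in \Omega_{\mathbb{R}}$, Theorem~\ref{lifting} produces a continuous lift $(C_1(t), C_2(t)) \in \Omega_{\mathbb{R}}$ with $\chi(C_1(t), C_2(t)) = \beta(t)$ and $(C_1(0), C_2(0)) = (C_1, C_2)$. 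Constancy of each trace, together with $C_i(t) \ne \pm I$ (forced by $(C_1(t), C_2(t)) \in \Omega_{\mathbb{R}}$) and continuity, implies each $C_i(t)$ stays in the conjugacy class of $C_i$, because in $\mathrm{SL}(2,\mathbb{R})$ the conjugacy classes are the connected components of the trace level sets away from $\{\pm I\}$.

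It remains to upgrade the conjugacy $C_1(t) C_2(t) \sim \gamma(t)^{-1}$ to an equality. Both elements are hyperbolic with the same trace and hence $\mathrm{SL}(2,\mathbb{R})$-conjugate. The orbit map $h \mapsto h X_0 h^{-1}$ from $\mathrm{SL}(2,\mathbb{R})$ onto a hyperbolic conjugacy class is a smooth principal bundle with one-dimensional centralizer fiber, hence admits path lifting; applying this to the path $\gamma(t)^{-1}$ (with base point $X_0 := \gamma(0)^{-1}$) and to the path $C_1(t) C_2(t)$ (with the same base point) and taking the ratio yields a continuous family $h(t) \in \mathrm{SL}(2,\mathbb{R})$ with $h(0) = I$ and
\[
h(t)\, C_1(t) C_2(t)\, h(t)^{-1} = \gamma(t)^{-1}.
\]
Setting $\phi_t(c_i) := h(t) C_i(t) h(t)^{-1}$ for $i = 1, 2$ and $\phi_t(c_3) := \gamma(t)$ now gives the desired path: the relation $\phi_t(c_1) \phi_t(c_2) \phi_t(c_3) = I$ is immediate, the conjugacy classes of $\phi_t(c_i)$ equal those of $C_i(t)$ (hence of $\phi(c_i)$), and $\phi_0 = \phi$.

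The residual case $[\phi(c_1), \phi(c_2)] = I$ (necessarily reducible, with both $\phi(c_i)$ in the centralizer of $\phi(c_3)$) can be handled by first perturbing $(\phi(c_1), \phi(c_2))$ inside its conjugacy classes into $\Omega_{\mathbb{R}}$---possible unless one of the $\phi(c_i)$ is central, a degenerate subcase in which preservation of $\mathrm{tr}(\phi_t(c_i))$ automatically constrains the problem to an abelian movement inside a single maximal torus---and then invoking the main case. The chief obstacle I expect is the continuous construction of the gauge family $h(t)$ with $h(0) = I$, which requires careful application of path lifting for the orbit map onto the hyperbolic conjugacy class; this is classical but the basepoint matching at $t = 0$ is crucial and is precisely what is supplied by $C_1(0) C_2(0) = \gamma(0)^{-1}$.
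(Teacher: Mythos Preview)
Your proof follows essentially the same route as the paper's: lift the path of trace triples $\beta(t)$ through $\chi$ via Theorem~\ref{lifting}, then post-compose with a continuous family of inner automorphisms to make $\phi_t(c_3)$ equal $\gamma(t)$ on the nose. Your treatment of why the conjugacy classes of $C_1(t),C_2(t)$ persist (connected components of trace level sets off $\pm I$) and of the construction of $h(t)$ is more explicit than the paper's, which simply asserts the existence of $U_t$; the paper's version is otherwise identical in structure.

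One caution on your handling of the residual case. Your claim that the central subcase ``constrains the problem to an abelian movement inside a single maximal torus'' is not correct as a proof of the lemma: if, say, $\phi(c_1)=\pm I$, then preservation of its conjugacy class forces $\phi_t(c_1)\equiv\pm I$, hence $\phi_t(c_2)=\pm\gamma(t)^{-1}$ has trace $\pm\mathrm{tr}(\gamma(t))$, which cannot remain constant unless $\mathrm{tr}(\gamma(t))$ does. So the lemma as stated actually fails in that degenerate case. The paper does not address the reducible case at all, but it never needs to: every invocation (e.g.\ in Lemma~\ref{par-lemma222} and Lemma~\ref{par-lemma2}) is on restrictions lying in $\mathcal{R}'_h$, where $[\phi(c_1),\phi(c_2)]\neq I$ by hypothesis. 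You may simply note that this irreducibility assumption is implicitly in force and drop the reducible discussion.
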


\begin{proof}
Since the triple $(\mathrm{tr}(C_1), \mathrm{tr}(C_2), \mathrm{tr}(\gamma(t)))$ lies in $\Delta$ for all $t$, by Theorem~\ref{lifting}, there exists a path of representations $\phi'_t$ such that $\phi'_0 = \phi$ and
\[
\chi(\phi'_t) = (\mathrm{tr}(C_1), \mathrm{tr}(C_2), \mathrm{tr}(\gamma(t))).
\]
Since $\gamma(t)$ remains hyperbolic throughout, there exists a path $U_t \in \mathrm{SL}(2,\mathbb{R})$ with $U_0 = I$ such that
\[
\phi'_t(c_3) = U_t \gamma(t) U_t^{-1}.
\]
Moreover, since $\phi'_t(c_i) \notin \{\pm I\}$ and has constant trace, we conclude that $\phi'_t(c_i)$ remains conjugate to $\phi(c_i)$ for $i = 1, 2$.

Define
\[
\phi_t := U_t^{-1} \phi'_t U_t.
\]
Then $\phi_t$ is a path of representations starting at $\phi$, satisfying $\phi_t(c_3) = \gamma(t)$, and the conjugacy classes of $\phi_t(c_1)$ and $\phi_t(c_2)$ remain unchanged throughout the deformation. This completes the proof.
\end{proof}

We are now ready to prove the following key lemma, which generalizes \cite[Lemma 10.1]{Gold1}.

\begin{lemma}\label{par-lemma222}
Let $\Sigma = \Sigma_{g,n}$ be a surface of genus $g$ with $n \geq 1$ boundary components, and let $\Sigma = \bigcup_{i=1}^{|\chi(\Sigma)|} M_i$ be a maximal dual tree decomposition of $\Sigma$. Then any representation $\phi \in \mathcal{R}_{h}'(\Sigma)$ can be deformed in $\mathcal{R}_{h}'(\Sigma)$ to an interior hyperbolic representation $\phi' \in \mathcal{R}_{h}'(\Sigma)$, i.e., a representation for which the restriction to each component $c \in \partial M_i \setminus \partial \Sigma$ is hyperbolic, while boundary conjugacy classes are fixed.
\end{lemma}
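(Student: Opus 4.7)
My plan is to prove the lemma by induction on $N = |\chi(\Sigma)| = 2g-2+n$, the number of pieces in the maximal dual tree decomposition $\Sigma = \bigcup_{i=1}^N M_i$. The base case $N = 1$, where $\Sigma$ is either $\Sigma_{0,3}$ or $\Sigma_{1,1}$, carries no interior curves and the conclusion is vacuous. For the inductive step $N \geq 2$, it suffices to construct a deformation within $\mathcal{R}_{h}'(\Sigma)$, preserving every boundary conjugacy class of $\partial \Sigma$, that makes the holonomy of just one chosen interior curve $c_0$ hyperbolic. Indeed, cutting along $c_0$ then splits $\Sigma = \Sigma^{(1)} \cup_{c_0} \Sigma^{(2)}$ into two surfaces with strictly smaller dual tree decompositions, and both restricted representations automatically lie in their respective $\mathcal{R}_{h}'$ since $c_0$ is now hyperbolic on each side. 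Applying the inductive hypothesis on each piece and then gluing the two deformations, after a continuously varying conjugation on one side to reconcile them along $c_0$, yields the desired interior hyperbolic deformation of $\phi$ on all of $\Sigma$.

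To produce the single hyperbolic interior curve, I exploit the structural fact that for $N \geq 2$ every punctured-torus piece of the decomposition has its one boundary interior (otherwise the tree would be a single vertex), so no torus piece contributes a boundary component to $\partial \Sigma$. Consequently, the hyperbolic boundary $c_* \subset \partial \Sigma$ must lie on a pants piece $P_*$ of the decomposition, and since $N \geq 2$ this $P_*$ has at least one interior boundary $c_0$. Letting $M'$ be the piece across $c_0$ from $P_*$ and $W := P_* \cup_{c_0} M'$, one obtains $W \cong \Sigma_{0,4}$ if $M'$ is a pants and $W \cong \Sigma_{1,2}$ if $M'$ is a punctured torus. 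Because $c_* \in \partial W$ has hyperbolic image and the hypothesis $\phi \in \mathcal{R}_{h}'(\Sigma)$ furnishes non-abelianness of $\phi|_{\pi_1(P_*)}$ and $\phi|_{\pi_1(M')}$, Lemma \ref{par-lemma4} (in the pants--pants case) or Lemma \ref{par-lemma5} (in the pants--torus case) applies and produces a deformation of $\phi|_{\pi_1(W)}$ that fixes every boundary conjugacy class of $\partial W$ while making $\phi(c_0)$ hyperbolic. Since the boundary of $W$ is left unchanged up to conjugacy, this local deformation extends to all of $\Sigma$ by leaving $\phi$ unchanged on $\pi_1(\Sigma \setminus W)$.

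The main technical point I expect to require care is verifying that the deformed representation remains in $\mathcal{R}_{h}'(\Sigma)$ throughout, i.e.\ that every restriction $\phi_t|_{\pi_1(M_i)}$ stays non-abelian. On pieces outside $W$ this is automatic since $\phi_t$ agrees with $\phi$ there; on the constituents $P_*$ and $M'$ of $W$ the referenced lemmas already preserve non-abelianness along the path they construct (their proofs use the path-lifting of the character map $\chi$ recalled in Theorem \ref{lifting}), so any transient collapse can be avoided by a small generic perturbation within the character variety. The subsequent gluing step of the induction is standard: because $\phi_1(c_0)$ is hyperbolic and hyperbolic conjugacy classes in $\mathrm{SL}(2,\mathbb{R})$ have connected centralizers up to the $\pm I$ ambiguity, the two sides' deformations can be made to induce identical holonomies on $\pi_1(c_0)$ at every time by a continuously varying conjugation. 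With these ingredients assembled, the induction closes and delivers the interior hyperbolic deformation asserted by the lemma.
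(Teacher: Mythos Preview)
Your proposal is correct and follows essentially the same induction as the paper: locate a pants piece carrying the given hyperbolic boundary, apply Lemma~\ref{par-lemma4} or Lemma~\ref{par-lemma5} on its union with the adjacent piece to make the shared interior curve hyperbolic, extend to $\Sigma$ by conjugating the complementary components, and recurse on the smaller surfaces. The one slip is the phrase ``leaving $\phi$ unchanged on $\pi_1(\Sigma\setminus W)$'': since the deformation on $W$ only fixes boundary \emph{conjugacy classes}, each complementary component $N_j$ must be conjugated by a continuous path $U_t^j$ with $\phi_t(c_j)=U_t^j\phi(c_j)(U_t^j)^{-1}$ in order for the glued family to remain a representation---this is precisely what the paper does, and is the same mechanism you correctly invoke in your final gluing step.
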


\begin{proof}
We follow the strategy of \cite[Lemma 10.1]{Gold1}. We proceed by induction on the Euler characteristic $\chi(\Sigma)$. The base case  $\chi(\Sigma) = -2$ is established by Lemmas \ref{par-lemma5} and \ref{par-lemma4}.

After possibly reindexing the $M_i$, we assume that $M_1$ and $M_2$ are subsurfaces in the decomposition satisfying the following properties:
\begin{itemize}
  \item[(1)] $\partial M_1$ contains at least one component of $\partial \Sigma$, and the restriction of $\phi$ on this component is hyperbolic;
  \item[(2)] The union $M = M_1 \cup M_2$ is connected.
\end{itemize}
The remaining subsurfaces $\bigcup_{i > 2} M_i$ consist of at most three connected components $N_j$. Let $c_j$ denote the boundary curve $c_j = \partial M_2 \cap \partial N_j$, and let $c = \partial M_1 \cap \partial M_2$; see Figure~\ref{fig:decomposition-4}.

\begin{figure}[ht]
\centering
\includegraphics[width=0.5\textwidth]{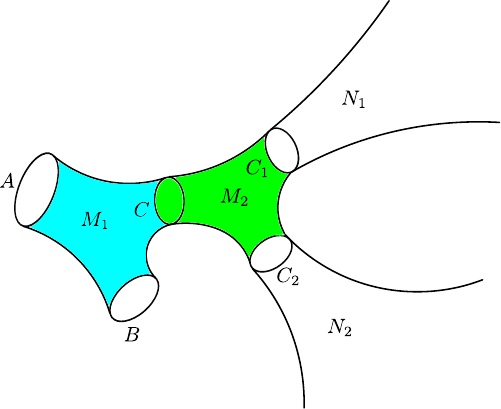}
\caption{Maximal dual tree decomposition}
\label{fig:decomposition-4}
\end{figure}

By Lemmas \ref{par-lemma4} and \ref{par-lemma5}, the restriction of $\phi$ to $\pi_1(M)$ satisfies the hypotheses, so there exists a path $\{\phi_t\}_{0 \leq t \leq 1}$ in $\mc{R}'_h(M)$ such that $\phi_1(c)$ is hyperbolic and for each $c_j$, the elements $\phi_t(c_j)$ remain conjugate to $\phi(c_j)$ for all $t$. Let $\{U^j_t\}_{0 \leq t \leq 1}$ be a path in $\mathrm{SL}(2,\mathbb{R})$ such that
\(
\phi_t(c_j) = U^j_t \phi(c_j) (U^j_t)^{-1}.
\)
We extend $\phi_t$ from $\pi_1(M)$ to $\pi_1(\Sigma)$ by defining
\(
\phi_t(\gamma) = U^j_t \phi(\gamma) (U^j_t)^{-1}
\)
for all $\gamma \in \pi_1(N_j)$. This defines a continuous path $\{\phi_t\}_{0 \leq t \leq 1}$ of representations in $\mathcal{R}'_{h}(\Sigma)$ such that $\phi_1(c)$ is hyperbolic. 

Now the restriction of $\phi_1$ to $\pi_1\left(\Sigma' \right)$ lies in $\mathcal{R}'_{h}(\Sigma')$, where $\Sigma' = N_1\cup N_2\cup M_2$ is a subsurface of $\Sigma$. By the induction hypothesis, there exists a path in $\mathcal{R}_{h}'(\Sigma')$ from this representation to one which maps each interior boundary in decomposition of $\Sigma'$  to a hyperbolic element. Furthermore, by the path-lifting property (see Lemma~\ref{par-lemma6}), this path can be extended to a path in $\mathcal{R}_{h}'(\Sigma)$.

Similarly, if there is a component \(N_3\) attached to the boundary \(B\), then, as in the cases of \(N_1\) and \(N_2\), we can also show that \(\phi\) can be deformed into an interior hyperbolic representation in \(N_3\).

 This completes the proof.
\end{proof}

\begin{lemma}\label{par-lemma2}
Let $\Sigma = \Sigma_{g,n}$ be a surface of genus $g \geq 1$ with $n \geq 1$ boundary components, and let 
\(
\Sigma = \bigcup_{i=1}^{|\chi(\Sigma)|} M_i
\)
be a maximal dual tree decomposition of $\Sigma$. Then any representation $\phi$ such that each $\phi|_{\pi_1(M_i)}$ is non-abelian can be deformed to an interior hyperbolic representation $\phi'$ while keeping the boundary conjugacy classes fixed. Moreover, the deformations is non-abelian on each $M_i$.
\end{lemma}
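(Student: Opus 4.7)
The plan is to use the hypothesis $g \geq 1$ to produce a single hyperbolic interior separator, which then reduces the problem to Lemma \ref{par-lemma222}. First I dispense with the trivial case: if $|\chi(\Sigma)| = 1$ then $\Sigma = \Sigma_{1,1}$, which has no interior separators, and there is nothing to prove. Henceforth assume $|\chi(\Sigma)| \geq 2$.

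Since $g \geq 1$, the decomposition contains at least one punctured torus $M_1 \cong \Sigma_{1,1}$. The dual graph is a connected tree on at least two vertices, so $M_1$ cannot be isolated, and its unique boundary $c = \partial M_1$ is an interior separator. Let $M_2$ be the piece adjacent to $M_1$ along $c$. Observe that $M_2$ must be a pair of pants: if $M_2$ were also a punctured torus, then $M_1 \cup_c M_2$ would be a closed genus-$2$ surface, forcing $\Sigma = \Sigma_{2,0}$, which contradicts $n \geq 1$. Hence $M_1 \cup_c M_2 \cong \Sigma_{1,2}$, and $c$ is precisely the type of separating curve considered in Lemma \ref{par-lemma5}.

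Next, since both $\phi|_{\pi_1(M_1)}$ and $\phi|_{\pi_1(M_2)}$ are non-abelian by hypothesis, Lemma \ref{par-lemma5} yields a path $\psi_t$ in $\mathrm{Hom}(\pi_1(M_1 \cup_c M_2), \mathrm{SL}(2,\mathbb{R}))$ with $\psi_0 = \phi|_{\pi_1(M_1 \cup_c M_2)}$, with $\psi_1(c)$ hyperbolic, and with the conjugacy classes of the two external boundaries $\partial M_2 \setminus c$ preserved throughout. I extend $\psi_t$ to a path $\phi_t$ on all of $\Sigma$ by the conjugation technique already used in the proof of Lemma \ref{par-lemma222}: for each piece $M_j$ with $j \geq 3$ sharing a boundary $c_j$ with $M_2$, choose a continuous path $U^j_t \in \mathrm{SL}(2,\mathbb{R})$ with $U^j_0 = I$ satisfying $\psi_t(c_j) = U^j_t \phi(c_j)(U^j_t)^{-1}$, and set $\phi_t(\gamma) := U^j_t \phi(\gamma)(U^j_t)^{-1}$ for $\gamma \in \pi_1(M_j)$, propagating outward through the dual tree. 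The resulting $\phi_t$ preserves every boundary conjugacy class of $\Sigma$ and satisfies $\phi_1(c)$ hyperbolic.

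Finally, cut $\Sigma$ along $c$ into $\Sigma^{(1)} = M_1$ (which contains no interior separator) and $\Sigma^{(2)} \cong \Sigma_{g-1,n+1}$. The restriction $\phi_1|_{\pi_1(\Sigma^{(2)})}$ has $c$ as a hyperbolic boundary, so it lies in $\mathcal{R}_h(\Sigma^{(2)})$, and the induced decomposition $\bigcup_{i \geq 2} M_i$ inherits the non-abelian property on each piece. Apply Lemma \ref{par-lemma222} to $\Sigma^{(2)}$ to further deform $\phi_1|_{\pi_1(\Sigma^{(2)})}$ inside $\mathcal{R}'_h(\Sigma^{(2)})$ to an interior hyperbolic representation, keeping all boundary conjugacy classes (including that of $c$) fixed. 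Gluing the two halves along $c$ (with a conjugation to align the $c$-holonomy) produces the required deformation of $\phi$ to an interior hyperbolic $\phi'$. The main technical point to verify is that non-abelianness on every $M_i$ is preserved along both $\psi_t$ and the subsequent $\mathrm{Lemma}~\ref{par-lemma222}$ deformation; this follows because the locus where $\phi|_{\pi_1(M_i)}$ is abelian is a closed subvariety of positive codimension in the representation space, so the paths can be perturbed slightly to avoid it, exploiting the density of $\mathcal{R}'_h$ in $\mathcal{R}_h$.
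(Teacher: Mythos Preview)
Your proposal is correct and follows essentially the same approach as the paper's proof: single out a punctured torus $M_1$, apply Lemma~\ref{par-lemma5} to make its boundary $c$ hyperbolic, and then invoke Lemma~\ref{par-lemma222} on the complementary surface $\Sigma' = \bigcup_{i>1} M_i$, extending back across $c$. The only cosmetic difference is that the paper cites Goldman's path-lifting property \cite[Corollary~7.8]{Gold1} for the extension step, whereas you use the equivalent conjugation-gluing technique already employed in the proof of Lemma~\ref{par-lemma222}; your explicit check that $M_2$ is a pair of pants and your codimension remark for preserving non-abelianness are welcome details the paper leaves implicit.
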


\begin{proof}
The argument is similar to the proof of Lemma \ref{par-lemma222}.  
Since $g \geq 1$, we choose $M_1$ to be a punctured torus, and $M_2$ to be a pair of pants.    

By Lemma~\ref{par-lemma5}, the representation $\phi$ can be deformed to $\phi_1$ such that $\phi_1(c)$ is hyperbolic. Moreover, $\phi_1$ remains non-abelian when restricted to each $M_i$.  

Now the restriction of $\phi_1$ to $\pi_1\big( \bigcup_{i>1} M_i \big)$ lies in $\mathcal{R}'_{h}(\Sigma')$, where 
\(
\Sigma' = \bigcup_{i>1} M_i
\)
is a subsurface of $\Sigma$.  
From Lemma~\ref{par-lemma222}, $\phi_1|_{\pi_1(\Sigma')}$ can be deformed to an interior hyperbolic representation.  
By the path--lifting property~\cite[Corollary~7.8]{Gold1}, these deformations can be extended to a path in $\mathrm{Hom}(\pi_1(\Sigma),\mathrm{SL}(2,\mathbb{R}))$.  
This completes the proof.
\end{proof}

\subsubsection{The maximal Toledo invariant} 
In this section, we discuss the boundary conjugacy classes of representations with maximal Toledo invariant. The case of minimal Toledo invariant is similar. By using Lemma \ref{par-lemma1}, we obtain the following result.
\begin{lemma}\label{par-lemma14}
Let $\phi=(C_1,C_2,C_3)\in \mathrm{Hom}(\pi_1(\Sigma_{0,3}),\mathrm{SL}(2,\mathbb{R}))$ be a representation such that $C_1, C_2$ are parabolic, and $C_3$ is hyperbolic with negative trace. If $\mathrm{T}(\phi)=1$, then $\sigma(C_1)=\sigma(C_2) \in \{-1,\sqrt{-1}\}$.
\end{lemma}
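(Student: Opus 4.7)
The plan is to reduce the assertion to the boundary parabolic case by deforming $C_3$ through hyperbolic elements of trace $<-2$ to a parabolic element of trace $-2$, while preserving the conjugacy classes of $C_1$ and $C_2$, and then reading off the answer from Lemma \ref{par-lemma1}. The key technical tool will be the path-lifting property of the character map $\chi$ given in Theorem \ref{lifting}.

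First, I would choose a monotone path $z\colon [0,1]\to\mathbb{R}$ with $z(0)=\mathrm{tr}(C_3)$, $z(1)=-2$, and $z(t)\le -2$ throughout, and consider the path $(2,2,z(t))$ in $\mathbb{R}^3$. When $z(t)<-2$, this triple is automatically outside $[-2,2]^3$; at $z(t)=-2$ one computes $\kappa(2,2,-2)=18\notin[-2,2]$; so the whole path lies in $\Delta$. By Theorem \ref{lifting}, it lifts to a continuous family $\phi_t=(C_1(t),C_2(t),C_3(t))\in\Omega_\mathbb{R}$ starting at $\phi_0=\phi$, with $\mathrm{tr}(C_i(t))=2$ for $i=1,2$ and $\mathrm{tr}(C_3(t))=z(t)$. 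Since $\kappa(\chi(\phi_t))\ge 18>2$ along the path, each $\phi_t$ is non-abelian and in particular $C_i(t)\ne I$; being nontrivial trace-$2$ parabolics, the $C_i(t)$ remain in their original $\mathrm{SL}(2,\mathbb{R})$-conjugacy classes by continuity, so $\sigma(C_i(t))=\sigma(C_i)$ for $i=1,2$ and all $t\in[0,1]$.

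Next, I would check that the Toledo invariant is constant along this path. For $t\in[0,1)$, $\phi_t$ is boundary hyperbolic, so $\mathrm{T}(\phi_t)=\tfrac{1}{2}\mathrm{sign}(\phi_t)\in\mathbb{Z}$; at $t=1$, $\phi_1$ is boundary parabolic with $\mathrm{T}(\phi_1)\in\{-1,0,1\}$ by Lemma \ref{par-lemma1}. Since $\mathrm{T}$ is continuous on the representation variety and takes only integer values along the whole path, it is constant, giving $\mathrm{T}(\phi_1)=\mathrm{T}(\phi_0)=1$. Applying Lemma \ref{par-lemma1} to $\phi_1\in\mathrm{Par}(\Sigma_{0,3})$, the triple $(\sigma(C_1),\sigma(C_2),\sigma(C_3(1)))$ must lie in the list
\[
\{(\sqrt{-1},\sqrt{-1},\sqrt{-1}),\;(-1,-1,\sqrt{-1}),\;(-1,\sqrt{-1},-1),\;(\sqrt{-1},-1,-1)\}.
\]
Because $C_3(1)$ has trace $-2$, its $\sigma$-value must lie in $\{\pm\sqrt{-1}\}$, which eliminates the last two entries. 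Hence only the first two triples survive, both of which satisfy $\sigma(C_1)=\sigma(C_2)\in\{-1,\sqrt{-1}\}$.

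The main obstacle I anticipate is the bookkeeping step of verifying that the lifted path indeed preserves the conjugacy classes of $C_1$ and $C_2$: one must rule out any accidental jump between the two trace-$2$ parabolic orbits $\Phi(1)^{\mathrm{SL}(2,\mathbb{R})}$ and $\Phi(-1)^{\mathrm{SL}(2,\mathbb{R})}$, and ensure that the fiber-ambiguity in Theorem \ref{lifting} (two $\mathrm{SL}(2,\mathbb{R})$-orbits when $\kappa\ne 2$) does not cause the sigma values to change. Once this is secured by the non-abelianness condition $\kappa>2$ together with continuity, everything else reduces to bookkeeping against the list in Lemma \ref{par-lemma1}.
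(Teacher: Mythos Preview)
Your strategy—deform $C_3$ through negative-trace hyperbolics to a parabolic element while keeping the conjugacy classes of $C_1,C_2$ fixed, then read off the answer from Lemma~\ref{par-lemma1}—is exactly the paper's intended (and unwritten) argument. There are, however, two concrete gaps in your implementation.

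First, you wrote the character path as $(2,2,z(t))$, but parabolic elements of $\mathrm{SL}(2,\mathbb{R})$ have trace $\pm 2$, so a priori $(\mathrm{tr}C_1,\mathrm{tr}C_2)$ can be any of $(\pm 2,\pm 2)$. When the two traces have opposite signs the endpoint $(\pm 2,\mp 2,-2)$ satisfies $\kappa=4+4+4-8-2=2$ and hence lies \emph{outside} $\Delta$; the path-lifting of Theorem~\ref{lifting} then does not carry you to $t=1$. You must either rule this case out beforehand (e.g.\ by the parity of the relative Euler class: $\mathrm{T}(\phi)\equiv \epsilon_1+\epsilon_2+\epsilon_3\pmod 2$ with $\epsilon_i=1$ iff $\mathrm{tr}C_i<0$, so $\mathrm{T}=1$ and $\epsilon_3=1$ force $\epsilon_1=\epsilon_2$), or else replace the abstract lifting by the explicit parametrization \eqref{par-eqn11}--\eqref{par-eqn4} and deform the off-diagonal parameter $c$ directly, which is what the paper does elsewhere and which works in all cases. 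Second, your claim that for $t\in[0,1)$ the representation $\phi_t$ is ``boundary hyperbolic'' with $\mathrm{T}(\phi_t)=\tfrac12\,\mathrm{sign}(\phi_t)$ is false: only $C_3(t)$ is hyperbolic, while $C_1(t),C_2(t)$ remain parabolic and can contribute nonzero rho invariant. The correct argument is simply that the boundary holonomy is non-elliptic throughout, so by Proposition~\ref{pre-prop1} the Toledo invariant equals the integer-valued relative Euler class; continuity then gives constancy.
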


In the case where two of boundary representations are hyperbolic and the other one is parabolic, we have an analogous characterization for the sigma values associated to boundary parabolic representation.
\begin{lemma}\label{par-lemma13}
Let $\phi=(C_1,C_2,C_3)\in \mathrm{Hom}(\pi_1(\Sigma_{0,3}),\mathrm{SL}(2,\mathbb{R}))$ be a representation such that $C_1$ is parabolic, and $C_2, C_3$ are hyperbolic with negative trace. If $\mathrm{T}(\phi)=1$, then $\sigma(C_1)=\sqrt{-1}$.
\end{lemma}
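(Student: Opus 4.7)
The plan is to deform $C_2$ continuously from hyperbolic with negative trace to parabolic with negative trace, while keeping $C_3(t) = (C_1 C_2(t))^{-1}$ hyperbolic with negative trace, and then apply Lemma~\ref{par-lemma14} at the endpoint. Since definition \eqref{par-sigma map} forces any parabolic element of trace $-2$ to have $\sigma \in \{\pm\sqrt{-1}\}$, intersecting this with the conclusion $\sigma(C_2(1)) \in \{-1, \sqrt{-1}\}$ of Lemma~\ref{par-lemma14} will single out $\sigma(C_2(1)) = \sqrt{-1}$, whence $\sigma(C_1) = \sigma(C_2(1)) = \sqrt{-1}$.

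The technical heart is constructing the deformation. After conjugating the whole representation we may assume $C_1 = -\alpha^+(s)$ for some $s \in \{\pm 1\}$; writing $C_2 = \begin{pmatrix} a & b \\ c & d \end{pmatrix}$, a direct calculation yields $\mathrm{tr}(C_1 C_2) = -\mathrm{tr}(C_2) - sc$. Thus the initial inequalities $\mathrm{tr}(C_2) < -2$ and $\mathrm{tr}(C_3) = \mathrm{tr}(C_1 C_2) < -2$ are equivalent to $sc > |\mathrm{tr}(C_2)| + 2$, and in particular $c \neq 0$. I would build $C_2(t) \in \mathrm{SL}(2,\mathbb{R})$ by interpolating the trace linearly, $\mathrm{tr}(C_2(t)) = (1-t)\mathrm{tr}(C_2) + t(-2)$, while keeping the $(2,1)$-entry $c$ constant and adjusting $a(t), d(t), b(t)$ so that $\det C_2(t) = 1$. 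Since $|\mathrm{tr}(C_2(t))|$ is monotonically decreasing and $sc$ is constant, the open condition $sc > |\mathrm{tr}(C_2(t))| + 2$ is automatically preserved, so $C_3(t)$ stays hyperbolic with negative trace; and because $c \neq 0$, the endpoint $C_2(1)$ is parabolic (not $-I$) with trace $-2$.

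Along this path every boundary holonomy is non-elliptic, so by Proposition~\ref{pre-prop1} the Toledo invariant is integer-valued and continuous, hence constant; in particular $\mathrm{T}(\phi_1) = 1$. Applying Lemma~\ref{par-lemma14} to $\phi_1 = (C_1, C_2(1), C_3(1))$ gives $\sigma(C_1) = \sigma(C_2(1)) \in \{-1, \sqrt{-1}\}$, and combining with $\sigma(C_2(1)) \in \{\pm\sqrt{-1}\}$ from $\mathrm{tr}(C_2(1)) = -2$ forces $\sigma(C_1) = \sqrt{-1}$. The only real obstacle is the explicit construction of $C_2(t)$ in the middle paragraph, namely guaranteeing that no $C_j(t)$ slips into the elliptic region at an intermediate time; but this is ensured by the linear monotonicity of $\mathrm{tr}(C_2(t))$ and the constancy of the product $sc$, which together keep both $C_2(t)$ and $C_3(t)$ in the trace $\leq -2$ locus throughout.
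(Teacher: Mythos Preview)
Your argument has a genuine gap at the very first step. You write ``after conjugating the whole representation we may assume $C_1 = -\alpha^+(s)$,'' but conjugation preserves trace, so this forces $\mathrm{tr}(C_1)=-2$. Nothing in the hypotheses rules out $\mathrm{tr}(C_1)=+2$, and your entire deformation hinges on this choice: with $C_1=\alpha^+(s)$ one gets $\mathrm{tr}(C_1C_2)=\mathrm{tr}(C_2)+sc$ instead, the inequality governing $C_3(t)$ becomes $sc<-2-\mathrm{tr}(C_2(t))$, and keeping $c$ fixed while pushing $\mathrm{tr}(C_2(t))\to -2$ need not keep $C_3(t)$ in the region $\mathrm{tr}<-2$ (e.g.\ $a+d=-3$, $sc=\tfrac12$ gives $\mathrm{tr}(C_1C_2(1))=-\tfrac32$, so $C_3(1)$ is elliptic). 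So the deformation you build is not available in the $\mathrm{tr}(C_1)=+2$ case.

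The gap is easily repaired: a parity count with the relative Euler class (lift each $C_i$ to $\mathrm{Hyp}_0\cup\mathrm{Par}_0\subset\widetilde G$ and project to $\mathrm{SL}(2,\mathbb R)$) shows that when $C_2,C_3$ have negative trace one has $\mathrm T(\phi)\equiv \mathrm{Re}(\sigma(C_1))+1\pmod 2$, so $\mathrm T(\phi)=1$ forces $\mathrm{tr}(C_1)=-2$. Once that is inserted, your argument is correct and in fact slightly slicker than the paper's. The paper instead keeps $\mathrm{tr}(C_2)$ fixed and varies $c$ so that $C_3$ (rather than $C_2$) degenerates to a parabolic with trace $-2$; applying Lemma~\ref{par-lemma14} to the cyclically permuted triple then yields $a_1\in\{-1,\sqrt{-1}\}$, after which the case $a_1=-1$ is eliminated by a separate $\sigma$-value computation. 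Your route avoids that last case split by arranging the degenerating element to have trace $-2$, so that $\sigma(C_2(1))\in\{\pm\sqrt{-1}\}$ intersects $\{-1,\sqrt{-1}\}$ in a single point.
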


\begin{proof}
Up to conjugation, we assume that
\begin{equation*}
C_1=\Phi(a_1),\quad C_2=\begin{pmatrix}
a & b \\
c & d
\end{pmatrix}\in \mathrm{SL}(2,\mathbb{R}),
\end{equation*}
with $a+d<-2$. Then we have
\begin{align*}
C_1C_2= (-1)^{\mathrm{Re}(a_1) + 1} \begin{pmatrix} a + s_1c & d s_1 + b \\ c &  d \end{pmatrix},
\end{align*}
and thus
\[
\mathrm{tr}(C_1C_2) = (-1)^{\mathrm{Re}(a_1) + 1}(a + d + c s_1).
\]

We now fix $\mathrm{tr}(C_2)=a+d$ and vary the parameter $c$ to deform $C_3$ continuously into a parabolic element with trace $-2$. By Lemma \ref{par-lemma14}, we have $a_1\in \{-1,\sqrt{-1}\}$.

If $a_1=-1$, then $s_1=1$. When $a+d+c=-2$, we have $c=-2-(a+d)>0$, implying $\sigma(C_1C_2)=\sqrt{-1}$. This corresponds to $a_3=\sigma(C_3)=-\sqrt{-1}\neq a_1$, and thus by Lemma \ref{par-lemma14}, $\mathrm{T}(\phi)\neq 1$. Therefore, we must have $a_1=\sqrt{-1}$.

Indeed, if $a_1=\sqrt{-1}$, then $s_1=1$. When $-(a+d+c)=-2$, we get $c=2-(a+d)>0$, yielding $\sigma(C_1C_2)=-\sqrt{-1}$. Thus, $a_3=\sigma(C_3)=\sqrt{-1}=a_1$. By the same argument as in Lemma \ref{par-lemma14}, the representation $\phi$ can be continuously deformed into a boundary parabolic representation with $a=(\sqrt{-1},\sqrt{-1},\sqrt{-1})$.
\end{proof}
\begin{prop}\label{prop1}
Let $\Sigma_{g,n}$ be a surface of genus $g\geq 1$ with $n$ boundary components, and let $\mathrm{Par}(\Sigma_{g,n})$ denote the space of boundary parabolic representations. Then
\begin{equation*}
\sigma(\mathrm{T}^{-1}(|\chi(\Sigma_{g,n})|)\cap\mathrm{Par}(\Sigma_{g,n}))= \{\mu_{i_1j_1}\circ\dots\circ \mu_{i_rj_r}(\sqrt{-1},\dots,\sqrt{-1}), 1\leq i_k,j_k\leq n\},
\end{equation*}
where $\mu_{ij}\in \mathrm{End}(\mathbb{C}^n)$ is defined by
\[
\mu_{ij}(a_1,\dots,a_n)=(a_1,\dots, -\sqrt{-1}\overline{a_i},\dots, -\sqrt{-1}\overline{a_j},\dots,a_n).
\]
\end{prop}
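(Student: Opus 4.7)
The plan is to prove the two inclusions $\supseteq$ and $\subseteq$ separately.

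For $\supseteq$, I first observe that $\mu_{ij}$ applied to $\phi$ replaces $\phi(c_i)$ and $\phi(c_j)$ by $-\phi(c_i)$ and $-\phi(c_j)$; this is a well-defined representation since the two minus signs cancel in the relation $\prod_k [a_k,b_k] \prod_l c_l = 1$, and $\pi \circ \mu_{ij}\phi = \pi \circ \phi$, together with $\mathrm{T}(\phi) = \mathrm{T}(\pi \circ \phi)$, gives $\mathrm{T}(\mu_{ij}\phi) = \mathrm{T}(\phi)$. Thus $\mathrm{T}^{-1}(|\chi(\Sigma_{g,n})|) \cap \mathrm{Par}(\Sigma_{g,n})$ is closed under the $\mu_{ij}$-action, and $\supseteq$ reduces to exhibiting a single representation $\phi_0$ with $\sigma(\phi_0) = (\sqrt{-1},\ldots,\sqrt{-1})$ and $\mathrm{T}(\phi_0) = |\chi(\Sigma_{g,n})|$. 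I construct such $\phi_0$ by gluing: fix a maximal dual tree decomposition $\Sigma_{g,n} = \bigcup_i M_i$, use on each pair of pants a Lemma \ref{par-lemma1}-type representative with parabolic $\partial\Sigma$-boundaries having sigma $\sqrt{-1}$, use on each punctured torus a representation with $\mathrm{T} = 1$ and boundary hyperbolic of trace $< -2$, and glue these along the interior hyperbolic edges of matched negative trace (realizable by the representation variety of $\Sigma_{0,3}$ with prescribed boundary traces).

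For $\subseteq$, take $\phi \in \mathrm{T}^{-1}(|\chi|) \cap \mathrm{Par}(\Sigma_{g,n})$. I apply Lemma \ref{par-lemma2} to deform $\phi$ into an interior hyperbolic $\phi'$ preserving boundary conjugacy classes, so $\sigma(\phi') = \sigma(\phi)$ and $\mathrm{T}(\phi') = |\chi|$. Fixing the maximal dual tree decomposition $\Sigma = \bigcup_{i=1}^{|\chi|} M_i$ afforded by that lemma, Toledo additivity combined with the Milnor--Wood bound $|\mathrm{T}(\phi'|_{M_i})| \leq |\chi(M_i)| = 1$ from \eqref{eqn25} forces $\mathrm{T}(\phi'|_{M_i}) = 1$ for every $i$. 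I then analyze each piece. For a punctured torus $M_i$, its boundary is interior hyperbolic with vanishing $\rho$, so $\mathrm{sign}(\phi'|_{M_i}) = 2\mathrm{T} = 2$; Table \ref{sign-table} then forces $\mathrm{tr}(\phi'(\partial M_i)) < -2$. Using the punctured-torus leaves as anchors, I propagate the negative-trace condition outward along the dual tree, invoking Lemma \ref{par-lemma10}'s parity constraint on trace signs for each pair-of-pants $M_j$ with $\mathrm{T}(\phi'|_{M_j}) = 1$, until every interior hyperbolic edge of $\phi'$ has $\mathrm{tr} < -2$. For each pair of pants $M_i$ with $k \in \{1,2\}$ parabolic $\partial\Sigma$-boundaries (the case $k = 3$ does not occur since $g \geq 1$), Lemma \ref{par-lemma13} (when $k = 1$) forces the parabolic sigma to be $\sqrt{-1}$, and Lemma \ref{par-lemma14} (when $k = 2$) forces the two parabolic sigmas to agree in $\{-1,\sqrt{-1}\}$. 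Collecting these constraints, every coordinate of $\sigma(\phi)$ lies in $\{-1,\sqrt{-1}\}$, and the $-1$ entries come in pairs (one pair per $k = 2$ piece), so $\sigma(\phi)$ lies in the $\mu_{ij}$-orbit of $(\sqrt{-1},\ldots,\sqrt{-1})$, proving $\subseteq$.

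The principal obstacle is the sign-propagation step above: Lemma \ref{par-lemma2} only guarantees hyperbolicity of interior edges, whereas Lemmas \ref{par-lemma13} and \ref{par-lemma14} are stated under the hypothesis of negative trace. I expect to resolve this either by a careful induction along the dual tree, starting from the punctured-torus leaves (whose interior edge is automatically forced to have negative trace by $\mathrm{T} = 1$ together with Table \ref{sign-table}) and using $\mathrm{T} = 1$ on each successive $M_i$ together with Lemma \ref{par-lemma10}'s parity to pin down the remaining interior-edge trace signs, or by a further deformation of $\phi'$ that rigidifies it into a Fuchsian-like configuration in which every interior hyperbolic edge has $\mathrm{tr} < -2$, while preserving $\sigma$ and $\mathrm{T}$ throughout.
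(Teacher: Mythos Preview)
Your overall architecture is sound and matches the paper: prove $\supseteq$ by $\mu_{ij}$-invariance plus one explicit maximal representation, and prove $\subseteq$ by deforming to an interior hyperbolic representation (Lemma~\ref{par-lemma2}), using additivity and the Milnor--Wood bound to force $\mathrm{T}=1$ on every piece, and then reading off the boundary sigma values via Lemmas~\ref{par-lemma13} and~\ref{par-lemma14}.

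The gap is exactly where you flagged it, and neither of your proposed resolutions works as stated. Your option~(a) fails because Lemma~\ref{par-lemma10} only says that for an all-hyperbolic pair of pants with $\mathrm{T}\neq 0$ the number of positive-trace boundaries is even; so if one interior edge is already known to have negative trace, the other two have the \emph{same} sign as each other, but that common sign can be either positive or negative. Positive-trace interior edges genuinely occur in maximal representations (the paper's proof explicitly encounters the case $\mathrm{tr}(D_1)>2$), so propagation cannot force all interior edges to have $\mathrm{tr}<-2$. Your option~(b) is not obviously available either: the connected components of $\mathrm{T}^{-1}(|\chi|)\cap\mathrm{Par}(\Sigma_{g,n})$ are not singletons within a fixed $\sigma$-class when $g\geq 1$, so there is no reason a $\sigma$-preserving deformation can reach a configuration with all interior traces negative.

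The paper's resolution is different and simpler: rather than trying to prove the interior edges already have negative trace, it \emph{applies the $\mu_{ij}$-operation to the representation itself} to force this. Concretely, the paper uses the specific caterpillar decomposition $\Sigma_{g,n}=\Sigma_{g,1}\cup_{c_0}\Sigma_{0,n+1}$ of Figure~\ref{fig:decomposition}, shows $\mathrm{tr}(C_0)<-2$, and then for each interior edge $D_i$ with $\mathrm{tr}(D_i)>2$ applies $\mu_{i,i+1}$ (multiplying $C_i$ and $C_{i+1}$ by $-I$), which flips the sign of $\mathrm{tr}(D_i)$ while leaving all other $D_j$ unchanged. After finitely many such moves one obtains $\phi'=\mu_{i_1j_1}\circ\cdots\circ\mu_{i_rj_r}(\phi)$ with every interior edge of negative trace; now Lemmas~\ref{par-lemma13} and~\ref{par-lemma14} apply directly and give $\sigma(\phi')\in\{(\sqrt{-1},\ldots,\sqrt{-1}),\ (\sqrt{-1},\ldots,\sqrt{-1},-1,-1)\}$, whence $\sigma(\phi)$ lies in the $\mu_{ij}$-orbit of $(\sqrt{-1},\ldots,\sqrt{-1})$. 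The case $n=1$ is handled separately by induction on $g$. This bypasses the sign-propagation problem entirely.
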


\begin{proof}
We first decompose $\Sigma_{g,n}$ into the connected sum of $\Sigma_{g,1}$ and $\Sigma_{0,n+1}$ as shown in Figure~\ref{fig:decomposition}. For any $\phi\in \mathrm{T}^{-1}(|\chi(\Sigma_{g,n})|)\cap\mathrm{Par}(\Sigma_{g,n})$, we have $\mathrm{T}(\phi)=|\chi(\Sigma_{g,n})|$, which is maximal. 

We first consider the case where $n \geq 2$. Since $g\geq 1$,
by Lemma \ref{par-lemma2}, we may assume that $C_0, D_1,\dots,D_{n-2}$ are all hyperbolic.

Since $\mathrm{T}(\phi)$ is maximal, the Toledo invariants of the restrictions of $\phi$ to both $\pi_1(\Sigma_{g,1})$ and each $\pi_1(P_i)$, $1\leq i\leq n-1$, are also maximal. If $\mathrm{tr}(C_0)>2$, it can be continuously deformed to the identity, reducing the Toledo invariant, contradicting maximality. Thus, $\mathrm{tr}(C_0)<-2$.

If $\mathrm{tr}(D_1)>2$, by multiplying $-I$ to the boundary representations $C_1$ and $C_2$, we obtain a new representation $\mu_{12}(\phi)$, where $\mu_{ij}$ acts by multiplying $-I$ at boundaries $i$ and $j$. This operation preserves both the Toledo invariant and preverse $\mathrm{Par}(\Sigma_{g,n})$, and transforms traces into negatives. Repeating this process, we may assume all $D_i$ have negative traces, obtaining a representation
\[
\phi'=\mu_{i_1j_1}\circ\dots\circ \mu_{i_rj_r}(\phi).
\]
By Lemmas \ref{par-lemma14} and \ref{par-lemma13}, we conclude
\[
\sigma(\phi')=(\sqrt{-1},\dots,\sqrt{-1},\sqrt{-1},\sqrt{-1}) \quad \text{or}\quad (\sqrt{-1},\dots,\sqrt{-1},-1,-1).
\]
Hence,
\[
\sigma(\phi')\in \mathcal{S}:= \{\mu_{i_1j_1}\circ\dots\circ \mu_{i_rj_r}(\sqrt{-1},\dots,\sqrt{-1})\},
\]
which implies $\sigma(\phi)\in\mathcal{S}$.

Conversely, for each  $P_i$, we can construct $\phi_i\in \mathrm{Hom}(\pi_1(P_i), \mathrm{SL}(2,\mathbb{R}))$ with maximal Toledo invariant, boundary invariants $\sigma(\phi_i(c_i))=\sigma(\phi_{n-1}(c_n))=\sqrt{-1}$, and negative hyperbolic traces elsewhere. Similarly, we construct a representation $\phi_0$ on $\pi_1(\Sigma_{g,1})$ with maximal Toledo invariant and $\mathrm{tr}(\phi_0(c_0))<-2$. After an appropriate deformation, these representations can be glued to form a representation $\phi\in \mathrm{T}^{-1}(|\chi(\Sigma_{g,n})|)\cap\mathrm{Par}(\Sigma_{g,n})$ with boundary invariant $(\sqrt{-1},\dots,\sqrt{-1})$. Thus, $(\sqrt{-1},\dots,\sqrt{-1})\in \sigma(\mathrm{T}^{-1}(|\chi(\Sigma_{g,n})|)\cap\mathrm{Par}(\Sigma_{g,n}))$. Since $\sigma$ commutes with $\mu_{ij}$ and
\[
\mu_{ij}(\mathrm{T}^{-1}(|\chi(\Sigma_{g,n})|)\cap\mathrm{Par}(\Sigma_{g,n}))= \mathrm{T}^{-1}(|\chi(\Sigma_{g,n})|)\cap\mathrm{Par}(\Sigma_{g,n}),
\]
we conclude $\mathcal{S}\subseteq \sigma(\mathrm{T}^{-1}(|\chi(\Sigma_{g,n})|)\cap\mathrm{Par}(\Sigma_{g,n}))$, completing the proof of $n\geq 2$.

Consider the case $n = 1$. If $\mathrm{T}(\phi) = |\chi(\Sigma_{g,1})| = 2g - 1$ is odd, then by the definition of the Toledo invariant, we have
\[
\mathrm{T}(\phi) \equiv \mathrm{Re}(a_1) + 1 \mod 2\mathbb{Z}.
\]
This implies that $\mathrm{Re}(a_1) = 0$, hence $a_1 = \pm \sqrt{-1}$.

For $g = 1$, the condition $\mathrm{T}(\phi) = 1$ implies that $\sigma(C_1) = \sqrt{-1}$ by Section \ref{sec=4.1.2}, where $C_1 := \phi(c_1)$.
We proceed by induction on $g$. Assume that for genus $\leq g - 1$, any boundary hyperbolic representation $\phi$ with $\mathrm{T}(\phi) = |\chi(\Sigma_{g-1,1})|$ satisfies $\sigma(\phi(c_1)) = \sqrt{-1}$.
By Lemma \ref{par-lemma2}, we may assume that both $C_2 := \phi([a_g, b_g])$ and 
\(C_3 := \prod_{i=1}^{g-1} [\phi(a_i), \phi(b_i)]\)
are hyperbolic elements. Since $\mathrm{T}(\phi)$ is maximal, the restrictions of $\phi$ to both the genus $g-1$ subsurface and the genus $1$ subsurface have maximal Toledo invariant. Hence, by the induction hypothesis, both $C_2$ and $C_3$ are hyperbolic with negative trace by Lemma \ref{lemma9}.
Now consider the triple $(C_3, C_2, C_1)$, which satisfies $C_3 C_2 C_1 = I$. Since $T((C_3, C_2, C_1)) = 1$, by Lemma~\ref{par-lemma13}, it follows that $\sigma(C_1) = \sqrt{-1}$.
This completes the proof.
\end{proof} 

\begin{cor}\label{cor1}
The number of connected components of boundary parabolic representations with maximal Toledo invariant is
\[
\#\{\mathrm{T}^{-1}(|\chi(\Sigma_{g,n})|)\cap\mathrm{Par}(\Sigma_{g,n})\} = 2^{2g+n-1}
\]
for $g\geq 1$.
\end{cor}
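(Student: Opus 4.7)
The plan is to combine Proposition \ref{prop1} (which identifies the image of the sigma map on $\mathrm{T}^{-1}(|\chi(\Sigma_{g,n})|)\cap\mathrm{Par}(\Sigma_{g,n})$) with a fiber-wise count of connected components via the sign invariants of the genus handles.

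First, I would analyze the image $\mathcal{S}$ of the sigma map. Each operation $\mu_{ij}$ swaps the entries $\sqrt{-1}\leftrightarrow -1$ at positions $i$ and $j$, since $-\sqrt{-1}\,\overline{\sqrt{-1}}=-1$ and $-\sqrt{-1}\,\overline{-1}=\sqrt{-1}$. Hence applying an arbitrary composition of such transpositions to $(\sqrt{-1},\dots,\sqrt{-1})$ produces exactly the $n$-tuples in $\{-1,\sqrt{-1}\}^n$ with an even number of entries equal to $-1$, so $|\mathcal{S}|=2^{n-1}$. Since $\sigma$ is locally constant on $\mathrm{Par}(\Sigma_{g,n})$, any two representations in different fibers of $\sigma$ lie in different connected components. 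It therefore suffices to prove that for each $a\in\mathcal{S}$,
\[
\#\bigl\{\sigma^{-1}(a)\cap\mathrm{T}^{-1}(|\chi(\Sigma_{g,n})|)\cap\mathrm{Par}(\Sigma_{g,n})\bigr\}=4^{g}.
\]

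Next, I would follow the strategy used in Lemmas \ref{lemma9} and \ref{lemma11} for the boundary hyperbolic/elliptic case. Fix $a\in\mathcal{S}$ and $\phi\in\sigma^{-1}(a)\cap\mathrm{T}^{-1}(|\chi|)$. By Lemma \ref{par-lemma2}, I may deform $\phi$, keeping each boundary in its parabolic conjugacy class, to an interior hyperbolic representation; in particular each commutator $[A_i,B_i]$ becomes hyperbolic. Maximality of the Toledo invariant forces the restriction on every punctured-torus piece to carry Toledo invariant $\pm 1$, which by the analysis in Section \ref{secn=1} (together with \eqref{eqn10} and \eqref{eqn8}) requires both $A_i$ and $B_i$ to be hyperbolic. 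The signs of $\mathrm{tr}(A_i),\mathrm{tr}(B_i)\in\{\pm\}$ then define a discrete invariant in $\{\pm 1\}^{2g}$ that is preserved under deformations in $\sigma^{-1}(a)\cap\mathrm{T}^{-1}(|\chi|)$ (the $\Lambda_{g,n}$-action). This yields at least $4^g$ distinct components within the fiber.

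The main obstacle is the upper bound: showing that within each choice of sign-pattern $(\epsilon_i,\delta_i)\in\{\pm 1\}^{2g}$, all representations in $\sigma^{-1}(a)\cap\mathrm{T}^{-1}(|\chi|)$ lie in a single component. For this I would run a connectedness argument parallel to Lemmas \ref{lemma23} and \ref{par-lemma8}: after deforming to interior hyperbolic form via Lemma \ref{par-lemma2}, the only remaining moduli are the conjugacy classes of the interior hyperbolic boundaries in the pants decomposition and the $\mathrm{GL}(2,\mathbb{R})$-orbits provided by the path-lifting property (Theorem \ref{lifting}). Since the exterior holonomies are rigid parabolics with fixed sigma, and the interior hyperbolic traces can be joined by paths in $\mathcal{R}'_h$ (Lemma \ref{par-lemma6}), one reduces to showing that, after fixing the $2g$ sign invariants, any two such representations can be connected while preserving $\sigma$. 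Combining this with $|\mathcal{S}|=2^{n-1}$ gives the total count $2^{n-1}\cdot 4^g=2^{2g+n-1}$, as asserted.
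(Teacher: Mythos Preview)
Your proposal is correct and follows essentially the same strategy as the paper: first count the $2^{n-1}$ admissible sigma tuples via Proposition~\ref{prop1}, then show each fiber splits into $4^g$ components via the trace-sign invariants of the handle generators $A_i,B_i$ (which are forced to be hyperbolic by maximality of the Toledo invariant on each $\Sigma_{1,1}$-piece). The paper organizes the $4^g$ count through the decomposition $\Sigma_{g,n}=\Sigma_{g,1}\cup_{c_0}\Sigma_{0,n+1}$, arguing that $C_0$ cannot become elliptic (via the signature--rho--Toledo relation) and then invoking the known component count for $\mathrm{Hyp}(\Sigma_{g,1})\cap\mathrm{T}^{-1}(2g-1)$ together with Lemma~\ref{par-lemma7} for the $\Sigma_{0,n+1}$ piece, but the underlying mechanism is the same as yours.
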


\begin{proof}
As illustrated in Figure~\ref{fig:decomposition}, by Lemma \ref{par-lemma2} and \ref{lemma9}, any representation $\phi\in \mathrm{T}^{-1}(|\chi(\Sigma_{g,n})|)\cap\mathrm{Par}(\Sigma_{g,n})$ can be continuously deformed into a representation such that $\phi(c_0) = C_0$ is hyperbolic with negative trace. Since the Toledo invariant of \( \phi|_{\pi_1(\Sigma_{g,1})} \) remains maximal under deformations, the boundary holonomy \( C_0 \) cannot be deformed to an elliptic element. Indeed, such a deformation would alter the rho invariant, while the signature remains constant and maximal. According to the relation \eqref{Sig-Tol}, this would force a change in the Toledo invariant, contradicting its maximality. Thus, $\phi|_{\pi_1(\Sigma_{g,1})}$ and $\gamma\phi|_{\pi_1(\Sigma_{g,1})}$ must lie in different components for any $\gamma\in \Lambda_{g,n}\backslash \{\mr{id}\}$. 

Hence, there are $4^g$ connected components of representations of $\pi_1(\Sigma_{g,1})$ with boundary hyperbolic holonomy and maximal Toledo invariant. On the other hand, since $C_0$ is hyperbolic, by Lemma \ref{par-lemma7}, the restriction $\phi|_{\pi_1(\Sigma_{0,n+1})}$ can be deformed into a boundary parabolic representation with $\sigma(C_0)=\sqrt{-1}$.

According to Proposition~\ref{prop1}, the number of such boundary-type invariants of the form
\[
\mu_{i_1j_1}\circ\cdots\circ \mu_{i_rj_r}(\sqrt{-1},\dots,\sqrt{-1}), \quad 1\leq i_k,j_k\leq n,
\]
is precisely $2^{n-1}$. Therefore, the total number of connected components of boundary parabolic representations with maximal Toledo invariant is
\[
\#\{\mathrm{T}^{-1}(|\chi(\Sigma_{g,n})|)\cap\mathrm{Par}(\Sigma_{g,n})\}=4^g \times 2^{n-1} = 2^{2g+n-1}.
\]
\end{proof}

\subsection{The case of $n$ is even and $g\geq 1$}
 Let $\Sigma_{g,n}$ be a surface of genus $g\geq 1$ with $n$ boundary components. Let $\mathrm{Hyp}(\Sigma_{g,n})$ denote the space of boundary hyperbolic representations in $\mathrm{Hom}(\pi_1(\Sigma_{g,n}), \mathrm{SL}(2,\mathbb{R}))$. For a hyperbolic element $C \in \mathrm{SL}(2, \mathbb{R})$, recall the sigma map is $\sigma(C)=\frac{1}{2}(1+\mr{sgn}(\mr{tr}(C)))$.

Now suppose $n$ is even. For any $\phi \in \mathrm{Par}(\Sigma_{g,n})$, let $\{\phi\}$ denote the connected component containing $\phi$. By Lemma \ref{par-lemma2}, we can deform $\phi$ to a representation $\phi' \in \mathrm{Par}(\Sigma_{g,n})$ such that each $\phi'(c_{2i-1}c_{2i})$ is hyperbolic, for $1 \leq i \leq n/2$. 

By Equation~\eqref{par-eqn4}, we have
\[
\mathrm{sgn}(\mathrm{tr}(\phi(c_{2i-1}c_{2i}))) = (-1)^{\mathrm{Re}(a_{2i-1} + a_{2i}) + 1} s_{2i-1} s_{2i},
\]
where $a_j = \sigma(\phi(c_j)) \in \{\pm1, \pm \sqrt{-1}\}$ and $s_j = \mathrm{Im}(a_j) - \mathrm{Re}(a_j)$.

Let $c'_{2i}$ denote a simple closed curve that separates $\{c_{2i-1}, c_{2i}\}$ from the remaining boundary generators of $\pi_1(\Sigma_{g,n})$. Let $\Sigma_{g, n/2}$ denote the subsurface of $\Sigma_{g,n}$ cut along the curves $c'_2, c'_4, \dots, c'_n$, see Figure \ref{fig:decomposition1}.
\begin{figure}[ht]
\centering
\includegraphics[width=0.6\textwidth]{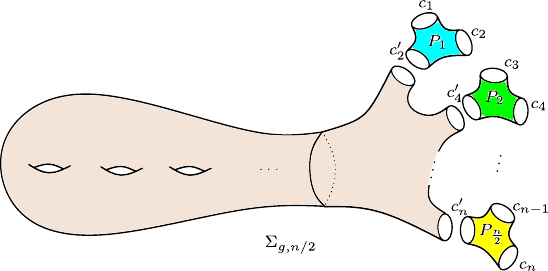}
\caption{Decomposition $\Sigma_{g,n} = \Sigma_{g,n/2} \cup \cup_{i=1}^{n/2}P_i$}
\label{fig:decomposition1}
\end{figure}

 Then the restriction $\psi := \phi'|_{\pi_1(\Sigma_{g,n/2})}$ is a representation in $\mathrm{Hyp}(\Sigma_{g,n/2})$ satisfying
\begin{equation}\label{par-eqn14}
\sigma(\psi(c'_{2i})) = \frac{1}{2} \left[ (-1)^{\mathrm{Re}(a_{2i-1} + a_{2i}) + 1} s_{2i-1} s_{2i} + 1 \right].
\end{equation}

Let $\{\psi\}$ denote the connected component of $\mathrm{Hyp}(\Sigma_{g,n/2})$ containing $\psi$. Define $\mathcal{P}_{g,n}$ and $\mathcal{H}_{g,n}$ to be the sets of connected components of $\mathrm{Par}(\Sigma_{g,n})$ and $\mathrm{Hyp}(\Sigma_{g,n})$, respectively. Denote by $\widehat{\mathcal{H}}_{g,n} := \mathcal{H}_{g,n}/\Lambda_{g,n}$ the set of connected components of boundary hyperbolic representations modulo the action of $\Lambda_{g,n}$. That is, two components $\{\psi_0\}, \{\psi_1\} \in \mathcal{H}_{g,n}$ are considered equivalent, written $\{\psi_0\} \sim \{\psi_1\}$, if there exists $\gamma \in \Lambda_{g,n}$ such that $\{\psi_1\} = \gamma \cdot \{\psi_0\}:=\{\gamma\psi_0\}$, where $\Lambda_{g, n} \subset \mathrm{Hom}\left(\pi_1\left(\Sigma_{g, n}\right),\{ \pm I\}\right)$ denote the subgroup generated by the representations $\alpha_1, \beta_1, \ldots, \alpha_g, \beta_g$, where each $\alpha_i$ (resp. $\beta_i$ ) sends $a_i$ (resp. $b_i$ ) to $-I$ and all other generators to the identity. We denote the equivalence class of $\{\psi\}$ in $\widehat{\mathcal{H}}_{g,n}$ by $\langle \psi \rangle$. Similarly, we can define $\widehat{\mc{P}}_{g,n}=\mc{P}_{g,n}/\Lambda_{g,n}$.

\begin{lemma}\label{par-lemma15}
The following map 
\[
\Phi: {\mathcal{P}}_{g,n} \to \widehat{\mathcal{H}}_{g,n/2}, \quad \{\phi\} \mapsto \left\langle \psi \right\rangle,
\]
is well-defined and surjective.
\end{lemma}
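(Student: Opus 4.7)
The plan is to verify well-definedness via two discrete invariants and then to establish surjectivity by a direct gluing construction.

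For well-definedness, I would show that both $\sigma(\psi)\in\{0,1\}^{n/2}$ and $\mathrm{T}(\psi)\in\mathbb{Z}$ depend only on the component $\{\phi\}$, not on the representative chosen in $\{\phi\}$ nor on the particular deformation supplied by Lemma \ref{par-lemma2}. The vector $\sigma(\psi)$ is given entrywise by formula \eqref{par-eqn14} in terms of $a_{2i-1}=\sigma(\phi(c_{2i-1}))$ and $a_{2i}=\sigma(\phi(c_{2i}))$, and these sigma-values of $\phi$ are locally constant on $\mathrm{Par}(\Sigma_{g,n})$, hence constant on $\{\phi\}$. The Toledo invariant is handled via additivity along the decomposition $\Sigma_{g,n}=\Sigma_{g,n/2}\cup\bigcup_{i=1}^{n/2}P_i$:
\[
\mathrm{T}(\phi)=\mathrm{T}(\psi)+\sum_{i=1}^{n/2}\mathrm{T}\bigl(\phi|_{\pi_1(P_i)}\bigr).
\]
Each pair-of-pants summand lies in $\{-1,0,1\}$ and is pinned down by Lemmas \ref{par-lemma1} and \ref{par-lemma13} once one knows the two parabolic sigma-values (fixed on $\{\phi\}$) and the sign of the trace of the hyperbolic element $\phi(c'_{2i})$, itself determined by \eqref{par-eqn14}. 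Since $\mathrm{T}(\phi)$ is also locally constant, it follows that $\mathrm{T}(\psi)$ is an invariant of $\{\phi\}$.

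Next, I would invoke the classification of components of $\mathrm{Hyp}(\Sigma_{g,n/2})$ to convert this data into a $\Lambda_{g,n/2}$-orbit. By Lemma \ref{par-lemma8}, the pair $(\sigma(\psi),\mathrm{T}(\psi))$ determines the component $\{\psi\}$ uniquely whenever $|\mathrm{T}(\psi)|<|\chi(\Sigma_{g,n/2})|$, while in the extremal range $|\mathrm{T}(\psi)|=|\chi(\Sigma_{g,n/2})|$ this pair determines $\{\psi\}$ only up to the action of $\Lambda_{g,n/2}$ on the pairs $(A_i,B_i)$. This residual ambiguity is exactly what is killed by passing to $\widehat{\mathcal{H}}_{g,n/2}=\mathcal{H}_{g,n/2}/\Lambda_{g,n/2}$, so $\langle\psi\rangle$ is a well-defined function of $\{\phi\}$.

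For surjectivity I would proceed by explicit construction. Given $\psi\in\mathrm{Hyp}(\Sigma_{g,n/2})$, write each $\psi(c'_{2i})^{-1}$ as a product $C_{2i-1}C_{2i}$ of two parabolic elements of $\mathrm{SL}(2,\mathbb{R})$; such a factorization is produced by the parametrization in \eqref{par-eqn11}–\eqref{par-eqn3} (either the case $c=0$ with appropriate sign choices, or the case $c=2$ with $\mu s=1$, realizes the required hyperbolic product). Declaring $\phi$ to coincide with $\psi$ on $\pi_1(\Sigma_{g,n/2})$ and with the chosen pair-of-pants representation on each attached $P_i$ yields $\phi\in\mathrm{Par}(\Sigma_{g,n})$ which is already interior hyperbolic, whence $\Phi(\{\phi\})=\langle\psi\rangle$.

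The principal obstacle is well-definedness: a priori, different deformations of a single $\phi$ produced by Lemma \ref{par-lemma2} could land in different components of $\mathrm{Hyp}(\Sigma_{g,n/2})$, and different representatives of $\{\phi\}$ could likewise produce different components. The argument above bypasses any direct comparison of deformation paths by expressing the two classifying invariants $\sigma$ and $\mathrm{T}$ entirely in terms of data that is locally constant on $\mathrm{Par}(\Sigma_{g,n})$; the $\Lambda_{g,n/2}$-quotient then absorbs precisely the remaining indeterminacy occurring at extremal Toledo values.
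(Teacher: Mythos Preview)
Your approach is essentially the paper's: both establish well-definedness by showing that $\sigma(\psi)$ and $\mathrm{T}(\psi)$ are determined by data that is locally constant on $\{\phi\}$, then appeal to the component classification of $\mathrm{Hyp}(\Sigma_{g,n/2})$ modulo $\Lambda_{g,n/2}$, and both prove surjectivity by direct gluing of pairs of pants onto the boundary of $\Sigma_{g,n/2}$. One citation needs repair, however: Lemmas~\ref{par-lemma1} and~\ref{par-lemma13} do not cover the pair-of-pants configuration you actually need (two parabolics and one hyperbolic of arbitrary trace sign --- Lemma~\ref{par-lemma1} treats three parabolics, Lemma~\ref{par-lemma13} one parabolic and two negative-trace hyperbolics). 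The paper fills this by proving, as an inline Claim, the formula $\mathrm{T}(\phi|_{P_i})=\tfrac{1}{2}(s_{2i-1}+s_{2i})$ (equation~\eqref{par-eqn16}): it deforms the hyperbolic third element monotonically to a parabolic one, keeping the Toledo invariant constant, and then reads off the value from Lemma~\ref{par-lemma1}. With that correction your argument goes through verbatim.
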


\begin{proof}
We first prove that $\Phi$ is well-defined. Suppose $\phi_0, \phi_1 \in \{\phi\}$ lie in the same connected component of $\mathrm{Par}(\Sigma_{g,n})$ and are connected by a path $\phi(t) \in \mathrm{Par}(\Sigma_{g,n})$, $t \in [0,1]$. 

By Lemma \ref{par-lemma2}, both $\phi_0$ and $\phi_1$ can be deformed to representations $\phi_0', \phi_1' \in \mathrm{Par}(\Sigma_{g,n})$ such that each of $\phi_0'(c_{2i-1}c_{2i})$ and $\phi_1'(c_{2i-1}c_{2i})$ is hyperbolic of the form described in \eqref{par-eqn14}. Let $\psi_0 := \phi_0'|_{\pi_1(\Sigma_{g,n/2})}$ and $\psi_1 := \phi_1'|_{\pi_1(\Sigma_{g,n/2})}$ be the restrictions to the subsurface. Then $\psi_0, \psi_1 \in \mathrm{Hyp}(\Sigma_{g,n/2})$ and satisfy $\sigma(\psi_0) = \sigma(\psi_1)$.

Moreover, since $\phi_0'$ and $\phi_1'$ lie in the same component $\{\phi\}$, we also have $\mathrm{T}(\phi_0') = \mathrm{T}(\phi_1')$. Now for each pair of pants $P_i = \{c_{2i-1}, c_{2i}, c_{2i}'\}$, we make the following claim:

\medskip
\noindent\textbf{Claim.} Let $\phi = (C_1, C_2, C_3) \in \mathrm{Hom}(\pi_1(P_i), \mathrm{SL}(2,\mathbb{R}))$ such that $C_1, C_2$ are parabolic and $C_3$ is hyperbolic. Then
\begin{equation}\label{par-eqn16}
\mathrm{T}(\phi) = \tfrac{1}{2}(s_1 + s_2),
\end{equation}
where $s_i := \mathrm{Im}(\sigma(C_i)) - \mathrm{Re}(\sigma(C_i))$ for $i = 1, 2$.
\begin{proof}[Proof of Claim]
Up to conjugation, we may assume that $C_1$ and $C_2$ are given by \eqref{par-eqn11}. By \eqref{par-eqn4}, the trace of $C_1C_2$ is
\begin{equation}\label{par-eqn12}
\mathrm{tr}(C_1C_2) = (-1)^{\mathrm{Re}(a_1)+\mathrm{Re}(a_2)}(2 - \mu s c^2), \quad \text{with } c > 0.
\end{equation}
The matrix product $C_1C_2$ is explicitly given by \eqref{par-eqn3}:
\begin{equation}
C_1C_2 = (-1)^{\mathrm{Re}(a_1)+\mathrm{Re}(a_2)}
\begin{pmatrix}
1 - \mu a c - \mu s c^2 & s + \mu a^2 + \mu s a c \\
- \mu c^2               & 1 + \mu a c
\end{pmatrix}.
\end{equation}

Suppose $\mathrm{Re}(a_1) + \mathrm{Re}(a_2)$ is even and $\mu = s$. Then
\[
\mathrm{tr}(C_1C_2) = 2 - c^2 < -2,
\]
which implies $c > 2$ and $C_1C_2 = C_3$ is hyperbolic with negative trace. We may deform $c$ monotonically decreasing to $c = 2$, so that $C_1C_2$ becomes parabolic with $\sigma(C_1C_2) = -\sqrt{-1} \mu$. In this case, the signature vector $a = \sigma(C_1,C_2,C_3)$ can be either $\pm (-1, -1, \sqrt{-1})$ or $\pm (\sqrt{-1}, \sqrt{-1}, \sqrt{-1})$. 
By Lemma \ref{par-lemma1} and the invariance of the Toledo invariant under deformation, we conclude $\mathrm{T}(\phi) = \pm 1$. In particular, the Toledo invariant is determined solely by $a_1$ and $a_2$.

Now consider the case $\mu = -s$. Then
\[
\mathrm{tr}(C_1C_2) = 2 + c^2 > 2,
\]
and $C_1C_2$ is hyperbolic with positive trace. We may deform $c$ decreasing to $0$, and in the limit $C_1C_2$ becomes a parabolic element with $\sigma(C_1C_2) = \pm 1$. Again, since the Toledo invariant remains unchanged throughout the deformation, by Lemma \ref{par-lemma1} we obtain $\mathrm{T}(\phi) = 0$.

Similarly, when $\mathrm{Re}(a_1) + \mathrm{Re}(a_2)$ is odd, the conclusions remain the same. When $\mu s = 1$, we again have $c > 2$, and as $c \to 2$ we obtain signature vectors $\pm (-1, \sqrt{-1}, -1)$ or $\pm (\sqrt{-1}, -1, -1)$, yielding $\mathrm{T}(\phi) = \pm 1$. When $\mu s = -1$, we get $\mathrm{T}(\phi) = 0$.
\end{proof}

\medskip
By the above claim, we conclude that for each $i$, the local Toledo invariants $\mathrm{T}(\phi_0'|_{\pi_1(P_i)}) = \mathrm{T}(\phi_1'|_{\pi_1(P_i)})$ are equal. Hence,
\[
\mathrm{T}(\psi_0) = \mathrm{T}(\phi_0') - \sum_{i=1}^{n/2} \mathrm{T}(\phi_0'|_{\pi_1(P_i)}) = \mathrm{T}(\phi_1') - \sum_{i=1}^{n/2} \mathrm{T}(\phi_1'|_{\pi_1(P_i)}) = \mathrm{T}(\psi_1).
\]
This shows that $\psi_0$ and $\psi_1$ have the same sigma invariant and the same Toledo invariant. Therefore, they lie in the same equivalence class in $\widehat{\mathcal{H}}_{g,n/2}$, i.e., $\langle \psi_0 \rangle = \langle \psi_1 \rangle$, and so $\Phi$ is well-defined.

Finally, we prove surjectivity. For any given class $\langle \psi \rangle \in \widehat{\mathcal{H}}_{g,n/2}$, one can glue a pair of pants along each boundary component of $\Sigma_{g,n/2}$ to obtain a surface $\Sigma_{g,n}$, and extend the representation by specifying a representation on each pair of pants that maps the two unglued boundaries to parabolic elements and the glued boundary to a hyperbolic element of the given type. This produces a preimage of $\langle \psi \rangle$ under $\Phi$.
\end{proof}

\begin{lemma}\label{par-lemma11}
The following map 
\[
\widehat{\Phi}: \widehat{\mathcal{P}}_{g,n} \to \widehat{\mathcal{H}}_{g,n/2}, \quad \left\langle\phi \right\rangle \mapsto \left\langle \psi \right\rangle,
\]
is well-defined and surjective. Moreover, each fiber $\widehat{\Phi}^{-1}(\left\langle\psi\right\rangle)$ consists of exactly $8^{n/2}$ elements.
\end{lemma}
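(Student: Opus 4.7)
The plan is to inherit well-definedness and surjectivity directly from $\Phi$ in Lemma~\ref{par-lemma15}, and then identify the fibers with admissible boundary sigma tuples modulo the $\Lambda_{g,n}$-action. Since $\Lambda_{g,n}$ acts trivially on each boundary generator $c_j$, it fixes the sigma tuple $\sigma(\phi)\in\{\pm 1,\pm\sqrt{-1}\}^n$; on the shared generators $a_i,b_i$ it restricts to $\Lambda_{g,n/2}\subset\Lambda_{g,n}$, so $\widehat{\Phi}$ factors through $\widehat{\mathcal{P}}_{g,n}$. Surjectivity is immediate from Lemma~\ref{par-lemma15}.

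For the enumeration, I would introduce the sign $\tau(a):=(-1)^{\mathrm{Re}(a)}(\mathrm{Im}(a)-\mathrm{Re}(a))$, which converts \eqref{par-eqn14} into $\tau(a_{2i-1})\tau(a_{2i})=1-2\sigma(\psi(c'_{2i}))$. Since $\tau$ takes each of the values $\pm 1$ on exactly two elements of $\{\pm 1,\pm\sqrt{-1}\}$, each prescribed hyperbolic type $\sigma(\psi(c'_{2i}))\in\{0,1\}$ admits precisely $8$ pairs $(a_{2i-1},a_{2i})$; multiplying over the $n/2$ pants gives $8^{n/2}$ admissible sigma tuples. Because $\Lambda_{g,n}$ fixes every such tuple, distinct sigma data already produce distinct classes in $\widehat{\mathcal{P}}_{g,n}$, giving $\#\widehat{\Phi}^{-1}(\langle\psi\rangle)\geq 8^{n/2}$.

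The matching upper bound is the step I expect to be the main obstacle. Given $\phi_0,\phi_1\in\mathrm{Par}(\Sigma_{g,n})$ sharing the same sigma tuple and both satisfying $\widehat{\Phi}(\langle\phi_j\rangle)=\langle\psi\rangle$, my plan is to show $\langle\phi_0\rangle=\langle\phi_1\rangle$ in three moves. First, apply Lemma~\ref{par-lemma2} to deform each $\phi_j$ to an interior hyperbolic representation with respect to the decomposition in Figure~\ref{fig:decomposition1}, with its restriction to each pant $P_i$ in the normal form \eqref{par-eqn11}. Second, observe that the analysis underlying the Claim in the proof of Lemma~\ref{par-lemma15}, together with \eqref{par-eqn16}, actually provides a continuous deformation, fixing all three boundary conjugacy classes, between any two pant representations carrying the same data $(a_{2i-1},a_{2i},\sigma(\psi(c'_{2i})))$; this confines the possible difference between $\phi_0$ and $\phi_1$ to the $\Sigma_{g,n/2}$-factor. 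Third, the equality $\langle\psi_0\rangle=\langle\psi_1\rangle$ in $\widehat{\mathcal{H}}_{g,n/2}$ supplies a $\gamma\in\Lambda_{g,n/2}\subset\Lambda_{g,n}$ and a path in $\mathrm{Hyp}(\Sigma_{g,n/2})$ from $\gamma\cdot\psi_0$ to $\psi_1$, which I would lift to a path in $\mathrm{Par}(\Sigma_{g,n})$ by the path-lifting technique used in Lemma~\ref{par-lemma222} and \cite[Corollary~7.8]{Gold1}, ensuring the pant monodromies stay in their conjugacy classes. Splicing the pant deformations with the lifted path produces a path from $\gamma\cdot\phi_0$ to $\phi_1$, which finishes the count.
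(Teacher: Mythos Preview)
Your proposal is correct and follows essentially the same approach as the paper: well-definedness by descent from $\Phi$, a count of $8$ compatible boundary-sigma pairs per pant for the lower bound (the paper phrases this as an explicit construction via \eqref{par-eqn15}, giving $4$ choices of $a_2$ times $2$ choices of trace sign, rather than your $\tau$-calculation), and extension of a path $\gamma\cdot\psi_0\leadsto\psi_1$ in $\mathrm{Hyp}(\Sigma_{g,n/2})$ across the pants for the upper bound. For that last step, the precise tool you want is Lemma~\ref{par-lemma3} (connectedness of pant representations with two parabolic and one hyperbolic boundary, following a prescribed hyperbolic path on the third component), not the Claim in Lemma~\ref{par-lemma15} (which only computes $\mathrm{T}$) or Lemma~\ref{par-lemma222}; the paper invokes this connectedness directly and then simply extends the path on $\Sigma_{g,n/2}$ to one on $\Sigma_{g,n}$.
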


\begin{proof}
For any $\{\phi\} \in \mathcal{P}_{g,n}$ and any $\gamma \in \Lambda_{g,n}$, we have $\Phi(\{\phi\}) = \Phi(\gamma\{\phi\})$. Hence, the map $\Phi$ descends to a composition
\[
\Phi = \widehat{\Phi} \circ p,
\]
where $p: \mathcal{P}_{g,n} \to \widehat{\mathcal{P}}_{g,n}$ is the natural projection, and $\widehat{\Phi}: \widehat{\mathcal{P}}_{g,n} \to \widehat{\mathcal{H}}_{g,n/2}$ is well-defined and surjective.

We now compute the cardinality of each fiber $\widehat{\Phi}^{-1}(\langle \psi \rangle)$. Consider a pair of pants $P$ and a representation $\phi = (C_1, C_2, C_3)$, where $C_1$ is a given hyperbolic element. Then there are exactly 8 distinct choices of $C_2$ and $C_3$ such that both $C_2$ and $C_3$ are parabolic and $C_1C_2C_3 = I$.

Indeed, up to conjugation, we may assume
\[
C_2 = \Phi(a_2), \quad \text{and} \quad C_1  = \begin{pmatrix} a & b \\ c & d \end{pmatrix} \in \mathrm{SL}(2, \mathbb{R}).
\]
Then
\begin{align*}
C_1C_2 
&= (-1)^{\mathrm{Re}(a_2) + 1} \begin{pmatrix} a & a s_2 + b \\ c & c s_2 + d \end{pmatrix},
\end{align*}
and thus
\begin{equation}\label{par-eqn15}
\mathrm{tr}(C_1C_2) = (-1)^{\mathrm{Re}(a_2) + 1}(a + d + c s_2),
\end{equation}
where $|a + d| > 2$. For any fixed $a_2$, we can choose
\[
c = \frac{(-1)^{\mathrm{Re}(a_2) + 1}(\pm 2) - (a + d)}{s_2} \neq 0
\]
so that $C_1C_2$ becomes parabolic. Hence, there are exactly 2 such choices for $C_3 = (C_1C_2)^{-1}$. Since there are 4 choices for $a_2 \in \{\pm 1, \pm \sqrt{-1}\}$ with fixed trace type, we obtain $4 \times 2 = 8$ possible choices for $(C_1, C_2)$ for a fixed $C_1$. Therefore, each fiber $\widehat{\Phi}^{-1}(\langle \psi \rangle)$ has at least $8^{n/2}$ elements.

Conversely, suppose $\phi_0, \phi_1 \in \mathrm{Par}(\Sigma_{g,n})$ with 
\[
\left\langle \phi_0 \right\rangle, \left\langle \phi_1 \right\rangle \in \widehat{\Phi}^{-1}(\left\langle \psi \right\rangle) \cap \sigma^{-1}(a).
\]
Then $\mathrm{T}(\phi_0) = \mathrm{T}(\phi_1)$ and $\sigma(\phi_0) = \sigma(\phi_1)$. Let $\psi_0 := \phi_0'|_{\pi_1(\Sigma_{g,n/2})}$ and $\psi_1 := \phi_1'|_{\pi_1(\Sigma_{g,n/2})}$. By assumption, there exists $\gamma \in \Lambda_{g,n}$ such that $\gamma \cdot \{\psi_0\} = \{\psi_1\}$. For representations on a pair of pants with one hyperbolic and two parabolic boundary components and with fixed conjugacy classes, the corresponding representation space is connected. Therefore, the path connecting $\gamma \cdot \psi_0$ and $\psi_1$ can be extended to a path connecting $\gamma \cdot \phi_0'$ and $\phi_1'$, which implies
\[
\gamma \cdot \{\phi_0\} = \gamma \cdot \{\phi_0'\} = \{\phi_1'\} = \{\phi_1\},
\]
and hence $\left\langle \phi_0 \right\rangle = \left\langle \phi_1 \right\rangle$. This shows that for fixed conjugacy classes on each boundary, each fiber $\widehat{\Phi}^{-1}(\left\langle \psi \right\rangle)$ contains exactly one element.

It follows that $\widehat{\Phi}^{-1}(\left\langle \psi \right\rangle)$ has precisely $8^{n/2}$ elements, completing the proof.
\end{proof}
By Lemma~\ref{par-lemma11}, we have
\begin{equation}\label{par-eqn22}
  \#\{\widehat{\mathcal{P}}_{g,n}\} = 8^{n/2} \cdot\{ \#\widehat{\mathcal{H}}_{g,n/2}\}
  = 8^{n/2} \cdot 2^{n/2 - 1}(4g + n - 3)
  = 2^{2n - 1}(4g + n - 3).
\end{equation}

\begin{lemma}\label{par-lemma3}
Let $\phi_0, \phi_1 \in \mathrm{Hom}(\pi_1(\Sigma_{0,3}), \mathrm{SL}(2,\mathbb{R}))$ be two representations such that $\phi_0(c_i)$ and $\phi_1(c_i)$ are parabolic and satisfy $\sigma(\phi_0(c_i)) = \sigma(\phi_1(c_i))$ for $i=1,2$. Suppose further that $\phi_0(c_3)$ and $\phi_1(c_3)$ are hyperbolic with the same sigma value. 
If $\gamma(t) \in \mathrm{SL}(2,\mathbb{R})$ is a hyperbolic path connecting $\phi_0(c_3)$ to $\phi_1(c_3)$,
then there exists a path  $\phi(t)$ connecting $\phi_0$ to $\phi_1$ such that $\phi(t)(c_3) = \gamma(t)$.
\end{lemma}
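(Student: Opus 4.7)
The approach is to mimic the proof of Lemma~\ref{par-lemma6}, using Goldman's path-lifting theorem (Theorem~\ref{lifting}) to produce the deformation and then analyzing how the sigma invariants behave under the lift.

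The first step is to lift the path in the character variety. Since $\gamma(t)$ is hyperbolic, the trace triple $\bigl(\mathrm{tr}(\phi_0(c_1)),\mathrm{tr}(\phi_0(c_2)),\mathrm{tr}(\gamma(t))\bigr)$ lies in $\Delta$ for every $t$, so by Theorem~\ref{lifting} we obtain a continuous lift $\phi'_t \in \Omega_{\mathbb{R}}$ with $\phi'_0 = \phi_0$ and prescribed character. Because $\phi'_t(c_3)$ and $\gamma(t)$ are hyperbolic with equal trace, a continuous conjugator $U_t \in \mathrm{SL}(2,\mathbb{R})$ with $U_0 = I$ and $\phi'_t(c_3) = U_t\gamma(t)U_t^{-1}$ exists, and setting $\phi(t) := U_t^{-1}\phi'_t U_t$ gives $\phi(t)(c_3) = \gamma(t)$ and $\phi(0) = \phi_0$.

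Next, I will verify that $\phi(t)\in\mathrm{Par}(\Sigma_{0,3})$ with constant sigma invariants on $c_1$ and $c_2$. The elements $\phi(t)(c_i)$ have the constant trace $\mathrm{tr}(\phi_0(c_i)) = \pm 2$, so each is either parabolic or equal to $\pm I$. The key observation is that $\phi(t_0)(c_1) = \epsilon I$ for some $\epsilon\in\{\pm1\}$ would force $\phi(t_0)(c_2)\phi(t_0)(c_3) = \epsilon I$, hence $\mathrm{tr}(\gamma(t_0)) = \pm\mathrm{tr}(\phi(t_0)(c_2)) = \pm 2$, contradicting the hyperbolicity of $\gamma(t_0)$; the same argument rules out $\phi(t)(c_2)=\pm I$. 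Hence each $\phi(t)(c_i)$ remains parabolic, and since the two parabolic conjugacy classes of a given trace are separated by $\pm I$, the sigma invariants $\sigma(\phi(t)(c_i))$ are constant and equal to $\sigma(\phi_0(c_i)) = \sigma(\phi_1(c_i))$.

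The main obstacle is reconciling $\phi(1)$ with $\phi_1$: they share character, sigma vector, and value at $c_3$, but Theorem~\ref{lifting} only places them in one of two possible $\mathrm{SL}(2,\mathbb{R})$-orbits above $\chi(\phi_1)$ (and one checks that $\kappa(\chi(\phi_1)) > 2$ here using the trace identity $\kappa=\mathrm{tr}(C_1)^2+\mathrm{tr}(C_2)^2+\mathrm{tr}(C_1C_2)^2-\mathrm{tr}(C_1)\mathrm{tr}(C_2)\mathrm{tr}(C_1C_2)-2$). The plan is to rule out the wrong orbit by observing that the two orbits are swapped by conjugation by $\mathrm{diag}(1,-1)\in\mathrm{GL}(2,\mathbb{R})$, an operation that flips parabolic sigma values while preserving hyperbolic ones; matching sigma vectors therefore forces $\phi(1)$ and $\phi_1$ to be simultaneously $\mathrm{SL}(2,\mathbb{R})$-conjugate by some $V$, and the condition $V\gamma(1)V^{-1}=\gamma(1)$ places $V$ in the centralizer of $\gamma(1)$. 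Since the centralizer of a hyperbolic element, modulo the trivially acting $\{\pm I\}$, is a connected one-parameter subgroup, $V$ can be deformed to the identity through the centralizer, yielding a path from $\phi(1)$ to $\phi_1$ along which $c_3$ stays pinned at $\gamma(1)$. Concatenating with the path constructed above produces the desired deformation.
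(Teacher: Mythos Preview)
Your proof is correct and follows a genuinely different route from the paper's. The paper proceeds by explicit normal forms: it conjugates so that $\phi_j(c_1)=\Phi(a_1)$ and $\phi_j(c_2)=P_j\Phi(a_2)P_j^{-1}$ with $(P_j)_{21}\geq 0$, then uses the trace identity $\mathrm{tr}(C_1C_2)=(-1)^{\mathrm{Re}(a_1+a_2)}(2-s_1s_2 c^2)$ with $c=(P)_{21}$ to solve $c(t)=\sqrt{F(t)}$ along $\gamma$ and interpolate the remaining entries of $P(t)$ from $P_0$ to $P_1$. This yields $\phi'(t)$ with $\phi'(0)=\phi_0$ and $\phi'(1)=\phi_1$ \emph{exactly}, so only a final conjugation is needed to pin $c_3$ to $\gamma(t)$. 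Your approach replaces this hands-on computation with Goldman's path-lifting plus the orbit dichotomy; the observation that the two $\mathrm{SL}(2,\mathbb{R})$-orbits above a given character are exchanged by $\mathrm{diag}(1,-1)$, which negates parabolic sigma values while preserving hyperbolic ones, is precisely the invariant needed to land in the correct orbit. The explicit method has the advantage of hitting $\phi_1$ on the nose, while yours is more conceptual but requires the extra centralizer step.

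One small remark: concatenating the centralizer path at the end gives a deformation whose $c_3$-value is a \emph{reparametrization} of $\gamma$ rather than $\gamma$ itself. To match the statement literally, perform the twist simultaneously: replace $\phi(t)$ by $h_t\phi(t)h_t^{-1}$ for a continuous section $h_t\in Z(\gamma(t))$ with $h_0=I$ and $h_1=V$ (up to sign). Such a section exists because the identity components $Z(\gamma(t))_0$ form a trivial $\mathbb{R}$-bundle over $[0,1]$. (The paper's proof has the analogous imprecision when it asserts one can choose $U(t)$ continuously with $U(0)=U(1)=I$.)
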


\begin{proof}
Up to conjugation, we may assume $\phi_0(c_1)=\Phi(a_1)$, where $a_i=\sigma(\phi_0(c_i))$, $i=1,2$. From equation~\eqref{par-eqn2}, we have
\begin{equation}\label{par-eqn1}
  \mathrm{tr}(\phi_0(c_1 c_2))=(-1)^{\mathrm{Re}(a_1)+\mathrm{Re}(a_2)}(2-s_1 s_2 P_{21}^2),
\end{equation}
where $s_i=\mathrm{Im}(a_i)-\mathrm{Re}(a_i)$, and $\phi_0(c_2)=P\Phi(a_2)P^{-1}$ with $P_{21}$ denoting the $(2,1)$-entry of $P$. Without loss of generality, we may assume $P_{21}\geq 0$, since $\phi_0(c_2)$ is invariant under the transformation $P\mapsto -P$. Similarly, assume $\phi_1(c_1)=\Phi(a_1)$, $\phi_1(c_2)=Q\Phi(a_2)Q^{-1}$ with $Q_{21}\geq 0$.

Let $\gamma(t)$ be a path connecting $\phi_0(c_3)$ to $\phi_1(c_3)$. Since $s_1 s_2=\pm 1 \neq 0$, define
\[
  F(t)=\frac{2-(-1)^{\mathrm{Re}(a_1)+\mathrm{Re}(a_2)}\mathrm{tr}(\gamma(t))}{s_1 s_2}.
\]
Then $F(0)=P_{21}^2\geq 0$ and $F(1)=Q_{21}^2\geq 0$. Since $|\mathrm{tr}(\gamma(t))| > 2$ (as $\gamma(t)$ is hyperbolic), $F(t)\neq 0$, and hence $F(t) > 0$ for all $t \in [0,1]$.

Define $c(t)=\sqrt{F(t)}$, so that $c(0)=P_{21}$ and $c(1)=Q_{21}$. Choose a path $P(t) \in \mathrm{SL}(2,\mathbb{R})$ connecting $P$ and $Q$ such that $P(t)_{21}=c(t)$. Define a path of representations by
\[
  \phi'(t)(c_1)=\Phi(a_1), \,
  \phi'(t)(c_2)=P(t)\Phi(a_2)P(t)^{-1}, \,
  \phi'(t)(c_3)=[\phi'(t)(c_1 c_2)]^{-1}.
\]

Then $\phi'(t)$ connects $\phi_0$ and $\phi_1$, and satisfies $\mathrm{tr}(\phi'(t)(c_3))=\mathrm{tr}(\gamma(t))$.

Since both $\phi_0(c_3)$ and $\phi_1(c_3)$ are hyperbolic, their traces are either both greater than $2$ or both less than $-2$. Thus, for each $t$, $\phi'(t)(c_3)$ and $\gamma(t)$ lie in the same conjugacy class. Therefore, there exists $U(t) \in \mathrm{SL}(2,\mathbb{R})$ such that
\[
\gamma(t) = U(t)\phi'(t)(c_3)U(t)^{-1}, \quad \text{with } U(0)=U(1)=I.
\]
Define the path of representations $\phi(t)$ by
\[
  \phi(t) := U(t)\phi'(t)U(t)^{-1}.
\]
Then $\phi(t)$ connects $\phi_0$ and $\phi_1$, and satisfies $\phi(t)(c_3) = \gamma(t)$, as desired.
\end{proof}

\begin{lemma}\label{par-lemma7}
Let $\phi \in \mathrm{Hom}(\pi_1(\Sigma_{g,n}), \mathrm{SL}(2,\mathbb{R}))$ be a representation such that $\phi(c_1), \dots, \phi(c_{n-1})$ are parabolic and $\phi(c_n)$ is hyperbolic, $g\geq 1$. Then there exists a deformation $\phi(t)$ of $\phi$ such that the conjugacy classes of $\phi(t)(c_1), \dots, \phi(t)(c_{n-1})$ remain fixed, the trace $\mathrm{tr}(\phi(t)(c_n))$ varies monotonically, and $\phi(1)(c_n)$ is parabolic with $\mathrm{tr}(\phi(1)(c_n))=2\mathrm{sgn}(\mathrm{tr}(\phi(c_n)))$. 
\end{lemma}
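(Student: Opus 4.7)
The plan is to localise the deformation to a pair of pants $P$ containing $c_n$ and to apply the path--lifting property of the trace map from Theorem \ref{lifting}, treating the degenerate case $g=n=1$ separately by a direct Fricke--coordinate argument. First, I would exploit the assumption $g\geq 1$ to choose a separating simple closed curve $c_0$ (together with an auxiliary curve $c_0'$ when $n=1$ and $g\geq 2$) so that $c_n$ sits in a pair of pants $P\subset\Sigma_{g,n}$ whose remaining boundaries are either interior separating curves or (when $n\geq 2$) the adjacent parabolic boundary $c_{n-1}$, with at least one interior separating curve present. Applying Lemma \ref{par-lemma2} to $\phi$, while keeping every original boundary conjugacy class fixed, produces a deformed representation $\phi_0$ sending each chosen interior curve to a hyperbolic element. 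Writing $\phi_0|_{\pi_1(P)}=(X,Y,Z)$ with $Z=\phi_0(c_n)$, we thus have $Z$ hyperbolic and at least one of $X,Y$ hyperbolic.

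Next, on $P$ I would apply the path--lifting property of Theorem \ref{lifting} to the straight--line trace path
\begin{equation*}
(x(t),y(t),z(t))=\bigl(\mathrm{tr}(X),\,\mathrm{tr}(Y),\,(1-t)\mathrm{tr}(Z)+2t\,\mathrm{sgn}(\mathrm{tr}(Z))\bigr),\qquad t\in[0,1].
\end{equation*}
Since at least one of $|\mathrm{tr}(X)|,|\mathrm{tr}(Y)|$ exceeds $2$, the triple $(x(t),y(t),z(t))$ lies outside $[-2,2]^3$ and hence remains in the admissible set $\Delta$ of Theorem \ref{lifting} for every $t\in[0,1]$. Lifting yields a path of representations of $\pi_1(P)$; conjugating fibrewise as in the proof of Lemma \ref{par-lemma6}, I may arrange that the two boundaries $X(t)$ and $Y(t)$ coincide with their original values $X$ and $Y$, while $Z(t)$ realises the prescribed trace. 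The monotonicity of $z(t)$ is built into the straight--line path, so $\mathrm{tr}(\phi_0(t)(c_n))$ varies monotonically from $\mathrm{tr}(Z)$ to $2\,\mathrm{sgn}(\mathrm{tr}(Z))$; moreover the non--abelianness clause of Theorem \ref{lifting} forces $[X,Y]\neq I$, hence $Z(1)=(XY)^{-1}\neq\pm I$, so $Z(1)$ is genuinely parabolic.

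To obtain a global deformation I then glue: because $X(t)$ and $Y(t)$ are held constant, the deformation on $P$ extends to $\Sigma_{g,n}$ by keeping the representation on the complement $\Sigma_{g,n}\setminus P$ unchanged, producing the required $\phi(t)\in\mathrm{Hom}(\pi_1(\Sigma_{g,n}),\mathrm{SL}(2,\mathbb{R}))$. The remaining case $g=n=1$ has no interior curve, so I would handle it directly using the Fricke parametrisation of the character variety of $\pi_1(\Sigma_{1,1})$ by $(x,y,z)=(\mathrm{tr}(A),\mathrm{tr}(B),\mathrm{tr}(AB))\in\mathbb{R}^3$, under which $\mathrm{tr}(\phi(c_1))=\kappa(x,y,z)$. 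It is then straightforward to exhibit a one--parameter polynomial path (for instance, fixing $x,y$ large with $|x|,|y|>2$ and varying $z$ linearly) along which $\kappa$ is strictly monotone from $\mathrm{tr}(\phi(c_1))$ to $\pm 2$; Theorem \ref{lifting} lifts this trace path to the desired deformation. The main technical obstacle is ensuring that the lifted path on $P$ can always be chosen so that the matrices $X(t),Y(t)$ are exactly the originals (rather than merely conjugate to them), which is precisely the fibrewise conjugation step from Lemma \ref{par-lemma6}; this step is available here because the traces of $X$ and $Y$ do not change along the prescribed path, so the global conjugating element $U(t)$ can be chosen continuously with $U(0)=U(1)=I$.
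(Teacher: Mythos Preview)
Your central claim---that after lifting the trace path on the pair of pants $P$ you can conjugate so that \emph{both} $X(t)$ and $Y(t)$ coincide with their original values---is impossible. On a pair of pants the relation $XYZ=I$ forces $Z=(XY)^{-1}$, so if $X(t)\equiv X$ and $Y(t)\equiv Y$ then $Z(t)\equiv Z$ and the trace of $Z$ cannot move. Lemma~\ref{par-lemma6} does something strictly weaker: it fixes one boundary exactly (the hyperbolic one) and keeps the other two only in their conjugacy classes. Consequently your gluing step, which ``keeps the representation on the complement $\Sigma_{g,n}\setminus P$ unchanged'' because both attaching curves are frozen, collapses.

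The repair is to fix only the interior hyperbolic curve, say $Y(t)\equiv Y$, by conjugation; then $X(t)$ stays in the conjugacy class of $X$ but moves as a matrix. If $X$ is the parabolic boundary $c_{n-1}$ (case $n\ge 2$) this is exactly what the statement requires, and the single piece of complement attached along $Y$ stays put. If $X$ is a second interior curve (case $n=1$, $g\ge 2$), write $X(t)=W(t)XW(t)^{-1}$ and conjugate the subsurface attached along $X$ by $W(t)$; this costs nothing on boundary conjugacy classes. This is precisely the mechanism the paper uses: it fixes $\phi(c_{n-1})$ exactly, lets the interior product $P=\phi(\prod[a_i,b_i]c_1\cdots c_{n-2})$ move inside its conjugacy class via an explicit one--parameter family (varying the $(2,1)$--entry while holding the trace), and then reglues by a conjugacy deformation. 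So your route via Theorem~\ref{lifting} is viable and arguably cleaner than the paper's matrix computation, but only after you drop the claim that two of the three boundaries can be simultaneously pinned. Two smaller points: Lemma~\ref{par-lemma2} needs non-abelianness on each piece of the decomposition, which you should secure by a preliminary small perturbation (as the paper does); and in the $g=n=1$ sketch you cannot simply ``fix $x,y$ large'', since $x=\mathrm{tr}(A)$ and $y=\mathrm{tr}(B)$ are handed to you by $\phi$ and need not satisfy $|x|,|y|>2$ when $\mathrm{tr}([A,B])>2$.
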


\begin{proof}
We first assume $n \geq 2$. By a small perturbation fixing the conjugacy classes of $\phi(c_1), \dots, \phi(c_{n-1})$, we may assume that $\phi(\Pi_{i=1}^g[a_i,b_i]c_1 \cdots c_{n-2})$ is either elliptic or hyperbolic. This can be done since $g\geq 1$. Up to conjugation, we may write
\[
\phi(c_{n-1}) = \Phi(a_{n-1}), \quad \phi(\Pi_{i=1}^g[a_i,b_i]c_1 \cdots c_{n-2}) = P = \begin{pmatrix} a & b \\ c & d \end{pmatrix} \in \mathrm{SL}(2, \mathbb{R}),
\]
where $a + d \neq \pm 2$. Then we have:
\[
\phi(c_n)^{-1} = \phi(\Pi_{i=1}^g[a_i,b_i]c_1 \cdots c_{n-1}) = (-1)^{\mathrm{Re}(a_{n-1}) + 1} \begin{pmatrix} a & a s_{n-1} + b \\ c & c s_{n-1} + d \end{pmatrix},
\]
and the trace becomes
\[
\mathrm{tr}(\phi(c_n)) = (-1)^{\mathrm{Re}(a_{n-1}) + 1}(a + d + c s_{n-1}).
\]
We can fix $a+d$ and deform $c$ such that $\mathrm{tr}(\phi(c_n))$ varies monotonically towards $2 \cdot \mathrm{sgn}(\mathrm{tr}(\phi(c_n)))$. In particular, if $a + d \in (-2, 2)$, then the sign of $c$ remains constant throughout. This yields a deformation $P(t)$ of $P$ fixing its conjugacy class and inducing a monotonic deformation of $\phi(c_n)$ into a parabolic element.
By a conjugacy deformation, we can then construct a family $\phi(t)$ such that the conjugacy classes of $\phi(t)(c_1), \dots, \phi(t)(c_{n-2}),  \phi(t)(c_{n-1})$ are fixed,  and $\mathrm{tr}(\phi(t)(c_n))$ varies monotonically to $\pm 2$, with $\phi(1)(c_n)$ parabolic.

Now consider the case $n = 1$. For $g = 1$, Section \ref{secn=1} shows that any hyperbolic $\phi(c_1)$ can be deformed into a parabolic element.
For $g \geq 2$, by Lemma \ref{par-lemma2}, we may assume that the product $\prod_{i=1}^{g-1} \phi([a_i,b_i]) = C_3$ and $\phi([a_g, b_g]) = C_2$ are hyperbolic. Then $C_3 C_2 C_1 = I$, where $C_1 := \phi(c_1)$.

In this case, we may assume $C_2 = \mathrm{diag}(\lambda, 1/\lambda)$ and $C_3 = \begin{pmatrix} a & b \\ c & d \end{pmatrix}$, so
\[
C_3 C_2 = \begin{pmatrix} \lambda a & \lambda^{-1} b \\ \lambda c & \lambda^{-1} d \end{pmatrix}, \quad \mathrm{tr}(C_3 C_2) = \lambda a + \lambda^{-1} d = (\lambda - \lambda^{-1})a + \lambda^{-1} \mathrm{tr}(C_3).
\]
Fixing $\mathrm{tr}(C_3)$, we may slightly perturb $c \neq 0$ and adjust $\lambda$ and $a$ so that $\mathrm{tr}(C_1) = \mathrm{tr}(C_3 C_2)$ monotonically deforms to $\pm 2$ while preserving sign. Since the conjugacy classes of $C_2$ and $C_3$ remain unchanged, this yields the desired deformation via conjugation.
This completes the proof.
\end{proof}

For any $\phi \in \mathrm{Par}(\Sigma_{g,n}) \cap \sigma^{-1}(a)$ with even $n \geq 2$, by \eqref{par-eqn16}, we have
\begin{equation}\label{par-eqn30}
  \mathrm{T}(\phi) = \mathrm{T}(\psi) + \tfrac{1}{2}s_a,
\end{equation}
where $s_a = \sum_{i=1}^n s_i \in 2\mathbb{Z}$, and $s_i = \mathrm{Im}(a_i) - \mathrm{Re}(a_i)$. Here, $\psi = \phi'|_{\pi_1(\Sigma_{g,n/2})}$ for some deformation $\phi'$ of $\phi$ in $\mathrm{Par}(\Sigma_{g,n})$ such that $\phi'(c_{2i}')$ is hyperbolic. Hence,
\begin{equation}\label{par-eqn20}
  \mathrm{T}(\phi) \in \left[-|\chi(\Sigma_{g,n/2})| + \tfrac{1}{2}s_a,\; |\chi(\Sigma_{g,n/2})| + \tfrac{1}{2}s_a\right] \cap \mathbb{Z}
\end{equation}
for any $\phi \in \mathrm{Par}(\Sigma_{g,n}) \cap \sigma^{-1}(a)$.

\begin{lemma}\label{lemma15}
Let $\phi \in \mathrm{Par}(\Sigma_{g,n})$ with even $n \geq 2$ and $g\geq 1$. Then $\phi$ and $\gamma\phi$ lie in the same connected component of $\mathrm{Par}(\Sigma_{g,n})$ for any $\gamma \in \Lambda_{g,n}$ if and only if $\mathrm{T}(\psi)$ is neither maximal nor minimal.
\end{lemma}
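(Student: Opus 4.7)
The plan is to transfer the question from $\mathrm{Par}(\Sigma_{g,n})$ to $\mathrm{Hyp}(\Sigma_{g,n/2})$ via the restriction construction of Lemma~\ref{par-lemma15}, and then invoke the $\Lambda_{g,n}$-action on connected components of boundary hyperbolic representations established in Section~\ref{sec-hyperbolic}. Since $\Lambda_{g,n}$ is generated by $\alpha_i,\beta_i$ which act trivially on every boundary generator $c_j$, an element $\gamma\in\Lambda_{g,n}$ commutes with the decomposition $\Sigma_{g,n}=\Sigma_{g,n/2}\cup\bigcup_{i=1}^{n/2}P_i$. Concretely, if $\phi'\in\mathrm{Par}(\Sigma_{g,n})$ is a deformation of $\phi$ with every product $\phi'(c_{2i-1}c_{2i})$ hyperbolic (guaranteed by Lemma~\ref{par-lemma2}) and $\psi:=\phi'|_{\pi_1(\Sigma_{g,n/2})}$, then $(\gamma\phi')|_{\pi_1(\Sigma_{g,n/2})}=\gamma\psi$, with identical boundary conjugacy classes at every $c_j$.

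\textbf{The ``if'' direction.} Assuming $\mathrm{T}(\psi)$ is neither maximal nor minimal, I would first note that $\psi$ and $\gamma\psi$ share the same Toledo invariant and the same $\sigma$-value. Lemma~\ref{par-lemma8} then places them in a single connected component of $\mathrm{Hyp}(\Sigma_{g,n/2})$; pick a connecting path $\psi(t)$. On each pair-of-pants $P_i$, the hyperbolic path $\psi(t)(c'_{2i})$ lifts by Lemma~\ref{par-lemma3} to a path of representations on $\pi_1(P_i)$ preserving the parabolic conjugacy classes of $\phi'(c_{2i-1})$ and $\phi'(c_{2i})$. Concatenating these $n/2$ local paths with $\psi(t)$ would yield a global path in $\mathrm{Par}(\Sigma_{g,n})$ terminating at some $\widetilde\phi$ with the same restriction $\gamma\psi$ on $\Sigma_{g,n/2}$ and the same boundary conjugacy classes as $\gamma\phi'$. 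The pair-of-pants connectedness observation used in the proof of Lemma~\ref{par-lemma11}---that fixing all three boundary conjugacy classes on a pair of pants with one hyperbolic and two parabolic boundaries yields a connected slice---would then connect $\widetilde\phi$ to $\gamma\phi'$ inside $\mathrm{Par}(\Sigma_{g,n})$, so $\{\phi\}=\{\gamma\phi\}$.

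\textbf{The ``only if'' direction.} Conversely, assume $\mathrm{T}(\psi)=\pm|\chi(\Sigma_{g,n/2})|$. Lemma~\ref{lemma9} together with the enumeration of extremal-Toledo components in Section~\ref{sec-hyperbolic} shows that $\Lambda_{g,n}$ acts freely on the $4^g$ components of $\mathrm{Hyp}(\Sigma_{g,n/2})$ within each extremal Toledo/sigma stratum, so $\{\gamma\psi\}\neq\{\psi\}$ in $\mathrm{Hyp}(\Sigma_{g,n/2})$ for every nontrivial $\gamma\in\Lambda_{g,n}$. Assuming for contradiction that $\{\phi\}=\{\gamma\phi\}$ in $\mathrm{Par}(\Sigma_{g,n})$, I would take a connecting path $\phi(t)$ and deform it, rel endpoints, into the open subset $U\subset\mathrm{Par}(\Sigma_{g,n})$ on which each product $\phi(c_{2i-1}c_{2i})$ is hyperbolic. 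Lemma~\ref{par-lemma2} furnishes local perturbations into $U$; a compactness-and-patching argument should stitch these together into a global path in $U$. Because the restriction $\phi\mapsto\phi|_{\pi_1(\Sigma_{g,n/2})}$ is continuous on $U$ and lands in $\mathrm{Hyp}(\Sigma_{g,n/2})$, the deformed path would yield a path from $\psi$ to $\gamma\psi$, contradicting the separation of components.

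\textbf{Main obstacle.} The hard part will be the deformation-into-$U$ step in the ``only if'' direction. Openness of $U$ and the local existence of perturbations from Lemma~\ref{par-lemma2} suggest a cover-and-patch procedure, but one must carry out the perturbations continuously in the parameter, remain inside $\mathrm{Par}(\Sigma_{g,n})$ throughout, and preserve the endpoints $\phi$ and $\gamma\phi$; this seems to require more than a naive application of the open cover lemma and is where I expect to invest most of the technical effort. By contrast, the algebraic identification $(\gamma\phi')|_{\pi_1(\Sigma_{g,n/2})}=\gamma\psi$ and the reduction to Lemmas~\ref{par-lemma8} and~\ref{par-lemma3} are largely formal.
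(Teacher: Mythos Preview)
Your ``if'' direction is essentially the paper's argument: connect $\psi$ to $\gamma\psi$ inside $\mathrm{Hyp}(\Sigma_{g,n/2})$ via Lemma~\ref{par-lemma8}, then extend over the pairs of pants using Lemma~\ref{par-lemma3}.

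Your ``only if'' direction, however, has a genuine gap, and the paper takes a different route. The step you flag as the main obstacle---deforming the connecting path $\phi(t)$ rel endpoints into the open set $U$ where all products $\phi(c_{2i-1}c_{2i})$ are hyperbolic---is not a mere technicality. By~\eqref{par-eqn4}, within a fixed $\sigma$-stratum the trace of $C_{2i-1}C_{2i}$ can range through the elliptic interval $(-2,2)$, so the complement of $U$ contains open regions and cannot be avoided by general position. Worse, the argument is close to assuming its conclusion: if $\{\psi\}\neq\{\gamma\psi\}$ in $\mathcal{H}_{g,n/2}$ (the case to be ruled out), then continuity of the restriction $U\to\mathrm{Hyp}(\Sigma_{g,n/2})$ forces any path from $\phi'$ to $\gamma\phi'$ to exit $U$, so there is no reason to expect such a homotopy to exist. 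Note also that Lemma~\ref{par-lemma15} only gives a well-defined map to $\widehat{\mathcal{H}}_{g,n/2}$, not to $\mathcal{H}_{g,n/2}$; the latter is Corollary~\ref{par-cor2}, which is proved \emph{after} and \emph{using} Lemma~\ref{lemma15}.

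The paper sidesteps the deformation-into-$U$ problem entirely with a direct Toledo-invariant argument. Taking $\gamma=\alpha_1$, a path from $\phi'$ to $\alpha_1\phi'$ forces $\phi'(a_1)$ to connect to $-\phi'(a_1)$, hence to pass through an elliptic element at some time $t_0$. By~\eqref{eqn8} and Table~\ref{sign-table}, the restriction $\phi'_{t_0}|_{\pi_1(\Sigma_{1,1})}$ then has Toledo invariant $0$. Additivity gives
\[
\mathrm{T}\bigl(\phi'_{t_0}|_{\pi_1(\Sigma_{g-1,n+1})}\bigr)=\mathrm{T}(\phi)=|\chi(\Sigma_{g,n/2})|+\tfrac{1}{2}s_a,
\]
and one checks (splitting into the cases $\phi'_{t_0}([a_1,b_1])=I$ and $\phi'_{t_0}([a_1,b_1])$ hyperbolic, the latter using Lemma~\ref{par-lemma7}) that this exceeds the Milnor--Wood bound~\eqref{par-eqn20} for the complementary subsurface, a contradiction. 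This argument never needs to control where the path lies relative to $U$.
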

\begin{proof}
\textbf{($\Rightarrow$)}  
Let $a := \sigma(\phi)$ and suppose that $\phi$ and $\gamma\phi$ lie in the same connected component for any $\gamma \in \Lambda_{g,n}$.

If $\mathrm{T}(\psi) = |\chi(\Sigma_{g,n/2})|$ is maximal, then by \eqref{par-eqn20}, 
\[
\mathrm{T}(\phi) = |\chi(\Sigma_{g,n/2})| + \tfrac{1}{2}s_a,
\]
which is the maximal value in the set $\mathrm{Par}(\Sigma_{g,n}) \cap \sigma^{-1}(a)$.

If $\phi'$ can be deformed to $\gamma\phi'$ for any $\gamma \in \Lambda_{g,n}$, then in particular, taking $\gamma = \alpha_1$ implies that $\phi'(a_1)$ can be deformed to an elliptic element (since $\alpha_1(\phi'(a_1)) = -\phi'(a_1)$). Consider a representation $\psi_1 = ((A, B), C)$ of $\pi_1(\Sigma_{1,1})$ such that either $A$ or $B$ is elliptic. By equation~\eqref{eqn8}, we know that $\mathrm{tr}(C) \geq 2$, with equality if and only if $C = I$. This implies $\mathrm{T}(\psi_1) = 0$ by Table \ref{sign-table}.

Therefore, when $\phi'(a_1)$ is deformed to an elliptic element, denote the resulting representation by $\phi'_{t_0}$. Then the restriction $\phi'_{t_0}|_{\pi_1(\Sigma_{1,1})}$ has Toledo invariant zero. Let $\Sigma_{g-1,n+1}$ denote the complement of $\Sigma_{1,1}$ in $\Sigma_{g,n}$. Then, by \eqref{par-eqn20},
\begin{equation}\label{par-eqn21}
\mathrm{T}(\phi'_{t_0}|_{\pi_1(\Sigma_{g-1,n+1})}) = \mathrm{T}(\phi) - \mathrm{T}(\phi'_{t_0}|_{\pi_1(\Sigma_{1,1})}) = |\chi(\Sigma_{g,n/2})| + \tfrac{1}{2}s_a.
\end{equation}

If $\phi'_{t_0}([a_1,b_1]) = I$, then $\phi'_{t_0}|_{\pi_1(\Sigma_{g-1,n})}$ can be viewed as a representation in $\mathrm{Par}(\Sigma_{g-1,n}) \cap \sigma^{-1}(a)$. By \eqref{par-eqn20}, its maximal Toledo invariant is 
\[
|\chi(\Sigma_{g-1,n/2})| + \tfrac{1}{2}s_a<|\chi(\Sigma_{g,n/2})| + \tfrac{1}{2}s_a,
\]
which contradicts \eqref{par-eqn21}.

If $\phi'_{t_0}([a_1,b_1])$ is hyperbolic, then by Lemma~\ref{par-lemma7}, we may deform the restriction $\phi'_{t_0}|_{\pi_1(\Sigma_{g-1,n+1})}$ to a representation in $\mathrm{Par}(\Sigma_{g,n}) \cap \sigma^{-1}(a_0, a)$ for some $a_0 \in \{\pm 1, \pm \sqrt{-1}\}$, such that the Toledo invariant remains unchanged. Furthermore, by Lemma \ref{par-lemma2}, we may assume that its restriction to each $c_{2i}'$, for $1 \leq i \leq n/2$, is hyperbolic. Then its maximal Toledo invariant is
\[
|\chi(\Sigma_{g-1,n/2+1})| + \tfrac{1}{2}s_a = |\chi(\Sigma_{g,n/2})| + \tfrac{1}{2}s_a - 1,
\]
again contradicting \eqref{par-eqn21}.
A similar argument rules out the case where $\mathrm{T}(\psi) = -|\chi(\Sigma_{g,n/2})|$.

\textbf{($\Leftarrow$)}  
If $\mathrm{T}(\psi)$ is neither maximal nor minimal, then by Lemma~\ref{par-lemma8}, $\psi$ and $\gamma\psi$ are connected by a path $\psi(t) \in \mathrm{Hyp}(\Sigma_{g,n/2})$. By Lemma \ref{par-lemma3}, we can extend $\psi(t)$ to obtain a path connecting $\phi'$ and $\gamma\phi'$. Hence, $\phi$ and $\gamma\phi$ lie in the same connected component.
\end{proof}

As a corollary, we obtain that 
\begin{cor}\label{par-cor2}
The following map 
\[
{\Phi}: {\mathcal{P}}_{g,n} \to{\mathcal{H}}_{g,n/2}, \quad \left\{\phi \right\} \mapsto \left\{ \psi \right\},
\]
is well-defined and surjective. Moreover, each fiber ${\Phi}^{-1}(\left\{\psi\right\})$ consists of exactly $8^{n/2}$ elements.
\end{cor}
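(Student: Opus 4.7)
The plan is to upgrade Lemma \ref{par-lemma11} (which gives fiber size $8^{n/2}$ for the induced map $\widehat{\Phi}: \widehat{\mathcal{P}}_{g,n} \to \widehat{\mathcal{H}}_{g,n/2}$) to the corresponding statement at the level of components, by carefully analyzing the $\Lambda_{g,n}$-equivariance on both sides and splitting into two cases according to whether $\mathrm{T}(\psi)$ is extreme.

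First, I would establish well-definedness of $\Phi: \mathcal{P}_{g,n} \to \mathcal{H}_{g,n/2}$. Given $\{\phi\}$ and two representatives $\phi_0, \phi_1 \in \{\phi\}$, I apply Lemma \ref{par-lemma2} to each to produce $\phi_0', \phi_1'$ with all boundary pair products $\phi_i'(c_{2k-1}c_{2k})$ hyperbolic, and set $\psi_i := \phi_i'|_{\pi_1(\Sigma_{g,n/2})}$. The proof of Lemma \ref{par-lemma15} already yields $\sigma(\psi_0) = \sigma(\psi_1)$ and $\mathrm{T}(\psi_0) = \mathrm{T}(\psi_1)$. If $|\mathrm{T}(\psi_0)| < |\chi(\Sigma_{g,n/2})|$, then Lemma \ref{par-lemma8} directly gives $\{\psi_0\} = \{\psi_1\}$. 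If $|\mathrm{T}(\psi_0)| = |\chi(\Sigma_{g,n/2})|$, then a path $\phi(t) \in \mathcal{P}_{g,n}$ connecting $\phi_0$ and $\phi_1$ can be modified by applying the deformation of Lemma \ref{par-lemma2} continuously in the parameter $t$, yielding a continuous family $\phi(t)'$ whose restrictions $\psi(t) \in \mathrm{Hyp}(\Sigma_{g,n/2})$ form a path from $\psi_0$ to $\psi_1$; hence $\{\psi_0\} = \{\psi_1\}$. Surjectivity is then obtained exactly as in Lemma \ref{par-lemma15}, by gluing a pair of pants with parabolic boundary to each boundary component of $\Sigma_{g,n/2}$.

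For the fiber count, I would exploit that $\Phi$ is $\Lambda_{g,n}$-equivariant (the generators $a_i, b_i$ are common to $\pi_1(\Sigma_{g,n})$ and $\pi_1(\Sigma_{g,n/2})$). Split into cases. Non-extreme: by Lemma \ref{lemma15}, the $\Lambda_{g,n}$-action on the non-extreme part of $\mathcal{P}_{g,n}$ is trivial on components, and similarly on the non-extreme part of $\mathcal{H}_{g,n/2}$ by Lemma \ref{par-lemma8} combined with the counting $\#\{\mathrm{Hyp}(\Sigma_{g,n/2}) \cap \mathrm{T}^{-1}(k)\} = 2^{n/2-1}$; thus the projections to $\widehat{\mathcal{P}}_{g,n}$ and $\widehat{\mathcal{H}}_{g,n/2}$ are bijections here, and Lemma \ref{par-lemma11} gives fiber size $8^{n/2}$. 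Extreme: by Lemma \ref{lemma15} and Corollary \ref{cor1} (resp. Lemma \ref{lemma11}), $\Lambda_{g,n}$ acts freely with $4^g$ orbits on both extreme pieces; the $\Lambda_{g,n}$-equivariance of $\Phi$ then bijects the $4^g$ orbit elements on each side, so each fiber again contains exactly $8^{n/2}$ elements.

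The main obstacle I anticipate is the extreme case of well-definedness: one must show that the deformation in Lemma \ref{par-lemma2} can be performed continuously in a parameter, so that a path in $\mathcal{P}_{g,n}$ yields a path in $\mathcal{H}_{g,n/2}$. The construction in Lemma \ref{par-lemma2} is explicit and geometric, so continuous dependence on parameters should hold, but one must verify that the hyperbolicity of each $\phi(t)'(c_{2k-1}c_{2k})$ can be maintained throughout; this requires a generic-position argument combined with the path-lifting property from Theorem \ref{lifting}.
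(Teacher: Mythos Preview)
Your overall architecture is sound, and your fiber-count argument via $\Lambda_{g,n}$-equivariance is correct and in fact a bit more transparent than the paper's, which simply repeats the proof of Lemma~\ref{par-lemma11} verbatim at the unquotiented level. The split into extreme and non-extreme Toledo invariant, with the non-extreme case handled by Lemma~\ref{par-lemma8} and the extreme case by free $\Lambda_{g,n}$-action on both sides, is a clean way to pass from $\widehat{\Phi}$ to $\Phi$.

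The genuine gap is exactly the one you flag: in the extreme case of well-definedness you propose to make the deformation of Lemma~\ref{par-lemma2} continuous in a path parameter $t$. This is not established anywhere in the paper, and it is not obviously true. The deformation in Lemma~\ref{par-lemma2} is built out of several successive applications of path-lifting (Theorem~\ref{lifting}) and ad hoc constructions on subsurfaces; none of these come with any uniqueness or continuous dependence on the initial data, so obtaining a \emph{continuous} family $\phi'(t)$ with each $\phi'(t)(c_{2k}')$ hyperbolic would require substantial additional work (and one would also have to perturb the path to keep all restrictions non-abelian throughout).

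The paper avoids this parametric issue entirely by a different and much shorter argument. From Lemma~\ref{par-lemma15} one already knows $\sigma(\psi_0)=\sigma(\psi_1)$ and $\mathrm{T}(\psi_0)=\mathrm{T}(\psi_1)$, so in the extreme case $\{\psi_0\}$ and $\{\psi_1\}$ lie in the same $\Lambda_{g,n}$-orbit, say $\{\psi_0\}=\gamma\cdot\{\psi_1\}$. The point is then to show $\gamma=\mathrm{id}$. The proof of Lemma~\ref{lemma15} (the $\Rightarrow$ direction) shows that when $\mathrm{T}(\psi)$ is extremal, along \emph{any} path in $\mathrm{Par}(\Sigma_{g,n})$ the images $\phi(t)(a_j)$ and $\phi(t)(b_j)$ can never become elliptic: if one did, the restriction to the corresponding $\Sigma_{1,1}$ would have Toledo invariant $0$, forcing the complementary restriction to violate the Milnor--Wood bound. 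Hence $\phi_0'(a_j),\phi_0'(b_j)$ and $\phi_1'(a_j),\phi_1'(b_j)$ lie in the closure of the \emph{same} hyperbolic region (same trace sign), which forces $\gamma=\mathrm{id}$ and gives $\{\psi_0\}=\{\psi_1\}$ directly. This replaces your parametric-deformation step by a simple obstruction argument already contained in the proof of Lemma~\ref{lemma15}.
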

\begin{proof}
	For any \( \phi_0, \phi_1 \in \{\phi\} \), the proof of Lemma~\ref{par-lemma15} shows that the corresponding representations \( \psi_0 \) and \( \psi_1 \in \mathrm{Hyp}(\Sigma_{g,n/2}) \) have the same Toledo invariant and the same sigma values. 

	If \( \mathrm{T}(\psi_0) = \mathrm{T}(\psi_1) \) is neither maximal nor minimal, then by Lemma~\ref{par-lemma8}, \( \psi_0 \) and \( \psi_1 \) lie in the same connected component. 

	If \( \mathrm{T}(\psi_0) = \mathrm{T}(\psi_1) \) is maximal or minimal, then \( \psi_0 \) and \( \gamma \cdot \psi_1 \) are connected for some \( \gamma \in \Lambda_{g,n} \) by the fact that there are $4^g$ different components  with the same sigma value for $\mathrm{Hyp}(\Sigma_{g,n/2})$ coming from the action of $\Lambda_{g,n}$. 
	From the proof of Lemma~\ref{lemma15}, we know that each of the elements \( \phi'_i(a_j) \) and \( \phi'_i(b_j) \), for \( i = 1,2 \) and \( 1 \leq j \leq g \), cannot be elliptic. Since \( \phi'_0 \) and \( \phi'_1 \) are connected, the corresponding elements \( \psi(a_j) = \phi'_i(a_j) \) and \( \psi(b_j) = \phi'_i(b_j) \) must lie in the closure of the same hyperbolic region. This implies that \( \gamma = \mathrm{id} \), and hence \( \psi_0 \) and \( \psi_1 \) are connected. 

	In conclusion, we have shown that \( \{\psi_0\} = \{\psi_1\} \), so \( \Phi \) is well-defined. The proofs of surjectivity and the degree of \( \Phi \) follow in the same manner as in the proof of Lemma~\ref{par-lemma11}.
\end{proof}

Hence, the number of connected components of the space $\mathrm{Par}(\Sigma_{g,n})$ of boundary-parabolic representations, assuming $n \geq 2$ is even and $g\geq 1$, is given by 
\begin{align*}
\begin{split}
  \#\{\mathrm{Par}(\Sigma_{g,n})\} =8^{n/2}\cdot \#\{\mr{Hyp}(\Sigma_{g,n/2})\}= 2^{2g+2n}+2^{2n - 1}(4g + n - 5).
 \end{split}
\end{align*}
\begin{thm}\label{par-thm1}
The number of connected components of the space of boundary-parabolic representations with an even number of boundary components is given by
\[
\#\{\mathrm{Par}(\Sigma_{g,n})\} =
2^{2g+2n}+2^{2n - 1}(4g + n - 5)
\]
for $g\geq 1$.
\end{thm}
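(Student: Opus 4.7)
The plan is to read the formula directly off Corollary~\ref{par-cor2}, which is the structural heart of the matter: it asserts that the assignment
\[
\Phi\colon \mathcal{P}_{g,n}\longrightarrow \mathcal{H}_{g,n/2},\qquad \{\phi\}\mapsto\{\psi\},
\]
where $\psi=\phi'|_{\pi_1(\Sigma_{g,n/2})}$ is obtained by deforming $\phi$ into a representation $\phi'\in\mathrm{Par}(\Sigma_{g,n})$ with all $\phi'(c_{2i-1}c_{2i})$ hyperbolic (possible by Lemma~\ref{par-lemma2}), is well defined, surjective, and has every fiber of cardinality exactly $8^{n/2}$. Granting this, the count is a one-line arithmetic consequence and I would simply verify the numerics.

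First I would recall the fiber size. Each pair of pants $P_i$ with a fixed hyperbolic boundary element $C_{2i}'$ admits, by the enumeration carried out in the proof of Lemma~\ref{par-lemma11} (four choices of the sigma value $a\in\{\pm1,\pm\sqrt{-1}\}$ for one parabolic boundary, times two choices for the sign of the transverse parameter $c$), exactly $8$ extensions to a representation with the other two boundaries parabolic; taking the product over the $n/2$ pairs gives $8^{n/2}$. Second, I would invoke Theorem~\ref{main thm3} with $n$ replaced by $n/2$:
\[
\#\{\mathrm{Hyp}(\Sigma_{g,n/2})\}=2^{2g+n/2}+2^{n/2-1}(4g+n-5).
\]

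Finally I would combine these two facts:
\begin{align*}
\#\{\mathrm{Par}(\Sigma_{g,n})\}
 &= 8^{n/2}\cdot\#\{\mathrm{Hyp}(\Sigma_{g,n/2})\} \\
 &= 2^{3n/2}\Bigl(2^{2g+n/2}+2^{n/2-1}(4g+n-5)\Bigr) \\
 &= 2^{2g+2n}+2^{2n-1}(4g+n-5),
\end{align*}
which is the claimed formula. Since the substantive content, namely the well-definedness and surjectivity of $\Phi$ and the exact degree~$8^{n/2}$, has already been established in Corollary~\ref{par-cor2} (relying in turn on Lemma~\ref{par-lemma2}, Lemma~\ref{par-lemma3} for path lifting across hyperbolic boundaries, and Lemma~\ref{lemma15} for the $\Lambda_{g,n}$-orbit analysis), there is no remaining obstacle; the only thing to double-check is the bookkeeping in the exponent arithmetic so that the Milnor--Wood bound $|\mathrm{T}(\psi)|\leq|\chi(\Sigma_{g,n/2})|=2g+n/2-2$ is correctly reflected in the $4g+n-5$ term and that the assumption $g\geq 1$ (needed to invoke Lemma~\ref{par-lemma2} and hence the whole deformation scheme) is explicitly recorded.
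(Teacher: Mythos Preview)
Your proposal is correct and follows exactly the paper's own argument: the paper also derives the formula in one line from Corollary~\ref{par-cor2} by multiplying $\#\{\mathrm{Hyp}(\Sigma_{g,n/2})\}=2^{2g+n/2}+2^{n/2-1}(4g+n-5)$ (Theorem~\ref{main thm3} with $n\mapsto n/2$) by the fiber size $8^{n/2}$. Your bookkeeping and the supporting references (Lemmas~\ref{par-lemma2}, \ref{par-lemma3}, \ref{par-lemma11}, \ref{lemma15}) match the paper's dependencies precisely.
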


\subsection{The case of $n$ is odd and $g\geq 1$}

In this section, we consider the connected components of the space of boundary-parabolic representations in the case where the number of boundary components $n$ is odd.

\subsubsection{Connected components of $\mathrm{HP}(\Sigma_{g,n})$}

Let $\mathrm{HP}(\Sigma_{g,n})$ denote the space of representations $\phi \in \mathrm{Hom}(\pi_1(\Sigma_{g,n}), \mathrm{SL}(2,\mathbb{R}))$ such that $\phi(c_n)$ is parabolic and $\phi(c_i)$ is hyperbolic for $1 \leq i \leq n-1$. Here, $n \geq 1$ is not necessarily assumed to be odd.

\begin{lemma}\label{par-lemma20}
For any $a = (a_1, a_2, a_3) \in \{0,1\} \times \{0,1\} \times \{\pm 1, \pm \sqrt{-1}\}$, the subset $\sigma^{-1}(a) \subset \mathrm{HP}(\Sigma_{0,3})$ is non-empty and connected. In particular, the space $\mathrm{HP}(\Sigma_{0,3})$ has $16$ connected components.
\end{lemma}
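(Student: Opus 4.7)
The plan mirrors the proof of Lemma \ref{lemma0}, where the analogous statement is established for the pair of pants with three hyperbolic boundaries. The argument splits into three parts: non-emptiness of each fibre $\sigma^{-1}(a)$, its connectedness, and the final count $2\cdot 2\cdot 4=16$.

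For non-emptiness, given $a=(a_1,a_2,a_3)$, I will construct a representative as in \eqref{hyp-eqn1}, namely
\[
C_1=(-1)^{a_1+1}\begin{pmatrix}\lambda&0\\0&1/\lambda\end{pmatrix},\qquad
C_2=(-1)^{a_2+1}\begin{pmatrix}a&b\\c&d\end{pmatrix},
\]
with $\lambda\in(0,1)$ and $a+d>2$. Using \eqref{hyp-eqn4.0}, the product $C_1C_2$ has trace $(-1)^{a_1+a_2}(\lambda a+d/\lambda)$. As in \eqref{hyp-eqn4}, this expression depends genuinely on $\lambda$ once $a+d$ is fixed, so I can arrange it to equal $(-1)^{\mathrm{Re}(a_3)+1}\cdot 2$, which is precisely the condition that $C_3=(C_1C_2)^{-1}$ be parabolic with the prescribed sign of trace. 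The remaining freedom in $c$ (and its sign) allows the off-diagonal entries of $C_1C_2$, hence the full $\sigma$-value of $C_3$, to realize each of $\pm 1,\pm\sqrt{-1}$.

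For connectedness, let $\phi_0,\phi_1\in\sigma^{-1}(a)$. After conjugation I put both $\phi_i(c_1)$ in diagonal form $(-1)^{a_1+1}\mathrm{diag}(\lambda_i,1/\lambda_i)$ with $\lambda_i\in(0,1)$ and write $\phi_i(c_2)=(-1)^{a_2+1}P_i$ with $P_i=\begin{pmatrix}a_i&b_i\\c_i&d_i\end{pmatrix}$. The key observation is that the $(2,1)$-entry of $C_1C_2$ equals $(-1)^{a_1+a_2}c/\lambda$, while the sigma value $a_3$ fixes, via the normal form $\Phi(a_3)$, the sign of this entry (and, when $c=0$, the sign of $\lambda b$). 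Therefore within $\sigma^{-1}(a)$ the sign of $c$ is determined by $a$. I then deform linearly the tuple $(\lambda,a,b,c,d)$ from the $\phi_0$-data to the $\phi_1$-data, adjusting $b$ or $d$ along the way to preserve both $ad-bc=1$ and the parabolic trace equation for $C_3$; since the sign of $c$ does not change, the interpolant remains in $\sigma^{-1}(a)$. A minor complication when $c_0=0$ or $c_1=0$ (the reducible locus) is handled by first perturbing slightly into the generic $c\neq 0$ stratum without altering $\sigma(C_3)$. The explicit count $|\{0,1\}^2\times\{\pm 1,\pm\sqrt{-1}\}|=16$ then gives the total number of components.

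The main technical obstacle is ensuring that the sign of $c$ is genuinely an invariant of $\sigma^{-1}(a)$, i.e.\ that no path in $\sigma^{-1}(a)$ can force $c$ to cross zero and reappear with the opposite sign while preserving $\sigma(C_3)$. This is where the parabolic condition plays a decisive role and distinguishes the present setting from the all-hyperbolic case of Lemma \ref{lemma0}: the trace of $C_1C_2$ is pinned at $\pm 2$, so crossing $c=0$ would either force the representation out of $\sigma^{-1}(a)$ or into an abelian configuration incompatible with the prescribed $\sigma$-data. Once this rigidity is verified, the linear interpolation argument closes the proof.
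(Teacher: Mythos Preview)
Your overall strategy matches the paper's: normalize one boundary element, parametrize the second by a generic matrix, and analyze the product to control the third. The key difference is \emph{which} element you normalize. You put the hyperbolic $C_1$ in diagonal form; the paper instead fixes the parabolic $C_3=\Phi(a_3)$ and writes $C_2=(-1)^{a_2+1}\begin{pmatrix}a&b\\c&d\end{pmatrix}$ with $a+d>2$. This choice matters: in the paper's coordinates $\mathrm{tr}(C_2C_3)=(-1)^{a_2+\mathrm{Re}(a_3)}(a+d+cs_3)$ is \emph{affine} in $(a,d,c)$, so the hyperbolicity of $C_1=(C_2C_3)^{-1}$ with prescribed trace sign is an affine half-space condition. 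Linear interpolation of $(a,d,c)$ (solving $b$ from $\det=1$) then manifestly preserves all constraints. The paper splits on the parity of $a_1+a_2+\mathrm{Re}(a_3)$: in the odd case $c=0$ is allowed and everything is deformed to a common upper-triangular model; in the even case one reads off $cs_3<-(a+d)-2<0$, forcing $\mathrm{sgn}(c)$.

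In your coordinates the parabolic condition for $C_3$ reads $\lambda a+d/\lambda=\pm 2$, which is nonlinear in $(\lambda,a,d)$. If you interpolate $(\lambda,a,c)$ linearly and solve $d$ from this equation, you still owe a verification that $a+d>2$ (i.e.\ $C_2$ stays hyperbolic) along the path; since $d=\lambda(\pm 2-\lambda a)$ depends nonlinearly on the interpolated data, this is not automatic. The argument can be completed, but not by the bare ``deform linearly'' you describe.

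A minor point on your closing paragraph: the claim that crossing $c=0$ produces ``an abelian configuration incompatible with the prescribed $\sigma$-data'' is inaccurate in the odd-parity case. There $c=0$ is a legitimate non-abelian point of $\sigma^{-1}(a)$ (necessarily $b\neq 0$, since otherwise $C_3$ would be diagonal), and the paper in fact uses such points as its deformation target. What is true, and what you actually need, is that for $c\neq 0$ the sign of $c$ is determined by $a$; once you have perturbed off the $c=0$ locus this suffices.
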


\begin{proof}
For any $a = (a_1, a_2, a_3) \in \{0,1\} \times \{0,1\} \times \{\pm 1, \pm \sqrt{-1}\}$, define
\[
C_2 = (-1)^{a_2 + 1} \begin{pmatrix} a & b \\ c & d \end{pmatrix}, \quad 
C_3 = (-1)^{\mathrm{Re}(a_3) + 1} \begin{pmatrix} 1 & s_3 \\ 0 & 1 \end{pmatrix},
\]
where $a + d > 2$, and $s_3 = \mathrm{Im}(a_3) - \mathrm{Re}(a_3) \in \{\pm 1\}$.

Then the product becomes
\[
C_2 C_3 = (-1)^{a_2 + \mathrm{Re}(a_3)} \begin{pmatrix}
a & a s_3 + b \\
c & c s_3 + d
\end{pmatrix},
\]
and thus
\[
\mathrm{tr}(C_2 C_3) = (-1)^{a_2 + \mathrm{Re}(a_3)} (a + d + c s_3).
\]
Suppose $c\neq 0$.
If $a_1 + a_2 + \mathrm{Re}(a_3)$ is even, then $\mathrm{tr}(C_2 C_3) = (-1)^{a_1}(a + d + c s_3)$. One can choose parameters $a, d, c$ such that $a + d > 2$ and $a + d + c s_3 < -2$. Hence $C_1 = (C_2 C_3)^{-1}$ is hyperbolic and satisfies $\sigma(C_1) = a_1$. In particular, in this case, we have  
\begin{equation}\label{par-eqn33}
  cs_3 < -(a+d) - 2 < 0,
\end{equation}
which implies that the sign of $c$ is uniquely determined by $s_3$.

Similarly, when $a_1 + a_2 + \mathrm{Re}(a_3)$ is odd, we can choose $a, d, c$ such that $a + d + c s_3 > 2$ and again $C_1$ is hyperbolic with the correct sigma value.
In both cases, we conclude that $\sigma^{-1}(a)$ is non-empty. 

For any $\phi \in \sigma^{-1}(a)$, if the parameter $c = 0$, then we have
\[
\mathrm{tr}(C_2 C_3) = (-1)^{a_2 + \mathrm{Re}(a_3)} (a + d) = \mathrm{tr}(C_1),
\]
which implies that $a_1 + a_2 + \mathrm{Re}(a_3)$ is odd.
Conversely, if $a_1 + a_2 + \mathrm{Re}(a_3)$ is odd, consider the representation $\phi' = (C_1', C_2', C_3')$ defined by
\[
C_2' = (-1)^{a_2 + 1} \begin{pmatrix}
2 & 0 \\
0 & \frac{1}{2}
\end{pmatrix}, \quad C_3' = C_3, \quad C_1' = (C_2' C_3')^{-1}.
\]
Then $\phi' \in \sigma^{-1}(a)$, i.e., the signature vector is preserved.

Next, we study the connectedness of the set $\sigma^{-1}(a)$. Let $\phi = (C_1, C_2, C_3) \in \sigma^{-1}(a)$. 
If $a_1 + a_2 + \mathrm{Re}(a_3)$ is odd, then the trace condition gives $a + d + c s_3 > 2$. If $c \neq 0$, define a path of matrices $C_2(t)$ by setting
\[
a(t) = a(1 - t) + 2t, \, d(t) = d(1 - t) + \tfrac{1}{2} t, \, c(t) = c(1 - t), \, b(t) = \tfrac{a(t)d(t) - 1}{c(t)}.
\]
Then $C_2(t) = \begin{pmatrix} a(t) & b(t) \\ c(t) & d(t) \end{pmatrix}$ defines a continuous deformation from $C_2$ to $C_2(1)$, and setting $\phi(t) = (C_1(t), C_2(t), C_3)$ defines a path in $\sigma^{-1}(a)$ such that
\[
\mathrm{tr}(C_1(t)) = \mathrm{tr}(C_1(0))(1 - t) + \mathrm{tr}(C_1(1))t,
\]
which remains hyperbolic throughout. Hence, $\sigma^{-1}(a)$ is connected when $a_1+a_2+\mr{Re}(a_3)$ is odd.

Now suppose that $\sum_{i=1}^3 \mathrm{Re}(a_i)$ is even.  
Let $\phi, \phi' \in \sigma^{-1}(a)$ be two representations with corresponding parameters $c$ and $c'$.  
By~\eqref{par-eqn33}, we have $cc' > 0$.  
Similarly to the case when $\sum_{i=1}^3 \mathrm{Re}(a_i)$ is odd, we can show that $\phi$ and $\phi'$ lie in the same connected component.
This completes the proof.
\end{proof}
\begin{rem}\label{par-rem1}
By the definition of the Toledo invariant, we have $\mathrm{T}(\phi) = 0$ for any $\phi \in \sigma^{-1}(a)$ with $\sum_{i=1}^3 \mathrm{Re}(a_i)$ odd.  
When $\sum_{i=1}^3 \mathrm{Re}(a_i)$ is even, the set $\sigma^{-1}(a)$ is connected, corresponding to $\mathrm{T}^{-1}(\pm 1)$.  
In fact, for any $\phi \in \sigma^{-1}(a) \cap \mathrm{HP}(\Sigma_{0,3})$ with $\sum_{i=1}^3 \mathrm{Re}(a_i)$ even, we have $\mathrm{T}(\phi) = \pm 1$, and the sign is completely determined by $a = (a_1, a_2, a_3)$.  
Up to multiplying by $-I$ on $C_3$ and $C_2$, we may assume $a_3 = \pm 1$.  
If $a_3 = 1$, then $s_3 = -1$, and by the Milnor--Wood inequality for signature$=2\mathrm{T}+\boldsymbol{\rho}(C_3)$, we know $\mathrm{T}(\phi) \neq 1$, which implies $\mathrm{T}(\phi) = -1$.  
Similarly, if $a_3 = -1$, then $\mathrm{T}(\phi) = 1$.  
In summary, we obtain
\begin{equation*}
  \mathrm{T}(\phi) =
  \begin{cases}
    1 & \text{for } a = (0,1,-1),\ (1,0,-1),\ (0,0,\sqrt{-1}),\ (1,1,\sqrt{-1}), \\
    -1 & \text{for } a = (0,1,1),\ (1,0,1),\ (0,0,-\sqrt{-1}),\ (1,1,-\sqrt{-1}), \\
    0 & \text{for } \sum_{i=1}^3 \mathrm{Re}(a_i) \ \text{odd}.
  \end{cases}
\end{equation*}
\end{rem}
\begin{lemma}\label{lemma17}
Let $\phi = (C_1, C_2, C_3) \in \mathrm{Hom}(\pi_1(\Sigma_{0,3}), \mathrm{SL}(2, \mathbb{R}))$ be a representation such that $C_2$ is hyperbolic and $C_3$ is parabolic. Then there exists a deformation $\phi(t) = (C_1, C_2(t), C_3(t))$ of $\phi$ such that both $C_2(t)$ and $C_3(t)$ are hyperbolic for all $t > 0$.
\end{lemma}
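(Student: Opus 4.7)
The plan is to perturb $C_2$ infinitesimally while keeping $C_1$ fixed, and use the identity $C_3(t)=C_2(t)^{-1}C_1^{-1}$ to automatically enforce the relation $C_1C_2(t)C_3(t)=I$. A one-parameter family of the form $C_2(t):=C_2\exp(tX)$ with $X\in\mf{sl}(2,\mb{R})$ is the most convenient choice, since it gives $C_3(t)=\exp(-tX)C_3$ and reduces the problem to a single first-order computation on the trace of $C_3(t)$.

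Concretely, I would differentiate to obtain
\[
\frac{d}{dt}\bigg|_{t=0}\mathrm{tr}(C_3(t))=-\mathrm{tr}(X\,C_3),
\]
and then observe that the linear functional $X\mapsto\mathrm{tr}(X\,C_3)$ on $\mf{sl}(2,\mb{R})$ does not vanish identically: after conjugation we may assume $C_3=\pm\bigl(I+sE_{12}\bigr)$ with $s\neq 0$, and evaluating on the trace-zero element $E_{21}\in\mf{sl}(2,\mb{R})$ gives $\pm s\neq 0$. Consequently we can select $X$ so that the sign of the derivative pushes $\mathrm{tr}(C_3(t))$ out of the interval $[-2,2]$: increasing past $2$ if $\mathrm{tr}(C_3)=2$, decreasing past $-2$ if $\mathrm{tr}(C_3)=-2$. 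Either way, by continuity $|\mathrm{tr}(C_3(t))|>2$ for all sufficiently small $t>0$, so $C_3(t)$ becomes hyperbolic. Since $|\mathrm{tr}(C_2)|>2$ at $t=0$, shrinking the parameter interval if necessary ensures $C_2(t)$ remains hyperbolic throughout, and after reparameterization we obtain the desired deformation $\phi(t)$.

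The only mildly delicate point is the nonvanishing of the linear variation of $\mathrm{tr}(C_3(t))$, which is just the statement that a nontrivial parabolic element is not central in $\mr{SL}(2,\mb{R})$; once this is observed, the rest of the argument is a transversality computation at the boundary of the hyperbolic locus and presents no further obstacle. I do not expect any subtle global issue, because both the hyperbolicity of $C_2(t)$ and the hyperbolicity of $C_3(t)$ are open conditions that are secured by first-order analysis at $t=0$.
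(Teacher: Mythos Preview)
Your argument is correct and in fact cleaner than the paper's. Both proofs keep $C_1$ fixed and perturb $C_2$ so that $C_3(t)=(C_1C_2(t))^{-1}$ acquires $|\mathrm{tr}|>2$. The paper, however, commits to a specific one-parameter family: it conjugates $C_2$ to diagonal form $\mathrm{diag}(t_0,1/t_0)$ and varies the eigenvalue, writing $\mathrm{tr}(C_1C_2)=f(t)=at+d/t$. This forces a case split: when $f'(t_0)\neq 0$ a first-order argument suffices, but when $f'(t_0)=0$ the paper must compute $f''(t_0)=c_0/t_0^2$ and use the second-order Taylor expansion to see that $|f|$ still moves past $2$. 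Your approach, by allowing an arbitrary direction $X\in\mf{sl}(2,\mb{R})$ and observing that the functional $X\mapsto\mathrm{tr}(XC_3)$ is nontrivial (since $C_3$ is not central), guarantees a choice with nonvanishing first variation and thereby eliminates the degenerate case entirely. The trade-off is that the paper's deformation stays within the diagonal conjugacy class of $C_2$ (so $C_2(t)$ is manifestly hyperbolic of the same type), whereas yours relies on the open condition $|\mathrm{tr}(C_2(t))|>2$ and continuity; for the purposes of this lemma that distinction is immaterial.
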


\begin{proof}
Up to conjugation, we may assume
\[
C_1 = \begin{pmatrix} a & b \\ c & d \end{pmatrix}, \quad C_2 = \begin{pmatrix} t_0 & 0 \\ 0 & 1/t_0 \end{pmatrix}.
\]
Then the trace of the product is
\[
\mathrm{tr}(C_1 C_2) = a t_0 + d t_0^{-1} = c_0,
\]
where $c_0 = \mathrm{tr}(C_3) = \pm 2$. Consider the function $f(t) = a t + d/t$. Then $f(t_0) = c_0$, and
\[
f'(t) = a - \frac{d}{t^2}, \quad f''(t) = \frac{2d}{t^3}.
\]

If $f'(t_0) \neq 0$, then a small perturbation of $t_0$ changes the value of $f(t)$ so that $|f(t)| > 2$ for $t \neq t_0$, implying that $C_1 C_2(t)$ is hyperbolic.

If $f'(t_0) = 0$, then $a = d/t_0^2$. In this case, since $f(t_0) = c_0$, we get $d = \frac{1}{2} c_0 t_0$, and thus
\[
f''(t_0) = \frac{c_0}{t_0^2}.
\]
It follows that
\[
f(t_0 + \Delta t) = c_0 + \frac{c_0}{2 t_0^2} (\Delta t)^2 + o((\Delta t)^2),
\]
which implies that $\frac{1}{c_0} f(t_0 + \Delta t) > 1$ for sufficiently small $\Delta t > 0$. Hence, for $t$ close to but not equal to $t_0$, we have $|f(t)| > 2$, so $C_1 C_2(t)$ is hyperbolic.

Therefore, we may fix $C_1$ and perturb $t_0$ slightly to deform $C_2$ such that the resulting product $C_3(t) = (C_1 C_2(t))^{-1}$ is also hyperbolic for all $t > 0$. The proof is complete.
\end{proof}

Now we assume $(g,n) \neq (1,1)$. 
If $n \geq 2$, let $c_0$ be a simple closed curve separating $\{c_{n-1}, c_n\}$ from the rest of the boundary components. Let $P_n$ be the pair of pants with boundary components $c_{n-1}, c_n$, and $c_0$, and define $\phi_0 := \phi|_{\pi_1(P_n)}$. By Lemma \ref{lemma17}, we may fix $\phi_0(c_0)$ and perturb $\phi_0(c_{n-1})$ slightly such that $\phi_0(c_n)$ becomes hyperbolic. This yields a deformation $\phi_0(t)$ of $\phi_0$. By gluing $\phi_0(t)$ with $\phi|_{\pi_1(\Sigma_{g,n} \setminus P_n)}$ along $c_0$, we obtain a deformation $\phi(t)$ of $\phi$ such that $\phi' := \phi(1)$ is boundary hyperbolic.

If $n = 1$ and $g \geq 2$, let 
\(
C_3 := \prod_{i=1}^{g-1} [\phi(a_i), \phi(b_i)]\) and \( C_2 := [\phi(a_g), \phi(b_g)],
\)
so that $C_3 C_2 C_1 = I$. By Lemma \ref{par-lemma2}, we may assume that both $C_3$ and $C_2$ are hyperbolic. Using Lemma \ref{lemma17}, we may fix $C_3$ and perturb $C_2$ slightly so that $C_1$ becomes hyperbolic. This yields a deformation $\phi(t)$ of $\phi$ such that $\phi' := \phi(1)$ is boundary hyperbolic.

In both cases, the conjugacy classes of $\phi(c_i)$, $1 \leq i \leq n-2$, are unchanged during the deformation, $\phi(c_{n-1})$ is perturbed slightly, and $\phi(c_n)$ becomes hyperbolic with sigma value
\(|\mathrm{Re}(\sigma(\phi(c_n)))|\)
for all $t > 0$. Hence, the Toledo invariant remains constant throughout the deformation.

Let $\mathcal{HP}_{g,n}$ denote the set of connected components of $\mathrm{HP}(\Sigma_{g,n})$, and let $\{\phi\} \in \mathcal{HP}_{g,n}$ denote the connected component containing $\phi \in \mathrm{HP}(\Sigma_{g,n})$.
\begin{lemma}\label{lemma18}
For $(g,n) \neq (1,1)$, the map
\[
\Psi: \mathcal{HP}_{g,n} \to \mathcal{H}_{g,n}, \quad \{\phi\} \mapsto \{\phi'\}
\]
is well-defined. Moreover, it is a double covering when restricted to 
$\mathcal{HP}_{g,n} \cap \mathrm{T}^{-1}(k)$ with $|k| < |\chi(\Sigma_{g,n})|$,  
and is of degree one when restricted to 
$\mathcal{HP}_{g,n} \cap \mathrm{T}^{-1}(\pm|\chi(\Sigma_{g,n})|)$.
\end{lemma}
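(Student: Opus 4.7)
The plan is to verify well-definedness of $\Psi$, establish an upper bound of two on each fiber, and show the bound is attained precisely when $|k|<|\chi(\Sigma_{g,n})|$. For well-definedness, given a path $\phi_t\in\mathrm{HP}(\Sigma_{g,n})$ from $\phi_0$ to $\phi_1$, I would carry out the construction preceding Lemma~\ref{lemma18} in a parametric way: for $n\geq 2$, apply Lemma~\ref{lemma17} to the pair-of-pants restriction $\phi_t|_{\pi_1(P_n)}$; for $n=1$ and $g\geq 2$, first apply Lemma~\ref{par-lemma2} and then Lemma~\ref{lemma17}. Since the perturbation in Lemma~\ref{lemma17} depends smoothly on the representation data, this yields a continuous path $\phi_t'\in\mathrm{Hyp}(\Sigma_{g,n})$, and hence $\{\phi_0'\}=\{\phi_1'\}$.

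For the fiber analysis, fix $\{\psi\}\in\mathcal{H}_{g,n}\cap\mathrm{T}^{-1}(k)$. Any $\{\phi\}\in\Psi^{-1}(\{\psi\})$ satisfies $\mathrm{T}(\phi)=k$ by continuity of the Toledo invariant, $\sigma(\phi(c_i))=\sigma(\psi(c_i))$ for $i<n$ since the deformation preserves those conjugacy classes, and $\sigma(\phi(c_n))\in\{\pm 1\}$ or $\{\pm\sqrt{-1}\}$ according to whether $\mathrm{tr}(\psi(c_n))>2$ or $<-2$. Because components of $\mathrm{HP}(\Sigma_{g,n})$ are separated by $(\mathrm{T},\sigma)$, we get $|\Psi^{-1}(\{\psi\})|\leq 2$. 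For the extremal case $|k|=|\chi(\Sigma_{g,n})|$, uniqueness of the preimage follows from Proposition~\ref{prop1}: iteratively applying Lemma~\ref{par-lemma7} to the hyperbolic boundaries of $\phi$ produces a representation $\widetilde{\phi}\in\mathrm{Par}(\Sigma_{g,n})$ of the same maximal Toledo with $\sigma(\widetilde{\phi}(c_n))=\sigma(\phi(c_n))$. By Proposition~\ref{prop1}, every entry of $\sigma(\widetilde{\phi})$ lies in $\{\sqrt{-1},-1\}$; intersecting with the trace-sign constraint uniquely forces $\sigma(\phi(c_n))=-1$ when $\mathrm{tr}(\psi(c_n))>2$, and $\sigma(\phi(c_n))=\sqrt{-1}$ when $\mathrm{tr}(\psi(c_n))<-2$. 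The minimal Toledo case is symmetric.

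In the non-extremal case $|k|<|\chi(\Sigma_{g,n})|$, both compatible values of $\sigma(\phi(c_n))$ are realized, giving a double cover. A first preimage $\phi^+$ is produced by reversing the deformation of Lemma~\ref{par-lemma7} on $\psi$. To construct a second preimage $\phi^-$ with the opposite parabolic sigma, one modifies $\phi^+$ locally inside a pair of pants containing $c_n$: the local Toledo contribution changes by a definite amount computable from Remark~\ref{par-rem1}, and is absorbed by a compensating modification on the complementary subsurface, which lies in its admissible Toledo range precisely because $k$ is not extremal. Both $\phi^{\pm\prime}$ then belong to $\mathrm{Hyp}(\Sigma_{g,n})\cap\mathrm{T}^{-1}(k)$ with the same $\sigma$-values on $c_1,\ldots,c_{n-1}$ as $\psi$, which is a single connected component by Lemma~\ref{par-lemma8}, so both map to $\{\psi\}$. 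The main obstacle is verifying this compensation argument carefully: one must check that the adjustment on the complement stays within the admissible Toledo range while preserving the hyperbolic sigmas, likely via an induction on $|\chi(\Sigma_{g,n})|$ with case analysis on whether the complement has extremal Toledo.
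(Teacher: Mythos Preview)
Your well-definedness argument matches the paper's. The main gap is in your upper bound: the assertion that ``components of $\mathrm{HP}(\Sigma_{g,n})$ are separated by $(\mathrm{T},\sigma)$'' is unjustified, and in the extremal case it is simply false. When $\mathrm{T}=\pm|\chi(\Sigma_{g,n})|$ and $g\geq 1$, there are $4^g\cdot 2^{n-1}$ components of $\mathrm{Hyp}(\Sigma_{g,n})$ (from the $\Lambda_{g,n}$-action), hence of $\mathrm{HP}(\Sigma_{g,n})$ via $\Psi$, but only $2^{n-1}$ possible $\sigma$-vectors --- so $(\mathrm{T},\sigma)$ cannot separate them. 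What the paper proves instead is the weaker (and sufficient) statement: \emph{within a single fiber} $\Psi^{-1}(\{\phi'\})$, components are separated by $\sigma(\phi(c_n))$. This is done by taking any path $\phi'(t)$ in $\mathrm{Hyp}(\Sigma_{g,n})$ connecting $\phi_0'$ to $\phi_1'$ and reversing the perturbation of Lemma~\ref{lemma17} along it, producing a path in $\mathrm{HP}(\Sigma_{g,n})\cap\sigma^{-1}(a)$. You need this reversing step; your Proposition~\ref{prop1} argument for the extremal case correctly pins down the unique admissible value of $\sigma(\phi(c_n))$ (and is a nice alternative to the paper's local pair-of-pants computation via Remark~\ref{par-rem1}), but it does not by itself show the fiber has a single element --- you still need to know that two preimages with the same $\sigma$ are connected.

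For the non-extremal lower bound, the paper avoids your compensation-and-induction argument entirely: by Lemma~\ref{hyp-lemma12}, any $\phi'\in\mathrm{Hyp}(\Sigma_{g,n})$ with non-extremal Toledo invariant can be deformed within $\mathrm{Hyp}(\Sigma_{g,n})$ so that the restriction to the pair of pants $P_n$ containing $c_n$ has Toledo invariant zero. Then the local analysis (Lemma~\ref{par-lemma20} / Remark~\ref{par-rem1}) shows that $C_3=\phi'(c_n)$ can be degenerated to a parabolic element of either admissible $\sigma$-value while keeping the other boundaries fixed. This produces both preimages directly, with no need to rebalance Toledo contributions across the surface.
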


\begin{proof}
We first show that $\Psi$ is well-defined. Suppose $\phi_0, \phi_1 \in \mathrm{HP}(\Sigma_{g,n})$ lie in the same connected component, i.e., there exists a path $\phi(t)$ in $\mathrm{HP}(\Sigma_{g,n})$ connecting $\phi_0$ and $\phi_1$. By construction (see Lemma \ref{lemma17}), each $\phi(t)$ can be deformed via a small perturbation into a representation $\phi'(t) \in \mathrm{Hyp}(\Sigma_{g,n})$ such that $\phi'(0) = \phi_0'$ and $\phi'(1) = \phi_1'$. Furthermore, the Toledo invariant and boundary sigma invariants remain constant during this deformation:
\[
\mathrm{T}(\phi_0') = \mathrm{T}(\phi_0) = \mathrm{T}(\phi_1) = \mathrm{T}(\phi_1'), \quad \sigma(\phi_0') = \sigma(\phi_1').
\]

For $n \geq 2$, the deformation from $\phi(t)$ to $\phi'(t) \in \mathrm{Hyp}(\Sigma_{g,n})$ can be performed continuously for all $t \in [0,1]$ via small perturbations on the parabolic boundary component (see Lemma~\ref{lemma17}). As a result, $\phi'(t)$ is a continuous path in $\mathrm{Hyp}(\Sigma_{g,n})$ connecting $\phi_0'$ and $\phi_1'$. Hence, $\{\phi_0'\} = \{\phi_1'\}$ in $\mathcal{H}_{g,n}$, and $\Psi$ is well-defined.
For $n = 1$, we use Lemma \ref{par-lemma2} to assume that the path $\phi(t)$ restricts to hyperbolic representations on $\Pi_{i=1}^{g-1} [a_i, b_i]$ and $[a_g, b_g]$ for all $t$. Then, as in the $n \geq 2$ case, $\phi(t)$ can be perturbed slightly into $\phi'(t) \in \mathrm{Hyp}(\Sigma_{g,1})$, so $\phi_0'$ and $\phi_1'$ lie in the same component. Therefore, $\Psi$ is also well-defined for $n=1$.

We now prove that $\Psi$ is a covering map when restricted to $\mathrm{T}^{-1}(k)$ and determine its degree. Consider a pair of pants $(C_1, C_2, C_3)$ with $C_1$, $C_2$ and $C_3$ hyperbolic. By  \eqref{hyp-eqn4}, there is a deformation of $C_3$ to a parabolic element, keeping the conjugacy class of $C_1$ and the sigma values of $C_2$ fixed. During this deformation, the trace of $C_3$ maintains its sign, and the Toledo invariant remains constant. In particular, by Lemma \ref{par-lemma10}, if $\sum_{i=1}^3 \mathrm{Re}\big(\sigma(C_i)\big)$ is odd (i.e., the Toledo invariant is zero), then the conjugacy class of the parabolic element can be $\pm \sqrt{\mathrm{sgn}(\mathrm{tr}(C_3))}$.  
If $\sum_{i=1}^3 \mathrm{Re}\big(\sigma(C_i)\big)$ is even (i.e., the Toledo invariant is $\pm 1$), then the conjugacy class of the parabolic element is unique (see the proof of Lemma~\ref{par-lemma20} or Remark \ref{par-rem1}).

For any $\phi' \in \mathrm{Hyp}(\Sigma_{g,n})$ with $n \geq 2$ and $g \geq 1$,  
if $|\mathrm{T}(\phi')| < |\chi(\Sigma_{g,n})|$, then by Lemma~\ref{hyp-lemma12}  
we can deform $\phi'$ to a new representation $\phi'' \in \mathrm{Hyp}(\Sigma_{g,n})$  
which is still interior hyperbolic and satisfies that  
$\phi''|_{\pi_1(P_n)} $ has Toledo invariant zero, 
where $P_n$ is a pair of pants containing $c_n$.  
From the above discussion for pairs of pants,  
there exist at least two representations $\phi_0, \phi_1 \in \mathrm{HP}(\Sigma_{g,n})$  
with different $\sigma$--values on $c_n$ such that  
\(
\Psi(\{\phi_0\}) = \Psi(\{\phi_1\}) = \{\phi''\} = \{\phi'\}.
\)
If $|\mathrm{T}(\phi')| = \pm |\chi(\Sigma_{g,n})|$,  
then, by the above discussion, there exists a unique representation  
$\phi_0 \in \mathrm{HP}(\Sigma_{g,n})$ such that  
\(
\Psi(\{\phi_0\}) = \{\phi'\}.
\)

Suppose $\phi_0, \phi_1 \in \mathrm{HP}(\Sigma_{g,n}) \cap \sigma^{-1}(a) \cap \Psi^{-1}(\{\phi'\})$, then $\phi_0'$ and $\phi_1'$ are connected by a path $\phi'(t) \in \mathrm{Hyp}(\Sigma_{g,n})$. Reversing the deformation from $\phi'(t)(c_n)$ back to a parabolic element (as in Lemma \ref{lemma17}), we obtain a path $\phi(t)$ in $\mathrm{HP}(\Sigma_{g,n}) \cap \sigma^{-1}(a)$ connecting $\phi_0$ and $\phi_1$. Thus, the fiber $\Psi^{-1}(\{\phi'\}) \cap \sigma^{-1}(a)$ is connected, and so has at most one connected component. If $|\mathrm{T}(\phi')| < |\chi(\Sigma_{g,n})|$, then the $\sigma$--values on $c_n$  
distinguish $\phi_0$ and $\phi_1$, and the entire fiber consists of exactly two elements.  
If $|\mathrm{T}(\phi')| = |\chi(\Sigma_{g,n})|$, then the $\sigma$--values of $\phi_0$ and $\phi_1$ coincide,  
and the entire fiber consists of exactly one element.

The argument for the case $n = 1$ is analogous: the deformation occurs on the boundary curve $c_1$, while the part corresponding to $\Pi_{i=1}^{g-1}[a_i, b_i]$ remains unchanged, and only a slight perturbation is performed on $[a_g, b_g]$, which remains hyperbolic throughout the deformation. This allows the construction to proceed in the same way.
\end{proof}

From Lemma \ref{lemma18}, we conclude that
\begin{align}
\begin{split}
& \quad  \#\{\mathrm{HP}(\Sigma_{g,n})\}\\
&=2\cdot\#\{\mathrm{Hyp}(\Sigma_{g,n})\cap \mr{T}^{-1}(k), |k|<|\chi(\Sigma_{g,n})|\}\\
&\quad +\{\mathrm{Hyp}(\Sigma_{g,n})\cap \mr{T}^{-1}(\pm|\chi(\Sigma)|)\}\\
&  =2^n(4g+2n-5)+4^g\cdot 2^{n}\\
&=2^n(4^g+4g+2n-5).
  \end{split}
\end{align}
for any $(g,n)\neq (1,1)$ and $g\geq 1$.

\subsubsection{Connected components of $\mr{Par}(\Sigma_{g,n})$}

We now compute the number of connected components of $\mr{Par}(\Sigma_{g,n})$ for odd $n$.

For any $\phi \in \mr{Par}(\Sigma_{g,n})$, by Lemma \ref{par-lemma2}, $\phi$ can be deformed to $\phi' \in \mr{Par}(\Sigma_{g,n})$ such that
\[
\psi := \phi'|_{\pi_1(\Sigma_{g,(n+1)/2})} \in \mathrm{HP}(\Sigma_{g,(n+1)/2}),
\]
where $\Sigma_{g,(n+1)/2}$ is the subsurface with boundary components $c_2', c_4', \dots, c_{n-1}', c_n$.

Similar to the proof of Lemma~\ref{lemma15}, we obtain:
\begin{lemma}
Let $n \geq 1$ be odd and $(g,n) \neq (1,1)$, $g\geq 1$. For any $\phi \in \mathrm{Par}(\Sigma_{g,n})$, $\phi$ and $\gamma\phi$ lie in the same connected component of $\mathrm{Par}(\Sigma_{g,n})$ for any $\gamma \in \Lambda_{g,n}$ if and only if $\mathrm{T}(\psi)$ is neither maximal nor minimal.
\end{lemma}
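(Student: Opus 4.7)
The plan is to mirror the argument of Lemma~\ref{lemma15}, now using the decomposition $\Sigma_{g,n}=\Sigma_{g,(n+1)/2}\cup\bigcup_{i=1}^{(n-1)/2}P_i$, in which each $P_i$ is the pair of pants with boundary $\{c_{2i-1},c_{2i},c'_{2i}\}$ and the distinguished parabolic boundary $c_n$ is retained as a boundary of $\Sigma_{g,(n+1)/2}$. After the deformation supplied by Lemma~\ref{par-lemma2}, the restriction $\psi=\phi'|_{\pi_1(\Sigma_{g,(n+1)/2})}$ lies in $\mathrm{HP}(\Sigma_{g,(n+1)/2})$, and summing the pair-of-pants identity~\eqref{par-eqn16} over $i=1,\dots,(n-1)/2$ yields the odd analogue of~\eqref{par-eqn30},
\[
\mathrm{T}(\phi)=\mathrm{T}(\psi)+\tfrac{1}{2}\sum_{j=1}^{n-1}s_j,
\]
together with the corresponding pinned range of $\mathrm{T}(\phi)$ on each $\sigma$-class.

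For the forward direction I would argue by contrapositive. If $\mathrm{T}(\psi)$ is maximal (the minimal case is symmetric under orientation reversal), then the identity above forces $\mathrm{T}(\phi)$ to attain its maximum value on $\mathrm{Par}(\Sigma_{g,n})\cap\sigma^{-1}(\sigma(\phi))$. Taking $\gamma=\alpha_1$, any continuous path from $\phi'$ to $\alpha_1\phi'$ must send $\phi'(a_1)$ through an elliptic element at some instant $t_0$, and at that moment $\mathrm{T}(\phi'_{t_0}|_{\pi_1(\Sigma_{1,1})})=0$ by Table~\ref{sign-table}. The restriction to the complementary surface $\Sigma_{g-1,n}$ (if $\phi'_{t_0}([a_1,b_1])=I$) or $\Sigma_{g-1,n+1}$ (if it is hyperbolic, after capping it off as a parabolic boundary via Lemma~\ref{par-lemma7} without changing the Toledo invariant) must then carry all of $\mathrm{T}(\phi)$; but the analogous pairing formula on the reduced-genus surface bounds its Toledo invariant by a quantity strictly smaller than $\mathrm{T}(\phi)$, giving the required contradiction. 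The sub-case split and the use of Lemma~\ref{par-lemma7} transcribe directly from the proof of Lemma~\ref{lemma15}.

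For the reverse direction I would first establish an $\mathrm{HP}$-analogue of Lemma~\ref{par-lemma8}: for $|k|<|\chi(\Sigma_{g,(n+1)/2})|$, the subset $\mathrm{HP}(\Sigma_{g,(n+1)/2})\cap\sigma^{-1}(a')\cap\mathrm{T}^{-1}(k)$ is a single connected component. This is read off Lemma~\ref{lemma18}: on non-extremal Toledo loci the map $\Psi$ is a genuine double cover whose two sheets are distinguished precisely by the $\sigma$-value at the parabolic boundary $c_n$, so fixing this $\sigma$-value selects exactly one component. Granted this, a non-extremal $\mathrm{T}(\psi)$ allows $\psi$ and $\gamma\psi$ to be joined by a path inside $\mathrm{HP}(\Sigma_{g,(n+1)/2})$; an adaptation of Lemma~\ref{par-lemma3} to a pair of pants with two parabolic boundaries and one moving hyperbolic boundary—the configuration realised at each $P_i$—then extends this path to one joining $\phi$ and $\gamma\phi$ inside $\mathrm{Par}(\Sigma_{g,n})$.

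The main obstacle will be the asymmetry introduced by the unpaired parabolic boundary $c_n$: the pair-of-pants gluing at the final step is controlled by Lemma~\ref{par-lemma20} (one parabolic, two hyperbolic) rather than the cleaner Lemma~\ref{lemma0}, and the connectedness statement for $\mathrm{HP}\cap\sigma^{-1}(a')\cap\mathrm{T}^{-1}(k)$ must be extracted carefully from the covering structure of $\Psi$ in Lemma~\ref{lemma18}. The $\mathrm{HP}$-version of Lemma~\ref{par-lemma3} likewise requires checking via the normal form~\eqref{par-eqn3} that the monotone deformation of the interior hyperbolic curve $c'_{2i}$ can be realised while holding the parabolic conjugacy classes of $c_{2i-1}$ and $c_{2i}$ fixed. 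Once these $\mathrm{HP}$-variants of the even-case lemmas are in hand, the remainder of the argument is a direct transcription of Lemma~\ref{lemma15}.
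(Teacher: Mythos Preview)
Your proposal is correct and follows exactly the approach the paper indicates: the paper simply states ``Similar to the proof of Lemma~\ref{lemma15}, we obtain'' and gives no further details, so your transcription of that argument to the odd-$n$ decomposition $\Sigma_{g,n}=\Sigma_{g,(n+1)/2}\cup\bigcup_i P_i$ with $\psi\in\mathrm{HP}(\Sigma_{g,(n+1)/2})$ is precisely what is intended. Your identification of the needed $\mathrm{HP}$-analogues (connectedness of $\mathrm{HP}\cap\sigma^{-1}(a')\cap\mathrm{T}^{-1}(k)$ via Lemma~\ref{lemma18}, and the path-extension via Lemma~\ref{par-lemma3}) fills in exactly the gaps the paper leaves implicit.
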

As a corollary, similar to the proof of Corollary \ref{par-cor2}, we obtain that 
\begin{cor}
The following map 
\[
{\Phi}: {\mathcal{P}}_{g,n} \to{\mathcal{H}}\mc{P}_{g,(n+1)/2}, \quad \left\{\phi \right\} \mapsto \left\{ \psi \right\},
\]
is well-defined and surjective. Moreover, each fiber ${\Phi}^{-1}(\left\{\psi\right\})$ consists of exactly $8^{(n-1)/2}$ elements.
\end{cor}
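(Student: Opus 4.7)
The plan is to follow the template of Lemma~\ref{par-lemma11} and Corollary~\ref{par-cor2}, replacing the even-$n$ decomposition $\Sigma_{g,n}=\Sigma_{g,n/2}\cup\bigcup_{i=1}^{n/2}P_i$ by the odd-$n$ decomposition in which $(n-1)/2$ pairs of pants $P_1,\dots,P_{(n-1)/2}$ are glued onto $\Sigma_{g,(n+1)/2}$ along the separating curves $c_2',c_4',\dots,c_{n-1}'$, leaving the last boundary $c_n$ untouched. Thus $\psi=\phi'|_{\pi_1(\Sigma_{g,(n+1)/2})}\in\mathrm{HP}(\Sigma_{g,(n+1)/2})$ has $(n-1)/2$ hyperbolic boundaries $c_{2i}'$ together with one genuinely parabolic boundary $c_n$, which is exactly what $\mathcal{HP}_{g,(n+1)/2}$ records.

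\textbf{Well-defined.} First I would show that for $\phi_0,\phi_1\in\{\phi\}$, the restrictions $\psi_0,\psi_1$ lie in the same component of $\mathrm{HP}(\Sigma_{g,(n+1)/2})$. The sigma values $\sigma(\psi_i(c_{2j}'))$ for $j\leq (n-1)/2$ are determined by the formula \eqref{par-eqn14} applied to the parabolic pair $(a_{2j-1},a_{2j})$, hence agree for $\phi_0$ and $\phi_1$; the sigma value on $c_n$ coincides with $\sigma(\phi_0(c_n))=\sigma(\phi_1(c_n))$. Using the analogue of \eqref{par-eqn16} for each $P_j$ (here the product of parabolics becomes hyperbolic, with the same proof) one gets $\mathrm{T}(\psi_0)=\mathrm{T}(\psi_1)$. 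Combined with the preceding lemma (which handles whether $\phi$ and $\gamma\phi$ are connected exactly when $\mathrm{T}(\psi)$ is not extremal) and Lemma~\ref{lemma18} for $\mathcal{HP}_{g,(n+1)/2}$, this shows $\{\psi_0\}=\{\psi_1\}$.

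\textbf{Surjectivity and fibers.} For any $\{\psi\}\in\mathcal{HP}_{g,(n+1)/2}$, glue a pair of pants on each $c_{2i}'$, $i\leq(n-1)/2$, with prescribed parabolic boundary holonomies. This gives a preimage and establishes surjectivity. To count the fiber, I would mimic the proof of Lemma~\ref{par-lemma11}: on each pair of pants $P_i$, with the hyperbolic element $\psi(c_{2i}')=C$ fixed, there are exactly $4$ choices of $a_{2i}\in\{\pm1,\pm\sqrt{-1}\}$ and, using \eqref{par-eqn15}, exactly $2$ values of the entry $c$ making $(C_{2i-1}C_{2i})^{-1}$ parabolic, producing $8$ components. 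Conversely, given two preimages with identical sigma values, Lemma~\ref{par-lemma3} together with a path-lifting argument inside $P_i$ (which is connected once all conjugacy classes are fixed) shows they lie in the same component of $\mathrm{Par}(\Sigma_{g,n})$. Hence each fiber has exactly $8^{(n-1)/2}$ elements.

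\textbf{Main obstacle.} The delicate point is the fiber-count argument in the extremal-Toledo cases, where the $\Lambda_{g,n}$-action may identify a priori distinct components. As in Corollary~\ref{par-cor2}, I would verify that when $\psi_0$ and $\gamma\cdot\psi_1$ are connected with $\gamma\in\Lambda_{g,n}$, the connecting path together with the assumption that no $\phi'(a_j),\phi'(b_j)$ becomes elliptic (a consequence of Lemma~\ref{lemma15} applied to the odd-$n$ setting after perturbing $c_n$ as in Lemma~\ref{lemma17}) forces $\gamma=\mathrm{id}$. Once this rigidity is established, the counting goes through verbatim and yields the claimed cardinality $8^{(n-1)/2}$.
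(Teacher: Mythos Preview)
Your proposal is correct and follows essentially the same approach as the paper, which simply states that the corollary is obtained ``similar to the proof of Corollary~\ref{par-cor2}'' without giving further details. Your write-up is in fact a faithful unpacking of that reference: the well-definedness via matching sigma and Toledo data, the surjectivity by gluing pairs of pants, the $8$-per-pair count from \eqref{par-eqn15}, and the rigidity argument forcing $\gamma=\mathrm{id}$ in the extremal case are exactly the ingredients used in Corollary~\ref{par-cor2}, transported to the odd-$n$ setting with $\mathcal{HP}_{g,(n+1)/2}$ in place of $\mathcal{H}_{g,n/2}$.
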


Hence, the number of connected components of the space $\mathrm{Par}(\Sigma_{g,n})$ of boundary-parabolic representations for odd $n \geq 1$ and $(g,n) \neq (1,1)$ is given by 
\begin{align}\label{par-eqn34}
\begin{split}
  \#\{\mathrm{Par}(\Sigma_{g,n})\} &=8^{(n-1)/2}\cdot \#\{\mr{HP}(\Sigma_{g,(n+1)/2})\}\\
  &= 2^{2g+2n-1}+2^{2n - 1}(4g + n - 4).
 \end{split}
\end{align}
Combining with Proposition \ref{par-prop1}, we obtain
\begin{thm}\label{par-thm2}
The number of connected components of the space $\mr{Par}(\Sigma_{g,n})$ with $n$ odd and $g\geq 1$ is given by
\[
\#\{ \mr{Par}(\Sigma_{g,n}) \} =
\begin{cases}
16 & \text{if } (g,n) = (1,1), \\
2^{2g+2n-1}+2^{2n-1} (4g+n-4)& \text{otherwise}.
\end{cases}
\]
\end{thm}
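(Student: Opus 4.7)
The proof proposal splits along the dichotomy of the theorem. For the special case $(g,n)=(1,1)$, the plan is to invoke Proposition~\ref{par-prop1} directly: the sigma map $\sigma\colon \mr{Par}(\Sigma_{1,1})\to \{\pm 1,\pm\sqrt{-1}\}$ partitions the space into four strata, and the analysis in Section~\ref{sec=4.1.2} (via the commutator formulas \eqref{eqn6}, \eqref{par-eqn7}, and the sign data of the diagonal/off-diagonal entries of the hyperbolic generators) shows each stratum has exactly four components, giving $16$.

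For the general case $n$ odd and $(g,n)\neq(1,1)$, the strategy is to reduce the counting of $\mr{Par}(\Sigma_{g,n})$ to that of $\mr{HP}(\Sigma_{g,(n+1)/2})$ via the decomposition $\Sigma_{g,n}=\Sigma_{g,(n+1)/2}\cup\bigcup_{i=1}^{(n-1)/2} P_i$, where each $P_i$ is a pair of pants attached along a separating curve $c'_{2i}$ that isolates $\{c_{2i-1},c_{2i}\}$ (with $c_n$ remaining a boundary of $\Sigma_{g,(n+1)/2}$). First I would invoke Lemma~\ref{par-lemma2} to deform any $\phi\in\mr{Par}(\Sigma_{g,n})$ so that $\phi(c'_{2i})$ is hyperbolic for each $i$, producing the restriction $\psi=\phi'|_{\pi_1(\Sigma_{g,(n+1)/2})}\in \mr{HP}(\Sigma_{g,(n+1)/2})$. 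The displayed map $\Phi\colon \mc{P}_{g,n}\to \mc{HP}_{g,(n+1)/2}$ sending $\{\phi\}\mapsto\{\psi\}$ is then well-defined and surjective, with each fiber of cardinality $8^{(n-1)/2}$, exactly as in the even case (Corollary~\ref{par-cor2}).

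With these pieces in hand, the proof concludes by arithmetic. Using the count
\[
\#\{\mr{HP}(\Sigma_{g,(n+1)/2})\}=2^{(n+1)/2}\bigl(4^{g}+4g+n-4\bigr),
\]
established in the paragraph preceding the theorem, multiplication by the fiber size yields
\[
\#\{\mr{Par}(\Sigma_{g,n})\}=8^{(n-1)/2}\cdot 2^{(n+1)/2}\bigl(4^{g}+4g+n-4\bigr)=2^{2n-1}\bigl(4^{g}+4g+n-4\bigr),
\]
which simplifies to $2^{2g+2n-1}+2^{2n-1}(4g+n-4)$ as claimed.

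The main obstacle is verifying that $\Phi$ truly has constant fiber size $8^{(n-1)/2}$ in the odd case. The analogue of Lemma~\ref{lemma15} must be checked: one needs that $\phi$ and $\gamma\phi$ lie in the same component of $\mr{Par}(\Sigma_{g,n})$ precisely when $\mathrm{T}(\psi)$ is neither maximal nor minimal. For non-extremal Toledo invariant this follows from Lemma~\ref{par-lemma8} combined with the path-lifting Lemma~\ref{par-lemma3} adapted so that along one boundary of each pair of pants the parabolic element varies in its conjugacy class while the others remain fixed (the technicality being that $\mr{HP}$ has an extra hyperbolic slot instead of a second parabolic one, so one invokes Lemma~\ref{lemma18}'s construction in reverse). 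For extremal $\mathrm{T}(\psi)$, the obstruction argument in Lemma~\ref{lemma15}—ruling out deformations of $\phi'(a_j),\phi'(b_j)$ to elliptic—applies verbatim, since it used only the Milnor--Wood equality and the structure of $\mr{Hom}(\pi_1(\Sigma_{1,1}),\mr{SL}(2,\mb{R}))$. Once this is in place, the count of $8$ parabolic lifts per pair of pants over a fixed hyperbolic $\phi(c'_{2i})$ is the same calculation as in Lemma~\ref{par-lemma11}, completing the argument.
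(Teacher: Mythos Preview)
Your proposal is correct and follows essentially the same approach as the paper: the $(g,n)=(1,1)$ case by Proposition~\ref{par-prop1}, and otherwise the reduction via the decomposition $\Sigma_{g,n}=\Sigma_{g,(n+1)/2}\cup\bigcup P_i$ to establish that $\Phi\colon \mc{P}_{g,n}\to \mc{HP}_{g,(n+1)/2}$ is surjective with constant fiber size $8^{(n-1)/2}$, then multiplying by the already-computed $\#\{\mr{HP}(\Sigma_{g,(n+1)/2})\}$. Your discussion of the main obstacle---verifying the odd-$n$ analogue of Lemma~\ref{lemma15} and Corollary~\ref{par-cor2}---matches precisely what the paper does by declaring those arguments go through ``similarly.''
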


\subsection{The case of $g=0$}In this section, we focus on the case \( g = 0 \).  
For any representation \( \phi \in \mathrm{Par}(\Sigma_{0,n}) \), we take a maximal dual tree decomposition  
$\Sigma = \bigcup_{i=1}^{|\chi(\Sigma)|} M_i$
such that $c_{2i-1},c_{2i}$ lie in the same subsurface $M_i$. 
If \( \phi \) can be deformed to a representation \( \phi' \) such that the restriction of \( \phi' \) to the boundary of some \( M_i \) is hyperbolic, then by Lemma~\ref{par-lemma222}, \( \phi' \) can be further deformed into an interior hyperbolic representation.  

We therefore divide \( \mathrm{Par}(\Sigma_{0,n}) \) into two disjoint classes.  
The first class, denoted by \(\mathcal{P}_{\mathrm{I}}\), consists of representations that can be deformed to an interior hyperbolic representation.  
The second class, denoted by \(\mathcal{P}_{\mathrm{II}}\), consists of representations that cannot be deformed to an interior hyperbolic representation.  
Clearly, \(\mathcal{P}_{\mathrm{I}} \cap \mathcal{P}_{\mathrm{II}} = \emptyset\) and \(\mathcal{P}_{\mathrm{I}} \cup \mathcal{P}_{\mathrm{II}} = \mathrm{Par}(\Sigma_{0,n})\). Note that the representations in $\mathcal{P}_{\mathrm{II}}$ were studied in~\cite{DT0}, where they are referred to as \emph{super--maximal representations}.

For \(n = 2\), it is immediate that \(\#\{\mathrm{Par}(\Sigma_{0,n})\} = 4\).  
For \(n = 3\), by~\eqref{par-eqn13}, we have \(\#\{\mathrm{Par}(\Sigma_{0,n})\} = 32\).  
From now on, we assume \(n \geq 4\).  
We first consider the class \(\mathcal{P}_{\mathrm{I}}\).  
By~\eqref{par-eqn22}, and noting that \(\widehat{\mathcal{P}}_{0,n}\) coincides with the set of connected components of \(\mathcal{P}_{\mathrm{I}}\), we obtain  
\begin{equation}\label{par-eqn23}
\#\{\mathcal{P}_{\mathrm{I}}\} = 2^{2n-1}(n-3)
\end{equation}
for even \(n\). Similarly, for odd \(n\), by \eqref{par-eqn34}, we also obtain \eqref{par-eqn23}.

We now compute the number of connected components of $\mathcal{P}_{\mathrm{II}}$.  

\begin{lemma}\label{par-lemma16}
For the surface $\Sigma = \Sigma_{0,4}$, $\phi \in \mathcal{P}_{\mathrm{II}}$ if and only if the two vectors $a = (a_1,a_2,a_3,a_4)$ and $s = (s_1,s_2,s_3,s_4)$ satisfy either:
\begin{itemize}
    \item[(i)]  The rho invariant $\boldsymbol{\rho}(\phi)=\mathrm{Re}( \sum_{i=1}^4 a_i)$ is odd and all $s_i$ are equal; or
    \item[(ii)] The rho invariant $\boldsymbol{\rho}(\phi)=\mathrm{Re}( \sum_{i=1}^4 a_i )$ is even and exactly three of the $s_i$ are equal,
\end{itemize}
where $a = \sigma(\phi)$ is the vector of $\sigma$--values and $s_i = \mathrm{Im}(a_i) - \mathrm{Re}(a_i)$.  
Moreover, for such $a$, the set $\sigma^{-1}(a) \subset \mathcal{P}_{\mathrm{II}}$ is connected.
\end{lemma}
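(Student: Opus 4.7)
The plan is to decompose $\Sigma_{0,4}$ along the separating simple closed curve $c_0 = c_1 c_2$ into two pairs of pants $P_1 = \{c_1, c_2, c_0\}$ and $P_2 = \{c_3, c_4, c_0^{-1}\}$. By Lemma~\ref{par-lemma222}, $\phi \in \mathcal{P}_{\mathrm{I}}$ iff $\phi$ can be deformed within $\mathrm{Par}(\Sigma_{0,4})$ to a representation in which $\phi(c_0)$ is hyperbolic, so $\phi \in \mathcal{P}_{\mathrm{II}}$ iff every such deformation keeps $\phi(c_0)$ non-hyperbolic.

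The first step is to read off the possible values of $\mathrm{tr}(\phi(c_0))$ from the parametrization of $\phi|_{P_1}$ in \eqref{par-eqn11}: by \eqref{par-eqn4},
\[
\mathrm{tr}(C_1 C_2)=(-1)^{\mathrm{Re}(a_1)+\mathrm{Re}(a_2)}(2-s_1 s_2\, c^2),\qquad c\in\mathbb{R},
\]
which sweeps one of four half-lines depending on $(s_1 s_2,(-1)^{\mathrm{Re}(a_1)+\mathrm{Re}(a_2)})$: (Type A) $(-\infty,2]$ when $s_1 s_2=1$, even; (Type B) $[-2,\infty)$ when $s_1 s_2=1$, odd; (Type C) $[2,\infty)$ when $s_1 s_2=-1$, even; (Type D) $(-\infty,-2]$ when $s_1 s_2=-1$, odd. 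An analogous half-line from $(a_3, a_4)$ records the trace range as $\phi|_{P_2}$ varies.

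The second step is a direct case analysis over the sixteen type pairings. Only six combinations have a trace-range intersection that misses the hyperbolic locus $\{|t|>2\}$: (A,B) and (B,A) with intersection $[-2,2]$; and (A,C), (C,A), (B,D), (D,B) with intersection $\{\pm 2\}$. For a representation $\phi\in\sigma^{-1}(a)$ actually to exist, however, the $\mathrm{SL}(2,\mathbb{R})$-conjugacy invariant $\epsilon(M):=\mathrm{sgn}\,\Omega(v,Mv)$ (which distinguishes the two elliptic classes with a given trace in $(-2,2)$, and the two parabolic classes with trace $\pm 2$) of $\phi(c_0)$ computed from the $P_1$ side must coincide with that computed from the $P_2$ side; working out $\epsilon$ from the $(2,1)$-entry of \eqref{par-eqn3} shows the $P_1$ side yields $\epsilon(C_1C_2)=-(-1)^{\mathrm{Re}(a_1)+\mathrm{Re}(a_2)}s_2$ (and analogously for $P_2$). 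Imposing this $\epsilon$-matching on the six pairings above forces $s_1=s_2=s_3=s_4$ with opposite parities in the (A,B)/(B,A) pairings (so $\boldsymbol{\rho}$ is odd and all $s_i$ agree: condition~(i)), and forces exactly three $s_i$ to agree with matching parities in the (A,C)/(C,A)/(B,D)/(D,B) pairings (so $\boldsymbol{\rho}$ is even: condition~(ii)). All other $\sigma$-classes admit hyperbolic $\phi(c_0)$ by varying $c$ in $P_1$ and solving for $c'$ in $P_2$, and thus lie in $\mathcal{P}_{\mathrm{I}}$.

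Finally, for the connectedness claim, fix $a$ satisfying (i) or (ii). Then the conjugacy class of $\phi(c_0)$ is confined to an explicit connected family: in case (i), the elliptic classes with $\epsilon=-s_1$ together with the two parabolic endpoints at trace $\pm 2$; in case (ii), a single trace-$\pm 2$ parabolic class. Given $\phi_0,\phi_1\in\sigma^{-1}(a)\cap\mathcal{P}_{\mathrm{II}}$, I would first deform $\phi_t|_{P_1}$ by varying $c$ (and the residual conjugation by the centralizer of $\Phi(a_1)$) so that $\phi_t(c_0)$ traces out a path between $\phi_0(c_0)$ and $\phi_1(c_0)$ inside this family, and simultaneously choose the corresponding parameter $c'$ on the $P_2$ side to realize $\phi_t(c_0)^{-1}=\phi_t|_{P_2}(c_3c_4)$; the existence of the lifted path follows from the path-lifting property of Theorem~\ref{lifting} applied to each pants separately. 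The main obstacle will be handling the degenerate loci where $\phi(c_0)\in\{\pm I\}$ (so a restriction becomes reducible and the parametrization of \eqref{par-eqn11} degenerates) and the parabolic/elliptic transition at $\mathrm{tr}(\phi(c_0))=\pm 2$, where one must verify by a small perturbation that the deformation can still be chosen smoothly across the singular stratum while remaining inside $\sigma^{-1}(a)$.
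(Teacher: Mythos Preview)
Your proposal is correct and follows essentially the same approach as the paper: both decompose $\Sigma_{0,4}$ along $c_0=c_1c_2$, use the trace formula \eqref{par-eqn4} to determine the range of $\mathrm{tr}(\phi(c_0))$ from each side, and then use the sign of the $(2,1)$-entry of \eqref{par-eqn3} (your $\epsilon$-invariant) to match the conjugacy classes of $\phi(c_1c_2)$ and $\phi(c_3c_4)^{-1}$. Your explicit Type A--D bookkeeping is a more systematic packaging of the paper's two-case split (elliptic $\phi(c_0)$ versus $\phi(c_0)$ forced to remain parabolic), and the paper treats the connectedness claim just as briefly (``straightforward to verify'') as you do. One small inaccuracy: in the (A,C)/(C,A)/(B,D)/(D,B) pairings the condition ``exactly three $s_i$ agree'' is already forced by the type constraints $s_1s_2=1$, $s_3s_4=-1$ (or vice versa), not by the $\epsilon$-matching, which in those cases only confirms that a representation exists.
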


\begin{proof}
We assume that $\Sigma=M_1\cup M_2$ with $c=\partial M_1\cap \partial M_2$.
Assume $\partial M_1 = c_1 \cup c_2$ and $\partial M_2 = c_3 \cup c_4$.  
If $\phi \in \mathcal{P}_{\mathrm{II}}$, then $|\mathrm{tr}(\phi(c_1c_2))| \leq 2$.  
As before $\phi(c_1)=C_1=\Phi(a_1)$, $\phi(c_2)=C_2=P_{M_1}\Phi(a_2)P_{M_1}^{-1}$ and $\phi(c_3)=C_3=Q\Phi(a_3)Q^{-1}$, $C_4=QP_{M_2}\Phi(a_4)P_{M_2}^{-1}Q^{-1}$
 with $c_{M_1},c_{M_2}\geq 0$ so that
$$\mathrm{tr}(C_1C_2)=(-1)^{\mathrm{Re}(a_1+a_2)}(2-s_1s_2 c^2_{M_1}),$$
$$\mathrm{tr}(C_3C_4)=(-1)^{\mathrm{Re}(a_3+a_4)}(2-s_3s_4 c^2_{M_2}),$$
where $c_{M_1}$ (resp. $c_{M_2}$) is the $(2,1)$-entry of $P_{M_1}$ (resp. $P_{M_2}$).
From~\eqref{par-eqn4} and~\eqref{par-eqn3}, the condition $|\mathrm{tr}(\phi(c_1c_2))| < 2$ implies that $s_1s_2=1, s_3s_4=1$.  Since $\phi(c_1c_2)=\phi(c_3c_4)^{-1}$, the $(2,1)$-entries of them have opposite sign. Hence
$$(-1)^{\mathrm{Re}(a_1+a_2+a_3+a_4)}s_1=-s_3.$$ This implies that
\[
s_1 = s_2 = -s_3 = -s_4, \quad \mathrm{Re}(a_1 + a_2 + a_3 + a_4) \ \text{is even},
\]
or  
\[
s_1 = s_2 = s_3 = s_4, \quad \mathrm{Re}(a_1 + a_2 + a_3 + a_4) \ \text{is odd},
\]
where $a = (a_1,a_2,a_3,a_4) = \sigma(\phi)$.  

Note that when $\mathrm{Re}(\sum_{i=1}^4 a_i)$ is even, $\mathrm{tr}(\phi(c_1c_2))=\mathrm{tr}(\phi(c_3c_4))$ implies that by making $c_{M_1},c_{M_2}$ large, $\phi$ can be deformed into an interior hyperbolic representation.  
Hence, for $\phi \in \mathcal{P}_{\mathrm{II}}$, we must have $\mathrm{Re}(\sum_{i=1}^4 a_i)$ odd and all $s_i$ equal.  
Since $a_i \in \{\pm 1, \pm \sqrt{-1}\}$, this is equivalent to $a$ having exactly one entry $\mp 1$ and the other three $\pm \sqrt{-1}$, or exactly one entry $\pm \sqrt{-1}$ and the other three $\mp 1$.  As a set, it is $\pm\{-1,\sqrt{-1},\sqrt{-1},\sqrt{-1}\}$ and $\pm\{1,1,1,-\sqrt{-1}\}$.
It is straightforward to verify that for such $a$, the set $\sigma^{-1}(a)$ is connected.

If $\phi(c)$ cannot be deformed into an elliptic element, then it remains parabolic under deformation.  
In this case, \eqref{par-eqn4} and~\eqref{par-eqn3} yield
\begin{equation}\label{par-eqn25}
s_1 = \pm s_2, \quad s_3 = \mp s_4, \quad \mathrm{Re}(\sum_{i=1}^4 a_i) \ \text{is even}.
\end{equation}
The condition $\mathrm{Re}(\sum_{i=1}^4 a_i)$ even, together with $c_{M_1}=c_{M_2}=0$, ensures that the traces of $\phi(c_1c_2)$ and $\phi(c_3c_4)$ coincide.  
Moreover, $s_1 = s_2$ and $s_3 = -s_4$ guarantee that $\phi(c_1c_2)$ can only be deformed to an elliptic element, while $\phi(c_3c_4)$ can only be deformed to a hyperbolic one, so both must be parabolic.  
Equation~\eqref{par-eqn25} is equivalent to exactly three of the $s_i$ being equal and $\mathrm{Re}(\sum_{i=1}^4 a_i)$ being even. As a collection of sets, it is $\pm\{\sqrt{-1},\sqrt{-1},\sqrt{-1},-\sqrt{-1}\},\pm\{1,1,1,-1\},\\\pm\{1,1,-\sqrt{-1},\sqrt{-1}\},\pm\{1,-1,-\sqrt{-1},-\sqrt{-1}\}$. Similarly, one can check that the set $\sigma^{-1}(a)$ is connected.

Conversely, if $a$ and $s$ satisfy the above conditions, then by~\eqref{par-eqn4} and~\eqref{par-eqn3}, the trace of $\phi(c_1c_2)$ always lies in $[-2,2]$, implying that $\phi \in \mathcal{P}_{\mathrm{II}}$.  
\end{proof}

As a direct consequence, we obtain that the number of connected components of $\mathcal{P}_{\mathrm{II}}$ is 
\begin{equation*}
  \#\{\mathcal{P}_{\mathrm{II}}\} = 2 \times 2^3 + 2 \times 4 \times 2^3 = 80.
\end{equation*}

For a general surface \(\Sigma_{0,n}\), an analogue of Lemma~\ref{par-lemma16} also holds, that is, $\phi\in\mc{P}_{\mr{II}}$ if and only if either $\mathrm{Re}\!\left( \sum_{i=1}^n a_i \right)+n$ is odd and all $s_i$ are equal; or $\mathrm{Re}\!\left( \sum_{i=1}^n a_i \right)+n$ is even and exactly $n-1$ of the $s_i$ are equal. For simplicity, we directly invoke the result of Ryu and Yang~\cite[Theorem~1.2, (2)\text{ and }(3)]{RY} for counting the number of connected components of $\mc{P}_{\mr{II}}$.

Recall that the projection
\begin{equation*}
\pi_* \colon \mathrm{Hom}(\pi_1(\Sigma_{0,n}), \mathrm{SL}(2,\mathbb{R}))
\longrightarrow \mathrm{Hom}(\pi_1(\Sigma_{0,n}), \mathrm{PSL}(2,\mathbb{R}))
\end{equation*}
is defined by composing with the double covering map $\pi:\mathrm{SL}(2,\mathbb{R}) \to \mathrm{PSL}(2,\mathbb{R})$.  
Restricting $\pi_*$ to $\mathcal{P}_{\mathrm{II}}$ gives
\begin{equation}\label{par-eqn-pi}
\pi_*|_{\mathcal{P}_{\mathrm{II}}} \colon \mathcal{P}_{\mathrm{II}} \to \pi_*(\mathcal{P}_{\mathrm{II}}),
\end{equation}
which is a covering map of degree $2^{n-1}$, since $\pi_1(\Sigma_{0,n})$ has $n-1$ free generators, each contributing two lifts.

The image $\pi_*(\mathcal{P}_{\mathrm{II}}) \subset \mathrm{Hom}(\pi_1(\Sigma_{0,n}), \mathrm{PSL}(2,\mathbb{R}))$ has been studied in~\cite{DT0}, where it is referred to as the space of \emph{super-maximal} representations.  
In~\cite[Theorem~1.2, (2) and (3)]{RY}, Ryu and Yang proved that the relative Euler class for a representation in $\pi_*(\mathcal{P}_{\mathrm{II}})$ takes values $\pm 1$ or $0$.  
Moreover, the fiber $e^{-1}(0)$ consists of $2n$ connected components, while $e^{-1}(\pm 1)$ each consists of a single component.  
Therefore,
\[
\#\{\pi_*(\mathcal{P}_{\mathrm{II}})\} = 2n + 2.
\]
Since $\pi_*|_{\mathcal{P}_{\mathrm{II}}}$ has degree $2^{n-1}$, we conclude that
\[
\#\{\mathcal{P}_{\mathrm{II}}\} = 2^{n-1} (2n + 2) = 2^{n} (n + 1).
\]

Combining this with~\eqref{par-eqn23}, we obtain the total number of connected components of $\mathrm{Par}(\Sigma_{0,n})$:
\[
\#\{\mathrm{Par}(\Sigma_{0,n})\}
= 2^{2n-1} \big(n - 3 \big) + 2^n (n + 1).
\]

Combining with Theorem~\ref{par-thm1} and \ref{par-thm2}, we conclude
\begin{thm}\label{par-thm3}
The number of connected components of $\mr{Par}(\Sigma_{g,n})$ is given by
\[
\#\{ \mr{Par}(\Sigma_{g,n}) \} =
\begin{cases}
16 & \text{for } g=n=1, \\
2^{2g+2n}+2^{2n-1} (4g+n-5) & \text{for } g\geq 2 \text{ is even}\\
2^{2g+2n-1}+2^{2n-1} (4g+n-4) & \text{for } g\geq 1 \text{ is odd}, g+n> 2\\
 2^{2n-1} \big(n - 3 \big) + 2^n (n + 1) &\text{for } g=0.
\end{cases}
\]

\end{thm}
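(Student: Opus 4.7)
The proof is a synthesis of the results established in the preceding subsections, organized by the four regimes appearing in the statement. The plan is to invoke, case by case, the counts already obtained, and verify that they assemble into the piecewise formula of Theorem \ref{par-thm3}.

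First, for $(g,n)=(1,1)$, the value $16$ is precisely Proposition \ref{par-prop1}, obtained through a direct analysis of the four fibers $\sigma^{-1}(\pm 1)$ and $\sigma^{-1}(\pm\sqrt{-1})$ of $\mathrm{Par}(\Sigma_{1,1})$, each of which is shown to decompose into $4$ connected components according to the signs of the eigenvalue $\lambda$ of $A$ and a distinguishing entry of $B$. Next, in the regime where $n$ is even with $g \geq 1$, the count $2^{2g+2n} + 2^{2n-1}(4g+n-5)$ is the content of Theorem \ref{par-thm1}. The argument there proceeds by constructing the surjective map $\Phi\colon \mathcal{P}_{g,n} \to \mathcal{H}_{g,n/2}$ of Corollary \ref{par-cor2}, built from the interior-hyperbolic deformation of Lemma \ref{par-lemma2} along separating curves $c_{2i}'$ that bound pairs of pants $P_i$ absorbing the parabolic boundaries, and then multiplying the degree $8^{n/2}$ (the number of parabolic fillings of a pair of pants with a prescribed hyperbolic third boundary, from Lemma \ref{par-lemma11}) by the hyperbolic count $\#\{\mathrm{Hyp}(\Sigma_{g,n/2})\} = 2^{2g+n/2} + 2^{n/2-1}(4g+n-5)$ from Section \ref{sec-hyperbolic}.

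For the regime where $n$ is odd with $g \geq 1$ and $(g,n) \neq (1,1)$, Theorem \ref{par-thm2} provides the count $2^{2g+2n-1} + 2^{2n-1}(4g+n-4)$ via an entirely parallel construction: all but one of the boundary curves are paired as before, so the map $\mathcal{P}_{g,n} \to \mathcal{HP}_{g,(n+1)/2}$ is a surjection of degree $8^{(n-1)/2}$, and the double-cover statement of Lemma \ref{lemma18} (based on the parabolic-to-hyperbolic perturbation of Lemma \ref{lemma17}) yields $\#\{\mathrm{HP}(\Sigma_{g,m})\} = 2^m(4^g + 4g + 2m - 5)$, which assembles into the stated count upon setting $m=(n+1)/2$.

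Finally, for $g = 0$, the representations split into the interior-deformable class $\mathcal{P}_\mathrm{I}$, contributing $2^{2n-1}(n-3)$ components via \eqref{par-eqn23} by the same $\Phi$-map technique, and the super-maximal class $\mathcal{P}_\mathrm{II}$. The main obstacle lies in counting $\mathcal{P}_\mathrm{II}$: these representations cannot be deformed to interior hyperbolic ones, so both Lemma \ref{par-lemma222} and the $\Phi$-map machinery fail. The plan is to use the generalization of Lemma \ref{par-lemma16}, which characterizes super-maximal $\phi$ explicitly by parity and equality conditions on the $\sigma$- and $s$-vectors, together with the projection $\pi_*\colon \mathcal{P}_\mathrm{II} \to \pi_*(\mathcal{P}_\mathrm{II})$ of \eqref{par-eqn-pi}, which is a covering of degree $2^{n-1}$ (since $\pi_1(\Sigma_{0,n})$ is free of rank $n-1$). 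Invoking Ryu--Yang's count \cite[Theorem 1.2, (2) and (3)]{RY}, which enumerates the connected components of the super-maximal locus in $\mathrm{Hom}(\pi_1(\Sigma_{0,n}),\mathrm{PSL}(2,\mathbb{R}))$ by relative Euler class (yielding $2n$ components with $e=0$ and one each with $e=\pm 1$, hence $2n+2$ in total), one obtains $\#\{\mathcal{P}_\mathrm{II}\} = 2^{n-1}(2n+2) = 2^n(n+1)$. Summing the two contributions gives $2^{2n-1}(n-3) + 2^n(n+1)$, which completes the proof.
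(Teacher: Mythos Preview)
Your proposal is correct and follows essentially the same approach as the paper: it assembles the four cases from Proposition \ref{par-prop1}, Theorem \ref{par-thm1}, Theorem \ref{par-thm2}, and the $g=0$ analysis via the $\mathcal{P}_{\mathrm{I}}/\mathcal{P}_{\mathrm{II}}$ split with the Ryu--Yang count. Your summary of the underlying mechanisms (the $\Phi$-maps of degree $8^{n/2}$ or $8^{(n-1)/2}$, the $\mathrm{HP}$ intermediary via Lemma \ref{lemma18}, and the $2^{n-1}$-fold covering $\pi_*$ for the super-maximal locus) accurately reflects the paper's arguments.
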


\subsection{The case of $\mathrm{PSL}(2,\mathbb{R})$}

We denote by 
\[
\mathrm{Par}_P(\Sigma_{g,n}) := \left\{ \phi \in \mathrm{Hom}(\pi_1(\Sigma_{g,n}), \mathrm{PSL}(2,\mathbb{R})) \mid \text{each } \phi(c_i) \text{ is parabolic} \right\}
\]
the space of boundary-parabolic representations into $\mathrm{PSL}(2,\mathbb{R})$. In this section, we compute the number of connected components of $\mathrm{Par}_P(\Sigma_{g,n})$. 

Note that $\mathrm{PSL}(2,\mathbb{R}) = \mathrm{SL}(2,\mathbb{R})/\{\pm I\}$, so the set of parabolic elements splits into two conjugacy classes. We choose the representative classes with positive trace, which correspond to sigma values $\pm 1$. 

\begin{thm}\label{par-thm4}
The number of connected components of the space of boundary-parabolic representations into $\mathrm{PSL}(2,\mathbb{R})$ is 
\begin{equation}\label{par-eqn33}
\#\{ \mr{Par}_P(\Sigma_{g,n}) \} =
\begin{cases}
2^{n} (4g+n-3) &  \text{for } g\geq 1 \\
2^{n} \big(n - 3 \big) + 2 (n + 1) &\text{for } g=0.
\end{cases}
\end{equation}
\end{thm}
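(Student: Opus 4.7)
The plan is to adapt the $\mathrm{SL}(2,\mathbb{R})$ counting strategy of Theorems \ref{par-thm1}--\ref{par-thm3} to the $\mathrm{PSL}(2,\mathbb{R})$ setting. The key simplifications are that under the projection $\pi\colon\mathrm{SL}(2,\mathbb{R})\to\mathrm{PSL}(2,\mathbb{R})$ the four $\mathrm{SL}(2,\mathbb{R})$-conjugacy classes of parabolic elements collapse to exactly two $\mathrm{PSL}(2,\mathbb{R})$-classes indexed by $s\in\{\pm 1\}$, and the central element $-I$ acts trivially, eliminating the auxiliary $\Lambda_{g,n}$-action that complicates the $\mathrm{SL}(2,\mathbb{R})$ counting. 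Combined with Goldman's classification in Theorem \ref{thm:Gold1}, this should yield the announced formula directly.

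For $g\geq 1$, I would first establish the $\mathrm{PSL}(2,\mathbb{R})$-analog of Lemma \ref{par-lemma2}: every $[\phi]\in\mathrm{Par}_P(\Sigma_{g,n})$ can be deformed, keeping each boundary $\mathrm{PSL}(2,\mathbb{R})$-conjugacy class fixed, to an interior-hyperbolic representation with respect to a maximal dual tree decomposition. This is immediate by lifting the paths produced by Lemma \ref{par-lemma2} through the covering $\pi_*$. I would then mirror the construction preceding Lemma \ref{par-lemma15} to build a surjective map on connected components
\[
\Phi_P\colon\pi_0(\mathrm{Par}_P(\Sigma_{g,n}))\longrightarrow
\begin{cases}
\pi_0(\mathrm{Hyp}_P(\Sigma_{g,n/2})), & n\text{ even},\\
\pi_0(\mathrm{HP}_P(\Sigma_{g,(n+1)/2})), & n\text{ odd},
\end{cases}
\]
by cutting along the interior hyperbolic curves that separate consecutive pairs of boundary components. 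A pair-of-pants computation analogous to the proof of Lemma \ref{par-lemma11}, but performed in $\mathrm{PSL}(2,\mathbb{R})$ using the trace formula \eqref{par-eqn15}, reveals that each fiber of $\Phi_P$ has size exactly $4^{\lfloor n/2\rfloor}$: given one hyperbolic cuff there are two $\mathrm{PSL}(2,\mathbb{R})$-conjugacy classes for one parabolic cuff and, for each, two values of the parameter $c$ making the remaining cuff parabolic, with no additional factor of two from $\pm I$ (contrast the count of $8$ in Lemma \ref{par-lemma11}).

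To finish the $g\geq 1$ case I would use $\#\pi_0(\mathrm{Hyp}_P(\Sigma_{g,k}))=4g+2k-3$ from Theorem \ref{thm:Gold1}, together with the analog $\#\pi_0(\mathrm{HP}_P(\Sigma_{g,k}))=4|\chi(\Sigma_{g,k})|$. The latter I would establish by a Goldman-style analysis: at each of the $2|\chi|-1$ intermediate relative Euler class values the parabolic $s$-value is unconstrained, yielding a factor of two, while at each of the two extremal relative Euler classes Lemmas \ref{par-lemma14} and \ref{par-lemma13} force $s$ to take a single prescribed value, giving $2(2|\chi|-1)+2=4|\chi|$ altogether. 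Substituting produces $4^{\lfloor n/2\rfloor}\cdot\#\pi_0=2^n(4g+n-3)$ in both parities of $n$. The case $g=0$ follows by partitioning $\mathrm{Par}_P(\Sigma_{0,n})=\mathcal{P}_I^P\sqcup\mathcal{P}_{II}^P$ exactly as in the proof of Theorem \ref{par-thm3}; the same reduction applied to $\mathcal{P}_I^P$ yields $2^n(n-3)$, while the super-maximal contribution $\#\pi_0(\mathcal{P}_{II}^P)=2(n+1)$ is supplied directly by \cite[Theorem 1.2]{RY} through the covering in \eqref{par-eqn-pi}.

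The main obstacle I anticipate is the computation $\#\pi_0(\mathrm{HP}_P(\Sigma_{g,k}))=4|\chi(\Sigma_{g,k})|$, which requires establishing that the relative Euler class and the parabolic $s$-value at the unique parabolic boundary are independent invariants at intermediate Euler class values but constrained at the two extremal ones. Carrying this out will need a careful port of the arguments of Lemmas \ref{par-lemma14} and \ref{par-lemma13}, as well as the Euler-class analysis of Section \ref{sec-hyperbolic}, from $\mathrm{SL}(2,\mathbb{R})$ to $\mathrm{PSL}(2,\mathbb{R})$.
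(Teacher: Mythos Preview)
Your proposal is correct and follows essentially the same route as the paper's proof: for $g\geq 1$ both arguments pass to $\mathrm{PSL}(2,\mathbb{R})$ by halving the pair-of-pants fiber count from $8$ to $4$ and dropping the $\Lambda_{g,n}$-action (which becomes trivial), then read off the target count from Goldman's Theorem~\ref{thm:Gold1}; for $g=0$ both use the degree-$2^{n-1}$ covering $\pi_*$ together with the $\mathrm{SL}(2,\mathbb{R})$ count and the Ryu--Yang result on super-maximal representations. Your treatment of the odd-$n$ case via the intermediate formula $\#\pi_0(\mathrm{HP}_P(\Sigma_{g,k}))=4|\chi(\Sigma_{g,k})|$ is more explicit than the paper, which simply asserts that the odd case follows ``similarly,'' but the underlying mechanism is the same.
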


\begin{proof}
For $(g,n) = (0,2)$, the number of connected components is precisely the number of parabolic conjugacy classes in $\mathrm{PSL}(2,\mathbb{R})$, so $\#\{\mathrm{Par}_P(\Sigma_{0,2})\} = 2$.

For $(g,n) = (1,1)$, we have $\#\{\mathrm{Par}(\Sigma_{1,1})\} = 16$ in $\mathrm{SL}(2,\mathbb{R})$, and each sigma value in $\{\pm1, \pm\sqrt{-1}\}$ corresponds to 4 connected components. These are permuted by the action of $\Lambda_{1,1}$. By Corollary~\ref{par-cor1}, the Toledo invariant is $\mathrm{T}(\phi) = \mathrm{Im}(\sigma(\phi))$. When passing to $\mathrm{PSL}(2,\mathbb{R})$, we identify $\pm I$, and only the sigma values $\pm 1$ remain for parabolic elements. Thus, there are exactly 4 components in $\mathrm{Par}_P(\Sigma_{1,1})$, corresponding to the subsets:
\[
\sigma^{-1}(1)\cap \mathrm{T}^{-1}(0),\,
\sigma^{-1}(-1)\cap \mathrm{T}^{-1}(0),\,
\sigma^{-1}(-1)\cap \mathrm{T}^{-1}(1),\,
\sigma^{-1}(-1)\cap \mathrm{T}^{-1}(-1).
\]

Now consider the case $g+n > 2$ with $g \geq 1$.  
When $n$ is even, by the proof of Lemma~\ref{par-lemma11}, for any hyperbolic element $C_3 \in \mathrm{PSL}(2,\mathbb{R})$ there exist four distinct types of compatible parabolic elements $C_1, C_2 \in \mathrm{PSL}(2,\mathbb{R})$.  
In contrast to the $\mathrm{SL}(2,\mathbb{R})$ case, there is no $4^g$ factor here, since the action of $\Lambda_{g,n}$ is connected in the $\mathrm{PSL}(2,\mathbb{R})$ setting.  
Therefore, the number of connected components of $\mathrm{Par}_P(\Sigma_{g,n})$ is
\(2^n (4g + n - 3)\).
When $n$ is odd, a similar argument shows that the number of connected components is also
\(2^n (4g + n - 3)\).

Finally, for the case $g = 0$,  note that the map $\pi_*$ (see \eqref{par-eqn-pi})
is a covering map of degree \(2^{n-1}\), since 
\(\pi: \mathrm{SL}(2,\mathbb{R}) \to \mathrm{PSL}(2,\mathbb{R})\) 
is a double covering and \(\pi_1(\Sigma_{0,n})\) has \(n-1\) free generators.  
Therefore, the result for \(g = 0\) follows directly from Theorem~\ref{par-thm3}.
This completes the proof.
\end{proof}

Next, using formula~\eqref{Sig-Tol}, we give an alternative proof of a generalized Milnor--Wood inequality, originally established in~\cite[Theorem~1.1 and 1.2]{RY}.

Let $\Sigma = \Sigma_{g,n}$ be a surface $\chi(\Sigma)\leq -1$ and $\phi \in \mathrm{Hom}(\pi_1(\Sigma), \mathrm{PSL}(2,\mathbb{R}))$.  
In $\mathrm{PSL}(2,\mathbb{R})$, there are only two conjugacy classes of parabolic elements, indexed by $s = \pm 1$, namely
\[
  \pm \begin{pmatrix} 1 & s \\ 0 & 1 \end{pmatrix}.
\]
Let $s_+$ (resp. $s_-$) denote the number of boundary components whose image under~$\phi$ is parabolic with $s=1$ (resp. $s=-1$), so that $n = s_+ + s_-$.  
Since $n \geq 1$, there exists a lift $\tilde{\phi} \in \mathrm{Hom}(\pi_1(\Sigma), \mathrm{SL}(2,\mathbb{R}))$ such that $\pi \circ \tilde{\phi} = \phi$, where $\pi: \mathrm{SL}(2,\mathbb{R}) \to \mathrm{PSL}(2,\mathbb{R})$ is the natural projection.  
The Toledo invariant is preserved under lifting:
\[
  \mathrm{T}(\tilde{\phi}) =  \mathrm{T}(\phi).
\]

Let $s_+^+$ (resp. $s_+^-$) be the number of boundary components of $\tilde{\phi}$ whose conjugacy class is
\[
\Phi(-1)=  +\begin{pmatrix} 1 & 1 \\ 0 & 1 \end{pmatrix}
  \quad \text{(resp. } \Phi(\sqrt{-1})=-\begin{pmatrix} 1 & 1 \\ 0 & 1 \end{pmatrix}\text{)},
\]
and similarly define $s_-^+$ and $s_-^-$, see \eqref{par-eqn-Phi} for the definition $\Phi$. 
Then
\[
  s_+ = s_+^+ + s_+^-, \quad s_- = s_-^+ + s_-^-.
\]
From~\eqref{Sig-Tol}, we have
\begin{equation}\label{par-eqn28}
  \mathrm{sign}(\tilde{\phi}) 
  = 2  \mathrm{T}(\phi) + \boldsymbol{\rho}(\tilde{\phi}) 
  = 2  \mathrm{T}(\phi) + \big(-s_+^+ + s^+_-\big),
\end{equation}
since the rho invariant equals $-s$ when $\mathrm{tr} = 2$ and is zero when $\mathrm{tr} = -2$. By~\cite[Proposition~8.9]{KPW}, for $g\geq 1$, we have
\begin{equation}\label{par-eqn29}
  |\mathrm{sign}(\tilde{\phi}) |\leq 2|\chi(\Sigma)| - s_+^+ - s^+_-.
\end{equation}
and for $g=0$, we have 
\begin{equation}\label{par-eqn33}
  |\mathrm{sign}(\tilde{\phi}) |\leq 2|\chi(\Sigma)| - s_+^+ - s^+_-+2\dim \mr{H}^0(\Sigma,\mc{E}).
\end{equation}
In particular, if $\tilde{\phi}$ can be deformed to an interior hyperbolic representation, then $\dim \mr{H}^0(\Sigma,\mc{E})$ can be zero. 

We first consider the case where $g\geq 1$, then \eqref{par-eqn29} holds.
Combining~\eqref{par-eqn28} and~\eqref{par-eqn29} yields
\[
   \mathrm{T}(\phi) \leq |\chi(\Sigma)| - s_-^+ 
  = (|\chi(\Sigma)| - s_-) + s_-^-
\]
and
\begin{equation*}
  \mathrm{T}(\phi)\geq \chi(\Sigma)+s^+_+=(\chi(\Sigma)+s_+)-s_+^-.
\end{equation*}

If $s_-^-$ is even, we may multiply by  $-I$ on each of these boundary components to obtain a new representation with $s_-^- = 0$.  

If $s_-^-$ is odd and $s_+ \geq 1$, we can also multiply by $-I$ on each of the $s_-^-$ boundary components whose conjugacy class is $\Phi(-\sqrt{-1})$, and additionally on one boundary component whose conjugacy class is $\Phi(-1)$ or $\Phi(\sqrt{-1})$.  
This operation again forces $s_-^- = 0$.  

If $s_-^-$ is odd and $s_+ = 0$, we may multiply by $-I$ on $s_-^- - 1$ boundary components whose conjugacy class is $\Phi(-\sqrt{-1})$, which forces $s_-^- = 1$.  

Since this operation does not change the Toledo invariant, we obtain the following Milnor--Wood type inequality:
\begin{equation}\label{par-eqn31}
   \mathrm{T}(\phi) \leq |\chi(\Sigma)| - s_- + 1 = 2g - 1 + s_+,
\end{equation}
where equality holds only when  $s_-^-$ is odd and $s_+ = 0$.

When $n$ is even, any boundary parabolic representation can be deformed into an interior hyperbolic representation (see Lemma~\ref{par-lemma222}).  
Therefore, by the decomposition in Figure~\ref{fig:decomposition1} and equation~\eqref{par-eqn30}, we have
\begin{equation*}
  \mathrm{T}(\tilde{\phi}) 
   = \mathrm{T}\big(\tilde{\phi}|_{\pi_1(\Sigma_{g,n/2})}\big) 
     + \frac{1}{2} \sum_{i=1}^n s_i .
\end{equation*}
If equality holds in~\eqref{par-eqn31}, then $s_+ = 0$, so $s_- = n$. In this case,
\begin{equation*}
   \mathrm{T}\big(\tilde{\phi}|_{\pi_1(\Sigma_{g,n/2})}\big) - \frac{n}{2} = 2g - 1,
\end{equation*}
which implies
\begin{equation*}
   \mathrm{T}\big(\tilde{\phi}|_{\pi_1(\Sigma_{g,n/2})}\big) 
      = 2g - 1 + \frac{n}{2} > |\chi(\Sigma_{g,n/2})|,
\end{equation*}
contradicting the Milnor--Wood inequality.

When $n$ is odd, if equality in~\eqref{par-eqn31} holds, then
\[
   \mathrm{T}\big(\tilde{\phi}|_{\pi_1(\Sigma_{g,(n+1)/2})}\big)
   = 2g - 1+\frac{n-1}{2} = |\chi(\Sigma_{g,(n+1)/2})|.
\]
Let $\psi = \tilde{\phi}|_{\pi_1(\Sigma_{g,(n+1)/2})} \in \mathrm{HP}(\Sigma_{g,(n+1)/2})$. 
If $\psi$ has maximal Toledo invariant, then the $s$-value of $\psi(c_n)$ must be $1$.  
Indeed, when $n\geq 3$, up to multiplying by $-I$ on two boundary components, we may assume $\psi(c_n)$ has positive trace.  
If its $s$-value is $-1$, then $\boldsymbol{\rho}(\psi) = 1$, hence 
\[
   \mathrm{sign}(\psi) = 2\mathrm{T}(\psi) + 1 > 2|\chi(\Sigma_{g,(n+1)/2})|,
\]
contradicting the Milnor--Wood inequality for the signature. When $n = 1$, we have $\psi = \tilde{\phi} \in \mathrm{Par}(\Sigma_{g,1})$ with maximal Toledo invariant. 
By Proposition~\ref{prop1}, $\sigma(\psi(c_n)) = \sqrt{-1}$, so the $s$-value of $\psi(c_n)$ is $1$.

Thus $s_+ \geq 1$, contradicting the condition for equality in~\eqref{par-eqn31}.  

Hence, when $g \geq 1$, equality in~\eqref{par-eqn31} never occurs, and we obtain
\begin{equation}\label{par-eqn32}
   \mathrm{T}(\phi) \leq |\chi(\Sigma)| - s_- .
\end{equation}
When $g=0$, for the subset $\mathcal{P}_{\mathrm{I}}$ of representations deformable to interior hyperbolic representations, the same argument applies, so~\eqref{par-eqn32} still holds.

Similarly, for $g\geq 1$ or for the subset $\mathcal{P}_{\mathrm{I}}$, we obtain the opposite inequality
\[
   \mathrm{T}(\phi) \geq -|\chi(\Sigma)| + s_+ .
\]
Combining our discussions of boundary parabolic representations,  
we obtain the following theorem, which is also proved in~\cite[Theorem~1.1 and Theorem~1.2]{RY}.
\begin{thm}
Let $\Sigma = \Sigma_{g,n}$ be a surface with $\chi(\Sigma) \leq -1$.  
If $\phi \in \mathrm{Par}_P(\Sigma_{g,n})$ can be deformed to an interior hyperbolic representation, then
\[
   -|\chi(\Sigma)| + s_+ \ \leq\ \mathrm{T}(\phi) \ \leq\ |\chi(\Sigma)| - s_-.
\] Moreover, each subset $\sigma^{-1}(s)\cap \mathrm{T}^{-1}(k)$ is non-empty and connected for $k$ and $s$ satisfying the above inequality. Here $s=(s_1,\dots,s_n)\in\{\pm 1\}^n$, $s_\pm$ is defined from $s$ as above.  The sigma map
 $\sigma:\mathrm{Par}_P(\Sigma_{g,n}) \to \{\pm 1\}^n$ is induced from the map defined in \eqref{par-eqn-sigma}. 
\end{thm}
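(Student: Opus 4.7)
The plan is to split the theorem into three tasks: the inequality, the non-emptiness of each level set, and the connectedness. The inequality part has been essentially set up in the paragraphs preceding the statement, so the proof amounts to organizing that argument and then supplying the new content (non-emptiness and connectedness of level sets).

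For the inequality, I would lift $\phi$ to $\tilde{\phi}\in\mathrm{Hom}(\pi_1(\Sigma),\mathrm{SL}(2,\mathbb{R}))$ and exploit the identity $\mathrm{sign}(\tilde{\phi})=2\mathrm{T}(\phi)+\boldsymbol{\rho}(\tilde{\phi})$ together with the generalized Milnor--Wood inequality $|\mathrm{sign}(\tilde{\phi})|\leq 2|\chi(\Sigma)|-s_+^+-s_-^+$. By multiplying by $-I$ on appropriately chosen boundary components (which fixes $\phi$ and $\mathrm{T}(\phi)$ but modifies the lift), one can assume $s_-^-=0$, giving the weaker bound $\mathrm{T}(\phi)\leq 2g-1+s_+$. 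The sharper inequality $\mathrm{T}(\phi)\leq |\chi(\Sigma)|-s_-$ then follows by ruling out equality in the weaker bound: if equality held, then using the decomposition $\Sigma=\Sigma_{g,\lceil n/2\rceil}\cup\bigcup P_i$ (Figure~\ref{fig:decomposition1}) together with the pair-of-pants formula \eqref{par-eqn16}, the restriction of $\tilde{\phi}$ to $\Sigma_{g,\lceil n/2\rceil}$ would have Toledo invariant exceeding $|\chi(\Sigma_{g,\lceil n/2\rceil})|$, contradicting the Milnor--Wood inequality or, in the odd case, Proposition~\ref{prop1} (which forces an extra $s_+\geq 1$).

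For non-emptiness of $\sigma^{-1}(s)\cap\mathrm{T}^{-1}(k)$, I would run the construction in reverse: start from a boundary hyperbolic representation $\psi_0\in\mathrm{Hyp}_P(\Sigma)$ with $\mathrm{T}(\psi_0)=k$, which exists by Theorem~\ref{thm:Gold1} for every $|k|\leq|\chi(\Sigma)|$. Using the reverse of Lemma~\ref{par-lemma7} (deforming the boundary traces monotonically from hyperbolic to parabolic) one may realize prescribed parabolic conjugacy classes on each $c_i$, and by adjusting the lift by multiplication by $-I$ on pairs of boundaries one can arrange the sigma vector in $\{\pm1\}^n$ to equal the prescribed $s$. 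Projecting to $\mathrm{PSL}(2,\mathbb{R})$ yields an element of $\sigma^{-1}(s)\cap\mathrm{T}^{-1}(k)$. The admissible range of $k$ as the boundary sigmas vary over $s$ is precisely the stated interval $[-|\chi(\Sigma)|+s_+,\,|\chi(\Sigma)|-s_-]\cap\mathbb{Z}$, by the inequality just established.

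For connectedness, given $\phi_0,\phi_1\in\sigma^{-1}(s)\cap\mathrm{T}^{-1}(k)$, the assumption that each deforms to an interior hyperbolic representation, together with the reverse perturbation of Lemma~\ref{par-lemma7} applied simultaneously to all boundary components, allows me to deform $\phi_0,\phi_1$ within $\mathrm{Par}_P(\Sigma_{g,n})$ to representations $\phi_0',\phi_1'$ whose boundary holonomies are hyperbolic, while preserving $\mathrm{T}$. These are elements of $\mathrm{Hyp}_P(\Sigma_{g,n})\cap\mathrm{T}^{-1}(k)$, which is a single connected component by Theorem~\ref{thm:Gold1} (Goldman's Theorem D). Any path in $\mathrm{Hyp}_P(\Sigma_{g,n})$ joining $\phi_0'$ to $\phi_1'$ can then be lifted and extended to a path in $\mathrm{Par}_P(\Sigma_{g,n})$ via the inverse deformation, keeping the sigma vector equal to $s$ throughout since the $\sigma$-value of a parabolic boundary is determined by the sign of the trace of a nearby hyperbolic approximation (equation~\eqref{par-eqn12}). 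The main obstacle is bookkeeping: one must verify that the deformations to and from $\mathrm{Hyp}_P(\Sigma_{g,n})$ can be performed continuously in families without the sigma vector jumping, which requires invoking the path-lifting property of $\chi$ (Theorem~\ref{lifting}) together with Lemma~\ref{par-lemma3} and its analog for the parabolic-to-hyperbolic transition.
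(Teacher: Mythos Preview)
Your inequality argument is correct and essentially reproduces the paper's derivation.

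The connectedness argument, however, has a genuine gap. You write that one can ``deform $\phi_0,\phi_1$ within $\mathrm{Par}_P(\Sigma_{g,n})$ to representations $\phi_0',\phi_1'$ whose boundary holonomies are hyperbolic,'' which is self-contradictory: if the path stays in $\mathrm{Par}_P$, the boundaries remain parabolic. What you actually do is exit $\mathrm{Par}_P$ into $\mathrm{Hyp}_P(\Sigma_{g,n})$ (same surface), connect there via Goldman's theorem, and then attempt to push the connecting path back into $\mathrm{Par}_P$. This last step is where the content lies. For each boundary, deforming a hyperbolic holonomy to a parabolic one with a prescribed $s_i$ is a two-valued choice (cf.\ the proof of Lemma~\ref{lemma18}), and you must make these choices continuously along the entire path in $\mathrm{Hyp}_P$ so as to land on the prescribed $s$ at both endpoints. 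You label this ``bookkeeping,'' but it is the heart of the matter, and neither Lemma~\ref{par-lemma3} nor the path-lifting of Theorem~\ref{lifting} supplies it in the form you need.

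The paper avoids this by never leaving $\mathrm{Par}_P$. It deforms $\phi$ \emph{inside} $\mathrm{Par}_P(\Sigma_{g,n})$ to an interior hyperbolic representation (the exterior boundaries $c_i$ stay parabolic; only the separating curves $c'_{2i}$ become hyperbolic---Lemma~\ref{par-lemma2}), then restricts to the \emph{subsurface} $\Sigma_{g,\lceil n/2\rceil}$ obtained by excising pairs of pants $P_i$ containing two boundary components each. This produces a well-defined map from components of $\mathrm{Par}(\Sigma_{g,n})$ to components of $\mathrm{Hyp}(\Sigma_{g,\lceil n/2\rceil})$ (Corollary~\ref{par-cor2}, and its odd-$n$ analogue via $\mathrm{HP}$), and Lemma~\ref{par-lemma3} shows precisely how a path of restrictions extends to a path in $\mathrm{Par}(\Sigma_{g,n})$ by filling in each $P_i$. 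The $\sigma$-vector is preserved automatically because the outer parabolic boundaries are never perturbed; non-emptiness follows from the surjectivity of this map rather than from a deformation out of $\mathrm{Hyp}_P(\Sigma_{g,n})$.
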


From the above generalized Milnor--Wood inequality, we can also deduce the number of connected components of the representations that can be deformed to an interior hyperbolic representation, which is given by
\begin{align*}
  \sum_{s_+=0}^n \ \sum_{\mr{T}=-|\chi(\Sigma)|+s_+}^{\,|\chi(\Sigma)|-(n-s_+)} \binom{n}{s_+}
  &= (2|\chi(\Sigma)| - n + 1) \sum_{s_+=0}^n \binom{n}{s_+} \\
  &= \big( 4g + n - 3 \big)\cdot 2^n.
\end{align*}

\begin{rem}
For $\phi \in \mathcal{P}_{\mathrm{II}}$, that is, for representations which cannot be deformed into an interior hyperbolic representation, suppose instead that $\phi$ can be deformed into an interior elliptic representation. Then we have $\mr{T}(\phi) = \pm 1$ whenever $s_{\mp} = 0$, see~\cite[Theorem~1.2 (3)]{RY}. In this case, $\dim \mr{H}^0(\Sigma,\mathcal{E})$ can be zero, which precisely corresponds to the situation where equality holds in~\eqref{par-eqn31}.  

On the other hand, if $\phi$ can only be interior parabolic, then it is abelian, its Toledo invariant is zero and either $s_+ = 1$ or $s_- = 1$, see~\cite[Theorem~1.2 (2)]{RY}. In this case, $\dim \mr{H}^0(\Sigma,\mathcal{E}) = 1$. From the discussion above, we also know that $s^-_- = 0$, and therefore by \eqref{par-eqn33}, the inequality~\eqref{par-eqn31} becomes
\[
   \mathrm{T}(\phi) \leq |\chi(\Sigma)| - s_- + \dim \mr{H}^0(\Sigma,\mathcal{E}) \;=\; -1 + s_+.
\]
Similarly, we obtain the opposite inequality,
\[
   1 - s_- \;\leq\; \mathrm{T}(\phi) \;\leq\; -1 + s_+.
\]
In particular, we see that $\mathrm{T}(\phi) = 0$ occurs precisely when equality holds in one of the above inequalities.
\end{rem}

\end{CJK}
\end{document}